\numberwithin{equation}{section}
\numberwithin{figure}{section}
\newtheorem{mainthm}{Theorem}
\newtheorem*{maindef}{Definition}
\newtheorem{theorem}[equation]{Theorem}
\newtheorem{lemma}[equation]{Lemma}
\newtheorem{claim}[equation]{Claim}
\newtheorem{corollary}[equation]{Corollary}
\newtheorem{proposition}[equation]{Proposition}
\newtheorem*{ex*}{Exercise}
\theoremstyle{definition}
\newtheorem{remark}[equation]{Remark}
\newtheorem{example}[equation]{Example}
\newtheorem{definition}[equation]{Definition}
\theoremstyle{plain}
\newcommand\N{\ensuremath{\mathbb{N}}}
\newcommand\dev{\ensuremath{\mathit{dev}}}
\newcommand\Z{\ensuremath{\mathbb{Z}}}
\newcommand\R{\ensuremath{\mathbb{R}}}
\newcommand\B{\ensuremath{\mathcal{B}}}
\newcommand*{\Bg}[1]{\ensuremath{\mathcal{B}^{#1}}}
\newcommand\G{\ensuremath{\mathcal{G}}}
\DeclareMathOperator{\Curr}{Curr}
\DeclareMathOperator{\Teich}{Teich}
\DeclareMathOperator{\sys}{sys}
\newcommand{\del}{\partial}
\newcommand*{\wt}[1]{\widetilde{#1}}
\renewcommand{\epsilon}{\varepsilon}
\DeclareMathOperator{\SL}{\mathit{SL}}
\DeclareMathOperator{\PSL}{\mathit{PSL}}
\newcommand{\capt}{\cap\kern-0.7em|\kern0.7em}
\newcommand{\Curves}{\mathit{Curves}}
\newcommand{\ML}{\mathcal{ML}}
\newcommand{\CAT}{\mathit{CAT}}
\DeclareMathOperator{\supp}{supp}
\tikzset{
    labl/.style={anchor=south, rotate=90, inner sep=.5mm}
}
\tikzstyle{every picture}=[> = to]
\tikzset{cdlabel/.style={execute at begin node=$\scriptstyle,execute at end node=$}}
\tikzset{implication/.style={double equal sign distance, -implies}}
\tikzset{biimplication/.style={double equal sign distance, implies-implies}}
\begin{document}
\title{Dual spaces of geodesic currents}

\author[De Rosa]{Luca de Rosa}
\address{
Department of Mathematics\\
         ETH Zurich\\
R\"amistrasse 101, 8092 Zurich\\
Switzerland}
\email{luca.derosa@math.ethz.ch}

\author[Martínez-Granado]{Dídac Martínez-Granado}
\address{Department of Mathematics\\University of Luxembourg\\Av. de la Fonte 6, Esch-sur-Alzette, L-4364, Luxembourg}       
\email{didac.martinezgranado@uni.lu}

\begin{abstract}
Every geodesic current on a hyperbolic surface has an associated dual space. If the current is a lamination, this dual embeds isometrically into a real tree. We show that, in general, the dual space is a Gromov hyperbolic metric tree-graded space, and express its Gromov hyperbolicity constant in terms of the geodesic current.  In the case of geodesic currents with no atoms and full support, such as those coming from certain higher rank representations, we show the duals are homeomorphic to the surface.
We also analyze the completeness of the dual and the properties of the action of the fundamental group of the surface on the dual. 
Furthermore, we compare two natural topologies in the space of duals. 

\end{abstract}
\maketitle
\tableofcontents

\section{Introduction}
\label{sec:intro}
An $\mathbb{R}$-tree is a geodesic metric space where any two points are connected by a unique arc isometric to a closed interval in $\mathbb{R}$.
A measured lamination $\lambda$ on a surface $X$ defines a (dual) $\pi_1(X)$-action on an $\mathbb{R}$-tree as follows. Lift the lamination to the universal cover $\wt{X}$ and define the pseudo-distance between two points by considering the measure of the set of geodesics intersecting the geodesic segment connecting them.
This turns out to define a $0$-hyperbolic space $X_{\lambda}$, that embeds isometrically into an $\mathbb{R}$-tree, called the tree dual to the measured lamination.
There are several equivalent formulations of this construction: see ~\cite{MORGAN1991143,W98:Duals,Kap00:Kapovich2000HyperbolicMA}. We explore the equivalence of these with our construction in Subsection~\ref{subsec:dual_tree}.
It follows from the construction of the dual $X_{\lambda}$ of a measured lamination that the translation length of an element $g \in \pi_1(X)$ is equal to the intersection number of the measured lamination with $[g]$, the homotopy class represented by $g$.

First introduced by Bonahon in his seminal paper \cite{Bonahon86:EndsHyperbolicManifolds}, geodesic currents can be understood as an extension of measured laminations where the geodesics in the support of the measure are allowed to intersect each other. 

The above construction of dual space of a lamination can be extended to any geodesic current $\mu$ on a compact hyperbolic surface $X$ (possibly with non-empty geodesic boundary).

\begin{maindef}[Dual space of a geodesic current]
A geodesic current $\mu$ induces a pseudo-distance on $\wt{X}$ given by
 \[
 d_\mu (\overline{x},\overline{y}) = \frac{1}{2} \left\{ \mu ( G[\overline{x},\overline{y})) + \mu ( G(\overline{x},\overline{y}] )\right\}.
 \]
 where $G[\overline{x},\overline{y})$ denotes the set of hyperbolic geodesics of $\wt{X}$ transverse to the geodesic segment $[\overline{x},\overline{y})$.
 The dual space of $\mu$, denoted by $X_{\mu}$, is defined as the metric quotient of on $\wt{X}$ under this this pseudo-distance. The set of all dual spaces of geodesic currents will be denoted by $\mathcal{D}(X)$.
 \end{maindef}

 This space was introduced by Burger-Iozzi-Parreau-Pozzetti in~\cite{BIPP21:Currents}. We note that $X_{\mu}$ depends on the choice of hyperbolic structure $X$ (see Subsection~\ref{subsec:dependence}).
 $X_{\mu}$ might not, a priori, be $0$-hyperbolic, but we show that, in fact, it is always $\delta$-hyperbolic for some $\delta \geq 0$ that can be described in terms of $\mu$.
If $B$ denotes a \emph{box} of geodesics in $\wt{X}$ given by a product of two intervals on $\partial \wt{X}$, $B^{\perp}$ denotes the \emph{opposite box} given by the complementary intervals, and $\mathcal{B}$ the family of all boxes $B$, we prove the following result.

\begin{mainthm}[Hyperbolicity]\label{thm:hyp_intro}
Let $\mu$ be a geodesic current on $X$, and let 
\[
\delta_\mu = \sup_{B \in \mathcal{B}} \min {\{ \mu(B), \mu(B^ \perp) \}}.
\]
Then the dual space $X_\mu$ is a $\delta_\mu$-hyperbolic space, and $\delta_\mu$ is the optimal hyperbolicity constant.
\end{mainthm}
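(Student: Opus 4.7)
We verify the Gromov four-point condition
\[
d_\mu(\bar x_1,\bar x_3)+d_\mu(\bar x_2,\bar x_4)\le \max\bigl\{d_\mu(\bar x_1,\bar x_2)+d_\mu(\bar x_3,\bar x_4),\ d_\mu(\bar x_1,\bar x_4)+d_\mu(\bar x_2,\bar x_3)\bigr\}+2\delta_\mu
\]
for every quadruple in $\wt X$; since the metric quotient $\wt X\to X_\mu$ preserves hyperbolicity, this yields $\delta_\mu$-hyperbolicity. By continuity of $d_\mu$ we may reduce to the case that $\bar x_1,\ldots,\bar x_4$ are in generic convex position with cyclic order $1,2,3,4$ on the boundary of their convex hull, since non-convex configurations follow by perturbing one point into convex position and passing to the limit.

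\textbf{Combinatorial decomposition.} Each geodesic $\ell\subset\wt X$ disjoint from the $\bar x_i$ partitions $\{\bar x_1,\ldots,\bar x_4\}$ into two nonempty subsets. Convex position rules out the interleaved partition $\{\bar x_1,\bar x_3\}\mid\{\bar x_2,\bar x_4\}$, since the diagonals $[\bar x_1,\bar x_3]$ and $[\bar x_2,\bar x_4]$ meet inside the convex hull. Let $A_i$ be the $\mu$-measure of the singleton separation isolating $\bar x_i$, and let $B,C$ be the $\mu$-measures of the adjacent-pair separations $\{\bar x_1,\bar x_2\}\mid\{\bar x_3,\bar x_4\}$ and $\{\bar x_1,\bar x_4\}\mid\{\bar x_2,\bar x_3\}$, respectively. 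The half-sum in the definition of $d_\mu$ absorbs the symmetric contributions of geodesics through the vertices, yielding $d_\mu(\bar x_i,\bar x_j)=A_i+A_j+X_{ij}$, where $X_{ij}\in\{0,B,C,B+C\}$ records which pair-separations actually separate $\bar x_i$ from $\bar x_j$. With $S:=A_1+A_2+A_3+A_4$, a direct expansion gives
\[
T_{12,34}=S+2C,\qquad T_{14,23}=S+2B,\qquad T_{13,24}=S+2B+2C,
\]
so $T_{13,24}$ is the largest of the three sums and $T_{13,24}-\max\{T_{12,34},T_{14,23}\}=2\min\{B,C\}$.

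\textbf{Opposite boxes.} Any $\ell$ counted by $B$ crosses both sides $[\bar x_2,\bar x_3]$ and $[\bar x_4,\bar x_1]$, hence a fortiori crosses the geodesic lines $L_{23}, L_{41}$ extending them. Their four ideal endpoints cut $\partial\wt X$ into four arcs, in cyclic order $I, I', J, J'$, where $I$ and $J$ lie on the sides of $L_{23}$ and $L_{41}$ opposite to the quadrilateral. Any such $\ell$ has one endpoint in $I$ and the other in $J$, giving $B\subseteq B_0:=I\times J\in\mathcal{B}$. The analogous analysis applied to $C$ places its geodesics in the middle strip between $L_{23}$ and $L_{41}$, so $C\subseteq B_0^\perp=I'\times J'$. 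Consequently
\[
\min\{B,C\}\le\min\{\mu(B_0),\mu(B_0^\perp)\}\le\delta_\mu,
\]
and the four-point condition follows with constant $\delta_\mu$.

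\textbf{Optimality and main obstacles.} For the matching lower bound, fix $\epsilon>0$ and choose a box $B_*\in\mathcal{B}$ with $\min\{\mu(B_*),\mu(B_*^\perp)\}>\delta_\mu-\epsilon$. Its four corner points on $\partial\wt X$ determine a pair of non-crossing geodesic lines $L, L'$. Place $\bar x_2,\bar x_3\in L$ and $\bar x_1,\bar x_4\in L'$ and slide them outward toward the four ideal endpoints, so that the segments $[\bar x_2,\bar x_3]$ and $[\bar x_4,\bar x_1]$ exhaust $L, L'$. Under this limit the inclusions $B\subseteq B_*$ and $C\subseteq B_*^\perp$ become asymptotic equalities by monotone convergence of $\mu$-measure on nested boxes, so the four-point deficiency $2\min\{B,C\}$ tends to $2\min\{\mu(B_*),\mu(B_*^\perp)\}>2(\delta_\mu-\epsilon)$. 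Letting $\epsilon\to 0$ forces $\delta_\mu$ to be the optimal constant. The main technical difficulties are (i) the vertex bookkeeping when $\mu$ has atoms on geodesics through some $\bar x_i$, cleanly handled by the half-sum in the definition of $d_\mu$; (ii) verifying the cyclic order of the four ideal endpoints of $L_{23}, L_{41}$ so that $B$ and $C$ really land in a pair of opposite boxes, which is the geometric heart of the argument; and (iii) the monotone approximation in the optimality step, which may require a density argument when $\mu$ charges the boundary of $B_*$.
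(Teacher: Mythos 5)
Your overall route is the same as the paper's: verify the Gromov four-point condition, compute the deficiency exactly in terms of the measures of the two ``adjacent-pair separation'' families (your $B$ and $C$ are, up to the endpoint conventions, the paper's double transversals $G^{\pm}$ and $(G^{\perp})^{\pm}$ of Lemma~\ref{lem:otalclaim}), bound $\min\{B,C\}$ by $\min\{\mu(B_0),\mu(B_0^{\perp})\}$ for a box $B_0$, and realize the supremum to get optimality. The identity $T_{13,24}-\max\{T_{12,34},T_{14,23}\}=2\min\{B,C\}$ is correct for a quadruple in convex position with $\mu(G[\bar x_i])=0$ for all $i$ (Proposition~\ref{prop:zeromeasure}), and your optimality construction is a nice explicit version of what the paper gets from the supremum.

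The genuine gap is the assertion that ``the analogous analysis applied to $C$ places its geodesics in the middle strip between $L_{23}$ and $L_{41}$, so $C\subseteq B_0^{\perp}$'' --- precisely the step you flag as the geometric heart. A geodesic counted by $C$ is only required to miss the \emph{segments} $[\bar x_2,\bar x_3]$ and $[\bar x_4,\bar x_1]$, not their full extensions, and it may leave the strip by crossing $L_{23}$ or $L_{41}$ beyond a vertex. Concretely, in the Klein model take the square with vertices $\bar x_1=(-\tfrac12,-\tfrac12)$, $\bar x_2=(\tfrac12,-\tfrac12)$, $\bar x_3=(\tfrac12,\tfrac12)$, $\bar x_4=(-\tfrac12,\tfrac12)$, so that $L_{23}$ and $L_{41}$ are the chords $x=\pm\tfrac12$. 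The chord $x=-0.98\,y$ meets the bottom and top sides at $(0.49,-0.5)$ and $(-0.49,0.5)$, separates $\{\bar x_2,\bar x_3\}$ from $\{\bar x_1,\bar x_4\}$, and is disjoint from both lateral segments; yet its ideal endpoints $(\pm 0.70,\mp 0.71)$ lie in $I$ and $J$, so this $C$-geodesic lies in $B_0=I\times J$, not in $B_0^{\perp}$. Hence $\min\{B,C\}\le\min\{\mu(B_0),\mu(B_0^{\perp})\}$ does not follow as written, and the bound $\min\{B,C\}\le\delta_\mu$ --- which is exactly what the theorem needs --- is left unproved. (A related issue: for thin quadrilaterals $L_{23}$ and $L_{41}$ may cross, and then their ideal endpoints do not bound a box with a well-defined opposite box at all.) To be fair, the paper's own treatment of this point, via the containments of Proposition~\ref{prop:boxes_transverse} and Lemma~\ref{lem:equal_deltas}, is terse in the same spot; but your write-up replaces it with a claim that is false, so it must be repaired, e.g.\ by relating $B$ and $C$ to the boxes spanned by the ideal endpoints of \emph{both} pairs of opposite sides and arguing as in Lemma~\ref{lem:equal_deltas}. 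Two secondary problems: the reduction of general quadruples to convex position ``by continuity of $d_\mu$'' is unavailable when $\mu$ has atoms, since by Proposition~\ref{prop:discontdistance} the induced distance is then neither upper nor lower semicontinuous (and a point strictly inside the triangle of the other three cannot be moved into convex position by a small perturbation anyway), so the non-convex case needs its own combinatorial count; and the exact vertex bookkeeping in the presence of atoms, which the paper carries out in Lemma~\ref{lem:otalclaim}, is asserted rather than checked.
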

It follows from this that $X_{\mu}$ is $0$-hyperbolic if and only if $\mu$ is a measured lamination (see Corollary~\ref{cor:0hyp}).
Theorem~\ref{thm:hyp_intro} is stated as Theorem~\ref{prop:dualishyp}.

For the following, compare Figure~\ref{decomposition2}.

    \begin{figure}[h!]
\centering{
\resizebox{100mm}{!}{\Huge{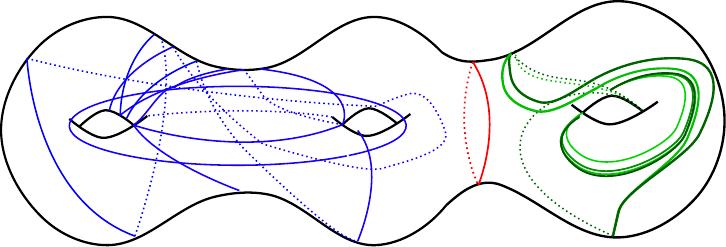}}
    \caption{  The figure shows a sketch of a geodesic current $\mu$ with two components $\mu_1$ (type 1, blue, left) and $\mu_2$ (type 2, green, right), separated by a simple multi-curve of one single component (type 3, red, middle). } \label{decomposition2}}
\end{figure}

Even though, by the above, $X_{\mu}$ is not in general isometric to an $\mathbb{R}$-tree, it has a structure that resembles that of an $\mathbb{R}$-tree, as follows. 
In~\cite{BIPP21:Currents}, Burger-Iozzi-Parreau-Pozzetti showed that there exists a simple multi-curve $m$ associated to $\mu$, called \emph{special multi-curve}, given by disjoint simple closed geodesics $(s_j)_{j=1}^k$, that decompose $X$ into sub-surfaces with geodesic boundary $X_1, \cdots, X_n$ such that the geodesic current $\mu$ on $X$ can be written as a sum  
\[
\mu=\sum_{i=1}^n \mu_i + \sum_{j=1}^k a_j s_j,
\] where $a_j \in \R$ and each $\mu_i$ is supported on $X_i$, and where the weights $a_j$ could be $0$. Moreover, for each $i$ such that $\mu_i \neq 0$, precisely one of the following holds:
\begin{itemize}
    \item $\mathrm{sys}_{X_i}(\mu_i) > 0$ (\emph{type 1});
    \item the current $\mu_i$ is a non-discrete measured lamination (\emph{type 2}).
\end{itemize}

The \emph{systole}
of a current $\mu$ on a surface with possibly non empty boundary $X$ is given by 
\[
\mathrm{sys}_X (\mu) = \inf_{c} i (\mu, c)
\]
where the infimum is taken over all closed geodesics contained in the interior of $X$. A non-discrete measured lamination is one without closed components (see Subsection~\ref{subsec:filling_boundary} for details, and Section~\ref{sec:decomposition} for more on decomposition).

Parallel to this result, we 
obtain a decomposition theorem for the dual space $X_{\mu}$ of $\mu$ as a (metric) tree graded space.
A \emph{tree graded space $X$}, in the sense of Drutu-Sapir~\cite{DS05:Treegraded}, is a geodesic metric space together with a family of distinguished subsets $\mathcal{P}$ called pieces. Intuitively, $X$ is assembled from $\mathcal{P}$ by attaching them along an $\mathbb{R}$-tree $T$ that acts as a ``central spine'', in such a way that any two pieces intersect each other at most at one point along $T$. This notion was suggested to us by A.Parreau and B. Pozzetti. 
In fact, in~\cite[Section~6]{BIPP21:PSL2xPSL2} Burger, Iozzi, Parreau and Pozzetti relate $\R^2$-tree-graded spaces to the dual of a current given as a sum of two transverse measured laminations.

Our dual spaces are not endowed with a geodesic structure in general, so we introduce the more general notion of \emph{metric} tree-graded space, and prove the following.
\begin{mainthm}[Dual structure theorem]\label{thm:intro_treegraded}
The dual space $X_\mu$ is a metric tree-graded space whose underlying tree is the dual tree of the special multi-curve $m$, and the pieces are the dual spaces $(X_i)_{\mu_i}$ of the currents $\mu_i$ on the sufaces $X_i$.
\end{mainthm}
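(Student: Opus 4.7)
The plan is to exploit the linearity of the dual construction in the current, together with the near-disjointness of supports in the decomposition $\mu = \sum_i \mu_i + \sum_j a_j s_j$. First, I would use the definition of $d_\mu$ together with the decomposition of $\mu$ to derive the additive identity
$$d_\mu(\bar x, \bar y) = \sum_{i=1}^n d_{\mu_i}(\bar x, \bar y) + d_{\sum_j a_j s_j}(\bar x, \bar y)$$
for all $\bar x, \bar y \in \wt X$. The second summand is the pseudo-distance defining the dual $\R$-tree $T := X_{\sum_j a_j s_j}$ of the special multi-curve $m$; the inequality $d_\mu \geq d_{\sum_j a_j s_j}$ then produces a canonical $1$-Lipschitz projection $\pi : X_\mu \to T$. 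Recall that the vertices of $T$ correspond to the complementary regions of the preimage $\wt m \subset \wt X$, which are exactly the lifts $\wt{X_i^\alpha}$ of the $X_i$, and the edges have lengths $a_j$.

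Next I would identify the pieces. For $\bar x, \bar y$ in the interior of the same lift $\wt{X_i^\alpha}$, each $\mu_j$ with $j \neq i$ is supported on geodesics contained in (other) lifts of $X_j$ disjoint from $\wt{X_i^\alpha}$, and $\sum_j a_j s_j$ is supported on $\wt m$; none of these transversely crosses a segment in the interior of $\wt{X_i^\alpha}$. Hence $d_\mu$ coincides with $d_{\mu_i}$ on the interior of $\wt{X_i^\alpha}$, so the image of $\wt{X_i^\alpha}$ in $X_\mu$ is a copy of $(X_i)_{\mu_i}$. These images, one for each complementary region, are declared to be the pieces. Two distinct lifts are either disjoint or share exactly one boundary lift $\tilde s \subset \wt m$; in the latter case one checks that nothing in $\supp \mu$ transversely crosses a sub-segment of $\tilde s$, so $\tilde s$ has vanishing $d_\mu$-diameter and collapses to a single point in $X_\mu$. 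It follows that two distinct pieces meet in at most one point, and this point projects under $\pi$ to the midpoint of the corresponding edge of $T$. The metric tree-graded axioms (covering, pairwise single-point intersection, and the path/triangle condition) then follow formally from additivity, since every path in $\wt X$ between two lifts must cross $\wt m$ and its $d_\mu$-length thus decomposes through the connecting collapsed lifts.

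The main technical obstacle will be the atomic contribution of $\sum_j a_j s_j$ at boundary lifts: a careful computation with the half-open/half-closed convention shows that it adds an offset of $a_j/2$ to the distance between an interior point of a piece and its collapsed boundary point. In particular the central tree $T$ is \emph{not} isometrically embedded in $X_\mu$ as a subspace of realised paths; its edges are realised only abstractly, through transitions between adjacent pieces at their shared boundary points. Accommodating this behaviour is exactly what the authors' more general notion of \emph{metric} tree-graded space (rather than the Drutu--Sapir geodesic notion) is designed for, and the bulk of the remaining bookkeeping will consist in matching the additive decomposition of $d_\mu$ against the generalised axioms and checking compatibility with the combinatorics of how the lifts $\wt{X_i^\alpha}$ are assembled along $\wt m$.
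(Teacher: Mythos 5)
Your proposal is correct and follows essentially the same route as the paper: both rest on the BIPP decomposition, the additivity of $d_\mu$ in the current, the fact that $d_\mu$ restricts to $d_{\mu_i}$ on the interior of each lifted subsurface, and the observation that a lift of a special geodesic is crossed transversely by nothing in $\supp\mu$ and hence collapses to the single point shared by two adjacent pieces, with the separation property of these lifts driving the remaining axioms in both arguments. Your explicit flagging of the $a_j/2$ atomic offset at collapsed boundary points is a genuine subtlety that the paper's own proof passes over, though your final step --- verifying the transversal axiom (constructing the $0$-hyperbolic subspaces $T_x$) and the contact-chain-triangle axiom --- is compressed into ``follows formally from additivity'' and would need to be spelled out along the lines of the paper's explicit case analysis.
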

See Theorem~\ref{thm:metric_tree_graded} and Section~\ref{sec:decomposition} for a precise statement and definitions.

Compare Figure~\ref{fig:dual_treegraded2} for a part of a sketch of a geometric realization of the dual space $X_\mu$ and a hint of its tree graded structure, where $\mu$ is the current illustrated in Figure ~\ref{decomposition2}. The Figure shows three pieces of $X_{\mu}$, two peripheral ones corresponding to the current $\mu_2$, and one, central, corresponding to $\mu_1$, as well as two edges of the tree $T$.

    \begin{figure}[h!]
\centering{
\resizebox{80mm}{!}{\includegraphics[scale=.7]{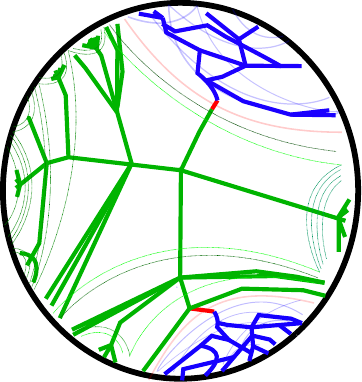}}
    \caption{  The figure shows a sketch of the dual space $X_{\mu}$ corresponding to the current in Figure~\ref{decomposition2}. The sketch is superimposed on the support of $\mu$ (the lifts of the geodesics on $X$ to the universal cover $\wt{X}$), that has been faded out so that a (geometric realization of the) dual stands out. }\label{fig:dual_treegraded2}} 
\end{figure}

Dual spaces come equipped with a natural action of $\pi_1(X)$, induced from the action of such group in the universal cover $\wt{X}$. We relate the properties of the action to the properties of the geodesic current.

\begin{mainthm}[Action]
Given any geodesic current $\mu$ on a surface $X$, the fundamental group
$\pi_1(X)$ acts by isometries on the dual space $X_{\mu}$, and it does so:
\begin{enumerate}
    \item Coboundedly. 
    \item Properly if and only if $\mu$ is filling.
    \item Freely if and only if $\mu$ has only one component in its decomposition.
\end{enumerate}
\label{thm:action_intro}
\end{mainthm}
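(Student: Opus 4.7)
The plan is to prove the three items in turn, using the tree-graded structure of Theorem~B to reduce each question to the individual pieces of $X_\mu$. I would handle (1) immediately: since $X$ is compact, $\pi_1(X) \acts \wt X$ admits a compact fundamental domain $F$; the quotient map $\pi \colon \wt X \to X_\mu$ is $1$-Lipschitz by construction of $d_\mu$, so $\pi(F)$ is bounded in $X_\mu$, and $\pi_1(X) \cdot \pi(F) = X_\mu$ by equivariance and surjectivity.

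For (2), I would argue the ``if'' direction piece-by-piece. In the absence of type 2 components, Theorem~B presents $X_\mu$ as a tree-graded space whose pieces $(X_i)_{\mu_i}$ are dual to type 1 currents, glued along the central tree dual to $m$. On a type 1 piece, the translation length of a non-trivial $g \in \pi_1(X_i)$ equals $i(\mu_i, g) \geq \sys_{X_i}(\mu_i) > 0$, and on a central-tree edge (type 3 component) the orbit structure is discrete along the axis. Combining these with coboundedness should rule out any accumulation of $\pi_1(X)$-orbits in a bounded set, yielding properness. For the ``only if'' direction, I would exploit a type 2 component $\mu_i$---a non-discrete measured lamination---to produce a sequence of simple closed geodesics $c_n \subset X_i$ with $i(\mu_i, c_n) \to 0$; choosing representatives whose axes meet a common compact set in $(X_i)_{\mu_i}$, I get infinitely many $g_n \in \pi_1(X_i)$ that displace a fixed point by arbitrarily small amounts, yielding the required orbit accumulation.

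For (3), I would first treat the ``if'' direction by assuming the decomposition has a unique non-zero component filling $X$. In the type 1 case, $\sys_X(\mu) > 0$ forces every non-trivial element of the torsion-free group $\pi_1(X)$ to have positive translation length in $X_\mu$, hence no fixed points. In the type 2 case, $X_\mu$ is an $\R$-tree and I would invoke the classical freeness of the $\pi_1(X)$-action on the dual of a filling minimal measured lamination. For the converse I split into cases: a positive weight $a_j > 0$ on some $s_j \in m$ gives $i(s_j, s_j) = 0$, forcing the translation length of $s_j$ in $X_\mu$ to vanish and hence producing a fixed point; if two non-zero pieces $(X_i)_{\mu_i}$ coexist, a separating component of $m$ has a representative whose axis in $\wt X$ is disjoint from the nearby support of $\mu$ and so fixes the corresponding gluing point; and if the unique non-zero component does not fill $X$, there is a subsurface $X_j$ where $\mu$ vanishes, whose $\pi_1(X_j)$ acts trivially on the collapsed image of $\wt{X_j}$ in $X_\mu$.

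The subtlest step I expect is the ``only if'' direction of (2): one has to convert the qualitative non-discreteness of $\mu_i$ into a quantitative production of arbitrarily small translations whose axes concentrate near a common point in $(X_i)_{\mu_i}$. This requires the fine structure of minimal components---recurrence and density of leaves---and I anticipate it to absorb most of the work. The remaining items rely mainly on the translation-length identity $\tau(g) = i(\mu, g)$, the systole bound, and the tree-graded decomposition from Theorem~B.
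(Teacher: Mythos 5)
Your overall architecture matches the paper's (translation-length identity $\ell_{X_\mu}(g)=i(\mu,g)$, the systole bound, the tree-graded decomposition, and recurrence plus the Closing Lemma for the non-properness direction), but two steps as written would fail. First, the projection $\pi_\mu\colon \wt X \to X_\mu$ is \emph{not} $1$-Lipschitz: $d_\mu(\overline x,\overline y)$ is a transverse measure, not bounded by the hyperbolic distance (two points at hyperbolic distance $\epsilon$ on opposite sides of a lift of a closed geodesic of weight $N$ are at $d_\mu$-distance $N$). The conclusion you want survives, but for the correct reason: $\mu$ is locally finite, so $\mu(G(K))<\infty$ for $K$ compact, hence $\pi_\mu(F)$ is bounded and in fact $\pi_\mu$ is large-scale Lipschitz; this is what the paper proves and then uses for coboundedness.

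Second, in the ``if'' direction of (2), positive translation lengths --- even uniformly bounded below by $\sys_{X_i}(\mu_i)>0$ --- do not imply properness: infinitely many distinct elements could a priori have axes all passing through a fixed $d_\mu$-ball, so that ball would meet infinitely many of its translates. The missing ingredient is a comparison of metrics in the other direction: when $\sys(\mu)>0$ the first-return time of the geodesic flow to $\supp(\mu)$ is bounded, which yields constants with $B_\mu(\overline x,r)\subset B_{\wt X}(\overline x,R)$; properness then transfers from the (proper) action on $\wt X$, and the general no-type-2 case follows piece by piece via the graph-of-groups decomposition. Your phrase ``should rule out any accumulation'' papers over exactly this point. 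The remaining items are essentially the paper's arguments: the ``only if'' of (2) via recurrence and the Closing Lemma is correctly identified as the hard step (though you must also control $d_\mu(x,\mathrm{axis}(g_n))$, not just $i(\mu,g_n)\to 0$, to get a common displaced point); and in (3) your case (a) should cite $i(\mu,s_j)=0$ (because $s_j$ is special, so no leaf of $\supp(\mu)$ crosses it), not $i(s_j,s_j)=0$, which only says $s_j$ is simple.
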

Theorem~\ref{thm:action_intro} is stated as a series of smaller results in Section~\ref{sec:actions}.
We also study the metric completeness of the dual spaces.

\begin{mainthm}[Completeness]
Let $\mu$ be a geodesic current on $X$ with no atoms. Then the dual space
 $X_{\mu}$ is metrically complete if and only if $\mu$ has no components of type 2 in its decomposition.
 \label{thm:comple_intro}
\end{mainthm}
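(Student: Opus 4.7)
The plan is to split via the decomposition of Theorem~B and handle each implication separately.

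For $(\Leftarrow)$, assume $\mu$ has no type~2 components. Combined with the no-atoms hypothesis, the decomposition becomes $\mu = \sum_i \mu_i$ with every $\mu_i$ of type~1 and atom-free, and with vanishing weights $a_j = 0$ on the special multi-curve. By Theorem~B, $X_\mu$ is a metric tree-graded space whose underlying tree is trivial (dual to a zero-mass multi-curve) and whose pieces are the duals $(X_i)_{\mu_i}$. Each type~1 atom-free current has full support on $X_i$, so the full-support/no-atoms result stated in the introduction identifies $(X_i)_{\mu_i}$ topologically with the metrically complete space $\wt X_i$; hence each piece is complete. A gluing argument for metric tree-graded spaces---every Cauchy sequence eventually stays within a single piece or traverses a bounded subtree, where completeness of the pieces and of the tree applies---then concludes that $X_\mu$ is complete.

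For $(\Rightarrow)$, suppose $\mu_{i_0}$ is a type~2 component on $X_{i_0}$, i.e., a non-discrete minimal filling measured lamination. Since pieces are closed subsets of a metric tree-graded space, non-completeness of the dual $\R$-tree $T := (X_{i_0})_{\mu_{i_0}}$ forces non-completeness of $X_\mu$. I would construct a bounded Cauchy sequence $([\overline x_n])$ in $T$ without a limit. The escape behavior is driven by Theorem~C: the $\pi_1(X_{i_0})$-action on $T$ is not properly discontinuous for a type~2 component, so orbits accumulate in $T$ while the lifts $\overline x_n \in \wt X_{i_0}$ (chosen as group translates of a base point) necessarily escape every compact subset of $\wt X_{i_0}$, since the action on $\wt X_{i_0}$ is free and properly discontinuous. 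Exploiting the non-atomic Cantor cross-section structure of $\mu_{i_0}$, I would refine this choice iteratively so that $d_\mu(\overline x_n, \overline x_{n+1}) \leq 2^{-n}$, by selecting each $\overline x_{n+1}$ inside an ever-thinner transverse branch of $T$ near $\overline x_n$.

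The main obstacle is to show that this sequence has no limit in $T$. A putative limit $[\overline y]$ would be represented by some $\overline y \in \wt X_{i_0}$, and $d_\mu(\overline x_n, \overline y) \to 0$ would force the $\mu_{i_0}$-mass of leaves separating $\overline y$ from the tail $\overline x_n$ to vanish. I expect to derive a contradiction by showing this mass is bounded below by a positive constant: using minimality and fullness of $\mu_{i_0}$, every geodesic in $\wt X_{i_0}$ asymptotic in the escape direction of the $\overline x_n$ must cross a persistent positive portion of leaves, and non-atomicity rules out any degeneration onto a single leaf or complementary polygon center that would allow this mass to vanish. Quantifying this lower bound through the Cantor transversal structure and the dynamics of the $\pi_1(X_{i_0})$-action near boundary points is the technical heart of the argument, and is precisely the step that fails for type~1 components where the full two-dimensional dual allows any Cauchy sequence to settle on a genuine point of $\wt X_i$.
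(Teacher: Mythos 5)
Both directions of your proposal have genuine gaps.

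In the ``no type 2 components $\Rightarrow$ complete'' direction, the step identifying each piece $(X_i)_{\mu_i}$ with $\wt{X_i}$ fails twice over. First, a type~1 component only satisfies $\mathrm{sys}_{X_i}(\mu_i)>0$; this does not give full support on $\mathcal{G}(\wt{X_i})$ (a current can intersect every closed curve positively while missing entire boxes of geodesics), so the full-support homeomorphism result does not apply. Second, even where it does apply it only produces a \emph{homeomorphism}, and completeness is a metric, not a topological, invariant ($(0,1)\cong\R$). The route that actually works, and the one the paper takes, is metric: positive relative systole plus no atoms makes each subdual a \emph{proper} metric space (filling forces $d_{\mu_i}$-balls to sit inside hyperbolic balls via a first-return-time bound, and continuity of $\pi_{\mu_i}$ then gives compactness of closed balls), and proper implies complete. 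Your tree-graded gluing step at the end is fine in spirit and matches the paper.

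In the ``type 2 component $\Rightarrow$ incomplete'' direction, the proposal is missing its central ingredient. The non-properness of the action does not hand you a non-convergent Cauchy sequence: the elements $g_k$ with $i(\mu,[g_k])\to 0$ produce orbit points whose images $\pi_\mu(g_k\overline{x})$ \emph{converge} to $\pi_\mu(\overline{x})$ in $X_\mu$, so they witness nothing about completeness. And you explicitly defer the ``no limit'' step (``the technical heart of the argument''), which is exactly the part that has to be proved; the heuristic that escaping geodesics ``must cross a persistent positive portion of leaves'' is false in general -- the whole point is that there exist escaping rays with \emph{finite} total intersection with the lamination. The paper's proof instead imports the existence of \emph{exotic rays} (Torkaman--Zhang): a geodesic ray $r$ with $i(r,\Lambda)<\infty$ that is neither asymptotic to a leaf nor eventually disjoint from $\Lambda$. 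Finiteness of the total intersection makes the intersection times $r(t_n)$ a Cauchy sequence in $d_\mu$ (tails of a convergent series), while exoticness and escape to infinity in $\wt{X}$ rule out a limit. Without this (or an equivalent substitute), the backward implication is not established.
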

Theorem~\ref{thm:comple_intro} appears stated as Theorem~\ref{thm:new_completeness}, and its proof spans Section~\ref{sec:completeness}. That theorem also analyzes the atomic case, where the dual is complete if and only if it is a multi-curve. 

The dual space $X_{\mu}$ comes equipped with the natural projection map $\pi_{\mu} \colon \wt{X} \to X_{\mu}$. We study its continuity properties.

\begin{mainthm}[Continuity of projection]
Given a geodesic current $\mu$ on $X$, the projection $\pi_{\mu} \colon \wt{X} \to X_{\mu}$ satisfies:
\begin{enumerate}
\item  The projection $\pi_{\mu}$ is continuous if and only if $\mu$ has no atoms;
\item If $\mu$ has no atoms and is filling, then $\pi_{\mu}$ is closed;
\item If $\mu$ has no atoms and has full support, then $\pi_{\mu}$ is a homeomorphism.
\end{enumerate}
\label{thm:continuity_intro}
\end{mainthm}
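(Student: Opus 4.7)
\emph{For (1).} The $\Leftarrow$ direction relies on the fact that a non-atomic geodesic current $\mu$ gives zero mass to the set $G_{\bar x}\subset G(\wt X)$ of geodesics through any fixed $\bar x$. Given $\bar y\to\bar x$, the families $G[\bar x,\bar y)$ and $G(\bar x,\bar y]$ are contained in shrinking neighborhoods of $G_{\bar x}$, so outer regularity combined with $\mu(G_{\bar x})=0$ yields $d_\mu(\bar x,\bar y)\to 0$, i.e.\ $\pi_\mu$ is continuous. Conversely, if $\mu$ carries an atom of mass $a>0$ on a geodesic $g$, pick $\bar x\in g$ and approach $\bar x$ along a sequence $\bar y_n$ lying off $g$: then $g$ meets $[\bar x,\bar y_n]$ transversely only at $\bar x$, and by the half-open convention it contributes $a/2$ to $d_\mu(\bar x,\bar y_n)$, so $\pi_\mu$ is discontinuous at $\bar x$.

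\emph{For (2).} Continuity is given by (1). To prove closedness, take a closed $C\subseteq\wt X$, a sequence $[\bar y_n]\in\pi_\mu(C)$ with $[\bar y_n]\to[\bar y]$ in $X_\mu$, and lifts $\bar y_n\in C$. The core step is a \emph{metric properness} claim: if $\mu$ is filling and atom-free then, for every $\bar x_0\in\wt X$ and $R>0$, the set $\{\bar y\in\wt X:d_\mu(\bar x_0,\bar y)\le R\}$ is bounded in $\wt X$. Granted this, $(\bar y_n)$ is bounded in $\wt X$, so a convergent subsequence produces $\bar y_\ast\in C$, and continuity (part 1) gives $\pi_\mu(\bar y_\ast)=[\bar y]$, whence $[\bar y]\in\pi_\mu(C)$. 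To prove the properness claim I would argue by contradiction: if $\bar y_n\to\xi\in\partial\wt X$, then filling forces the complement in $\wt X$ of the projection of $\supp(\mu)$ to be a disjoint union of bounded disks, so the ray from $\bar x_0$ to $\xi$ exits and re-enters this complement infinitely often. Cocompactness of the $\pi_1(X)$-action on $\wt X$ together with $\pi_1$-invariance of $\mu$ then produce a uniform positive lower bound on the $\mu$-mass gained per fundamental period along the ray, forcing $d_\mu(\bar x_0,\bar y_n)\to\infty$.

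\emph{For (3) and main obstacle.} Full support implies filling: for any simple closed geodesic $c$ on $X$, the set of geodesics transverse to a lift of $c$ is non-empty and open in $G(\wt X)$, hence of positive $\mu$-mass. So by (1) and (2), $\pi_\mu$ is continuous and closed. Injectivity also comes from full support: if $\bar x\ne\bar y$, then $G[\bar x,\bar y]$ has non-empty interior in $G(\wt X)$, forcing $d_\mu(\bar x,\bar y)>0$. A continuous closed bijection is a homeomorphism. The delicate step in the whole argument is the metric properness claim in~(2). A non-atomic filling current may possess components of type 2 (e.g.\ a filling minimal measured lamination), so by Theorem~\ref{thm:action_intro} the $\pi_1(X)$-action on $X_\mu$ need not be proper, and one cannot invoke properness of the action as a shortcut; the divergence of $d_\mu$ along rays to $\partial\wt X$ must be extracted directly from filling and absence of atoms.
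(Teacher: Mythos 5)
Your argument is correct, and for parts (1) and (2) it takes a genuinely different route from the paper. For (1) the paper (Propositions~\ref{prop:contdistance} and~\ref{prop:discontdistance}) proves the stronger statement that $(\overline{x},\overline{y})\mapsto d_\mu(\pi_\mu(\overline{x}),\pi_\mu(\overline{y}))$ is \emph{jointly} continuous, via a $\liminf/\limsup$ analysis with Fatou-type lemmas for set limits; your shrinking-neighborhood argument around $G[\overline{x}]$ establishes only continuity of $\pi_\mu$ itself, which is all the theorem asks, and is shorter (it does rely on $\mu(G[\overline{x}])=0$ for \emph{every} $\overline{x}$ when $\mu$ is non-atomic, which is the content of Proposition~\ref{prop:zeromeasure}). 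For (2) the paper (Proposition~\ref{thm:homeoprojection}) descends to the quotient: cocompactness and properness of the $\pi_1(X)$-action make $X_\mu/\pi_1(X)$ compact Hausdorff, the induced map from the compact surface $X$ is then automatically closed, and one lifts back by equivariance. You instead prove closedness upstairs by a sequential argument resting on boundedness of $d_\mu$-balls in $\wt{X}$; that boundedness is exactly the paper's Lemma~\ref{lem:distancecompare} and Lemma~\ref{lem:balls} (a uniform bound on the first-return time to $\supp(\mu)$ plus straightness of $d_\mu$ along hyperbolic geodesics), and your sketch of it is essentially the same argument. Both routes work; yours avoids the quotient entirely at the cost of re-deriving metric properness by hand.

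One conceptual correction: your closing claim that a non-atomic filling current may have type~2 components, so that the action on $X_\mu$ need not be proper, misreads the paper's definition of filling. A type~2 component is a measured lamination $\lambda$, and by orthogonality of the decomposition $i(\mu,\lambda)=i(\lambda,\lambda)=0$, so a current that is filling in the paper's sense (positive intersection with every non-trivial internal current) has no type~2 components, consists of a single type~1 component, hence has positive systole (Lemma~\ref{lem:possystole}), and the action is proper (Proposition~\ref{prop:proper}). The shortcut you decline to take is therefore available, and is precisely what the paper uses; this does not affect the validity of your proof, only its framing. Finally, in (3), ``full support implies filling'' needs one more line: your openness argument directly gives $i(\mu,c)>0$ only for closed curves $c$, and you should add that full support forces a transversally intersecting pair in $\supp\mu\times\supp\nu$ for every non-trivial internal $\nu$, whence $(\mu\times\nu)(\mathfrak{I}/\Gamma)>0$.
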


Theorem~\ref{thm:continuity_intro} appears as a series of Propositions in Section~\ref{sec:semicontinuity}.
Theorem~\ref{thm:continuity_intro}, in conjunction with Theorem~\ref{thm:action_intro}, shows that $X_{\mu}/\pi_1(X)$ is homeomorphic to $X$
for geodesic currents coming from certain higher rank representations of $\pi_1(X)$ (precisely, for those coming from \emph{positively ratioed representations} in the sense of~\cite{MZ19:PositivelyRatioed}).
In the case a real convex projective structure $Z=\Omega/\pi_1(Z)$ on $S$, we use this result to induce an isometry between the dual space with underlying metric $Z$ and the surface $S$ equipped with the Hilbert metric coming from $Z$ (see Proposition~\ref{prop:convexproj}). 

We also study and relate two natural topologies in the space of geodesic currents $\Curr(X)$ and in the space of duals $\mathcal{D}(X)$. 
The space $\mathcal{D}(X)$ can be equipped with the equivariant Gromov-Hausdorff topology, first introduced and studied by Paulin~\cite{Pau88:Thesis}.
This
is a variation of the Gromov-Hausdorff topology that bakes in the action of a group.
On the other hand, the space of currents $\mathrm{Curr}(X)$ is naturally endowed with the weak$^*$-topology.
We prove

\begin{mainthm}[Topologies]
\label{thm:homeointroduction}
The map $\Psi \colon \Curr(X) \to \mathcal{D}(X)$, defined by $\mu \mapsto X_{\mu}$, is a continuous injection.
\end{mainthm}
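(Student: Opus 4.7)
The plan is to split injectivity and continuity, using throughout Paulin's characterization of the equivariant Gromov--Hausdorff topology: $X_{\mu_n} \to X_\mu$ iff for every finite $K \subset \pi_1(X)$, finite $F \subset X_\mu$, and $\varepsilon > 0$, one can find a bijection $\phi \colon F \to F_n \subset X_{\mu_n}$ with $|d_{X_{\mu_n}}(g\phi(x), \phi(y)) - d_{X_\mu}(gx, y)| < \varepsilon$ for all $g \in K$ and $x, y \in F$. The unifying input will be that the geometry of $X_\mu$ can be read off from the measure $\mu$ on the space of geodesics of $\wt{X}$ via $d_\mu$.

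For injectivity the key identity is that the translation length of every non-trivial $g \in \pi_1(X)$ acting on $X_\mu$ equals the intersection number $i(\mu, [g])$. Pick $\overline{x}$ on the axis $\gamma_g$; since two distinct geodesics of $\wt{X}$ meet in at most one point, the $\ab{g}$-translates of $G[\overline{x}, g\overline{x})$ partition $G[\overline{x}, g^n\overline{x})$, and $\pi_1(X)$-invariance of $\mu$ together with the symmetrization defining $d_\mu$ yield $d_\mu(\overline{x}, g^n\overline{x}) = n \cdot i(\mu, [g])$, so dividing by $n$ identifies the translation length. If $\Psi(\mu) = \Psi(\nu)$, then $i(\mu, [g]) = i(\nu, [g])$ for every closed geodesic $[g]$; by density of weighted closed geodesics in $\Curr(X)$ and continuity of the intersection pairing (Bonahon's uniqueness), this forces $\mu = \nu$.

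For continuity, suppose $\mu_n \to \mu$ weakly-$*$ in $\Curr(X)$. The substance is to show pointwise convergence $d_{\mu_n}(\overline{a}, \overline{b}) \to d_\mu(\overline{a}, \overline{b})$ for sufficiently many pairs in $\wt{X}$ to invoke Paulin's criterion. The set $G[\overline{a}, \overline{b}]$ is relatively compact in the space of unparametrized geodesics of $\wt{X}$, and its topological boundary consists of those geodesics passing through $\overline{a}$ or $\overline{b}$. Since $\mu$ has only countably many atoms, for a generic choice of lifts $\wt{F} \subset \wt{X}$ of $F$ no atomic geodesic of $\mu$ meets any point of $K\wt{F} \cup \wt{F}$, so these boundaries are $\mu$-null and the Portmanteau theorem gives the desired convergence (separately for $G[\overline{a}, \overline{b})$ and $G(\overline{a}, \overline{b}]$). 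Setting $F_n := \pi_{\mu_n}(\wt{F})$ and using that $d_{\mu_n}(\wt{p}_i, \wt{p}_j) \to d_\mu(\wt{p}_i, \wt{p}_j) > 0$ for distinct $\wt{p}_i, \wt{p}_j \in \wt{F}$ forces the projection to be injective for large $n$, producing the required bijection and verifying Paulin's criterion.

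The main obstacle is to choose the lifts $\wt{F}$ genuinely representing the prescribed $F$ while avoiding the atomic geodesics of $\mu$: if some $p \in F$ only lifts to points lying on an atomic geodesic $\ell$ of weight $w$, any perturbation transverse to $\ell$ raises $d_\mu$ by at least $w/2$, so one cannot simply move the lift to a generic nearby point representing the same $p$. I would handle this by splitting $\mu$ into its continuous and atomic parts and passing to the limit separately: the continuous part allows the Portmanteau argument without constraints, while the atomic part is a locally finite $\pi_1(X)$-invariant weighted sum of geodesics, for which weak-$*$ convergence of $\mu_n$ reduces to convergence of weights on individual $\pi_1(X)$-orbits of axes. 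Allowing an $O(\varepsilon)$ slack in Paulin's criterion, one can then perturb each offending lift transversely and absorb the jump into the error bound by taking $n$ large enough that the corresponding atoms of $\mu_n$ are $\varepsilon$-close in weight to those of $\mu$; this is the technical heart of the argument.
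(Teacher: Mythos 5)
Your overall architecture is the same as the paper's: injectivity via the identity $\ell_{X_{\mu}}(g)=i(\mu,[g])$ (the paper's Lemma~\ref{lem:lenvsint}) together with the fact that a current is determined by its intersection numbers with closed curves, and continuity by verifying the equivariant Gromov--Hausdorff neighborhoods through a Portmanteau argument on the transversal sets $G[\overline{a},\overline{b}]$, whose boundaries are $\mu$-null exactly when no atomic geodesic of $\mu$ passes through $\overline{a}$ or $\overline{b}$ (this is the content of Proposition~\ref{prop:zeromeasure}). The injectivity half and the non-atomic part of the continuity half are correct and agree with Theorem~\ref{thm:homeo}.

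The gap is in your treatment of the atomic case, which you correctly single out as the technical heart. Your fix rests on the claim that, after splitting $\mu$ into continuous and atomic parts, weak-$*$ convergence of $\mu_n$ ``reduces to convergence of weights on individual $\pi_1(X)$-orbits of axes.'' This is false: the decomposition into atomic and non-atomic parts is not continuous for the weak-$*$ topology. By Bonahon's density theorem, weighted multi-curves (purely atomic currents) are weak-$*$ dense in $\Curr(X)$, so purely atomic currents can converge to a non-atomic one, and conversely one can smooth an atom of $\mu$ into non-atomic approximants $\mu_n$ carrying no atom at all on the relevant orbit. In that situation there are no ``corresponding atoms of $\mu_n$'' to be $\epsilon$-close in weight to those of $\mu$; concretely, for $\overline{p}$ on an atomic leaf $\ell$ of weight $w$, the quantity $d_{\mu_n}(\overline{p},\overline{y})$ can converge to $d_{\mu}(\overline{p},\overline{y})\pm w/2$ depending on which side of $\ell$ the mass of $\mu_n$ concentrates, so the pointwise convergence you need genuinely fails at such base points. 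The paper avoids tracking atoms of the approximating currents altogether: Proposition~\ref{prop:weakflow} isolates the dense set $A$ of points of $\wt{X}$ lying on no lift of any closed geodesic, so that $\nu(G[\overline{x}])=0$ simultaneously for \emph{every} current $\nu$, shows that the sets $W(\mu,C,\epsilon)$ with $C\subset A$ finite generate a topology coarser than the weak-$*$ topology, and then in the proof of Theorem~\ref{thm:homeo} the prescribed finite subset of $X_{\mu}$ is replaced by nearby points with lifts in $A$, the replacement being absorbed into the $\epsilon$ of the Gromov--Hausdorff relation. If you want to salvage your version, the perturbation should be applied to the base points rather than to the measures, and you must then argue that this perturbation changes all the relevant distances by less than $\epsilon/2$ (a step that itself requires care precisely at points whose entire fiber lies on an atomic leaf).
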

Theorem~\ref{thm:homeointroduction} appears stated as Theorem~\ref{thm:homeo}.
Our theorem partially extends a result of Paulin~\cite{Pa89:Paulin1989TheGT} from the setting of $\mathbb{R}$-trees to more general $\delta$-hyperbolic spaces within the class $\mathcal{D}(X)$.

\subsection{Outline}

In Section~\ref{sec:background} we introduce geodesic currents and give some examples whose duals we will study later on in Section~\ref{sec:examples}. We also describe the weak$^*$-topology on the space of currents and give a convenient family of neighborhoods that will play a role in Section ~\ref{sec:topology}.

In Section~\ref{sec:currentdual} we introduce the dual of a geodesic current and relate it to the notion of measured wall spaces~\cite{CD17:MedianMeasuredWall}.

In Section~\ref{sec:semicontinuity} we show that the natural projection map from the universal cover of the surface to the dual space is continuous when the current has no atoms, and a homeomorphism when the current is non-atomic and has full support. We also show that if $\mu$ has atoms, the projection map is neither lower nor upper-semicontinuous.

 In Section~\ref{sec:examples} we explore other natural examples of such duals other than the $\mathbb{R}$-trees coming from measured laminations (which are discussed in detail in Section~\ref{subsec:dual_tree}). For example, we study the dual space of two intersecting measured laminations, and relate it to the concept of core of trees previously introduced by Guirardel in~\cite{Gui05:Coeur}. 
 
 We also show how for geodesic currents coming from certain Anosov representations, known as positively ratioed, such as strictly convex projective structures, the duals are homeomorphic to the surface $X$. In fact, in the case of a Hitchin representation $\rho$ in $\PSL(3,\mathbb{R})$, the associated strictly convex projective structure $\Omega_{\rho}$ is isometric to the dual $X_{\mu_{\Omega}}$, when $X$ is the Hilbert metric on $S$.

In Section~\ref{sec:dualis0hyperbolic}, we prove that duals are $\delta$-hyperbolic and moreover relate the optimal $\delta$-hyperbolicity constant to the geodesic current. Using this relation, we give inequalities between the $\delta$-hyperbolicity constants of $X_{\mu}$ and the duals of the subcurrents $\mu_i$ in its structural decomposition (in the sense of~\cite{BIPP21:Currents}).

In Section~\ref{sec:decomposition}, we prove that, using the decomposition theorem for geodesic currents proven in~\cite{BIPP21:Currents}, one can obtain a corresponding decomposition for the dual space as a tree graded space (in the sense of Drutu-Sapir~\cite{DS05:Treegraded}). Strictly speaking, we need to develop a notion of metric tree graded space, because of the lack, in general, of a geodesic structure on $X_{\mu}$.

In Section~\ref{sec:actions}, we prove that the action of the fundamental group on the dual space is cobounded, it is proper if and only if the current has no filling measured lamination components (type 2), and it is free if and only if the current has only one component in its decomposition.

In Section~\ref{sec:completeness} we prove that, for a non-atomic $\mu$, 
the dual space $X_{\mu}$ is complete if and only if $\mu$ has no components of type 2 in its decomposition. We also analyze the case where the current has atoms, and show it is complete if and only if $\mu$ is a multi-curve.

In Section~\ref{sec:topology} we relate two natural topologies in the space of duals. One the one hand, the axis topology, given in terms of translation lengths, is directly related to the weak$^*$-topology on currents. On the other hand, one can also consider the equivariant Gromov-Hausdorff topology, previously introduced by Paulin in~\cite{Pau88:Thesis}. 
We show that the weak$^*$-topology is finer than Gromov-Hausdorff topology. Explicitly, we prove that the map sending a geodesic current to its dual is a continuous injection when the space of currents is equipped with the weak$^*$-topology and the space of duals is equipped with the equivariant Gromov-Hausdorff topology. This partially generalizes work of Paulin in~\cite{Pa89:Paulin1989TheGT} for $\mathbb{R}$-trees within $\mathcal{D}(X)$. We also discuss connections with recent work of Oreg\'on-Reyes~\cite{OR22:SpaceMetric} and Jenya Sapir~\cite{Sap22:ExtensionThurston}.

Cantrell and Oreg\'on-Reyes~\cite{COR22:Manhattan} fit the notion of dual spaces of geodesic currents in the general framework of boundary metric structures. These are left-invariant hyperbolic pseudo-metrics on a non-elementary hyperbolic group satisfying the \emph{bounded backtracking property}.
Such property has also been studied independently by Kapovich and the second author in upcoming work~\cite{KMG:BBP}, where they use it to construct an extension to geodesic currents for the stable length of such actions, as well as other natural notions of length, and relate it to the concept of small action of groups on $\mathbb{R}$-trees.

\subsection{Acknowledgments}

\label{sec:acknowledgments}

We thank Marc Burger, Indira Chatterji, Michael Kapovich, Giuseppe Martone, Eduardo Oreg\'on-Reyes, Anne Parreau, Beatrice Pozzetti, Jenya Sapir and Dylan Thurston for useful conversations.
We are grateful to the referee for their invaluable feedback. Their careful reading of the previous version revealed several errors and offered insightful suggestions that enhanced the clarity of the manuscript.
The first author would like to thank Raphael Appenzeller, Benjamin Br\"uck, Matthew Cordes, Xenia Flamm and Francesco Fournier Facio for insightful conversations.
The second author acknowledges support from U.S. National Science Foundation grants DMS 1107452, 1107263, 1107367 "RNMS: Geometric Structures and Representation Varieties" (the GEAR Network), and from the Luxembourg National research Fund AFR/Bilateral-ReSurface 22/17145118, and thanks Marc Burger for his invitation to ETH Z\"urich, during which a portion of this work was done. 

\section{Background}
\label{sec:background}

Table~\ref{tab:object-notation} outlines the main notation in this paper, unless we explicitly state otherwise.

\begin{table}
    \begin{tabular}{@{}ll@{}}
      \toprule
      Notation & Meaning \\
      \midrule
         $S$ &  compact topological surface \\
         $X$ &  compact $\delta$-hyperbolic geodesic surface \\
         $\Gamma, \pi_1(X)$ & deck transformation group of $X$ \\       
         $\wt{X}$ &  universal cover of $X$ \\
         $X_{\mu}$ & dual of $\mu$ \\
         $\overline{x}, \overline{K}$ &  points/sets in $\wt{X}$ (as opposed to in $X_{\mu}$) \\ 
         $x, K$ &  points/sets in $X_{\mu}$ \\
         $\wt{X_{\lambda}}, T_{\lambda}$ & geometric realization of $X_{\lambda}$ \\
         $\G(\wt{X})$ & bi-infinite geodesics on $\wt{X}$ \\
         $\gamma$ & geodesic in $\G(\wt{X})$\\
         $\lambda$ & measured lamination\\
         $\mu$ & geodesic current\\
         $s$ & simple closed curve \\
         $c$ & closed curve \\

         $\Curves(X)$ &  (weighted) multi-curves \\
         $\ML(X)$ & measured laminations on $X$ \\
         $\Curr(X)$ & geodesic currents on $X$ \\
         $\mathbb{P}\Curr(X)$ & projective geodesic currents on $X$ \\
         $\mathcal{D}(X)$ & space of duals \\
         $G$ & general group (mostly hyperbolic) \\
      \bottomrule\addlinespace
    \end{tabular}
  \caption{Notation for the objects related to surfaces, curves,
    geodesic currents and duals.}\label{tab:object-notation}
\end{table}

In this section we introduce the basic concepts we will explore in this paper: the definition and basic properties of geodesic current, some examples of geodesic currents that will feature in this paper, and the weak$^*$-topology for geodesic currents.
This will motivate the central object of study, in Section~\ref{sec:currentdual}, the \emph{dual of a geodesic current}.

\subsection{Geodesic Currents and the Intersection Form}
\label{intro:curr}
 Let $S$ be a closed connected orientable topological surface of negative Euler characteristic and genus $g$.
 Let $p \colon \wt{S} \to S$ denote the universal cover of $S$. For any $\delta$-hyperbolic geodesic metric on $S$, we will denote by $X$ the surface $S$ equipped with said metric. $\wt{X}$ will denote $\wt{S}$ with the induced geodesic metric on $\wt{S}$.
Let $\partial \wt{X}$ denote the Gromov boundary of $\wt{X}$. This compactifies $\wt{X}$ to a closed disk, and the action of $\pi_1(X)$ on $\wt{X}$ by isometries extends to an action by homeomorphisms on $\wt{X} \cup \partial \wt{X}$~\cite[Chapter~III.H.3, 3.7~Proposition]{BH11:NonPosCurvature}.
Let $\mathcal{G}(\wt{X})$ denote the space of bi-infinite unoriented geodesics in $\wt{X}$, given by the quotient of the space of unit speed parameterized geodesics with the compact-open topology, where we furthermore forget the parameterization. Any $\gamma \in \mathcal{G}(\wt{X})$ has unique endpoints at infinity $(\gamma_+,\gamma_-)$, and the map $\partial \colon \mathcal{G}(\wt{X}) \to (\partial \wt{X} \times \partial \wt{X} - \Delta) / \mathbb{Z}_2$ sending $\gamma$ to its endpoints is a continuous, closed, $\pi_1(X)$-equivariant surjective map~\cite[Proposition~2.2]{BL18:Marked}. It need not be injective: for example, in $CAT(0)$ metrics, it can have non-degenerate intervals as fibers.
In this paper, we will restrict our attention to geodesic metrics on $\wt{X}$ that are proper, $\delta$-hyperbolic and for which the map $\partial$ is a homeomorphism. In particular, given two endpoints at infinity there is a unique element of $\mathcal{G}(\wt{X})$ with those endpoints.
We will furthermore assume that given any two points in $\wt{X}$, there is a unique (unoriented, unparameterized) geodesic segment between them.
Any negatively curved Riemannian metric satisfies these assumptions, including hyperbolic metrics. However, there are other noteworthy examples such as Hilbert metrics on $\wt{X}$ coming from endowing $S$ with the Hilbert metric obtained by quotienting a strictly convex projective domain $\Omega$ by a discrete and cocompact representation of $\pi_1(S)$ into $PGL(3,\mathbb{R})$~\cite[Page~99]{dlH93:Hilbert}. See more details in Section~\ref{sec:examples}. Unless stated otherwise, \emph{the metric $X$ will be a hyperbolic structure on $S$},  so that we can identify $X$ with the quotient $\wt{X} / \Gamma$ for some $\Gamma=\pi_1(X) \leq {\PSL(2,\R)}$ Fuchsian subgroup. 

  There are several equivalent definitions of geodesic currents. For an excellent account we refer the reader to~\cite[Chapter~3]{ES22:GeodesicCount}. We will mainly use the following one.
 \begin{definition}[Geodesic current]
 A \emph{geodesic current} on $X$ is a positive $\Gamma$-invariant locally finite Radon measure on the set $\mathcal{G}(\wt{X})$ of unoriented unparameterized bi-infinite geodesics in $\wt{X}$, which we identify by their endpoints in the boundary at infinity. Let
 \[(\del \wt{X})^{(2)}=\{ (x,y) \in (\del \wt{X})^{2} \colon x \neq y \}.\]
 We define
 \[
 \mathcal{G}(\wt{X}) \coloneqq (\del \wt{X})^{(2)}/\sim
 \]
 where $(x,y) \sim (y,x)$. We denote with $\mathrm{Curr}(X)$ the set of all geodesic currents on $X$.
 \end{definition}

We say that an interval in $\partial \wt{X}$ (or a geodesic segment $\tau$ in $\wt{X}$) is \emph{degenerate} if it is a singleton. We say it is \emph{non-degenerate} otherwise.

 We will consider two types of subsets of geodesics: boxes and transversals.

\begin{definition}[Box of geodesics]
Let $I_{a,b}$ denote a \emph{generalized ordered interval} in $\partial \wt{X}$, i.e., any of the following non-empty and possibly degenerate intervals: $(a,b),[a,b),(a,b], (a,b)$, where $a,b$ are ordered counter-clockwise in $\partial \wt{X}$. 

We define a \emph{box of geodesics} as any subset of $\mathcal{G}(\wt{X})$ of the type $B=I_{a,b} \times I_{c,d}$.
Let $\mathcal{B}$ denote the family of all boxes of geodesics.
\label{def:boxes}
\end{definition}

 \begin{definition}[Transversal of geodesics]
 Given a geodesic segment $\tau$ in $\wt{X}$ (which could be degenerate), $G(\tau)$ denotes the subset of geodesics in $\mathcal{G}(\wt{X})$ intersecting $\tau$ transversely, i.e., all $\gamma \in \mathcal{G}(\wt{X})$ so that $\gamma \cap \tau = \{ p \}$, where $p \in \tau$. Given any subset $A$ of $\wt{X}$ which contains at least one non-degenerate geodesic segment, $G(A)$ denotes the set of geodesics $\gamma \in \mathcal{G}(\wt{X})$ intersecting transversely at least one non-degenerate geodesic segment $\tau \subseteq A$. We will refer to these sets $G(\tau)$ and $G(A)$ as \emph{transversals}.
 \end{definition}

  Note that as subsets of geodesics, a transversal $G(\tau)$, for $\tau$ a geodesic segment, is not contained in one single box of geodesics, but it can be contained in a union of two boxes. Also, if $\tau$ is a non-degenerate geodesic segment, then $G(\tau)$ contains a non-degenerate box of geodesics.
 
 Geodesic currents provide a unifying generalization of notions such as simple closed curves, hyperbolic structures on $X$, and geodesic laminations, among many others. For an introduction on the subject we refer to the seminal paper \cite{Bonahon88:GeodesicCurrent} by Bonahon.

 Denote by $\mathfrak{I} \subset  \mathcal{G}(\wt{X}) \times  \mathcal{G}(\wt{X})$ the open set consisting of pairs of transversely intersecting geodesics in $\wt{X}$. 
 We endow $\mathfrak{I}$ with the subspace topology from $\mathcal{G}(\wt{X}) \times  \mathcal{G}(\wt{X})$. Notice that the diagonal $\Gamma$-action on $\mathcal{G}(\wt{X}) \times \mathcal{G}(\wt{X})$ descends to a free, properly discontinuous and cocompact action on $\mathfrak{I}$~\cite[Page~44]{ES22:GeodesicCount}, and hence the projection $\pi \colon \mathfrak{I} \to \mathfrak{I}/\Gamma$ is a topological covering. 
 
 Given two geodesic currents $\mu, \nu \in \Curr(X)$, they induce a $\Gamma$-invariant product measure $\mu \times \nu$ on $\mathcal{G}(\wt{X}) \times \mathcal{G}(\wt{X})$, and hence on $\mathfrak{I}$. This measure descends to a measure on $\mathfrak{I}/\Gamma$ via the covering $\pi \colon \mathfrak{I} \to \mathfrak{I}/\Gamma$. We will still indicate such measure  with $\mu \times \nu$, for notation simplicity. 
 
 \begin{definition}[Intersection form]
 The \emph{intersection form} evaluated on the two currents $\mu, \nu \in \Curr(X)$ is the total volume of $\mathfrak{I}/\Gamma$ in the measure $\mu \times \nu$
 \[
 i(\mu, \nu) := \left( \mu \times \nu \right) \left( \mathfrak{I}/\Gamma  \right).
 \]
 \label{def:intersection}
 \end{definition}

\subsection{Boundaries and filling currents}
\label{subsec:filling_boundary}
 In most of the paper we will assume $X$ is a closed hyperbolic surface, even though most of our results only use the cocompactness of $\pi_1(X)$ on $\wt{X}$, and so are also true for surfaces with geodesic boundary. The only time we will explicitly refer to surfaces with boundary, though, will be when working with subsurfaces.
We notice that certain notions such as Liouville currents only make sense for closed surfaces.

Given a compact hyperbolic surface $X$ with geodesic boundary, the space of \emph{internal geodesic currents} is the subspace $\Curr_0(X) \subseteq \Curr(X)$ consisting of currents not supported on lifts of boundary geodesics.
If $\mu$ is only supported on lifts of boundary parallel geodesics, we say it is a \emph{boundary geodesic current}.
By the definition of intersection number of geodesic currents (see also~\cite[Exercise~3.11]{ES22:GeodesicCount}), $i(\mu, \nu)=0$ for every non-trivial $\nu \in \Curr(X)$ if and only if $\mu$ is a boundary geodesic current.
In fact, any geodesic current can be written uniquely as a sum of a boundary current and an internal current. Moreover, the subspace of boundary currents is closed, and the subspace of internal currents is dense in $\Curr(X)$~(see the proof of all these claims in~\cite[Lemma~2.12]{EM18:ErgodicGeodesicCurrents}).
We define a \emph{filling geodesic current} as any internal current $\mu$ so that $i(\mu,\nu)>0$ for every non-trivial internal current $\nu$.

Let $p \colon \wt{X} \to X$ denote the universal covering projection, $Y$ a subsurface of $X$ with totally geodesic boundary, and suppose that $\mu$ is a geodesic current on $X$ so that $p(\supp(\mu)) \subset Y-\partial Y$. We will say $\mu$ is \emph{filling in a subsurface} $Y$ of $X$ if for every non-trivial current $\nu \in \Curr(X)$ so that $p(\supp(\mu)) \subset Y-\partial Y$, we have $i(\mu,\nu)>0$.
An example of a filling geodesic current on $X$ is a \emph{filling multi-curve}, i.e., a multi-curve $c$, so that $X-c$ is a disjoint union of topological disks and once-punctured disks.
In fact, a multi-curve $c$ is a filling multi-curve if and only if its associated geodesic current is filling in the sense of geodesic currents~\cite[Exercise~3.13]{ES22:GeodesicCount}.
Another example, if $X$ is closed, is the Liouville current (see Examples~\ref{ex:Liouville}).

\subsection{Examples of geodesic currents}
\label{subsec:examples_currents}

In this section we recall some examples of geodesic currents that will appear in the forthcoming sections.

\subsubsection{Weighted multi-curves}

Let $K \subset S$ be a smooth non-necessarily-connected
1-manifold without boundary, together with a map $c$ from $K$
to~$S$.
We say that $c$ is \emph{trivial} if the image of $c$ is contained in a disk (so it is \emph{null-homotopic}) or it is \emph{boundary parallel}, i.e. homotopic to a component of $\partial S$.
A \emph{multi-curve} is an equivalence class of maps $c$ as above where $c$ and~$c'$ are considered equivalent if they are related by 
\emph{homotopy} within the space of all maps from $K$ to~$S$ (not necessarily immersions), \emph{reparametrization} of the 1-manifold and
dropping trivial components.
If the domain $K$ of $c$ is connected, we will call $\gamma$ a \emph{curve}.

We write $\Curves(S)$ for the space of all curves on $S$.
There is a bijection between curves $c$ and conjugacy classes of $g$ and $g^{-1}$, for $g, g^{-1} \in \pi_1(X)$.
A \emph{weighted multi-curve} $c = \lambda_1 c_1 + \cdots + \lambda_n c_n$, is a multi-curve $\cup_{i} c_i$ where each component (curve) $c_i$ is equipped with a non-negative real number $\lambda_i$. The geodesic current corresponding to $c$ is the sum of weighted Dirac measures on $\mathcal{G}(\wt{X})$ supported on $\cup_i \Gamma \tilde{\gamma}_i \subseteq \mathcal{G}(\wt{X})$, where $\tilde{\gamma_i}$ is a lift of the geodesic in the class of $c_i$.

In fact, a geodesic current $\mu$ has atoms as a measure if and only if $\mu = \nu + \delta$, where $\delta$ is a non-trivial weighted multi-curve.
First, we consider the case of ``$0$-dimensional atoms'', i.e. atoms whose topological dimension in the space $\mathcal{G}(\wt{X})$ is $0$.

 \begin{lemma}[{\cite[Proposition~8.2.7]{Martelli16:IntroGeoTop}}]
 \label{martelli_discrete}
 Let $\mu \in \Curr(X)$.
 If $\mu(\{ \gamma \}) >0$ for $\gamma \in \G(\wt{X})$, then $\gamma$ is a lift of a closed geodesic.
\label{lem:discreteatoms}
\end{lemma}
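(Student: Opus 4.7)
The plan is to argue by contrapositive: I assume $\gamma$ is not the lift of a closed geodesic and derive a contradiction from the local finiteness of the Radon measure $\mu$. The strategy is to use cocompactness of $\Gamma \acts \wt{X}$ to produce infinitely many $\Gamma$-translates of $\gamma$ accumulating on a single limiting geodesic; since every such translate carries atomic mass $\mu(\{\gamma\})>0$ by $\Gamma$-invariance, this will flood a neighborhood of the limit with infinite $\mu$-mass.

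Concretely, I fix a compact set $D\subset\wt{X}$ with $\Gamma\cdot D=\wt{X}$, choose points $p_n\in\gamma$ with $d_{\wt{X}}(p_0,p_n)\to\infty$, and pick $g_n\in\Gamma$ such that $g_n p_n\in D$. Each translate $g_n\gamma$ then crosses $D$, and the subset of $\mathcal{G}(\wt{X})$ consisting of geodesics meeting a fixed compact set of $\wt{X}$ is compact, because the corresponding pairs of endpoints form a closed subset of $(\partial\wt{X})^{(2)}$ that is bounded away from the diagonal. Passing to a subsequence, $g_n\gamma$ converges in $\mathcal{G}(\wt{X})$ to some geodesic $\gamma'$.

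Now I perform a dichotomy. If along a subsequence the geodesics $g_n\gamma$ are pairwise distinct, then by $\Gamma$-invariance every neighborhood of $\gamma'$ contains infinitely many distinct atoms of common mass $\mu(\{\gamma\})>0$, contradicting the local finiteness of $\mu$. Otherwise, infinitely many of the $g_n\gamma$ coincide with a single geodesic, so, fixing one such index $n_0$, each $g_{n_0}^{-1}g_n$ lies in the stabilizer $\operatorname{Stab}_\Gamma(\gamma)$; moreover, the $g_n$ cannot be eventually constant, since otherwise $p_n=g_n^{-1}(g_n p_n)$ would remain in a fixed compact set, contradicting $d_{\wt{X}}(p_0,p_n)\to\infty$. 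Hence $\operatorname{Stab}_\Gamma(\gamma)$ is infinite, and because $\Gamma$ is a torsion-free cocompact Fuchsian group this stabilizer must be infinite cyclic, generated by a hyperbolic element whose axis is precisely $\gamma$. Therefore $\gamma$ projects to a closed geodesic in $X$, contrary to the contrapositive assumption.

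The main obstacle I anticipate is the stabilizer step just above: one must ensure that preservation of the unoriented geodesic $\gamma$, combined with torsion-freeness of $\Gamma$, forces the stabilizer to be generated by a single hyperbolic element with axis $\gamma$. This reduces to the elementary classification of elements of $\PSL(2,\mathbb{R})$ preserving a geodesic in $\mathbb{H}^2$, but it is the load-bearing ingredient that turns the accumulation dichotomy into the desired conclusion.
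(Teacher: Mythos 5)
Your argument is correct, and it is essentially the standard proof of the result the paper simply cites (Martelli, Proposition~8.2.7) without reproving: translate points of $\gamma$ escaping to infinity back into a compact fundamental domain, use compactness of the set of geodesics meeting a compact subset of $\wt{X}$, and run the dichotomy between infinitely many distinct translates (contradicting local finiteness, since each carries the same atomic mass) and an infinite stabilizer (which, for a torsion-free cocompact Fuchsian group, must be infinite cyclic with axis $\gamma$). The only cosmetic point is in the stabilizer branch: what you need is that the $g_n$ with $g_n\gamma=g_{n_0}\gamma$ take infinitely many distinct values, not merely that they are not eventually constant, but the very compactness observation you invoke (a fixed $g$ can occur as $g_n$ for only finitely many $n$ because $p_n\in g^{-1}D$ would then stay bounded) already yields this stronger statement.
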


In fact, $1$-dimensional atoms also come from weighted multi-curves. For $z,x,y \in \partial \wt{X}$ cyclically ordered we define a \emph{pencil} $P(z, [x,y]) \subseteq \mathcal{G}(\wt{X})$ to be the set of all geodesics with one endpoint $z$ and the other endpoint in $[x,y]$. When we just write $P(z)$ we mean the pencil consisting of all geodesics with $z$ as one of their endpoint.
We prove the following characterization.

\begin{lemma}
A geodesic current $\mu$ has an atom if and only if there exists $z \in \partial X$ so that the pencil $P(z)$ of geodesics at $z$, satisfies $\mu(P(z))>0$. Moreover, $\mu(P(z))<\infty$.
\label{lem:atompencil}
\begin{proof}
If $\mu$ has an atom $\gamma$, then $\mu(P(\gamma^+))>0$.
On the other hand, suppose that $\mu$ has no atoms, but $\mu(P(z))>0$ for some $z \in \partial X$.
Then, by~\cite[Proposition~8.2.8]{Martelli16:IntroGeoTop}, there exists a closed geodesic $c$ so that $z=\gamma_+$. Let $P(z,[x,y])$ be the set of geodesics with one endpoint at $z$ and the other within $[x,y] \subset \partial X$, where we assume that $\gamma_-\in [x,y]$. 
Note that, by the north-south dynamics of $\gamma$, $P(z)=\cup_{n \geq 0} \gamma^n P(z,[x,y]) = \gamma^n P(z,[x,y])$. Then, taking measures, by  $\pi_1(X)$-invariance, we have
$\mu(P(z)) = \mu(\gamma^n P(z,[x,y]))=\mu(P(z,[x,y]))$. Thus, by assumption it follows $\mu(P(z,[x,y]))>0$.
On the other hand, $\{(\gamma_+,\gamma_-)\} =\cap_{n \geq 0} \gamma^{-n} P(z,[x,y])$.
Thus, by continuity of measures from below (\cite[Theorem~D]{Ham50:Measure}, $\mu(\gamma)=\lim_{n} \mu(\gamma^{-n} P(z,[x,y]))$.
By assumption, $\mu(\gamma)=0$, and by $\pi_1(X)$-invariance, $\lim_{n} \mu( \gamma^{-n} P(z,[x,y]) ) = \mu(P(z,[x,y]))$. Thus, it follows $\mu(P(z,[x,y]))=0$, a contradiction.
Note that since $\mu(P(z)) = \mu(\gamma^n P(z,[x,y]))=\mu(P(z,[x,y]))$, and $P(z,[x,y])$ is a compact set of geodesics, $\mu(P(z))$ is finite.
\end{proof}
\end{lemma}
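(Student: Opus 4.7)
The plan is to prove the two directions separately, with essentially all of the work lying in the $(\Leftarrow)$ direction and in the ``moreover'' claim.

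For $(\Rightarrow)$: if $\mu$ has an atom $\gamma\in\G(\wt{X})$, then by Lemma~\ref{lem:discreteatoms} $\gamma$ descends to a closed geodesic, and in particular $\gamma\in P(\gamma^+)$, so $\mu(P(\gamma^+))\geq \mu(\{\gamma\})>0$.

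For $(\Leftarrow)$ I would argue by contrapositive: assume $\mu$ is non-atomic, fix $z\in\partial\wt{X}$ with $\mu(P(z))>0$, and derive a contradiction. The main obstacle here is the very first step, which is to promote $z$ to the attracting fixed point of a hyperbolic element $\gamma\in\Gamma$. This is exactly where cocompactness of $\pi_1(X)$ on $\mathfrak{I}$ bites: morally, a positive-mass pencil cannot be based at a ``generic'' boundary point, because translates under the group would pile up locally finite mass on a small transversal and violate the Radon property. I expect this to follow from a standard dynamical argument in the currents literature (e.g.\ Martelli's Proposition~8.2.8) rather than something one needs to redo by hand.

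Once such a $\gamma$ is in hand with $\gamma^+=z$, I would pick a compact interval $[x,y]\subset\partial\wt{X}\setminus\{z\}$ containing $\gamma^-$ in its interior and study the sub-pencil $P(z,[x,y])$. The axis $\gamma$ has north–south dynamics on $\partial\wt{X}$, so the iterates $\gamma^n[x,y]$ form an increasing family whose union is $\partial\wt{X}\setminus\{z\}$, while $\gamma^{-m}[x,y]$ is a decreasing family with intersection $\{\gamma^-\}$. Combining the first feature with $\Gamma$-invariance and continuity of $\mu$ from below gives
\[
\mu(P(z)) \;=\; \lim_{n\to\infty}\mu(\gamma^n P(z,[x,y])) \;=\; \mu(P(z,[x,y])).
\]
Since the endpoints of geodesics in $P(z,[x,y])$ stay at positive distance from $z$, this pencil is contained in a compact subset of $\G(\wt{X})$, so the common value is finite; this settles the ``moreover'' part.

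Combining the second feature with $\Gamma$-invariance, continuity of $\mu$ from above (which applies since we now know the measures involved are finite), and the identification $\bigcap_m \gamma^{-m}P(z,[x,y])=\{\gamma\}$ (abusing notation for the axis), we obtain $\mu(\{\gamma\})=\mu(P(z,[x,y]))=\mu(P(z))>0$, contradicting non-atomicity of $\mu$. The only technical care needed is to check that the decreasing intersection really equals the singleton $\{\gamma\}$, which is a direct consequence of north–south dynamics on $\partial\wt{X}$.
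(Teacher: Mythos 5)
Your proposal is correct and follows essentially the same route as the paper: invoke Martelli's Proposition~8.2.8 to realize $z$ as the attracting fixed point of a hyperbolic $\gamma\in\Gamma$, use north--south dynamics together with $\Gamma$-invariance and continuity of measures along the increasing family $\gamma^nP(z,[x,y])$ to get $\mu(P(z))=\mu(P(z,[x,y]))<\infty$, and then shrink $\gamma^{-m}P(z,[x,y])$ down to the axis to produce an atom. Your explicit remark that finiteness must be established before applying continuity along the decreasing intersection is a point the paper glosses over, but otherwise the two arguments coincide.
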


A consequence of the above lemma is the following, which will not be used in the sequel but worth recording.

\begin{lemma}
Let $\mu$ be a geodesic current, and $\gamma \in \supp \mu$.
The geodesic $\gamma$ is not an atom of $\mu$ if and only if for all $\epsilon>0$, there exists $\delta>0$ so that $\mu(N_{\delta}(\gamma))<\epsilon$, where $N_{\delta}$ is the $\delta$-neighborhood of $\gamma$ in $\G(\wt{X})$.
\label{lem:shrinkingatom}
\end{lemma}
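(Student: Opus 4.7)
The plan is to prove this by a straightforward application of continuity of measures from above (resp.\ monotonicity), once we verify that small enough neighborhoods of $\gamma$ have finite $\mu$-measure.

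The easy direction is ($\Leftarrow$). Assume that for every $\epsilon > 0$ there is $\delta > 0$ with $\mu(N_\delta(\gamma)) < \epsilon$. Since $\{\gamma\} \subseteq N_\delta(\gamma)$, monotonicity gives $\mu(\{\gamma\}) < \epsilon$ for every $\epsilon > 0$, so $\mu(\{\gamma\}) = 0$, i.e.\ $\gamma$ is not an atom.

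For the non-trivial direction ($\Rightarrow$), assume $\mu(\{\gamma\}) = 0$. Writing $\gamma = (\gamma_+, \gamma_-) \in (\partial \wt{X})^{(2)}/\sim$ and using that $\gamma_+ \neq \gamma_-$, I would choose disjoint closed arcs $[a,b] \ni \gamma_+$ and $[c,d] \ni \gamma_-$ in $\partial \wt{X}$, so that the closed box $[a,b] \times [c,d]$ is a compact subset of $\G(\wt{X})$. By local finiteness of the Radon measure $\mu$, this compact set has finite $\mu$-measure. Picking $\delta_0 > 0$ small enough that $N_{\delta_0}(\gamma) \subseteq [a,b] \times [c,d]$ (using that $\G(\wt{X})$ is Hausdorff and the topology comes from the product topology on boundary pairs), we obtain $\mu(N_{\delta_0}(\gamma)) < \infty$.

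Now take any decreasing sequence $\delta_n \downarrow 0$ with $\delta_n \leq \delta_0$. The sets $N_{\delta_n}(\gamma)$ form a decreasing family whose intersection equals $\{\gamma\}$ (since the neighborhood basis of $\gamma$ shrinks to the point in the Hausdorff space $\G(\wt{X})$). Since the first set has finite measure, continuity of measures from above applies and yields
\[
\lim_{n \to \infty} \mu(N_{\delta_n}(\gamma)) = \mu(\{\gamma\}) = 0.
\]
Hence for any given $\epsilon > 0$ we can choose $n$ large so that $\delta = \delta_n$ satisfies $\mu(N_\delta(\gamma)) < \epsilon$, completing the proof. The only subtlety I expect is verifying that the neighborhood basis $\{N_\delta(\gamma)\}$ in the chosen metric on $\G(\wt{X})$ indeed intersects to $\{\gamma\}$ and eventually sits inside a disjoint-endpoint box; both are automatic once one uses the product-topology identification $\G(\wt{X}) \cong (\partial \wt{X})^{(2)}/\sim$.
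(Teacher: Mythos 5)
Your proof is correct, and it takes a genuinely more direct route than the paper's. The paper first reduces to showing that the partial pencil $P(\gamma_-,(\gamma_+-\epsilon,\gamma_++\epsilon))$ has $\mu$-measure zero — an argument that invokes Lemma~\ref{lem:atompencil} and uses the hypothesis $\gamma\in\supp\mu$ — and then pinches a nested sequence of boxes in \emph{one} boundary coordinate only, so that the decreasing intersection is that pencil rather than the point. You instead shrink the neighborhoods in both coordinates so that $\bigcap_n N_{\delta_n}(\gamma)=\{\gamma\}$ and apply continuity of measures along decreasing sets directly to the singleton; this bypasses the pencil machinery entirely, and in fact your argument never uses $\gamma\in\supp\mu$, so it proves the statement for an arbitrary $\gamma\in\G(\wt{X})$. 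A further point in your favor: you explicitly verify the finiteness hypothesis needed for downward continuity of measures, by trapping $N_{\delta_0}(\gamma)$ inside a compact box $[a,b]\times[c,d]$ with disjoint arcs and invoking local finiteness of the Radon measure — a step the paper's proof leaves implicit (its $B_1=N_\epsilon(\gamma)$ has finite measure only once one notes it sits in such a compact box). What the paper's pencil detour buys is some extra information (that the whole partial pencil, not just the singleton, is null when $\gamma$ is a non-atomic point of the support), but that is not needed for the stated equivalence.
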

\begin{proof}
If $\gamma$ is an atom, then the condition is obviously violated.

If $\gamma$ is not an atom, denote with $\gamma_{-}$ and $\gamma_{+}$ its endpoints, and consider the pencil $P = P \left( \gamma_-, (\gamma_+-\epsilon,\gamma_+ + \epsilon ) \right)$.
We know from Lemma~\ref{lem:atompencil} that $\mu(P) > 0$ if and only if it contains the axis of some non-zero element of $\Gamma$ which projects to a closed geodesic in $X$. Suppose that this is the case, i.e. there exists a geodesic line $l \in P$ such that $p(l)$ is a closed geodesic in $X$. Then $l$ is an atom for $\mu$, contradicting the fact that $\gamma$ is in the support of $\mu$, and is asymptotic to $l$. It follows that $\mu(P(\gamma_-, (\gamma_+-\epsilon,\gamma_+ + \epsilon ))=0$.

Now we define a sequence of boxes as follows: start with \[B_1 = N_\epsilon (\gamma) = (\gamma_{-} - \epsilon , \gamma_{-} + \epsilon) \times ( \gamma_{+} -\epsilon, \gamma_{+} + \epsilon)\] and define \[ B_n =(\gamma_{-} - \epsilon/n , \gamma_{-} + \epsilon/n ) \times ( \gamma_{+} -\epsilon, \gamma_{+} + \epsilon) \] by pinching one of the intervals of the corresponding box of geodesics so that $\cap_{n=1}^{\infty} B_n  = P(\gamma_-, (\gamma_+-\epsilon,\gamma_+ + \epsilon )$.
By continuity of measures from below, we have $\lim_n \mu(\cap_{i=1}^n B_i) \to \mu(P(\gamma_-, (\gamma_+-\epsilon,\gamma_+ + \epsilon ))=0$. For any $\epsilon>0$, if we take $\delta$ so that $N_{\delta}(\gamma) \subset B_n$, and $\mu(B_n)<\epsilon$, then we have $\mu(N_{\delta}(\gamma))<\epsilon$, as wanted.
\end{proof}

Finally, any geodesic current can be approximated by a sequence of weighted multi-curves.

\begin{proposition}[{\cite[Proposition~4.4]{Bonahon86:EndsHyperbolicManifolds}}]
The subset of geodesic currents coming from weighted multi-curves is dense with respect to the weak$^*$-topology of currents.
\end{proposition}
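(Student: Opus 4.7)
The plan is to approximate any $\mu \in \Curr(X)$ in the weak-$*$ topology by weighted sums of currents associated to closed geodesics, using the density of axes of hyperbolic elements of $\Gamma$ in $\G(\wt{X})$.

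First, I would reduce density to a finite test. A basis of weak-$*$ neighborhoods of $\mu$ consists of sets of the form $\{\nu \in \Curr(X) : |\nu(B_k) - \mu(B_k)| < \epsilon \text{ for all } k = 1,\dots,N\}$, where $B_1,\dots,B_N \in \mathcal{B}$ are boxes with $\mu(\partial B_k) = 0$ (a convergence-determining class, since boundaries of boxes that charge no $\mu$-mass are Portmanteau continuity sets). Hence it is enough, for each such test data, to exhibit a weighted multi-curve inside this neighborhood.

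The geometric input is that lifts of closed geodesics are dense in $\G(\wt{X})$: the set of pairs of attracting/repelling fixed points of hyperbolic elements of $\Gamma \leq \PSL(2,\R)$ is dense in $(\partial\wt{X})^{(2)}$, a standard consequence of the north--south dynamics of hyperbolic isometries. Fixing a relatively compact fundamental domain $K$ for the $\Gamma$-action on $\G(\wt{X})$, I would refine each $B_k$ into finitely many sub-boxes $P_{k,j}$ of arbitrarily small diameter with $\mu$-null boundary, and in each $P_{k,j}$ with $\mu(P_{k,j})>0$ pick a lift $\tilde\alpha_{k,j}$ of the axis of some hyperbolic $g_{k,j}\in \Gamma$ lying inside $P_{k,j}$. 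Denoting by $c_{k,j}$ the corresponding primitive closed geodesic, the candidate approximation is
\[
\nu \coloneqq \sum_{k,j} \mu(P_{k,j})\,\mu_{c_{k,j}}.
\]

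The main obstacle is multiplicity control. Since $\mu_{c_{k,j}}$ is supported on the entire $\Gamma$-orbit of $\tilde\alpha_{k,j}$, a closed geodesic chosen to represent $P_{k,j}$ can also deposit Dirac mass inside other test boxes $B_l$, inflating $\nu(B_l)$ above $\mu(B_l)$. The resolution uses that each $B_l$ is relatively compact in $(\partial\wt{X})^{(2)}\setminus\Delta$ and that there are only finitely many test boxes: by making the partition sufficiently fine and restricting the choice of $\tilde\alpha_{k,j}$ to a fixed fundamental domain, one can arrange that each $\tilde\alpha_{k,j}$ together with its $\Gamma$-translates contributes to $\bigcup_l B_l$ only via the single lift in $P_{k,j}$, up to an error that vanishes along a sequence of refinements. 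This produces a sequence of weighted multi-curves converging weak-$*$ to $\mu$.
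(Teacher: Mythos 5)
The paper does not prove this statement itself --- it is quoted from Bonahon --- so I compare your argument with the proof in the cited source. Your reduction to finitely many $\mu$-generic test boxes is fine, as is the density of axes of hyperbolic elements. The fatal problem is precisely the ``multiplicity control'' step, which you correctly identify and then dismiss with an assertion that is false. Quantitatively: if $c$ is a closed geodesic of length $\ell$ and $R\geq\diam(X)$, then every point of a lift $\tilde c$ lies within $R$ of the orbit $\Gamma\overline{x_0}$, and points of $\tilde c$ spaced more than $2R$ apart along one period yield \emph{distinct} $\Gamma$-translates of $\tilde c$ meeting $\overline{B}(\overline{x_0},R)$; hence $c$ has at least $c_X\,\ell-1$ lifts in $G\bigl(\overline{B}(\overline{x_0},R)\bigr)$, for a constant $c_X>0$ depending only on $X$. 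This set of geodesics is covered by a finite union of $\mu$-generic boxes (cut $\partial\wt{X}$ into small arcs and take products of non-adjacent arcs), so it is a legitimate test set for weak$^*$ convergence. On the other hand, there are only finitely many closed geodesics of length $\leq L$, so their axes form a locally finite subset of $\G(\wt{X})$; consequently, as your partition $\{P_{k,j}\}$ refines, the sub-boxes carrying almost all of the mass of a non-atomic $\mu$ admit no representative axis of length $\leq L$, for any fixed $L$. Combining the two facts, the total mass your candidate $\nu=\sum_{k,j}\mu(P_{k,j})\,\mu_{c_{k,j}}$ deposits on these test boxes is at least $\bigl(\mu(B)-o(1)\bigr)\bigl(c_X L-1\bigr)$ with $L\to\infty$ along the refinement: it blows up rather than converging to $\mu(B)$ (already for $\mu$ the Liouville current). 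Refining the partition and restricting the chosen lifts to a fundamental domain makes the error worse, not smaller, because finer boxes force longer representatives, and the extra lifts live in the \emph{other} test boxes, which you do not get to move.

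Any repair must weight $c_{k,j}$ by something like $\mu(P_{k,j})/\ell(c_{k,j})$ and then show that the chosen closed geodesics equidistribute relative to $\mu$ --- and that is essentially Bonahon's actual proof, which runs along a genuinely different route: interpret $\mu$ as a flip- and geodesic-flow-invariant measure on $T^1X$, reduce via ergodic decomposition to the case of ergodic $\mu$, take a $\mu$-generic recurrent orbit segment of length $T$, close it up with the Anosov closing lemma to a closed geodesic $c_T$, and prove that $\frac{\mu(T^1X)}{T}\,\mu_{c_T}\to\mu$ weak$^*$. The normalization by the length $T$ is exactly the multiplicity control that your construction lacks; without it, or without an equidistribution input replacing it, the ``one axis per small box'' scheme cannot work.
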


We discuss the weak$^*$-topology in Subsection~\ref{subsec:weakstar}.

\subsubsection{Measured laminations}

We start with the definition of measured geodesic lamination:
\begin{definition}
A \emph{geodesic lamination} $\Lambda$ is a set of disjoint simple complete geodesics in $X$, whose union is a closed subset of $X$. A \emph{transverse measure} for $\Lambda \subset X$ is a family $\lambda$ of locally finite Borel measures ${\lambda}_\alpha$ on each arc $\alpha \subset X$ transverse to $\lambda$, such that
\begin{enumerate}
\item For every $\alpha$ transverse arc, the support of $\lambda_\alpha$ is $\alpha \cap \Lambda$;
    \item If $\alpha' \subset \alpha$ is a sub-arc of $\alpha$, then the measure $\lambda_{\alpha'}$ is the restriction of $\lambda_\alpha$;
    \item For every $\alpha$ transverse arc, the measure $\lambda_\alpha$ is invariant through isotopies of transverse arcs.
\end{enumerate}

A \emph{measured geodesic lamination} is a geodesic lamination together with a transverse measure.
\label{def:lam}
\end{definition}

 It is a well-known fact (see \cite[Proposition~17]{Bonahon88:GeodesicCurrent}) that measured laminations can be embedded into the space of geodesic currents (see {\cite[Lemma~4.4]{AL17:HyperbolicStructures}}).

 In fact, the image of the above embedding is characterized as those geodesic currents $\eta$ so that $i(\alpha,\alpha)=0$.

 \begin{proposition}[{\cite[Proposition~14]{Bonahon88:GeodesicCurrent}}]
  The image of the embedding $(\Lambda,\mu) \mapsto \eta_{\mu}$ consists of geodesic currents $\eta$ so that $i(\alpha,\alpha)=0$.
  \label{prop:lamselfzero}
 \end{proposition}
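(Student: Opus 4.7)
The plan is to prove the two inclusions separately. The forward direction (measured laminations give currents of self-intersection zero) is mostly a bookkeeping exercise, and the converse direction (currents with $i(\eta,\eta)=0$ come from laminations) requires a support argument together with a construction of the transverse measure by pushforward.

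For the forward direction, suppose $\eta=\eta_{(\Lambda,\mu)}$ comes from a measured lamination. Because $\Lambda$ is a lamination, every two geodesics in $\Lambda$ are either disjoint or coincide, and the same holds for their lifts in $\wt{X}$. Consequently $\supp(\eta)\times\supp(\eta)\subset \G(\wt{X})\times\G(\wt{X})$ is disjoint from the open set $\mathfrak{I}$ of transversely intersecting pairs. Therefore $(\eta\times\eta)(\mathfrak{I})=0$, and since the measure on $\mathfrak{I}/\Gamma$ is the descent of $\eta\times\eta$ via the covering $\pi$, we get $i(\eta,\eta)=(\eta\times\eta)(\mathfrak{I}/\Gamma)=0$.

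For the converse, assume $i(\eta,\eta)=0$. First I would show that any two geodesics $\gamma_1,\gamma_2\in\supp(\eta)$ are non-transverse. If they were transverse, one could choose disjoint open boxes $U\ni\gamma_1$, $V\ni\gamma_2$ in $\G(\wt{X})$ small enough that $U\times V\subset\mathfrak{I}$ and small enough to embed into $\mathfrak{I}/\Gamma$ under $\pi$. By definition of support $\eta(U),\eta(V)>0$, so $\pi_*(\eta\times\eta)(\pi(U\times V))>0$, contradicting $i(\eta,\eta)=0$. Next I would rule out self-transverse geodesics: if $\gamma\in\supp(\eta)$ projects to a non-simple closed geodesic in $X$, two $\Gamma$-translates of $\gamma$ lie in $\supp(\eta)$ and intersect transversely, again a contradiction; for non-closed $\gamma$ one uses approximation by nearby $\pi_1(X)$-translates that one shows to be transverse to $\gamma$. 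Setting $L\coloneqq \bigcup_{\gamma\in\supp(\eta)}p(\gamma)\subset X$, the resulting family of simple disjoint geodesics in $X$ is closed because $\supp(\eta)$ is closed in $\G(\wt{X})$ and the endpoint map $\partial$ is a homeomorphism; hence $L$ is a geodesic lamination.

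It remains to manufacture the transverse measure $\mu$. Given a transverse arc $\alpha\subset X$, lift it to $\tilde\alpha\subset\wt{X}$ and consider the intersection map $\iota\colon G(\tilde\alpha)\to\tilde\alpha$, $\gamma\mapsto\gamma\cap\tilde\alpha$, which is well-defined and Borel. Set
\[
\mu_\alpha \coloneqq p_*\bigl(\iota_*(\eta|_{G(\tilde\alpha)})\bigr).
\]
The $\Gamma$-invariance of $\eta$ ensures $\mu_\alpha$ is independent of the choice of lift, and the axioms of Definition~\ref{def:lam} follow formally: additivity for sub-arcs from $G(\tilde\alpha')\subset G(\tilde\alpha)$, invariance under transverse isotopy from the fact that on the lamination $L$ an isotopy of $\tilde\alpha$ transverse to $L$ defines a bijection of $G(\tilde\alpha)$ with $G(\tilde\alpha_t)$ preserving the measure (since it moves each geodesic of $L$ to itself), and the support statement from the fact that $\mu_\alpha$-measure concentrates on $\alpha\cap L$. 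Finally, the current associated to $(\L,\mu)$ agrees with $\eta$ because the two measures on $\G(\wt X)$ give the same mass to every transversal $G(\tilde\alpha)$, and such transversals generate the Borel $\sigma$-algebra.

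The main subtlety is the support step: one must rule out all forms of transverse intersection (including self-intersections of single geodesics in the support and limits of transverse translates approximating a boundary geodesic of $\supp(\eta)$) from only knowing $i(\eta,\eta)=0$. The rest of the argument is a routine verification once the lamination structure on $\supp(\eta)$ is in place.
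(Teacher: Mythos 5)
The paper does not actually prove this statement---it is quoted directly from Bonahon---but your argument is the standard one and is correct: $i(\eta,\eta)=0$ is equivalent to $\supp(\eta)\times\supp(\eta)$ missing the open set $\mathfrak{I}$, i.e.\ to the support being a closed $\Gamma$-invariant family of pairwise non-transverse geodesics (which, projected to $X$, is a geodesic lamination), and the transverse measure is recovered by pushing $\eta$ forward along transversals. The one step deserving slightly more care is the pushforward construction when a leaf meets a transverse arc $\tilde\alpha$ more than once, so that the intersection map $\iota$ is not single-valued; this is handled by working with sufficiently short (or quasi-transverse) subarcs and patching, and is routine.
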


When $X$ has boundary, there are multiple types of measured laminations one can consider (see~\cite[1.8]{HP92:TrainTracks}, \cite[Chapter~11]{Kap00:Kapovich2000HyperbolicMA} for several treatments). In this paper, we will only consider measured laminations whose associated geodesic currents are internal currents, so they are in $\Curr_0(X)$.
When working in a subsurface $Y \subset X$, we will only consider internal measured laminations within that subsurface, in the sense that, for $p \colon \wt{X} \to X$ the universal covering projection, we have $p(\supp(\mu)) \subset Y-\partial Y$.
We say that a measured lamination is \emph{discrete} if it is a simple multi-curve, i.e., all the leaves of the support of the lamination $\Lambda$ are simple closed geodesics. We say it is a \emph{non-discrete measured lamination} otherwise.
Non-discrete measured laminations are equivalent to type 2 subcurrents in the structural decomposition theorem for geodesic currents (see Section~\ref{sec:decomposition}). For any such measured lamination $\lambda$, there is a minimal (with respect to inclusion) subsurface $Y$ of $X$ that contains $p(\supp(\lambda))$, and so that for every internal closed curve $c$ in $Y$, we have $i(\lambda,c)>0$. Some authors choose to call these measured laminations ``filling'', but that would clash with our choice of ``filling'' for geodesic currents. Observe that a measured lamination is never filling in the sense of geodesic currents, since $i(\lambda,\lambda)=0$.

\subsubsection{Liouville current}

\begin{example}
\label{ex:Liouville}
Given a box $B= [a,b] \times [c,d] \subset \mathcal{G}(\wt{X} )$, the Liouville current can be explicitly defined as follows.
Consider the hyperbolic cross ratio on the upper half space $\mathbb{H}^2$, defined by taking, for any box of geodesics $B=[a,b] \times [c,d]$ in $\mathcal{G}(\mathbb{H}^2)$, the expression
\[
L(B) = \left| \log \frac{|a-c||b-d|}{|a-d||b-c|} \right|.
\]
Let $[(Y,\varphi)] \in \Teich(X)$, where $Y$ is some hyperbolic structure on $X$, a  priori distinct from $X$.
Since $Y$ is a hyperbolic structure, we have a $\pi_1(Y)$-invariant isometry $I \colon \mathbb{H}^2 \to \wt{Y}$.
We consider the following measure on $\mathcal{G}(\wt{Y})$, given by $\mathcal{L}_Y \coloneqq I_* (L)$.
From the definition of $\mathcal{L}_Y$ and the intersection number of geodesic currents, one can check the following property
\[
i(\mathcal{L}_Y, c)=\ell_{Y}(g)
\]
where $\ell_Y$ denotes the hyperbolic length (see~\cite[Proposition~14]{Bonahon88:GeodesicCurrent}).
In fact, $\mathcal{L}_Y$ is characterized by this property, by~\cite[Th\'eor\`eme~2]{Otal90:SpectreMarqueNegative}.
We will call this the \emph{intersection property}.

A stronger (a priori) property of 
$\mathcal{L}_Y$, which also follows from its definition, and fully characterizes $\mathcal{L}_Y$ (see \cite[Proposition 8.1.12]{Martelli16:IntroGeoTop}), is the following. 
\begin{definition}
   Let $X$ be a compact surface of genus $g \geq 2$ endowed with a length geodesic metric $Y$. A geodesic current $\mu \in \Curr (X)$ satisfies the \emph{Crofton property} if
    \[
    \mu (G [x,y]) = \ell_{\wt{Y}} ([x,y])
    \]
    for every $x, y \in \wt{Y}$, where $[x,y]$ denotes a geodesic from $x$ to $y$, and $\ell_{\wt{Y}}$ is the lift of $\ell_Y$ to the universal cover. 
\end{definition}

Such property is named after Crofton since it is a special case of the Crofton formula for integral geometry ~\cite[19]{Sa04:Integral}. 

Now, since
$\varphi \colon X \to Y$ is a quasi-conformal marking from the base hyperbolic structure $X$ to another hyperbolic structure $Y$, we can define a geodesic  current $\mathcal{L}_Y^X$ in $\Curr(X)$ as follows.
Since the marking $\varphi$ induces a $\pi_1(X)$-equivariant homeomorphism $\varphi \colon \wt{X} \to \wt{Y}$, we put
\[
\mathcal{L}_Y^X \coloneqq \varphi^{-1}_* \mathcal{L}_Y =  \varphi^{-1}_* \circ I_* (L).
\]

The geodesic current $\mathcal{L}_Y^X$  has full support and has no atoms, and it is defined as the \emph{Liouville current} associated to $[(Y,\varphi)]$ (see~\cite[Page~145]{Bonahon88:GeodesicCurrent}).

Otal, in~\cite[Page~155]{Otal90:SpectreMarqueNegative}, extended the construction of Liouville current $\mathcal{L}_Y$ to any negatively curved Riemannian metric $Z$ on $X$ (not necessarily of constant curvature $-1$). Otal's current, $\mathcal{L}_Z$, also satisfies the Crofton property
\[
\mathcal{L}_Z(G[x,y])=\ell_{Z}([x,y])
\]
for any $Z$-geodesic segment $[x,y]$ in $\wt{Z}$, where here $G[x,y]$ denotes the set of $Z$-geodesics intersecting $[x,y]$ transversely. 
\end{example}

\subsubsection{Geodesic currents coming from Anosov representations}
\label{subsubsec:positivelyratioed}
Let $G$ be a real, connected, non-compact, semisimple, linear Lie group. Let $K$ denote a maximal compact subgroup of $G$, 
so that $V=G/K$ is the Riemannian symmetric space of $G$. Let $[P]$ be the conjugacy class of
a parabolic subgroup $P \subset G$. 
Then there is a notion of \emph{$[P]$-Anosov representation} $\rho\colon \pi_1(X) \to G$; see, for example, Kassel's notes \cite[Section~4]{K18:GeometricStructures}.
When $\operatorname{rank}_{\mathbb{R}}(G)=1$ there is essentially one
class $[P]$, so we can simply refer to them as Anosov representations,
and they can be defined as those injective representations
$\rho\colon \pi_1(X) \to G$ where $\Gamma\coloneqq \rho(\pi_1(X))$
preserves and acts cocompactly on some nonempty convex
subset of~$X$. 
Examples of these are Fuchsian and quasi-Fuchsian representations.
In general rank, the conjugacy classes of parabolic subgroups of~$G$ correspond to
subsets~$\theta$ of the set of restricted simple roots $\Delta$ of $G$. 
For a given $[P]$-Anosov representation and each $\alpha \in \theta$, 
one can define a curve functional on oriented curves
\[
\ell_{\alpha}^{\rho}\colon \mathcal{C}(X) \to \mathbb{R}_{\geq 0}
\]
by considering the $\log$ of the diagonal matrix of eigenvalues of $\rho(g)$ and composing it with $\alpha + i(\alpha)$, where $\alpha \in \Delta$ is a root, and 
$\iota(\alpha)$ denotes the root obtained by acting by the negative of the largest element in the Weyl group. See~\cite[Section~2]{MZ19:PositivelyRatioed} for details.
Martone and Zhang
show that for a certain subset of Anosov representations called \emph{positively ratioed}~\cite[Definition~2.21]{MZ19:PositivelyRatioed}, there exists a geodesic current $\mu_{\rho}$ so that
\[
i(\mu_{\rho},[g])=\ell^{\rho}_{\alpha}(g)
\]
for all $g \in \pi_1(X)$. 
The construction goes through interpreting geodesic currents as generalized positive cross-ratios, an observation that was already used by~Hamenstaedt~\cite[Lemma~1.10]{Ham97:Cocycles}, \cite[Section~2]{Ham99:Cocycles}.
This class includes two types of representations of interest: Hitchin representations and maximal representations. In this paper we will only consider  \emph{Hitchin representations}, i.e., a representation $\rho \colon \pi_1(X) \to \SL(n,\mathbb{R})$ which 
may be continuously deformed to a composition of the irreducible representation of $\PSL(2,\mathbb{R})$ into $\PSL(n,\mathbb{R})$ with a discrete faithful representation of $\pi_1(X)$ into $\PSL(2,\mathbb{R})$.

Continuity of the cross-ratio is crucial in Martone-Zhang's construction of positive cross-ratios.
From the geodesic current viewpoint, it translates into the fact that their associated geodesic currents have no atoms.
In fact, the following can be extracted from~\cite[Page~17]{MZ19:PositivelyRatioed}).

\begin{lemma}
For $\rho \colon \pi_1(X) \to G$ a positively ratioed Anosov representation, the associated geodesic current $\mu_{\rho}$ is non-atomic and has full support.
\label{lem:anosovnoatomfull}
\end{lemma}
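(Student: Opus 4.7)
The plan is to use the explicit construction of $\mu_\rho$ from the positive cross-ratio associated with $\rho$ and exploit both the continuity and strict positivity of this cross-ratio on non-degenerate quadruples.

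First, I would recall that in~\cite{MZ19:PositivelyRatioed}, the current $\mu_\rho$ is constructed by defining, for every box of geodesics $B = [a,b] \times [c,d] \subset \G(\wt{X})$, a value $\mu_\rho(B)$ equal to (a normalization of) the positive cross-ratio $\beta_\rho(a,b,c,d)$, and then extending this box assignment by Carathéodory to a Radon measure on $\G(\wt{X})$. The two facts I need about $\beta_\rho$ are: (i) it is continuous in its four boundary arguments, and (ii) it is strictly positive on every non-degenerate quadruple $a<b<c<d$ in $\partial \wt{X}$. Both are part of the definition of a positive cross-ratio in~\cite[Def.~2.21]{MZ19:PositivelyRatioed}.

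Second, to show $\mu_\rho$ has no atoms, I would apply Lemma~\ref{lem:atompencil}: it suffices to prove $\mu_\rho(P(z)) = 0$ for every $z \in \partial \wt{X}$. Fix $z$ and an interval $[x,y] \subset \partial \wt{X}$ with $z \notin [x,y]$. The pencil $P(z,[x,y])$ is the intersection of a decreasing sequence of non-degenerate boxes $B_n = [z-1/n, z+1/n] \times [x,y]$, whose $\mu_\rho$-measure is $\beta_\rho(z-1/n, z+1/n, x, y)$. By continuity of $\beta_\rho$ this limit equals $\beta_\rho(z,z,x,y)$, which vanishes because the cross-ratio of a degenerate quadruple is zero. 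Continuity of measures from above gives $\mu_\rho(P(z,[x,y])) = 0$, and since $P(z)$ is a countable union of such truncated pencils (using $\pi_1(X)$-invariance and the translation argument of Lemma~\ref{lem:atompencil}), we obtain $\mu_\rho(P(z)) = 0$ as required.

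Third, for full support, I would show every non-empty open $U \subset \G(\wt{X})$ has $\mu_\rho(U) > 0$. Since non-degenerate boxes $B = (a,b) \times (c,d)$ with $a<b<c<d$ form a basis of the topology on $\G(\wt{X})$, it suffices to show $\mu_\rho(B) > 0$ for such $B$. Shrinking if necessary to a closed sub-box $[a',b'] \times [c',d'] \subset B$ with $a<a'<b'<c'<d'<d$, strict positivity of $\beta_\rho$ on non-degenerate quadruples gives $\mu_\rho([a',b'] \times [c',d']) = \beta_\rho(a',b',c',d') > 0$, and monotonicity yields $\mu_\rho(B) > 0$.

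The main obstacle is item (ii), the strict positivity of the positive cross-ratio on non-degenerate quadruples; this relies on the $[P]$-Anosov condition and the positivity part of the definition of a positively ratioed representation, and I would simply invoke~\cite[Sec.~2]{MZ19:PositivelyRatioed} for this rather than reproduce it. Everything else is a soft argument using continuity of $\beta_\rho$ together with the characterization of atoms via pencils (Lemma~\ref{lem:atompencil}).
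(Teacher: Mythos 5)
Your argument is correct and is essentially the paper's: the paper offers no proof of its own here, merely asserting that the statement ``can be extracted from''~\cite[Page~17]{MZ19:PositivelyRatioed}, and what you have written is precisely that extraction (continuity of the cross-ratio kills the pencils via Lemma~\ref{lem:atompencil}, hence no atoms; strict positivity on linked non-degenerate quadruples gives positive measure on a basis of boxes, hence full support). The only external inputs you rely on --- the box-to-cross-ratio formula defining $\mu_\rho$, and the continuity and strict positivity of the positive cross-ratio --- are indeed part of Martone--Zhang's definition of a positively ratioed representation, so the citation you defer to carries exactly the weight you claim; the one cosmetic point is that your decreasing boxes $B_n$ should start at an index large enough that $[z-1/n,z+1/n]$ and $[x,y]$ are disjoint, so that $\mu_\rho(B_n)$ is finite and continuity from above applies.
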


Recently, Burger-Iozzi-Parreau-Pozzeti~\cite[Proposition~4.3]{BIPP21:Crossratios} have lifted the continuity assumption in the generalized cross-ratio, 
thus extending the construction of such currents beyond positively ratioed representations: see
\cite{BP21:Hk} and~\cite{BP21:Theta}. Their associated currents can, in general, have atoms.

In Subsection~\ref{ex:positivelyratioed}, we discuss in more detail the case of Hitchin representations for $\SL(3,\mathbb{R})$, their connection to convex projective structures and their associated dual spaces.

\subsection{Weak$^*$-topology of currents}
 \label{subsec:weakstar}
 As a space of Radon measures on $\mathcal{G}(\wt{X})$, it is natural to endow the space of geodesic currents $\Curr(X)$ with the \emph{weak$^*$-topology on geodesic currents}, defined by the family of
semi-norms
\[
| \alpha |_{\xi} = \int_{\mathcal{G}(\wt{X})} \xi \alpha
\]
for $\alpha \in \Curr(X)$, as $\xi$ ranges over all continuous function $\xi \colon \mathcal{G}(\wt{X}) \to  \mathbb{R}$ with compact support.
The space $\Curr(X)$ is second countable and completely metrizable~(see~\cite[Proposition~A.9]{ES22:GeodesicCount}.
Thus, the topology can be specified via sequential convergence.

The intersection number $i \colon \Curr(X) \times \Curr(X) \to \mathbb{R}$ is continuous with respect to this topology (see~\cite[Proposition~4.5]{Bonahon86:EndsHyperbolicManifolds}).

In fact, the weak$^*$ topology coincides with the topology of intersection numbers, by~\cite[Theorem~11]{DLR10:DegenerationFlatMetrics}, which essentially follows by work of Otal in~\cite[Th\'eor\`eme~2]{Otal90:SpectreMarqueNegative}.

\begin{theorem}
A sequence of geodesic currents $(\mu_i)$ converges $\mu_i \to \mu$ in the weak$^*$-topology if and only if $i(\mu_i,c) \to i(\mu,c)$ for all closed curves $c$.
\label{thm:weakintersection}
\end{theorem}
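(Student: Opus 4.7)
The forward direction is immediate from the continuity of the intersection form recalled just above: if $\mu_i \to \mu$ in the weak$^*$-topology, then $i(\mu_i, \nu) \to i(\mu, \nu)$ for every fixed $\nu \in \Curr(X)$, and specializing $\nu$ to the current associated to a closed curve $c$ gives $i(\mu_i, c) \to i(\mu, c)$.

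For the converse, the plan is to combine weak$^*$-precompactness with the separation of currents by closed-curve intersection numbers. First I would fix a filling multi-curve $c_0$ on $X$; by hypothesis $i(\mu_i, c_0)$ converges and is therefore bounded by some constant $C > 0$. A classical observation going back to Bonahon is that the set $K_C = \{\nu \in \Curr(X) : i(\nu, c_0) \le C\}$ is weak$^*$-compact whenever $c_0$ is filling: the lifts of the components of $c_0$ yield a finite family of transversals whose $\Gamma$-translates cover $\G(\wt{X})$, so $i(\nu, c_0) \le C$ controls the total mass of $\nu$ on every compact subset of $\G(\wt{X})$, and weak$^*$-compactness then follows from the standard Banach--Alaoglu argument for Radon measures. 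Hence $(\mu_i) \subset K_C$ is precompact; since $\Curr(X)$ is metrizable, it suffices to show that every weak$^*$-convergent subsequence of $(\mu_i)$ has limit $\mu$.

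Let $\mu_{i_k} \to \mu^*$ be a subsequential limit. By the forward direction applied to this subsequence, $i(\mu^*, c) = \lim_k i(\mu_{i_k}, c) = i(\mu, c)$ for every closed curve $c$. The theorem of Otal~\cite{Otal90:SpectreMarqueNegative}, extended to arbitrary geodesic currents in the framework of Bonahon, asserts that a geodesic current on $X$ is determined by its intersection numbers with all closed curves; therefore $\mu^* = \mu$, and the full sequence converges in the weak$^*$-topology.

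The main obstacle is this last rigidity step, since Otal's original theorem is formulated for Liouville currents of negatively curved metrics rather than for arbitrary elements of $\Curr(X)$. A cleaner self-contained alternative, which would avoid invoking Otal directly, would be to use the density of weighted multi-curves in $\Curr(X)$ together with continuity of the intersection form in both variables to first upgrade the equality $i(\mu^*, c) = i(\mu, c)$ from closed curves to all $\nu \in \Curr(X)$, and then show non-degeneracy of the pairing $(\alpha,\beta) \mapsto i(\alpha,\beta)$ on $\Curr(X)$ by approximating characteristic functions of boxes of geodesics by finite linear combinations of characteristic functions of transversals to closed geodesics; this last approximation is the genuinely measure-theoretic heart of the argument.
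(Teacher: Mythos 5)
Your argument is essentially correct, and it is worth noting that the paper itself does not prove this statement: it is quoted from \cite[Theorem~11]{DLR10:DegenerationFlatMetrics}, with the key input attributed to \cite[Th\'eor\`eme~2]{Otal90:SpectreMarqueNegative}. Your outline is the standard proof of that cited result. The forward direction via continuity of $i$ is fine, and the converse correctly combines two classical ingredients: (a) compactness of $\{\nu \in \Curr(X) : i(\nu,c_0)\le C\}$ for a filling multi-curve $c_0$ (this is Bonahon's compactness criterion, \cite[Proposition~4]{Bonahon88:GeodesicCurrent}; your sketch of why boundedness of $i(\nu,c_0)$ controls the mass of $\nu$ on compact sets of geodesics is the right one), together with metrizability of $\Curr(X)$ to reduce to identifying subsequential limits; and (b) injectivity of the marked intersection spectrum $\mu \mapsto (i(\mu,c))_{c}$. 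Point (b) is the genuine crux, and you are right to flag it: Otal's theorem as usually stated concerns Liouville currents, but his proof (recovering the measure of boxes with vertices at fixed points of hyperbolic elements from intersection numbers, then using density of such fixed points and regularity of Radon measures) gives injectivity for arbitrary currents, and this is exactly how the present paper itself invokes Otal elsewhere (e.g.\ to conclude $\mu_{\Omega}=\sigma$ in Section~\ref{ex:positivelyratioed}). Your proposed self-contained alternative (upgrade the equality of intersection numbers from curves to all currents by density of multi-curves and bicontinuity of $i$, then prove the box-approximation step directly) is a correct reorganization but does not actually bypass the hard measure-theoretic step, which you honestly leave as a sketch; as a proof of a result the paper treats as known, that reliance is acceptable.
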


\subsection{Systole of a geodesic current}
\label{subsec:systole}

Given a geodesic current $\mu$ on $X$, we define the \emph{systole of $\mu$} as 
\[
\sys(\mu) \coloneqq \inf \{ i(\mu,c) : c \in \Curves(X)\}
\]

We point out that, as a function on geodesic currents with the weak$^*$-topology, $\sys$ is a continuous function (see~\cite[Corollary~1.5(1)]{BIPP21:Currents}).

Given a subsurface $Y$ of $X$ with totally geodesic boundary and $\mu$ a geodesic current with $p(\supp(\mu)) \subset Y$, we define the \emph{systole of} $\mu$ \emph{relative to} $Y$, as follows,
\begin{equation*}
\mathrm{sys}_{Y} (\mu) := \inf \{ i (\mu, c) : c \in \Curves(Y - \partial Y)\}. 
\end{equation*}

 \section{Dual space of a geodesic current}
\label{sec:currentdual}

In this section we define and prove the basic properties of the dual space of a current.

We start by recalling some facts about pseudo-metric spaces.
 A \emph{pseudo-metric} on a set $Y$ is a map $d \colon Y \times Y \to \mathbb{R}$ satisfying the symmetry and triangle inequalities. Points $x \neq y$ with $d(x,y)=0$ are allowed.
 A pseudo-metric space $Y$ with pseudo-metric $d$ has a canonical metric space quotient $Y / \sim$. It is given by the equivalence classes for the equivalence relation identifying $x$ and $y$ in $Y$ if and only if $d(x,y)=0$, and endowed with the induced metric coming from $d$. We call this the \emph{metric quotient} of $Y$. 
  In most of this paper, we will let $Y=\wt{X}$ be the universal cover of the surface $X$ equipped with the pullback metric on $X$, and the pseudo-distance will be defined from a geodesic current as defined below.
  We will decorate the points in $\wt{X}$ with an overline, as in $\overline{x}$. 

\subsection{The dual space of a geodesic current}
A geodesic current $\mu \in \Curr(X)$ induces a pseudo-distance on $\wt{X}$ given by
 \[
 d_\mu (\overline{x},\overline{y}) = \frac{1}{2} \left\{ \mu ( G[\overline{x},\overline{y})) + \mu ( G(\overline{x},\overline{y}] )\right\}.
 \]
 
 Note that the pseudo-distance $d_\mu$ is \emph{straight} (see \cite[Proposition 4.1]{BIPP21:Currents}, in the sense that it is additive on hyperbolic geodesic lines.
 Precisely, let $\overline{x}, \overline{y}, \overline{z} \in \gamma$ be three points lying in the order $\overline{x} < \overline{y} < \overline{z}$ on a hyperbolic geodesic $\gamma \subseteq \wt{X}$, then we have
 \[
 d_\mu (\overline{x}, \overline{z}) = d_\mu (\overline{x}, \overline{y}) + d_\mu (\overline{y},\overline{z}).
 \]
  A non-straight version of this pseudo-distance was first considered by Glorieux in~\cite{Glo17:CriticalExponents}.
 \begin{definition}[Dual space of a geodesic current]
 \label{def:currentdual}
 For $\overline{x}, \overline{y} \in \wt{X}$,  consider the equivalence under the pseudo-metric, $\overline{x} \sim \overline{y}$ if and only if $d_\mu (\overline{x},\overline{y}) =0$. The metric quotient $X_\mu \coloneqq \wt{X} / \sim$ will be called the \emph{dual space of the geodesic current} $\mu$.
 \end{definition}

  Given $X$, we denote by $\mathcal{D}(X)$ the set of all duals of geodesic currents on $X$.
 
  When $\mu$ is a measured lamination, then it is known that $X_\mu$ is a $0$-hyperbolic space and, hence, it can be isometrically embedded in a unique $\R$-tree $\widehat{X_\mu}$ (see also \ref{subsec:dual_tree}). It follows that $X_\mu$ can be endowed with a geodesic structure via such embedding $X_\mu \hookrightarrow \widehat{X_\mu}$.

  \begin{remark}
  In the remaining of the paper, when $\mu$ is a measured lamination, we will often denote $\widehat{X_\mu}$ simply with $\mathcal{T}(\mu)$, to emphasise that it is an $\R$-tree.
  \end{remark}

 \subsection{Geodesic structure}
\label{subsec:geodesics}

  In this subsection we explore how to define a geodesic structure on $X_\mu$ when $\mu$ is not necessarily a measured lamination, i.e. when the support of $\mu$ is allowed to have intersections. In particular, we will construct an isometric embedding $X_\mu \hookrightarrow \widehat{X_\mu}$ when $\mu$ is a multi-curve with self intersections, and when $\mu$ has no atoms at all. The mixed case, i.e., when the current has both atomic and non-atomic parts, or the case when the current is purely atomic with a non-discrete set of atoms, will not be discussed in this project and will be fleshed out in a sequel to this project with Anne Parreau.

\subsubsection{Multi-curve case}

Let now $\mu$ be a weighted multi-curve. The support of $\mu$ is union of finitely many discrete orbits of lifts of closed geodesics. In this case, the dual space $X_\mu$ is in general not an $\R$-tree, but it can still be isometrically embedded in a graph, hence in a geodesic space, as follows.
 
 Let $x , y \in X_\mu$ be two point in the dual space of $\mu$. They correspond to two subsets $R_x=\pi_{\mu}^{-1}(x) , R_y=\pi_{\mu}^{-1}(y)$ of $\wt{X}$. Note that $R_x$ and $R_y$ can be a connected component of $\wt{X} \setminus \mathrm{supp}(\mu)$, a geodesic, a geodesic segment, or just a point.
 
 \begin{definition}
 \label{def:adjacent}
 We say that $x$ and $y$ are \emph{adjacent} in $X_\mu$ if there exist $\overline{x} \in\pi_{\mu}^{-1}(x)$, $\overline{y} \in \pi_{\mu}^{-1}(y)$ and a geodesic segment $[\overline{x}, \overline{y}]$ in $\wt{X}$ that doesn't intersect transversely any geodesic of $\mathrm{supp}(\mu)$ in its interior $(\overline{x}, \overline{y})$, and intersects lifts of closed geodesics only in one of its endpoints $\overline{x}$ or $\overline{y}$.
 \end{definition}

 In Figure \ref{fig:adj} we can see a `zoomed-in' of a configuration of geodesics in the support of $\mu$ in $\wt{X}$. According to definition \ref{def:adjacent} the point $\pi_\mu (x)$ is adjacent to $\pi_\mu (y)$, but not adjacent to $\pi_\mu (z)$. The point $\pi_\mu (v)$ is adjacent to $\pi_\mu (w)$ but not adjacent to $\pi_\mu (x), \pi_\mu (y)$ or $\pi_\mu (z)$.  Note that the additional condition that $[\overline{x}, \overline{y}]$ intersects in only one of its endpoint lifts of closed geodesics is motivated by the fact that we want the region corresponding to $\pi_\mu (x)$ to be adjacent to the points $\pi_\mu (y)$ and $\pi_\mu (u)$ corresponding to boundary  geodesics, but we don't want $\pi_\mu (y)$ and $\pi_\mu (u)$ to be adjacent.

 \begin{figure}[htbp]
\begin{center}
\includegraphics[scale=0.2]{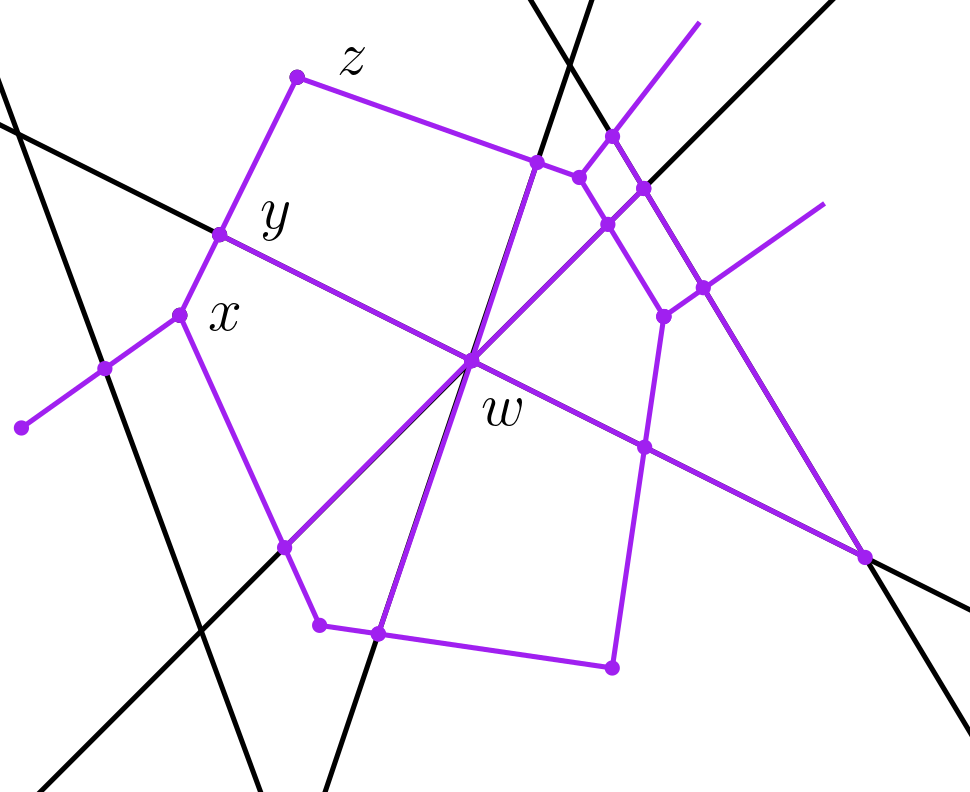}
\caption{}
\label{fig:adj}
\end{center}
\end{figure}

If $v_1$ and $v_2$ in $X_\mu$ are adjacent, we add an edge $e_{1,2}$ joining them of length $d_\mu (v_1, v_2)$. This defines a graph $\widehat{X_{\mu}}$ that has as vertices the points of $X_\mu$, and edges realising the adjacency between points. This embeds $X_{\mu}$ isometrically into a connected graph $\widehat{X_{\mu}}$, and hence a geodesic space. We stress that this construction is by no means canonical, since the notion of adjacency could be defined in many geometrically meaningful different ways.
In the next Example \ref{ex:dual_pair_intersecting_curves} we will see the geometric realisation $\widehat{X_\mu}$ of a current $\mu = \alpha + \beta$ supported on a pair of intersecting simple closed curves.

\begin{example}
\label{ex:dual_pair_intersecting_curves}

Consider the 1-punctured torus $X= X_{1,1}$ and the current $\mu = \alpha + \beta$ whose support is given by the two orthogonally intersecting simple closed geodesics $\alpha$ and $\beta$. Up to pre-composing with an isometry of $\wt{X}$, the lift in $\wt{X}$ of $\alpha$ and $\beta$ in yellow and green respectively, is as in Fig. \ref{fig:dual}.  In that figure we also see the points of $X_\mu$, i.e the equivalence classes with respect to the pseudo-distance $d_\mu$.

Notice that each point in $X_\mu$ corresponds to either a geodesic segment or to an unbounded complementary region. Finally, we can embed $X_\mu$ in its geometric realisation graph $\widehat{X_\mu}$, which is sketched in purple in Figure~\ref{fig:dual}.

\begin{figure}[h]
\begin{center}
        \includegraphics[width=0.55\textwidth]{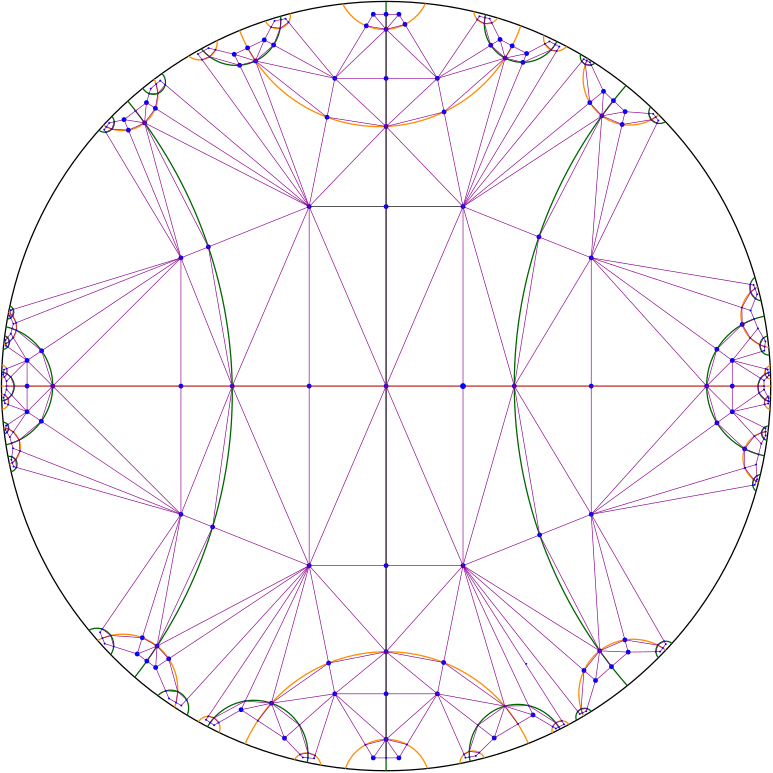} 
        \caption{A sketch of $\widehat{X_\mu}$ when $\mu$ is a union of two intersecting simple closed curves $\alpha$ and $\beta$ in a once-punctured torus. The orbits $\Gamma   \tilde{\alpha}$ and $\Gamma \tilde{\beta}$ are denoted in green and yellow, respectively. The support of $\mu$ is precisely the union of the two orbits. The blue dots represent the points of $X_\mu$.}
        \label{fig:dual}
\end{center}
\end{figure}
\end{example}

 \subsubsection{Non-atomic case}
Assuming $\mu$ has no atoms, we will show $X_{\mu}$ is a geodesic space.
 
  A metric space $(X,d)$ is called \emph{Menger convex} if, for every $x,y \in X$, there exists $z \in X$ so that $d(x,z)=d(y,z)=\frac{1}{2}d(x,y)$.
  The following Lemma can be found  in~\cite[Theorem~2.6.2]{Papadopoulos14:MetricConvexity}.
  \begin{lemma}
  Let $X$ be a proper metric space. $X$ is geodesic if and only if it is Menger convex.
  \label{lem:menger}
  \end{lemma}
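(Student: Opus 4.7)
The forward direction is essentially trivial and requires no properness: if $\gamma \colon [0, d(\overline{x},\overline{y})] \to X$ is a unit-speed geodesic from $x$ to $y$, then $z = \gamma(d(x,y)/2)$ satisfies $d(x,z) = d(y,z) = d(x,y)/2$, so $X$ is Menger convex. The content is entirely in the converse, and the plan is the classical dyadic midpoint construction.

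Fix $x, y \in X$ and let $L = d(x,y)$. Using Menger convexity, pick a midpoint $z_{1/2}$ with $d(x, z_{1/2}) = d(z_{1/2}, y) = L/2$. Now iterate on each half: Menger convexity applied to the pairs $(x, z_{1/2})$ and $(z_{1/2}, y)$ produces midpoints $z_{1/4}$ and $z_{3/4}$, and the triangle inequality forces the five distances $d(z_{k/4}, z_{(k+1)/4})$ to equal $L/4$ (otherwise one could violate $d(x,y) = L$). Continuing inductively, one builds a map $\varphi_D \colon D \to X$ on the dyadic rationals $D \subset [0,1]$ with $d(\varphi_D(s), \varphi_D(t)) = L \cdot |s - t|$ for all $s, t \in D$.

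Properness now provides the extension to a continuous map $\varphi \colon [0,1] \to X$. Since $\varphi_D$ is $L$-Lipschitz and $D$ is dense in $[0,1]$, for any $t \in [0,1]$ and any sequence $(t_n) \subset D$ with $t_n \to t$, the images $(\varphi_D(t_n))$ form a Cauchy sequence lying in the closed ball of radius $L$ about $x$. By properness this ball is compact and hence complete, so the limit $\varphi(t) := \lim_n \varphi_D(t_n)$ exists; two such sequences yield the same limit by the Lipschitz estimate. Passing to the limit in $d(\varphi_D(s_n), \varphi_D(t_n)) = L|s_n - t_n|$ gives $d(\varphi(s), \varphi(t)) = L|s-t|$ for all $s, t \in [0,1]$, so $\varphi$ is a geodesic from $x$ to $y$.

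The only mildly delicate point is verifying that each inductive step of the dyadic construction produces \emph{exact} equalities of distances at the new scale, rather than mere Menger-convex midpoints whose cumulative distances could shrink; this is forced by the triangle inequality together with the already-established equalities at the previous scale. Everything else is bookkeeping, and properness enters in a single clean place to guarantee that the Cauchy limits defining $\varphi$ exist.
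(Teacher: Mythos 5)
Your proof is correct, and it is the standard dyadic-midpoint argument; the paper does not prove this lemma itself but cites it to Papadopoulos (Theorem~2.6.2 of his book on metric convexity), where essentially this same construction appears. The only cosmetic slip is that the four consecutive distances at scale $L/4$ (not five) are already equal to $L/4$ by the definition of midpoint, and the triangle inequality is what you need for the \emph{non-consecutive} pairs such as $d(z_{1/4}, z_{3/4}) = L/2$; your properness step (closed balls compact, hence complete, so the Lipschitz map on dyadics extends) is exactly right.
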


  \begin{proposition}
  If $\mu$ is a geodesic current without atoms, then $X_{\mu}$ is a geodesic metric space.
  \label{prop:geodesic}
  \end{proposition}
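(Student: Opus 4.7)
My plan is to construct a geodesic in $X_\mu$ between any two points directly, by projecting a $\wt{X}$-geodesic and reparameterizing by $\mu$-length. Given $x, y \in X_\mu$, pick lifts $\bar{x}, \bar{y} \in \wt{X}$, let $\gamma \colon [0,L] \to \wt{X}$ be the unique arc-length $\wt{X}$-geodesic joining them (uniqueness is guaranteed by our standing assumption on $\wt{X}$), set $D = d_\mu(\bar{x}, \bar{y})$, and define $f \colon [0,L] \to [0, D]$ by $f(t) = d_\mu(\bar{x}, \gamma(t))$. Straightness of $d_\mu$ gives that $f$ is non-decreasing, with $f(0)=0$ and $f(L)=D$, and that $d_\mu(\gamma(s),\gamma(t)) = f(t) - f(s)$ for $s \le t$.

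The core step is showing that $f$ is continuous. Applying continuity of $\mu$ from above and below to the nested families $G[\bar{x}, \gamma(t))$ and $G(\bar{x}, \gamma(t)]$ as $t \to t_0$, a direct computation yields that the right-jump and the left-jump of $f$ at $t_0$ are both equal to $\tfrac{1}{2}\mu(P(\gamma(t_0)))$, where $P(p) \subseteq \mathcal{G}(\wt{X})$ denotes the pencil of geodesics through $p$. Hence $f$ is continuous if and only if $\mu(P(p)) = 0$ for every interior point $p$ along $\gamma$. To deduce this pencil-vanishing from the no-atoms hypothesis, I would argue: parameterizing $P(p)$ by $\partial\wt{X}$ via the endpoint map, the restriction $\mu|_{P(p)}$ pushes forward to a finite Borel measure on $\partial \wt{X}$ that has no atoms (any such atom would correspond to a positive-$\mu$ individual geodesic through $p$). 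Combining this with the $\Gamma$-invariance $\mu(P(gp)) = \mu(P(p))$, the fact that distinct pencils $P(p)$ and $P(gp)$ share at most one geodesic (a $\mu$-null set), and the local finiteness of $\mu$ on boxes of geodesics containing many such translates, one rules out $\mu(P(p)) > 0$.

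Granted continuity of $f$, it is a non-decreasing continuous surjection from $[0,L]$ onto $[0,D]$. For each $s \in [0, D]$, pick any $t_s \in f^{-1}(s)$ and set $\sigma(s) = \pi_{\mu}(\gamma(t_s))$. This is well-defined, since any $t_1 \le t_2$ in $f^{-1}(s)$ satisfy $d_\mu(\gamma(t_1), \gamma(t_2)) = f(t_2) - f(t_1) = 0$ and so have the same image in $X_\mu$. For $s_1 < s_2$, choosing $t_1 \le t_2$ in the respective fibers gives $d_\mu(\sigma(s_1), \sigma(s_2)) = f(t_2) - f(t_1) = s_2 - s_1$. Therefore $\sigma \colon [0, D] \to X_\mu$ is an isometric embedding with $\sigma(0) = x$ and $\sigma(D) = y$, i.e.\ a geodesic from $x$ to $y$ in $X_\mu$.

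The principal obstacle is the pencil-vanishing claim, which does not follow formally from the mere absence of $0$-dimensional atoms: a non-atomic measure on the $2$-dimensional space $\mathcal{G}(\wt{X})$ can a priori charge a $1$-dimensional submanifold. The proof must genuinely exploit the combinatorics of pencils at the $\Gamma$-translates of $p$ together with local finiteness. Unlike the analogous pencil statement at boundary points (Lemma~\ref{lem:atompencil}), one cannot directly appeal to the north-south dynamics of a single element of $\Gamma$, since $\Gamma$ acts freely on $\wt{X}$; this is what makes the interior-pencil estimate the delicate part of the argument.
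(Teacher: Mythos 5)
Your route is genuinely different from the paper's. The paper does not construct geodesics explicitly: it notes that $\pi_{\mu}$ is continuous (Proposition~\ref{prop:continuousproj}), so images of $\wt{X}$-geodesics are connected, deduces Menger convexity, and then invokes Lemma~\ref{lem:menger} (Menger convex plus proper implies geodesic), with properness supplied by Proposition~\ref{prop:properspace}. Your direct construction --- projecting the $\wt{X}$-geodesic and reparameterizing by the monotone function $f(t)=d_{\mu}(\overline{x},\gamma(t))$ --- is sound in outline and arguably buys more: Proposition~\ref{prop:properspace} assumes $\mu$ is filling, a hypothesis absent from the statement being proved, whereas your argument needs neither properness nor completeness and yields an explicit isometric embedding of $[0,D]$ for any non-atomic current. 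The reduction of continuity of $f$ to the vanishing of $\mu$ on pencils $P(p)$ is computed correctly (though you need it at the endpoints $\overline{x},\overline{y}$ as well, not only at interior points), and the final reparameterization and well-definedness arguments are fine.

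The one genuine gap is the pencil-vanishing claim itself, which you rightly identify as the crux but whose sketch does not close. The ``almost-disjoint $\Gamma$-translates versus local finiteness'' idea fails because the pencils $P(gp)$ leave every compact subset of $\mathcal{G}(\wt{X})$ as $g\to\infty$: only the sub-pencil of geodesics through $gp$ that also meet a fixed compact $K\subset\wt{X}$ lies in the compact set $G(K)$, and the $\mu$-masses of these shrinking sub-pencils can a priori form a convergent series, so no contradiction with $\mu(G(K))<\infty$ arises. The fact you need is exactly Proposition~\ref{prop:zeromeasure}: viewing $\mu$ as a flow-invariant measure on $T^{1}X$ and applying a recurrence/countability argument, the portion of the fiber over $p$ carrying the mass is shown to be countable, hence $\mu$-null once $\mu$ has no atoms (the hypothesis there that $p$ avoid axes of nontrivial elements is only needed to rule out atomic contributions). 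Citing that proposition --- as the paper itself does at the analogous point in the proof of Proposition~\ref{prop:contdistance} --- closes your argument.
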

\begin{proof}
Since $\mu$ has no atoms, by Proposition~\ref{prop:continuousproj}, $\pi_{\mu}$ is continuous.
First, we assume that $\mu$ is filling in $X$, then by Proposition~\ref{prop:properspace}, 
$X_{\mu}$ is proper. Thus, it suffices to check Menger convexity, by Lemma~\ref{lem:menger}. 
Given any two points $x,y \in X_{\mu}$, let $\overline{x} \in \pi_{\mu}^{-1}(x)$ and $\overline{y} \in \pi_{\mu}^{-1}(y)$, and let $I$ be the geodesic segment connecting $\overline{x}$ and $\overline{y}$. $\pi_{\mu}(I)$ is connected, and thus there exists $\overline{z} \in I$ so that $z=\pi_{\mu}(\overline{z})$ satisfies the condition of the statement.
The proof for the general case will use the tree graded structure discussed in Section~\ref{sec:decomposition}.
Let $X_i$, for $i=1,\cdots,k$, denote the subsurfaces in the structural decomposition of $\mu$ (in the sense of Theorem~\ref{thm:structurecurrents}).
If no components of the decomposition of $\mu$ are of type 1 (i.e., filling within $X_i$), then $X_{\mu}$ is equivariantly isometric to an $\mathbb{R}$-tree by Lemma~\ref{lem:isom_trees}. We can thus pullback the geodesics using this isometry.
If any of its structural components is of type 1, we proceed as follows.
Given any two points $x,y \in X_{\mu}$, let $\overline{x} \in \pi_{\mu}^{-1}(x)$ and $\overline{y} \in \pi_{\mu}^{-1}(y)$, and let $I$ be the geodesic segment connecting $\overline{x}$ and $\overline{y}$. We partition $I$ at points $\overline{x_1}, \overline{x_2}, \cdots, \overline{x_{n-1}} \in I$, and set $\overline{x_0} \coloneqq \overline{x}$ and $\overline{x_n} \coloneqq \overline{y}$, so that each subinterval $I_j \coloneqq [\overline{x_j}, \overline{x_{j+1}}]$ is contained in $R_j$, the closure of $\wt{X_j}$ (a lift of a subsurface $X_i$ in the universal cover $\wt{X}$).
There is a subgroup $\Gamma_j$ of $\pi_1(X)$ isomorphic to $\pi_1(X_i)$, acting on $\wt{X_j}$ cocompactly. 
Let $\mu_i$ be a component of $\mu$ which is filling in $X_i$.
Restricting $\mu_i$ to $\wt{X_j}$, we get a measure $\mu_j$ invariant under the action of $\Gamma_j$. Since $\mu_i$ is filling in $X_i$, and has no atoms, $\wt{X_j}/\{d_{\mu_j}=0\}$ is proper by Proposition~\ref{prop:properspace}, so $[\overline{x_j},\overline{x_{j+1}}]$ is geodesic by the same argument using Menger convexity.
If $\mu_j$ is of type 2, then $\wt{X_j}/\{d_{\mu_j}=0\}$ is isometric to an $\mathbb{R}$-tree.
Finally, by Theorem~\ref{thm:metric_tree_graded}, $\wt{X_j}/\{d_{\mu_j}=0\}$ correspond to the pieces of the tree graded structure of $X_{\mu}$. We have proven that $\pi_{\mu}(I)$ is a concatenation of geodesics within the pieces of the tree graded structure and segments of the transverse tree (if the subcurrent $\mu_j$ is of type 3).
Thus, by Proposition~\ref{prop:geodesictreegraded}, is a geodesic compatible with the tree graded structure. This endows $X_{\mu}$ with a geodesic structure.
\end{proof}

 It would be interesting to see which conditions to impose on $\mu$ in order to obtain sharper convexity properties. We say that a $\delta$-hyperbolic space with midpoints is \emph{convex} if, for every triple of points $p,q,r$, if $m_1$ denotes the midpoint between $p$ and $q$ and $m_2$ denotes the midpoint between $p$ and $r$, then we have $d(m_1,m_2) \leq \frac{1}{2} d(q,r)$. What are the conditions on $\mu$ ensuring that $X_{\mu}$ is convex? Convexity properties of this type (and stronger) are useful to guarantee sequential pre-compactness in the setting of the equivariant Gromov-Hausdorff topology as well as separation properties for this topology (see Section~\ref{sec:topology} and~\cite[Chapter~4]{Pau88:Thesis}).

  \subsection{Dependence on the metric structure}

\label{subsec:dependence}
To define the space of geodesic currents $\Curr(X)$, we fixed a $\delta$-hyperbolic geodesic structure $X$ on $S$.
Given a homeomorphism between two different structures $f \colon X \to X'$, we get a homeomorphism $\wt{f} \colon \partial \wt{X} \to \partial \wt{X'}$ by~\cite[Lemma~3.7]{BC88:AutomorphismsSurfaces} extending $f$ to the boundary. This induces a homeomorphism between the corresponding spaces of geodesics and, by pushforward of measures, 
 induces a homeomorphism $\wt{f}_* \colon \Curr(X) \to \Curr(X')$.
 In particular, this induces an action of the mapping class group on $\Curr(X)$.
 For any two structures $X$, we have a commutative diagram of bijections

 \[
  \begin{tikzpicture}[x=2.25cm,y=1.25cm]
    \node (simples) at (0,0) {$\Curr(X)$};
    \node (laminations) at (1,0) {$\mathcal{D}(X)$};
    \node (curves) at (0,-1) {$\Curr(X')$};
    \node (currents) at (1,-1) {$\mathcal{D}(X')$.};
    \draw[-to] (simples) to (curves);
    \draw[-to] (simples) to (laminations);
    \draw[-to] (curves) to (currents);
    \draw[-to] (laminations) to (currents);
  \end{tikzpicture}
\]
The left vertical arrow is a homeomorphism from the above paragraph.

From Theorem~\ref{thm:homeo}, it will follow that, with respect to the equivariant Gromov-Hausdorff topology on $\mathcal{D}(X)$ (see Section~\ref{sec:topology}), the two horizontal maps are continuous injections.
Observe that the isometry class of $X_{\mu}$ does depend on the choice of the underlying hyperbolic structure $X$.
However, for a fixed (unweighted) multi-curve $\mu$ the dual spaces $X_{\mu}$, for different choices of hyperbolic structure $X$, embed as 2-dimensional faces of a $\CAT(0)$-cube complex $\mathcal{S}(\mu)$ independent on the choice of hyperbolic structure: the \emph{Sageev complex} (see~\cite[3]{AG17:Dualcubecomplex} for a description in this setting, and~\cite{CN05:Walls,S01:Cube1,S02:Cube2} in bigger generality). It would be interesting to see if there is an analogous construction of such complex for an arbitrary geodesic current.

 \subsection{Measured wall spaces}

We point out that the dual space of a geodesic current is an example of a measured wall space, in the sense of~\cite{CFV04:MeasuredWall}.

 Given a set $X$, a \emph{wall} of $X$ is a partition $X = h \cup h^c$ where $h$ is any subset of $X$ and $h^c$ denotes its complement. A collection $\mathcal{H}$ is called a collection of half-spaces if for every $h \in \mathcal{H}$ the complementary subset $h^c$ is also in $\mathcal{H}$. Let $\mathcal{W}_{\mathcal{H}}$ denote the collection of pairs $w=(h,h^c)$ with $h \in \mathcal{H}$. We say that $h$ and $h^c$ are the two half-spaces bounding the wall $w$.
 We say that a wall $w=(h,h^c)$ separates two disjoint subsets $A,B$ in $X$ if $A \subset h$ and $B \subset h^c$ or vice-versa and denote by $\mathcal{W}(A|B)$ the set of walls separating $A$ and $B$. In particular, $W(A| \emptyset)$ is the set of walls $w=(h,h^c)$ such that $A \subset h$ or $A \subset h^c$, hence $\mathcal{W}(\emptyset | \emptyset)=\mathcal{W}$.
 We use $\mathcal{W}(x|y)$ to denote $\mathcal{W}(\{ x \} | \{ y \})$.
 
 \begin{definition}[Space with measured walls]
 A space with measured walls is a 4-tuple $(X,\mathcal{W},\mathcal{A},\mu)$ where $\mathcal{W}$ is a collection of walls, $\mathcal{A}$ is a $\sigma$-algebra of subsets in $\mathcal{W}$ and $\mu$ is a measure on $\mathcal{A}$ so that for every two points $x, y \in X$, the set of separating walls $\mathcal{W}(x|y)$ is in $\mathcal{A}$ and has finite measure. We let $d_{\mu}(x,y)=\mu(\mathcal{W}(x|y))$, and we call it the wall-pseudo metric.
 \end{definition}

It is easy to see that $(\wt{X},d_{\mu})$ is a measured wall space on $\wt{X}$, where $\mathcal{W}$ are oriented geodesics of $\wt{X}$, $\mathcal{A}$ is the Borel $\sigma$-algebra of oriented geodesics, and $W \subset \mathcal{W}$, the measure on walls $\mu'$ is given by $\mu' \coloneqq \frac{1}{2}\mu(W)$. This normalization factor is introduced to account for the fact that we get two copies of each geodesic in the support of the current in the space of walls.

 \section{Continuity of the projection}
\label{sec:semicontinuity}
In this section we study the semicontinuity and continuity properties of the natural metric quotient projection map $\pi_{\mu} \colon \wt{X} \to X_{\mu}$.

\subsection{Measure theory results}

We recall some measure theory results that will be used in this section.
Given a sequence of subsets $(A_n)$, $A_n \subset X$,
we have the following definitions:
\begin{equation}
\liminf_n A_n \coloneqq \cup_{n\geq 1} \cap_{j \geq n} A_j
\label{eq:liminf}
\end{equation}
and
\begin{equation}
\limsup_n A_n \coloneqq \cap_{n\geq 1} \cup_{j \geq n} A_j.
\label{eq:limsup}
\end{equation}
The Morgan laws immediately give the following identity
\begin{equation}
\liminf_n A_n = \left( \limsup_n A_n^c \right)^c.
\label{eq:complements}
\end{equation}

The following result is standard and can be found, for example, in~\cite[Theorem~D]{Ham50:Measure} and~\cite[Theorem~E]{Ham50:Measure}.

\begin{lemma}
If $(A_n)$ is a sequence of measurable sets in $X$, so that $A_{n+1} \subseteq A_n$, then for any measure $\mu$ on $X$, we have
\[
\lim_n \mu(A_n) = \mu( \cap_n A_n).
\]
This property is called \emph{continuity of measures from below}.
If $(A_n)$ is a sequence of measurable sets in $X$, so that $A_{n} \subseteq A_{n+1}$ so that, for some $N>0$, $\mu(A_n) < \infty$ for $n \geq N$, then for any measure $\mu$ on $X$, we have
\[
\lim_n \mu(A_n) = \mu( \cup_n A_n).
\]
This property is called \emph{continuity of measures from above}.
\label{lem:cont_measures}
\end{lemma}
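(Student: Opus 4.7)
The plan is to prove both parts of the lemma by the classical disjointification argument combined with countable additivity of $\mu$. I would begin by remarking that, as stated, the first assertion tacitly needs a finiteness hypothesis (say $\mu(A_N) < \infty$ for some $N$) to avoid trivial counterexamples such as $A_n = [n,\infty)$ under Lebesgue measure, whereas the second assertion holds with no finiteness hypothesis at all; I would treat both cases under the minimal hypotheses that make them true.

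For the first (decreasing) assertion, fix $N$ with $\mu(A_N) < \infty$ and define the disjoint family $B_n := A_n \setminus A_{n+1}$ for $n \geq N$. Since $A_{n+1} \subseteq A_n$, we have the telescoping identity
\[
A_m \;=\; \Bigl(\bigcap_k A_k\Bigr) \;\sqcup\; \bigsqcup_{n \geq m} B_n
\]
for every $m \geq N$. Countable additivity gives $\mu(A_N) = \mu(\bigcap_k A_k) + \sum_{n \geq N}\mu(B_n)$, and the series converges by finiteness of $\mu(A_N)$. Applying the same identity at level $m$ and subtracting the tail yields
\[
\mu(A_m) - \mu\Bigl(\bigcap_k A_k\Bigr) \;=\; \sum_{n \geq m} \mu(B_n) \;\longrightarrow\; 0
\]
as $m \to \infty$, which is the desired conclusion.

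For the second (increasing) assertion, set $C_1 := A_1$ and $C_n := A_n \setminus A_{n-1}$ for $n \geq 2$. The $C_n$ are pairwise disjoint with $A_m = \bigsqcup_{n=1}^{m} C_n$ and $\bigcup_n A_n = \bigsqcup_n C_n$, so countable additivity gives
\[
\mu\Bigl(\bigcup_n A_n\Bigr) \;=\; \sum_n \mu(C_n) \;=\; \lim_m \sum_{n=1}^m \mu(C_n) \;=\; \lim_m \mu(A_m).
\]

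There is essentially no obstacle: both claims are direct consequences of $\sigma$-additivity once the nested sequence is telescoped into a disjoint union. The only conceptual subtlety is recognizing that the finiteness hypothesis is genuinely needed in the decreasing case (to make the subtraction step above well-defined) and is superfluous in the increasing case, so the statement should be read with that caveat in mind.
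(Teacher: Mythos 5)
Your proof is correct, and it is the standard disjointification argument; the paper itself gives no proof of this lemma, only a citation to Halmos (Theorems D and E of \emph{Measure Theory}), so there is nothing to compare at the level of method. What is genuinely valuable in your write-up is the observation about the hypotheses: as printed, the lemma attaches the finiteness assumption to the increasing case (where it is superfluous, since if some $\mu(A_n)=\infty$ both sides are $\infty$) and omits it from the decreasing case (where it is essential, as your example $A_n=[n,\infty)$ with Lebesgue measure shows). Your subtraction step $\mu(A_m)-\mu\bigl(\bigcap_k A_k\bigr)=\sum_{n\geq m}\mu(B_n)$ is exactly where that finiteness is consumed. One should check that the paper's later invocations of the decreasing case remain valid: in Lemma~\ref{lem:atompencil} and Lemma~\ref{lem:shrinkingatom} the nested sets are contained in compact (or precompact) sets of geodesics, so their $\mu$-measures are finite and the corrected hypothesis is satisfied; the same holds for the application inside Lemma~\ref{lem:fatou}, where the relevant union is assumed to have finite measure. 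A minor further point: the names are swapped relative to the usual convention (decreasing sequences are ordinarily ``continuity from above''), but this is purely terminological and does not affect any argument.
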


\begin{lemma}
Let $\mu$ be a (non-necessarily finite) measure on a measurable space $X$, and let $(A_n)$ be a sequence of measurable sets.
Then
\[
\liminf_n \mu(A_n) \geq \mu(\liminf_n A_n).
\]
Moreover, if for some $n$, $\mu \left( \cup_{j \geq n} A_j \right)$ is finite, we have
\[
\limsup_n \mu(A_n) \leq \mu(\limsup_n A_n).
\]
\label{lem:fatou}
\end{lemma}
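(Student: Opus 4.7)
The plan is to reduce both inequalities to the monotone continuity statements for measures already recorded as Lemma~\ref{lem:cont_measures}, which is the standard route for the set-theoretic Fatou lemma. The proof is purely measure-theoretic and makes no use of the structure of $\mathcal{G}(\wt{X})$; the roles of $\liminf$ and $\limsup$ are dual under complementation as recalled in \eqref{eq:complements}, but here I will treat the two inequalities separately, since the finiteness hypothesis only intervenes in the second.

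For the first inequality, I would set $B_n \coloneqq \bigcap_{j \geq n} A_j$. By construction $(B_n)$ is an increasing sequence of measurable sets with $\bigcup_n B_n = \liminf_n A_n$, so continuity of measures from below (the first half of Lemma~\ref{lem:cont_measures}) yields $\mu(\liminf_n A_n) = \lim_n \mu(B_n)$. Since $B_n \subseteq A_n$, monotonicity of $\mu$ gives $\mu(B_n) \leq \mu(A_n)$ for every $n$, and taking $\liminf$ on both sides produces
\[
\mu(\liminf_n A_n) \;=\; \lim_n \mu(B_n) \;\leq\; \liminf_n \mu(A_n),
\]
as desired.

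For the second inequality, I would dually set $C_n \coloneqq \bigcup_{j \geq n} A_j$. The sequence $(C_n)$ is decreasing with $\bigcap_n C_n = \limsup_n A_n$, and the hypothesis that $\mu(C_{n_0}) < \infty$ for some $n_0$ is exactly the finite-measure condition required to apply continuity of measures from above (the second half of Lemma~\ref{lem:cont_measures}) to the tail sequence $(C_n)_{n \geq n_0}$. This gives $\mu(\limsup_n A_n) = \lim_n \mu(C_n)$. Since $A_n \subseteq C_n$, monotonicity yields $\mu(A_n) \leq \mu(C_n)$, and taking $\limsup$ on both sides,
\[
\limsup_n \mu(A_n) \;\leq\; \lim_n \mu(C_n) \;=\; \mu(\limsup_n A_n).
\]

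There is no real obstacle here; the only point to be careful about is that in the $\limsup$ case one genuinely needs the finiteness hypothesis, both to justify the application of continuity from above and to ensure that $\limsup_n \mu(A_n)$ is a sensible finite quantity to bound. Without that hypothesis the inequality can fail, as the classical example of $A_n$ being alternately two disjoint sets of infinite measure shows. Once this is noted, the proof reduces to two short displays as above.
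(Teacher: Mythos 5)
Your proof is correct and follows essentially the same route as the paper's: for the first inequality both arguments set $B_n = \bigcap_{j \geq n} A_j$, apply continuity of measures along the increasing sequence $(B_n)$, and compare with $\mu(A_n)$ by monotonicity; for the second, the paper likewise reduces to continuity along the decreasing tails $C_n = \bigcup_{j \geq n} A_j$ under the stated finiteness hypothesis (which you spell out in more detail than the paper does). Nothing to correct.
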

\begin{proof}
For the first inequality, let  $B_n \coloneqq \cap_{j \geq n} A_j$.
Note that $B_{n} \subset B_{n+1}$, and thus 
by continuity of measures from below~\ref{lem:cont_measures}, we get
\[
\mu(\lim_n B_n) = \lim_n \mu(B_n).
\]
Note that
\[
\lim_n B_n = \cup_{n \geq 1} B_n = \liminf_n A_n. 
\]
Since $\cap_{j \geq n} A_n \subset A_k$ for $k \geq n$, we have
\[
\mu \left( \cap_{j \geq n} A_n \right) \subset \inf \{ \mu(A_k) \colon k \geq n \}.
\]
Thus, letting $n \to \infty$, we get
\[
\mu \left( \liminf A_n \right) \leq \liminf_n \mu(A_n).
\]
The other inequality follows from continuity of measures from above~\ref{lem:cont_measures}, where the hypothesis of $\mu \left( \cup_{j \geq n} A_j \right)$ being finite for some $n$ is crucial.
\end{proof}

The following result will play a crucial role in understanding the behaviour of $d_{\mu}$ in the presence of atoms.

\begin{proposition}
Let $X=\Gamma \backslash \wt{X}$ be a hyperbolic structure. Let $G[\bar{x}]$ denote the set of geodesics through $\bar{x} \in \wt{X}$.
If $\bar{x}$ does not lie on the axis of an element $g \in \Gamma - \{ e\}$, then
\[
\mu(G[\overline{x}])=0.
\]
\label{prop:zeromeasure}
\end{proposition}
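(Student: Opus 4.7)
The plan is to decompose $\mu = \mu_{\mathrm{at}} + \mu_{\mathrm{na}}$ into atomic and non-atomic parts (both $\Gamma$-invariant) and handle each separately. The atomic case is immediate from Lemma~\ref{lem:discreteatoms}: every atom of $\mu$ is the axis of some $g \in \Gamma - \{e\}$, and since by hypothesis $x$ lies on no such axis, $\mu_{\mathrm{at}}(G[x]) = 0$. It therefore suffices to show $\mu_{\mathrm{na}}(G[x]) = 0$, which I would argue by contradiction by assuming $c := \mu_{\mathrm{na}}(G[x]) > 0$.

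Fix any $g \in \Gamma - \{e\}$, automatically hyperbolic since $\Gamma$ is torsion-free Fuchsian, with attracting fixed point $g^+ \in \partial \wt{X}$, and set $A_n := G[g^n x]$. By $\Gamma$-invariance, $\mu_{\mathrm{na}}(A_n) = c$ for every $n \in \mathbb{Z}$. The geometric key is that the orbit $\{g^n x\}_{n \in \mathbb{Z}}$ lies on the hypercycle about the axis of $g$ passing through $x$, which is not itself a geodesic; hence any hyperbolic geodesic meets this hypercycle in at most two points, so each $\eta \in \mathcal{G}(\wt{X})$ belongs to at most two of the $A_n$'s. The set-theoretic $\limsup_n A_n$ is therefore empty.

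To leverage this, fix an open neighborhood $V$ of $g^+$ in $\partial \wt{X}$ with $g^- \notin \overline{V}$, let $W := \partial \wt{X} \setminus V$, and split $A_n = A_n^{(1)} \sqcup A_n^{(2)}$ depending on whether at least one endpoint lies in $W$ or both endpoints lie in $V$. A direct computation in the upper half-plane (with the axis of $g$ placed along the imaginary axis) shows that as $n \to \infty$, the geodesic through $g^n x$ with prescribed endpoint $b \in W$ has its other endpoint converging uniformly (in $b \in W$) to $g^+$; consequently $A_n^{(1)} \subset (W \times V_0) \cup (V_0 \times W)$ for some smaller neighborhood $V_0 \subset V$ of $g^+$ and all $n \geq N_0$, a compact subset of $\mathcal{G}(\wt{X})$ of finite $\mu_{\mathrm{na}}$-measure. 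Combined with $\limsup_n A_n^{(1)} = \emptyset$, Fatou's lemma for $\limsup$ yields $\limsup_n \mu_{\mathrm{na}}(A_n^{(1)}) = 0$. On the other hand $A_n^{(2)} \subset (V \times V) \cap \mathcal{G}(\wt{X})$, and as $V$ collapses to $\{g^+\}$ this intersection decreases to $\emptyset$ because its only diagonal limit point $(g^+, g^+)$ is excluded from $\mathcal{G}(\wt{X})$, so continuity of measure from above (Lemma~\ref{lem:cont_measures}) forces $\mu_{\mathrm{na}}((V \times V) \cap \mathcal{G}(\wt{X})) \to 0$. Given $\epsilon > 0$, I would choose $V$ small enough that the second summand is $< \epsilon/2$ and then $n \geq N_0$ large enough that the first is $< \epsilon/2$
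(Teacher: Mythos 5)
Your reduction to the non-atomic part is fine (atoms are lifts of closed geodesics, hence axes, so $\mu_{\mathrm{at}}(G[x])=0$), the hypercycle observation correctly gives $\limsup_n A_n=\emptyset$, and the estimate $\limsup_n\mu_{\mathrm{na}}(A_n^{(1)})=0$ is sound. The argument breaks at the very last step, where you bound $\mu_{\mathrm{na}}(A_n^{(2)})$ by $\mu_{\mathrm{na}}\bigl((V\times V)\cap\mathcal{G}(\wt{X})\bigr)$ and claim this tends to $0$ as $V$ collapses to $\{g^+\}$ by continuity of measure along a decreasing family. That continuity statement requires the sets in the family to have finite measure, and here they do not: $(V\times V)\cap\mathcal{G}(\wt{X})$ accumulates on the diagonal, so local finiteness gives no control, and for the Liouville current (non-atomic, and a current to which the proposition must apply) the set of geodesics with both endpoints in a nondegenerate interval $I$ has measure $\int_{\{s<t\}\cap I^2}(t-s)^{-2}\,ds\,dt=\infty$. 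Worse, the quantity you actually need to control does not become small: by $\Gamma$-invariance, $\mu_{\mathrm{na}}(A_n^{(2)})=\mu_{\mathrm{na}}\bigl(G[x]\cap(g^{-n}V\times g^{-n}V)\bigr)$, and if $V$ is an attracting neighborhood of $g^+$ then $g^{-n}V$ increases to $\partial\wt{X}\setminus\{g^-\}$, so $\mu_{\mathrm{na}}(A_n^{(2)})\to\mu_{\mathrm{na}}(G[x])-\mu_{\mathrm{na}}(G[x]\cap P(g^-))=c$, the pencil term vanishing by Lemma~\ref{lem:atompencil} since $\mu_{\mathrm{na}}$ has no atoms. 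In other words, for any fixed $V$ all the mass of $A_n$ migrates into $A_n^{(2)}$ as $n\to\infty$; your bound on $A_n^{(1)}$ is correct but captures none of it, and no choice of $V$ makes the second summand uniformly small in $n$ unless $c=0$ — which is the statement being proved. The underlying obstruction is that a single cyclic orbit $\{g^nx\}$ forces each geodesic to miss all but two of the $A_n$, yet the union $\bigcup_n A_n$ is not of finite measure, so this exclusion cannot be converted into a bound on $c$.

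For contrast, the paper's proof is dynamical rather than boundary-geometric: it views $\mu$ as a geodesic-flow-invariant measure on $T^1X$, identifies $G[x]$ with the fiber over $x$, applies Poincar\'e recurrence to conclude that $\mu(G[x])$ equals the measure of the set of recurrent directions at $x$, and then shows that set is countable (one direction per based homotopy class of geodesic loop at $x$), with each singleton of measure zero unless it lies on a closed geodesic through $x$ — excluded by the hypothesis on $x$. Effectively this uses the full $\Gamma$-orbit of $x$ at once, which is what your single-element argument is missing.
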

\begin{proof}
Let $\mu$ be a geodesic current, which can be thought of as flip invariant geodesic flow invariant measure on $T^1X$~(see, for example,\cite[Chapter~3]{ES22:GeodesicCount}).
In $T^1X$, the set $G[\overline{x}]$ corresponds to the fiber at $x \in X$ of the canonical projection $\pi \colon T^1 X \to X$, where $x$ is the projection of $\overline{x} \in \wt{X}$ to $X$. Let $K \coloneqq \pi^{-1}(x)$.
If $\mu(K)=0$ we are done. Hence, suppose $\mu(K)>0$. By Poincar\'e recurrence theorem (see comment after~\cite[Theorem~1.2.1]{VO16:FoundationsErgodic} for a statement for continuous flows), we have
\[
\mu(K)=\mu(K^r)
\]
where $K^r$ is the set of \emph{recurrent points} of  $K$, i.e. the set of points $v \in K$ so that there exist $t_n \in \mathbb{R}$ with $t_n \to +\infty$ and $\phi_{t_n}(v) \in K$.
Each $v \in K^r$ is tangent to a geodesic on $X$ going through $x$ more than once. Let $C$ be the countable collection of lifts of $x$ to $\wt{X}$. In particular, $\overline{x} \in C$. Let $\hat{v}$ be the corresponding
lift of $v$ based at $\overline{x}$.  Denote by $x(\hat{v})$ the  point of intersection with $C$ (after $\overline{x}$) of the geodesic through $\hat{v}$.
Since the geodesic between $\overline{x}$ and $\overline{x}(\hat{v})$ is unique, $\hat{v}$ is uniquely determined by the choice of $\overline{x}(\hat{v}) \in C$ which is countable. Hence, $K^r$ is countable.
Thus, using countable additivity of $\mu$, $\mu(K^r)$ can be written as a sum of measures of singletons, which are all 0 except (possibly) if $v$ is contained in a lift of a closed geodesic. 
\end{proof}

\begin{proposition}
If $\mu$ is a geodesic current without atoms, then the function
\[f(\cdot, \cdot) \coloneqq d_{\mu}(\pi_{\mu}(\cdot ), \pi_{\mu}(\cdot)) \colon \wt{X} \times \wt{X} \to \mathbb{R}\] is a continuous function.
\label{prop:contdistance}
\end{proposition}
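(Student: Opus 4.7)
The plan is to reduce the statement to showing that the pseudo-metric $d_\mu$ itself is continuous as a map $\wt{X}\times\wt{X}\to\mathbb{R}$, since by the very definition of the metric quotient one has $d_\mu(\pi_\mu(\overline{x}),\pi_\mu(\overline{y}))=d_\mu(\overline{x},\overline{y})$. Fix a convergent sequence $(\overline{x}_n,\overline{y}_n)\to(\overline{x},\overline{y})$ in $\wt{X}\times\wt{X}$; I will prove $\mu(G[\overline{x}_n,\overline{y}_n))\to\mu(G[\overline{x},\overline{y}))$, the argument for $\mu(G(\overline{x}_n,\overline{y}_n])$ being identical. The core of the argument is a Fatou-type comparison applied to the sets $A_n:=G[\overline{x}_n,\overline{y}_n)$. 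The two key set-theoretic inclusions are: (i) $G(\overline{x},\overline{y})\subseteq\liminf_n A_n$, because any geodesic $\gamma$ meeting the open segment $(\overline{x},\overline{y})$ transversely at an interior point $p$ will meet $[\overline{x}_n,\overline{y}_n)$ transversely at a nearby interior point for all large $n$---the transverse-intersection condition can be expressed as a boundary-separation condition on the endpoints of $\gamma$, which is open in $(\overline{x},\overline{y})$ once one uses that the endpoint map $\partial\colon\mathcal{G}(\wt{X})\to(\partial\wt{X})^{(2)}/\mathbb{Z}_2$ is a homeomorphism---and (ii) $\limsup_n A_n\subseteq G[\overline{x},\overline{y}]$, because the intersection points $p_n\in\gamma\cap[\overline{x}_n,\overline{y}_n]$ admit a subsequence accumulating at a point of $[\overline{x},\overline{y}]$ that, by closedness of $\gamma$, must lie on $\gamma$.

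The symmetric difference $G[\overline{x},\overline{y}]\setminus G(\overline{x},\overline{y})$ is contained in $G[\overline{x}]\cup G[\overline{y}]$. Inspecting the proof of Proposition~\ref{prop:zeromeasure}, one sees that $\mu(G[p])$ is always a countable sum of $\mu$-masses of singletons (coming from the recurrent set $K^r$); under the no-atoms hypothesis this sum is $0$, regardless of whether $\overline{x}$ or $\overline{y}$ happens to lie on the axis of an element of $\Gamma$. Hence $\mu(G(\overline{x},\overline{y}))=\mu(G[\overline{x},\overline{y}])=\mu(G[\overline{x},\overline{y}))$. Moreover, since $\mu$ is a Radon measure and the segments $[\overline{x}_n,\overline{y}_n]$ eventually lie in a fixed compact set $K\subseteq\wt{X}$, one has $A_n\subseteq G(K)$ for all large $n$, and $\mu(G(K))<\infty$; this provides the integrability hypothesis needed for the upper Fatou inequality in Lemma~\ref{lem:fatou}.

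Combining the two inclusions with Lemma~\ref{lem:fatou} gives
\[
\mu(G[\overline{x},\overline{y}))=\mu(\liminf_n A_n)\leq\liminf_n\mu(A_n)\leq\limsup_n\mu(A_n)\leq\mu(\limsup_n A_n)=\mu(G[\overline{x},\overline{y})),
\]
so $\mu(A_n)\to\mu(G[\overline{x},\overline{y}))$; together with the analogous conclusion for $G(\overline{x}_n,\overline{y}_n]$ this proves $d_\mu(\overline{x}_n,\overline{y}_n)\to d_\mu(\overline{x},\overline{y})$. The degenerate case $\overline{x}=\overline{y}$ is automatic, since then $\limsup_n A_n\subseteq G[\overline{x}]$ already has $\mu$-measure zero. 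The main technical step I expect to have to write out carefully is inclusion (i): one must verify that transverse intersection with the open segment is genuinely stable under small perturbations of the endpoints, which is where the homeomorphism property of $\partial$ and the openness of the ``opposite sides of a geodesic'' condition in $\wt{X}$ enter essentially.
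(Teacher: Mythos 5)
Your proposal is correct and follows essentially the same route as the paper: a Fatou-type sandwich $\mu(\liminf_n A_n)\leq\liminf_n\mu(A_n)\leq\limsup_n\mu(A_n)\leq\mu(\limsup_n A_n)$ for the transversal sets, with the no-atoms hypothesis (via the countability argument in Proposition~\ref{prop:zeromeasure}) killing the boundary contributions $G[\overline{x}]$, $G[\overline{y}]$ and the exceptional geodesic through $\overline{x},\overline{y}$, and local finiteness of $\mu$ on a compact transversal supplying the integrability needed for the upper inequality in Lemma~\ref{lem:fatou}. The only difference is cosmetic: the paper first replaces the half-open transversals by open ones and then sandwiches, whereas you sandwich directly and reconcile the boundary sets at the end.
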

\begin{proof}
Let $\left( \overline{x_n}, \overline{y_n} \right)$ be a sequence in $\wt{X} \times \wt{X}$ converging to $(\overline{x}, \overline{y})$.
We first show that
\begin{equation} \liminf_n d_{\mu}(\pi_{\mu}(\overline{x_n}), \pi_{\mu}(\overline{y_n})) \geq d_{\mu}(\pi_{\mu}(\overline{x}), \pi_{\mu}(\overline{y}))
\label{eq:lower}
\end{equation}
Note that, since $\mu$ has no atoms, we have, by Proposition~\ref{prop:zeromeasure},
\[
d_{\mu}(\pi_{\mu}(\overline{x_n}), \pi_{\mu}(\overline{y_n})) = \mu(G(\overline{x_n},\overline{y_n})).
\]
for all $n$.
\begin{claim}
If $\gamma \in G(\overline{x},\overline{y})$, then $\gamma \in \cup_{N \geq 1} \cap_{n \geq N} G(\overline{x_n},\overline{y_n})$.
\label{claim:lim_inf}
\end{claim}
\begin{proof}
Since $\gamma \in G(\overline{x},\overline{y})$, $\gamma$ intersects $(\overline{x},\overline{y})$ transversely at some $\overline{z} \in (\overline{x},\overline{y})$. Since
$(\overline{x_n},\overline{y_n})$ is converging in Hausdorff distance to $(\overline{x},\overline{y})$, we have that for $n_0$ large enough, $\gamma$ must intersect transversely all $(\overline{x_n},\overline{y_n})$ for all $n \geq n_0$.
\end{proof}
From Claim~\ref{claim:lim_inf} and Lemma~\ref{lem:fatou}, it follows that 
\[
\mu(G(\overline{x},\overline{y})) \leq \mu \left( \cup_{N \geq 1} \cap_{n \geq N} G(\overline{x_n},\overline{y_n})\right) \leq \liminf_n \mu(G(\overline{x_n},\overline{y_n}))
\]
from which Equation~\ref{eq:lower} follows.
We will now prove 
\begin{equation} \limsup_n d_{\mu}(\pi_{\mu}(\overline{x_n}), \pi_{\mu}(\overline{y_n})) \leq d_{\mu}(\pi_{\mu}(\overline{x}), \pi_{\mu}(\overline{y})).
\label{eq:upper}
\end{equation}

\begin{claim}
If $\gamma$ is a geodesic disjoint from $[\overline{x},\overline{y}]$, then $\gamma \notin \limsup_{n} G(\overline{x_n},\overline{y_n})$.
\label{claim:upper}
\end{claim}
\begin{proof}
Let $A_n=G(\overline{x_n},\overline{y_n})$.
We are assuming $\gamma$ is a geodesic disjoint from $(x,y)$, and we want to show  $\gamma \notin \limsup_{n} A_n$, which, by Equation~\ref{eq:complements} is equivalent to showing
$\gamma \in \liminf_n A_n^c$, i.e., to showing that there exists $N > 0$ so that for all $n \geq N$, $\gamma \notin A_n$.
This follows, since $\gamma$ and $(\overline{x},\overline{y})$ are a definite distance apart, and $(\overline{x_n},\overline{y_n})$ is converging to $(\overline{x},\overline{y})$ in the Hausdorff distance of $\wt{X}$. Thus, for $n_0$ large enough, $(\overline{x_n},\overline{y_n})$ is also disjoint from $\gamma$, for all $n \geq n_0$.
\end{proof}
A geodesic which is not in $G(\overline{x},\overline{y})$ is either disjoint from $(\overline{x},\overline{y})$, or it is the geodesic $\gamma_{x,y}$ determined by the points $x$ and $y$.
Claim~\ref{claim:upper} can be phrased as follows:
\[
\limsup_{n}  A_n \subset G(\overline{x},\overline{y}) - \{ \gamma_{x,y} \}.
\]
Since $\mu$ has no atoms, we have
\[
\limsup_{n}  \mu(A_n ) \leq \mu(\limsup_{n} A_n ) \leq \mu( G(\overline{x},\overline{y}) - \{ \gamma_{x,y} \} ) = \mu( G(\overline{x},\overline{y})),
\]
where we have used the second equation in Lemma~\ref{lem:fatou} in the first inequality, and the last equation plus the monotonicity of $\mu$ in the second inequality.
It remains to explain why the application of Lemma~\ref{lem:fatou} is justified. Since $\overline{x_n} \to \overline{x}$ and $\overline{y_n} \to \overline{y}$, there exists $n_0$ so that for $n \geq n_0$, $(\overline{x_n},\overline{y_n}) \subset B$ for some compact ball $B$. Then \[\cup_{n \geq n_0} A_n \subset G(B).\]
Since $B$ is compact, $\mu(G(B))$ is finite, and thus the application of the Lemma is justified.
This shows Equation~\ref{eq:upper} and finishes the proof.
\end{proof}

\begin{proposition}
Let $\mu \in \Curr(X)$. 
If $\mu$ has atoms, then $f(\cdot, \cdot)$ is neither lower semicontinuous nor upper semicontinuous.
\label{prop:discontdistance}
\end{proposition}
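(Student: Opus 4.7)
The plan is to show that any singleton atom of $\mu$---which by Lemma~\ref{lem:discreteatoms} must be the lift $\tilde\gamma$ of a closed geodesic---produces an explicit discontinuity of $f(\cdot,\cdot) = d_\mu(\pi_\mu(\cdot),\pi_\mu(\cdot))$ by perturbing an endpoint across $\tilde\gamma$. Write $c \coloneqq \mu(\{\tilde\gamma\}) > 0$ and decompose $\mu = c\,\delta_{\tilde\gamma} + \mu''$, where $\mu''$ is a positive Borel measure on $\G(\wt X)$ giving no mass to $\tilde\gamma$.

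First I would choose $\overline y \in \tilde\gamma$ lying on no axis of $\Gamma$ other than $\tilde\gamma$ itself, which is possible because $\Gamma$ has only countably many axes. The argument of Proposition~\ref{prop:zeromeasure} shows that $\mu(G[\overline y])$ is carried by axes through $\overline y$; by the choice of $\overline y$ this forces $\mu(G[\overline y]) = c$ and hence $\mu''(G[\overline y]) = 0$. Next I would pick $\overline x$ off every axis (so $\mu''(G[\overline x]) = 0$ by Proposition~\ref{prop:zeromeasure}) and close enough to $\overline y$ that the geodesic segment $[\overline x, \overline y]$ meets $\tilde\gamma$ only at the endpoint $\overline y$. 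Then $\tilde\gamma \in G(\overline x, \overline y]$ but $\tilde\gamma \notin G[\overline x, \overline y)$, so the atom contributes exactly $c/2$, giving $d_\mu(\overline x, \overline y) = d_{\mu''}(\overline x, \overline y) + c/2$.

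I will then construct two sequences $\overline y_n^{(1)}, \overline y_n^{(2)} \to \overline y$ with $\overline y_n^{(1)}$ on the same component of $\wt X \setminus \tilde\gamma$ as $\overline x$ and $\overline y_n^{(2)}$ on the opposite component. For $n$ large, $[\overline x, \overline y_n^{(1)}]$ is disjoint from $\tilde\gamma$ while $[\overline x, \overline y_n^{(2)}]$ crosses $\tilde\gamma$ transversely at an interior point, so the atom contributes $0$ to $d_\mu(\overline x, \overline y_n^{(1)})$ and $c$ to $d_\mu(\overline x, \overline y_n^{(2)})$.

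The main technical step is to verify $d_{\mu''}(\overline x, \overline y_n^{(i)}) \to d_{\mu''}(\overline x, \overline y)$ for $i = 1,2$. I would adapt the Fatou argument of Proposition~\ref{prop:contdistance}: the geodesics whose transversality status differs between $[\overline x, \overline y]$ and $[\overline x, \overline y_n^{(i)}]$ are all contained in $G(B_{\varepsilon_n}(\overline y))$ for some $\varepsilon_n \to 0$, and the intersection of these sets is exactly the pencil $G[\overline y]$, which has $\mu''$-measure zero; the same $\limsup$/$\liminf$ sandwiching applied to $\mu''$ then gives continuity at $(\overline x,\overline y)$. Combining everything yields
\[
\lim_n d_\mu(\overline x, \overline y_n^{(1)}) = d_\mu(\overline x, \overline y) - c/2 \quad\text{and}\quad \lim_n d_\mu(\overline x, \overline y_n^{(2)}) = d_\mu(\overline x, \overline y) + c/2,
\]
which violate lower and upper semicontinuity of $f$ respectively. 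The delicate part is therefore purely the measure-theoretic continuity of $d_{\mu''}$ at $(\overline x, \overline y)$, isolated by the careful genericity of $\overline x$ and $\overline y$ off auxiliary axes.
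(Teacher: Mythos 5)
Your proof is correct, and it exploits the same underlying mechanism as the paper's: an atom $\tilde\gamma$ forces a jump in $d_\mu$ when an endpoint of the segment crosses $\tilde\gamma$, with the half-open convention producing the intermediate value $c/2$ exactly on $\tilde\gamma$. The execution differs in two useful ways. First, the paper uses two separate configurations (a subsegment of the atomic geodesic with an endpoint pushed off it for upper semicontinuity, and a segment meeting the atom at an endpoint with that endpoint pushed inward for lower semicontinuity), whereas your single three-position configuration --- far endpoint on the same side of $\tilde\gamma$ as $\overline{x}$, on $\tilde\gamma$, and on the opposite side --- yields limits $d_\mu(\overline{x},\overline{y})-c/2$ and $d_\mu(\overline{x},\overline{y})+c/2$ simultaneously and hence both failures at once. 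Second, and more substantively, the paper simply asserts the strict inequalities $\epsilon_1>\epsilon_2$ without controlling the contribution of the rest of the measure along the perturbed segments; your genericity choices (taking $\overline{y}$ on no axis other than $\tilde\gamma$ and $\overline{x}$ on no axis at all, so that the argument of Proposition~\ref{prop:zeromeasure} gives $\mu''(G[\overline{x}])=\mu''(G[\overline{y}])=0$) together with the symmetric-difference/Fatou argument from Proposition~\ref{prop:contdistance} and Lemma~\ref{lem:fatou} actually prove that $d_{\mu''}$ converges along your sequences, so the atom's contribution is isolated cleanly. One cosmetic remark: since $\overline{x}\notin\tilde\gamma$ and $\overline{y}\in\tilde\gamma$, two distinct geodesics meet in at most one point, so $[\overline{x},\overline{y}]\cap\tilde\gamma=\{\overline{y}\}$ automatically and the ``close enough'' hypothesis is not needed; also note that $\mu''$ is no longer $\Gamma$-invariant, which is harmless since you only use it as a positive locally finite Borel measure, while the one place invariance matters (computing $\mu(G[\overline{y}])=c$) is applied to $\mu$ itself.
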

\begin{proof}
Suppose $\mu$ is a geodesic current with atoms.
By Lemma~\ref{lem:discreteatoms} and Lemma~\ref{lem:atompencil}, the only 0-dimensional and 1-dimensional atoms of a geodesic current are concentrated on lifts of closed geodesics and pencils containing lifts of closed geodesics, respectively.
Let then $\gamma$ be a lift of a closed geodesic in the support of $\mu$.
Let $[\overline{x},\overline{y})$ be a subsegment of $\gamma$, short enough so that no other lift of a closed geodesic in the support of $\mu$ intersects it transversely. Let $\overline{y_n}$ be a sequence of points outside of $\gamma$ converging to $\overline{y}$.
Then, if we denote $\mu(G[\overline{x},\overline{y_n})) = \epsilon_1$, we have $\epsilon_1>0$ for all $n$, while $\mu(G[\overline{x},\overline{y}))=\epsilon_2$, where $\epsilon_1>\epsilon_2 \geq 0$.
This shows that
\[
\limsup_n f(\overline{x_n},\overline{y_n}) > f(x,y)
\]
so $f$ is not upper semicontinuous.
Observe that this construction can also be adapted for an open interval $(\overline{x},\overline{y})$, by considering open segments $(\overline{x_n},\overline{y_n})$ given by sequences of points $(\overline{x_n})$ and $(\overline{y_n})$, where $\overline{x_n} \to \overline{x}$ and $\overline{y_n} \to \overline{y}$ from distinct sides of $\gamma$. This ensures that $(\overline{x_n},\overline{y_n})$ crosses $\gamma$ transversely for all $n$, while $\gamma$ does not intersect $(\overline{x},\overline{y})$ transversely.
Let's show how lower semicontinuity fails. Let $\gamma$ be again a lift of a closed geodesic in the support of $\mu$.
Suppose that $\gamma$ intersects a segment $[\overline{x},\overline{y})$ transversely at $\overline{x}$, and let $\overline{x_n}$ be a sequence converging to $\overline{x}$ so that $[\overline{x_n},\overline{y}) \subset [\overline{x_{n+1}},\overline{y})$ for all $n$.
Then, $\mu(G[\overline{x_n},\overline{y})) \leq \epsilon_1$ for all $n$ and for some $\epsilon_1 \geq 0$, while, if $\mu(G[\overline{x},\overline{y}))=\epsilon_2$, we have that $0 \leq \epsilon_1 < \epsilon_2$.
This shows that
\[
\liminf_n f(\overline{x_n},\overline{y_n}) < f(x,y),
\]
so $f$ is not lower semicontinuous.
\label{rem:notcontinuous}
\end{proof}

Combining both Propositions~\ref{prop:contdistance} and~\ref{prop:discontdistance}, we get the following.

\begin{proposition}
A geodesic current $\mu \in \Curr(X)$ has no atoms 
if and only if the map $\pi_{\mu} \colon \wt{X} \to X_{\mu}$ is continuous. 
\label{prop:continuousproj}
\end{proposition}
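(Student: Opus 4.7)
The plan is to derive this proposition essentially for free from Propositions~\ref{prop:contdistance} and~\ref{prop:discontdistance}, noting that continuity of $\pi_\mu$ at $\overline{x}$ is equivalent to the convergence $d_\mu(\pi_\mu(\overline{x_n}), \pi_\mu(\overline{x})) \to 0$ for every $\overline{x_n} \to \overline{x}$, i.e., to continuity at $\overline{x}$ of the slice $\overline{z} \mapsto f(\overline{z},\overline{x})$ of the two-point function $f$ considered in Proposition~\ref{prop:contdistance}.

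For the forward implication, suppose $\mu$ has no atoms and let $\overline{x_n} \to \overline{x}$ in $\wt{X}$. Then $(\overline{x_n}, \overline{x})$ converges to $(\overline{x},\overline{x})$ in $\wt{X}\times\wt{X}$, so Proposition~\ref{prop:contdistance} gives
\[
d_\mu(\pi_\mu(\overline{x_n}), \pi_\mu(\overline{x})) \;=\; f(\overline{x_n},\overline{x}) \;\longrightarrow\; f(\overline{x},\overline{x}) \;=\; 0,
\]
which is exactly the convergence $\pi_\mu(\overline{x_n}) \to \pi_\mu(\overline{x})$ in $X_\mu$.

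For the converse I would argue by contrapositive via a direct construction rather than by formally invoking Proposition~\ref{prop:discontdistance}. If $\mu$ has an atom, then by Lemma~\ref{lem:discreteatoms} some lift $\gamma$ of a closed geodesic satisfies $\mu(\{\gamma\}) > 0$. Pick any $\overline{y} \in \wt{X}\setminus\gamma$ sufficiently close to $\gamma$, and approximate $\overline{y}$ by a sequence $\overline{y_n} \to \overline{y}$ whose terms all lie on the side of $\gamma$ opposite to $\overline{y}$. Then for every $n$ the open segment $(\overline{y_n},\overline{y})$ crosses $\gamma$ transversely at an interior point, so $\gamma \in G(\overline{y_n},\overline{y}) \subseteq G[\overline{y_n},\overline{y}) \cap G(\overline{y_n},\overline{y}]$, and hence
\[
d_\mu(\pi_\mu(\overline{y_n}), \pi_\mu(\overline{y})) \;\geq\; \tfrac{1}{2}\mu(\{\gamma\}) \;>\; 0,
\]
showing that $\pi_\mu$ is discontinuous at $\overline{y}$.

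The main content is entirely packaged in Propositions~\ref{prop:contdistance} and~\ref{prop:discontdistance}; the only minor cosmetic point is to confirm that the endpoint conventions in the definition of $d_\mu$ pick up $\gamma$ as a transversal of the perturbed segments, and this is automatic from the half-open and half-closed definitions of $G[\cdot,\cdot)$ and $G(\cdot,\cdot]$, since $\gamma$ meets each $(\overline{y_n},\overline{y})$ at an interior point. Note also that the lower bound is insensitive to any additional atomic or non-atomic mass contributed by other geodesics of $\supp(\mu)$, so no local-finiteness bookkeeping on the atomic part of $\mu$ is required.
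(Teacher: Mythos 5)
Your forward implication is correct and is exactly the paper's argument: with no atoms, Proposition~\ref{prop:contdistance} gives continuity of the two-point function $f$, and restricting to the slice at $\overline{x}$ yields continuity of $\pi_\mu$ at $\overline{x}$.

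The converse, however, contains a genuine error. You pick $\overline{y}\in\wt{X}\setminus\gamma$ and then ask for a sequence $\overline{y_n}\to\overline{y}$ all of whose terms lie on the side of $\gamma$ opposite to $\overline{y}$. No such sequence exists: $\gamma$ is a complete geodesic separating $\wt{X}$ into two half-planes, and any point on the opposite side satisfies $d_{\wt{X}}(\overline{y_n},\overline{y})\geq d_{\wt{X}}(\overline{y},\gamma)>0$, since the segment $[\overline{y_n},\overline{y}]$ must cross $\gamma$. So the sequence stays a definite distance from $\overline{y}$ and cannot converge to it. Worse, the discontinuity genuinely cannot be exhibited at a point off $\gamma$ in this way: for any $\overline{y_n}\to\overline{y}$ with $\overline{y}\notin\gamma$, the segments $[\overline{y},\overline{y_n}]$ eventually miss $\gamma$ entirely, so the atom contributes nothing to $d_\mu(\pi_\mu(\overline{y_n}),\pi_\mu(\overline{y}))$ for large $n$.

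The fix is to place the point of discontinuity \emph{on} the atom, as the paper does: take $\overline{x}\in\gamma$ and a sequence $\overline{x_n}\to\overline{x}$ with $\overline{x_n}\notin\gamma$. Then $\gamma$ meets the half-open segment $[\overline{x},\overline{x_n})$ in the single point $\overline{x}$, so $\gamma\in G[\overline{x},\overline{x_n})$ and
\[
d_\mu\bigl(\pi_\mu(\overline{x_n}),\pi_\mu(\overline{x})\bigr)\;\geq\;\tfrac{1}{2}\,\mu(\{\gamma\})\;>\;0
\]
for all $n$, while $\overline{x_n}\to\overline{x}$ in $\wt{X}$. This is precisely where the half-open/half-closed endpoint conventions in the definition of $d_\mu$ do their work — not, as in your version, by catching an interior crossing of a segment whose endpoints straddle $\gamma$.
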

\begin{proof}
If $\mu$ has no atoms, then the implication follows from Propositions~\ref{prop:contdistance} and~\ref{prop:discontdistance}. Namely, for $\overline{x} \in \wt{X}$, consider the continuous function $g_{\overline{x}}(\cdot) \coloneqq f(\overline{x},\cdot)$. Since this function is continuous at $\overline{x}$, for every $\epsilon>0$, we can find $\delta_{\overline{x},\epsilon}$ so that if $d_{\wt{X}}(\overline{x},\overline{y})<\delta$, we have $g_{\overline{x}}(\overline{y})<\epsilon$, i.e.,  $d_{\mu}(\pi_{\mu}(\overline{x}), \pi_{\mu}(\overline{y}))<\epsilon$, which proves continuity of $\pi_{\mu}$ at $\overline{x}$.
If $\mu$ has atoms, then let $\bar{x} \in \gamma$, for $\gamma$ an atom of $\mu$.
Then, there exist an $\epsilon>0$ and a sequence of points $\overline{x_n} \notin \gamma$, so that $d_{\mu}(\pi_{\mu}(\bar{x_n}),\pi_{\mu}(\bar{x}))> \epsilon$ as $\overline{x_n} \to \overline{x}$, so $\pi_{\mu}$ is not continuous.
\end{proof}

Proposition~\ref{prop:continuousproj} might come as a surprise for those familiar with the classical work on $\mathbb{R}$-trees dual to measured foliations/laminations in~\cite{W98:Duals}, where the natural projection map is always continuous.
We emphasize that when $\mu$ is a measured lamination, one can eliminate the discontinuities of $\pi_{\mu}$ created by the atoms by a process of blow-up, as described in~\cite[Defintion~11.27]{Kap00:Kapovich2000HyperbolicMA}. 

We can combine the above results with the results in Section~\ref{sec:actions}, to obtain the following.

\begin{proposition}
Given a geodesic current $\mu$ on $X$, the projection $\pi_{\mu} \colon \wt{X} \to X_{\mu}$ satisfies:
\begin{enumerate}
\item If $\mu$ has no atoms and it is filling then $\pi_{\mu}$ is a $\pi_1(X)$-equivariant closed map.
\item If $\mu$ has no atoms and has full support, then $\pi_{\mu}$ is a $\pi_1(X)$-equivariant homeomorphism.
\end{enumerate}
\label{thm:homeoprojection}
\end{proposition}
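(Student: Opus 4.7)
The plan is to combine Proposition~\ref{prop:continuousproj} (continuity of $\pi_\mu$ from the no-atoms hypothesis) with Theorem~\ref{thm:action_intro}(2) (properness of the $\pi_1(X)$-action on $X_\mu$ from the no-type-2 hypothesis) as the two main inputs. The $\pi_1(X)$-equivariance of $\pi_\mu$ in both parts is immediate, since $\mu$ is $\pi_1(X)$-invariant, so $d_\mu$ is $\pi_1(X)$-invariant on $\wt{X}$ and the metric quotient commutes with the $\pi_1(X)$-action.

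For (1), I would prove closedness as follows. Take a closed $C \subseteq \wt{X}$ and a sequence $\overline{x_n} \in C$ with $\pi_\mu(\overline{x_n}) \to x \in X_\mu$; the goal is to produce $\overline{x}_* \in C$ with $\pi_\mu(\overline{x}_*) = x$. Since $\pi_1(X)$ acts cocompactly on $\wt{X}$, choose a compact fundamental domain $D$ and elements $g_n \in \pi_1(X)$ with $\overline{y_n} := g_n \overline{x_n} \in D$; extract a subsequence with $\overline{y_n} \to \overline{y} \in D$. Continuity of $\pi_\mu$ gives $\pi_\mu(\overline{y_n}) = g_n \cdot \pi_\mu(\overline{x_n}) \to \pi_\mu(\overline{y})$, while by assumption $\pi_\mu(\overline{x_n}) \to x$. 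Both sequences lie in a common compact subset $K \subseteq X_\mu$, so properness of the $\pi_1(X)$-action forces $\{g_n\}$ to be finite. After passing to a further subsequence, $g_n = g$ is constant; then $\overline{x_n} = g^{-1} \overline{y_n} \to g^{-1}\overline{y}$, and since $C$ is closed, $\overline{x}_* := g^{-1}\overline{y} \in C$ with $\pi_\mu(\overline{x}_*) = g^{-1} \pi_\mu(\overline{y}) = x$.

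For (2), the first step is to observe that full support rules out a non-trivial special multi-curve decomposition of $\mu$: the sum $\sum_i \mu_i + \sum_j a_j s_j$ is supported on geodesics contained in the closures of the $X_i$ together with lifts of the $s_j$, so it cannot charge geodesics transverse to a lift of $m$; full support forces the decomposition to be trivial. The single remaining component cannot be a lamination, since laminations have nowhere-dense support in $\mathcal{G}(\wt{X})$, so $\mu$ has no type~2 components and part (1) delivers closedness. Injectivity of $\pi_\mu$ follows from full support: for $\overline{x} \neq \overline{y}$ the set of geodesics transverse to a non-degenerate subsegment of $[\overline{x},\overline{y}]$ contains an open box of geodesics, which has positive $\mu$-measure, so $d_\mu(\overline{x},\overline{y}) > 0$. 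A continuous, closed, bijective map between metric spaces is a homeomorphism.

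The main obstacle is the closedness argument in (1): fibers of $\pi_\mu$ may be unbounded in $\wt{X}$, so one cannot directly lift convergence in $X_\mu$ to convergence in $\wt{X}$. The trick of first translating into the compact fundamental domain $D$ and only then invoking properness on $X_\mu$ is what circumvents this; the argument collapses without the no-type-2 hypothesis, which is exactly what makes the action on $X_\mu$ proper. A secondary care-point in (2) is the compatibility of full support with the structure of the special multi-curve decomposition, which has to be unpacked before part (1) can be applied.
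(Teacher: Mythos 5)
Your proof is correct, but the route to closedness in part (1) is genuinely different from the paper's. The paper descends to the quotient: it shows $X_{\mu}$ is a proper metric space (Proposition~\ref{prop:properspace}), hence locally compact (Lemma~\ref{thm:hopfrinow}), combines this with cocompactness (Lemma~\ref{lem:real_cocompact}) and properness (Lemma~\ref{prop:proper}) of the action to conclude that $X_{\mu}/\pi_1(X)$ is compact Hausdorff, observes that the descended map $\overline{\pi_{\mu}}\colon X \to X_{\mu}/\pi_1(X)$ is a continuous surjection from a compact space to a Hausdorff space and hence closed, and then lifts closedness back to $\pi_{\mu}$ by equivariance. You instead argue directly and sequentially upstairs: translate the sequence into a compact fundamental domain of $\wt{X}$, use continuity of $\pi_{\mu}$ (Proposition~\ref{prop:continuousproj}) and properness of the action on $X_{\mu}$ to pin down finitely many translates, and extract a convergent subsequence in the closed set $C$. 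What your approach buys is that you never need $X_{\mu}$ itself to be a proper (or even locally compact) metric space --- you only use properness of the \emph{action} --- whereas the paper's appeal to Proposition~\ref{prop:properspace} formally requires $\mu$ to be filling, which is slightly stronger than ``no type~2 components.'' What the paper's approach buys is brevity and the extra conclusion, stated explicitly, that $X_{\mu}/\pi_1(X)$ is compact Hausdorff. For part (2) the two arguments coincide (injectivity from full support via a positive-measure open box, then continuous closed bijection implies homeomorphism); you additionally spell out why full support forces the special multi-curve to be empty and the unique component to be of type~1, a step the paper leaves implicit but which is needed before part (1) can be invoked. One small point to make explicit in your write-up: the paper's Definition~\ref{def:propaction} of properness is pointwise (for each $x$ there is some $r>0$ with $\{g: gB(x,r)\cap B(x,r)\neq\emptyset\}$ finite), so you should note the standard fact that for isometric actions this implies finiteness of $\{g: gK\cap K\neq\emptyset\}$ for every compact $K$, which is the form you use.
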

\begin{proof}
Since $\mu$ has no atoms, by~Proposition~\ref{prop:contdistance}, $\pi_{\mu}$ is continuous and by Proposition~\ref{prop:properspace}, $X_{\mu}$ is a proper metric space. Therefore, by Lemma~\ref{thm:hopfrinow}, it is locally compact.
By Lemma~\ref{lem:real_cocompact} the action of $\pi_1(X)$ on $X_{\mu}$ is cocompact and by Lemma~\ref{prop:proper}, the action is proper. Hence, the quotient $X_{\mu}/\pi_1(X)$ is Hausdorff (and compact). By $\pi_1(X)$ equivariance of $\pi_{\mu}$, the descended map $\overline{\pi_{\mu}} \colon X \to X_{\mu}/\pi_1(X)$ is a well-defined continuous surjective map from the compact surface $X$ to the Hausdorff space $X_{\mu}/\pi_1(X)$, so it is a closed map. If moreover $\mu$ has full support, then $\pi_{\mu}$ is injective, and hence it is a homeomorphism. By equivariance, this implies the same results for $\pi_{\mu}$.\end{proof}
\section{Examples}
\label{sec:examples}

In this section we study examples of dual spaces associated to the geodesic currents discussed in Subsection~\ref{subsec:examples_currents}

\subsection{The dual tree of a measured geodesic lamination}
\label{subsec:dual_tree}
The first example of dual space of a geodesic current is the so-called \emph{dual tree $\mathcal{T}(\lambda)$ of a lamination $\lambda$}.
Such space has been well studied (for example in \cite{MORGAN1991143}) before the notion of dual space of a geodesic current was introduced. We recall here the original construction by Morgan--Shalen and show that the dual tree $\mathcal{T}(\lambda)$ of a lamination coincides with the dual space $X_\lambda$ of the current $\lambda$.

Let
$(\lambda, \mu)$ be a measured lamination (recall Definition~\ref{def:lam}) on a hyperbolic surface $X$ with support $\lambda = \mathrm{supp}(\mu)$ and transverse measure $\mu$. Denote with $(\wt{\lambda}, \tilde{\mu})$ the lifted measured lamination on $\wt{X} $, and define $\mathfrak{C}$ the set of connected components of $\wt{X} \setminus \supp \lambda$, each of which is called a \emph{complementary region} of $\lambda$.

Let $c_0, c_1 \in \mathfrak{C}$. We define a metric on $\mathfrak{C}$ as follows:
\[
d_{(\lambda, \mu)} (c_0, c_1) = \inf \mu (\gamma)
\]
where the $\inf$ is taken over all quasi-transverse arcs $\gamma$ such that $\gamma(0) = x_0 \in c_o$ and $\gamma(1) = x_1 \in c_1$. A \emph{quasi-transverse arc} is an arc intersecting transversely each leaf of the lamination at most once. 

Morgan and Shalen proved the following

 \begin{theorem}[\cite{MORGAN1991143} Lemma 5]
 \label{thm:morganinclusion}
Denote with $\mathfrak{C}$ the set of complementary regions of $\tilde{\lambda}$. There exists an $\R$-tree $\mathcal{T}(\lambda)$ called the \emph{dual tree of $\lambda$} and an isometric embedding $\psi: \mathfrak{C} \hookrightarrow \mathcal{T}(\lambda)$ such that:
\begin{enumerate}
\item $\psi(\mathfrak{C})$ spans $\mathcal{T}(\lambda)$.
\item Any point $x \in \mathcal{T}(\lambda) \setminus \psi(\mathfrak{C})$ is an edge point, i.e. separates $\mathcal{T}(\lambda)$ in two connected components.
\item The action $\pi_1 (X)$ on $\mathfrak{C}$ extends uniquely to an action by isometries of $\pi_1 (X)$ on $\mathcal{T}(\lambda)$.
\end{enumerate}

Moreover, if $T$ and $T'$ are two $\R$-trees satisfying the above properties, then there exists an equivariant isometry $T \to T'$ with respect to the $\pi_1 (X)$-action.

\end{theorem}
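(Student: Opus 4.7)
The plan is to realize the Morgan--Shalen dual tree $\mathcal{T}(\lambda)$ as the $\R$-tree hull of the dual space $X_\lambda$ defined in Section~\ref{sec:currentdual}, and to read off the three listed properties, together with uniqueness, from the general theory of $0$-hyperbolic spaces combined with Theorem~\ref{thm:hyp_intro}.

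First I would define the candidate embedding $\psi \colon \mathfrak{C} \to X_\lambda$ by $\psi(c) = \pi_\lambda(\overline{x})$ for any $\overline{x} \in c$. This is well-defined: for $\overline{x}, \overline{y} \in c$ the hyperbolic segment $[\overline{x}, \overline{y}]$ lies in the open complementary region $c$, so $G[\overline{x}, \overline{y})$ and $G(\overline{x}, \overline{y}]$ are $\lambda$-null, forcing $d_\lambda(\overline{x}, \overline{y}) = 0$. Next I would verify that $\psi$ is isometric with respect to the Morgan--Shalen metric: given $c_0, c_1 \in \mathfrak{C}$ and any quasi-transverse arc $\gamma$ from $\overline{x}_0 \in c_0$ to $\overline{x}_1 \in c_1$, the condition that $\gamma$ meets each leaf at most once identifies its transverse mass $\mu(\gamma)$ with $\mu(G(\gamma))$, which in turn dominates the mass of leaves separating $c_0$ from $c_1$, yielding $d_{(\lambda,\mu)}(c_0,c_1) \geq d_\lambda(\psi(c_0),\psi(c_1))$; for the reverse inequality the hyperbolic segment $[\overline{x}_0,\overline{x}_1]$ is itself quasi-transverse outside of tangencies with leaves, and those tangencies contribute no transverse mass since $i(\lambda,\lambda) = 0$.

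To produce $\mathcal{T}(\lambda)$, I would apply Theorem~\ref{thm:hyp_intro} to $\mu = \lambda$: because $i(\lambda,\lambda)=0$ by Proposition~\ref{prop:lamselfzero}, any box $B \in \mathcal{B}$ with $\lambda(B) > 0$ must have $\lambda(B^\perp) = 0$ (otherwise the product measure would detect a positive set of transversely intersecting pairs contributing to $i(\lambda,\lambda)$), so $\delta_\lambda = 0$ and $X_\lambda$ is $0$-hyperbolic. Any $0$-hyperbolic metric space embeds canonically and isometrically into an $\R$-tree (its Chiswell/Gromov tree hull); I would then define $\mathcal{T}(\lambda)$ to be the closed convex hull of $\psi(\mathfrak{C})$ inside this ambient tree. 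Property~(1) is immediate from this construction.

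For property~(2), a point $t \in \mathcal{T}(\lambda) \setminus \psi(\mathfrak{C})$ lies in the interior of the unique arc between some $\psi(c_0)$ and $\psi(c_1)$; via the metric identification of the previous paragraph it corresponds to a leaf (or a limit of leaves) of $\supp\lambda$ separating $c_0$ from $c_1$ in $\wt{X}$, and this separation of $\mathfrak{C}$ translates, by spanning, to a partition of $\mathcal{T}(\lambda)$ into exactly two components. Property~(3) holds because the $\pi_1(X)$-action on $\wt{X}$ preserves $\lambda$, hence descends to an isometric action on $X_\lambda$ permuting $\psi(\mathfrak{C})$, and isometries of a spanning subset of an $\R$-tree extend uniquely to isometries of the tree. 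The same extension principle gives uniqueness: any two $\R$-trees satisfying (1)--(3) contain an isometric copy of $(\mathfrak{C},d_{(\lambda,\mu)})$ as a spanning subset, and the identification of these copies extends uniquely and $\pi_1(X)$-equivariantly to an isometry $T \to T'$. The step I expect to be most delicate is the identification of $d_\lambda$ with the Morgan--Shalen metric on $\mathfrak{C}$; in particular, handling the case where the hyperbolic geodesic $[\overline{x}_0,\overline{x}_1]$ is tangent to or runs along leaves of $\lambda$ requires showing that such tangential contributions carry zero transverse measure, which is the crux of the comparison.
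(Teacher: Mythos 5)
Your argument is essentially sound, but it is worth knowing that the paper does not prove this statement at all: Theorem~\ref{thm:morganinclusion} is imported verbatim from Morgan--Shalen \cite{MORGAN1991143} as a black box, and the paper only afterwards (Lemma~\ref{lem:isom_trees}) identifies the Morgan--Shalen tree with $\widehat{X_\lambda}$, doing so via Culler--Morgan's uniqueness of trees with a given length function rather than by any direct metric comparison. What you propose is a genuinely different route: you build $\mathcal{T}(\lambda)$ directly as the subtree of the $\R$-tree hull of $X_\lambda$ spanned by $\psi(\mathfrak{C})$, which collapses Theorem~\ref{thm:morganinclusion} and Lemma~\ref{lem:isom_trees} into a single construction and makes the isometry $\mathcal{T}(\lambda)\cong\widehat{X_\lambda}$ tautological. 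The price is that you must carry out the metric identification $d_{(\lambda,\mu)}=d_\lambda$ on $\mathfrak{C}$ yourself, which you correctly flag as the crux; your sketch of it is right, though note that the ``tangency'' worry is vacuous here --- since $\overline{x}_0,\overline{x}_1$ lie in open complementary regions, the hyperbolic segment $[\overline{x}_0,\overline{x}_1]$ is not contained in any leaf, and two distinct complete geodesics in $\wt{X}$ either are disjoint or cross transversely exactly once, so the segment is automatically quasi-transverse and realizes the infimum because every leaf it crosses separates $c_0$ from $c_1$. The two places where you should add detail are: (i) in property~(2), ``separates into \emph{exactly} two components'' requires showing $t$ is not a branch point, which needs the straightness of $d_\mu$ along hyperbolic geodesics (stated after the Definition in Section~\ref{sec:currentdual}) to guarantee that two regions on the same side of the corresponding leaf are joined by an arc in $\mathcal{T}(\lambda)$ missing $t$; and (ii) your appeal to Theorem~\ref{thm:hyp_intro} for $\delta_\lambda=0$ is not circular (Section~\ref{sec:dualis0hyperbolic} is independent of this statement), but the cheaper and more self-contained input is \cite[Proposition~2.1]{BIPP21:Crossratios} as used in Corollary~\ref{cor:0hyp}, or simply the observation that linked boxes satisfy $B\times B^{\perp}\subset\mathfrak{I}$ so $\lambda(B)\lambda(B^{\perp})>0$ would force $i(\lambda,\lambda)>0$, exactly as you say.
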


On the other hand in Section \ref{sec:dualis0hyperbolic} we show that the dual space $X_\lambda$ of the current $\lambda$ is $0$-hyperbolic as well, and hence embeds isometrically into an $\R$-tree $\widehat{X_\lambda}$~\cite[Theorem~3.38]{E08:Lectures}.

\begin{lemma}
\label{lem:isom_trees}
    The $\R$-trees $\widehat{X_\lambda}$ and $T(\lambda)$ are isometric.
    \begin{proof}
        The action of $\pi_1(X)$ on both $\mathbb{R}$-trees satisfy the hypotheses of~\cite[Theorem~3.7]{CM87:Groups} and they the same length functions, hence, there exists an equivariant isometry between them.
    \end{proof}
\end{lemma}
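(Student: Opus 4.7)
The plan is to invoke the Culler--Morgan rigidity theorem~\cite{CM87:Groups}, which asserts that a minimal, irreducible (non-abelian) isometric action of a group on an $\mathbb{R}$-tree is determined up to equivariant isometry by its translation length function $\ell\colon \pi_1(X) \to \mathbb{R}_{\geq 0}$. So the task reduces to checking the hypotheses of this theorem for both actions and then verifying that the two length functions agree.

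First I would note that $\pi_1(X)$ acts by isometries on $\widehat{X_\lambda}$, since the pseudo-metric $d_\lambda$ on $\wt{X}$ is $\pi_1(X)$-invariant and hence descends to an isometric action on the quotient $X_\lambda$, which extends to its $\mathbb{R}$-tree hull $\widehat{X_\lambda}$. For $T(\lambda)$, the corresponding statement is part~(3) of Theorem~\ref{thm:morganinclusion}. Next I would verify minimality and non-abelianness: since $\pi_1(X)$ is non-abelian (in fact contains non-commuting hyperbolic elements), both actions are irreducible provided they have elements acting with positive translation length, which will be the case as soon as $\lambda$ is non-trivial. For minimality, one can pass to the unique minimal invariant subtree in both cases without loss of generality, since the length function is unchanged by this operation; alternatively, both constructions are naturally minimal (the image $\psi(\mathfrak{C})$ spans $T(\lambda)$ by~\ref{thm:morganinclusion}(1), and $\widehat{X_\lambda}$ is the $\mathbb{R}$-tree hull of $X_\lambda$).

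The heart of the proof is then the translation length computation. For any $g \in \pi_1(X)$ with axis $\alpha_g \subset \wt{X}$ projecting to the closed geodesic $[g]$ on $X$, the translation length of $g$ on $\widehat{X_\lambda}$ equals $d_\lambda(\overline{x}, g\overline{x})$ for $\overline{x}$ on $\alpha_g$ (by straightness of $d_\lambda$ along $\alpha_g$, cf.~\cite[Proposition~4.1]{BIPP21:Currents}), which unwinds via the definition of $d_\lambda$ to the transverse measure of a fundamental domain of $g$ acting on the set of geodesics of $\wt{X}$ meeting $\alpha_g$, and this is precisely $i(\lambda, [g])$. The same equality $\ell_{T(\lambda)}(g) = i(\lambda,[g])$ holds for Morgan--Shalen's dual tree, as recalled in the introduction of this paper right before the statement of our main definition. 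Therefore the two length functions coincide on all of $\pi_1(X)$, and Culler--Morgan yields the desired equivariant isometry between $\widehat{X_\lambda}$ and $T(\lambda)$.

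The main potential obstacle is making the translation length computation on $\widehat{X_\lambda}$ rigorous and identifying the result with $i(\lambda, [g])$; however, because $\lambda$ is a measured lamination and hence has no self-intersections, the set of lifts of $\lambda$ meeting the axis $\alpha_g$ transversely in a fundamental domain is in measure-preserving bijection with the transverse arc on $X$ realizing $i(\lambda,[g])$, so this step is standard. The hypotheses of Culler--Morgan (irreducibility, non-abelian image) require only that $\lambda \neq 0$, so we may assume that without loss of generality, as the statement is vacuous otherwise.
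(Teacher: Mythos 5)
Your proposal is correct and follows essentially the same route as the paper: the paper's proof simply invokes the Culler--Morgan uniqueness theorem after asserting that both actions satisfy its hypotheses and have the same length function (namely $g \mapsto i(\lambda,[g])$). Your write-up supplies the details of those verifications, but the argument is the same.
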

By Lemma~\ref{lem:isom_trees}, we will often refer with a slight abuse of notation to $X_\lambda$ as the dual tree $\mathcal{T}(\lambda)$ of $\lambda$.

It is worth noticing that there are other equivalent definitions of the dual tree of a lamination, for example the one given by Kapovich in {\cite[Section 11.12]{Kap00:Kapovich2000HyperbolicMA}} or by Wolf in \cite{W98:Duals}, which uses the notion of measured foliations. For a comparison between these definitions we refer the reader to \cite{DeRosa23Thesis}. 

\subsection{Guirardel core: a filling sum of two measured laminations on the surface}
\label{ex:guirardelcore}

Let $X$ be a compact hyperbolic surface (possibly with boundary), and consider $\alpha,\beta$ two measured laminations so that $\alpha+\beta$ is filling as a geodesic current. For concreteness, the reader might want to assume that $\alpha$ and $\beta$ are simple closed curves, so that the multi-curve $\mu=\alpha \cup \beta$ is filling (this can always be achieved, see \cite[1.3.2]{FM12:Primer} for the argument in genus 2). 

Given a finitely generated group $G$ and two $\mathbb{R}$-trees $T_1, T_2$ equipped with isometric actions, the \emph{Guirardel's core} $\mathcal{C}$ is the smallest subset of $T_1 \times T_2$ which is
\begin{enumerate}
\item $\pi_1(X)$-invariant
\item closed and connected
\item For every $x_0 \in T_1$ and every $x_1 \in T_2$, both $\mathcal{C} \cup (\{ x_0 \} \times T_{2})$ and $\mathcal{C} \cup (T_{1} \times \{ x_1 \})$ are convex.
\end{enumerate}
We commend the reader to the paper \cite[Section~2.2, Example~3]{Gui05:Coeur} for more details. 
When $G$ is the fundamental group of a closed surface, and $T_1,T_2$ are the $\mathbb{R}$-trees $\widehat{X_{\alpha}}$ and $\widehat{X_{\beta}}$,
we claim that $X_{\mu}$ embeds isometrically into the Guirardel's core $\mathcal{C}$ of the product of trees $\widehat{X_{\alpha}}$ and $\widehat{X_{\beta}}$.
In this case, the core can be described as follows.
Let $\pi_{\alpha} \colon \wt{X} \to \widehat{X_{\alpha}}$ and $\pi_{\beta} \colon \wt{X} \to \widehat{X_{\beta}}$ be the composition of the natural projection maps with the isometric embedding of the dual into their $\mathbb{R}$-tree, and define
$f \colon \wt{X} \to \widehat{X_{\alpha}} \times \widehat{X_{\beta}}$ by $(\pi_{\alpha}(\overline{x}),\pi_{\beta}(\overline{x}))$.
Guirardel's core corresponds to $\mathcal{C}=f(\wt{X})$. 
We now define a map $\overline{f}$ from $X_{\mu}$ to $\mathcal{C}$,
by picking, for every $x \in X_{\mu}$, a point $\overline{x} \in \pi_{\mu}^{-1}(x)$, and defining $\overline{f}(x)\coloneqq f(\bar{x})$.

\begin{lemma}
    The map $\overline{f} : X_\mu \to \mathcal{C}$ is a homeomorphism. 
    \begin{proof}
        We firstly show that $\overline{f}$ is well-defined.
Indeed, suppose that $x \sim y$, i.e., $d_{\mu}(x,y)=0$.
This happens in the following cases:
\begin{enumerate}
\item $x,y$ are the same point;
\item $x,y$ are in the same complementary region of $\mu$;
\item $x,y$ are in the same lift of $\alpha$ and not separated by any lift of $\beta$, or on the same lift of $\beta$ and not separated by any lift of $\alpha$.
\end{enumerate}
In any of these cases, we see that $\overline{f}(x)=\overline{f}(y)$. 
In fact, one can check that those conditions also characterize the set of pairs $(x,y)$ so that $\overline{f}(x)=\overline{f}(y)$, which proves injectivity. Surjectivity is clear from the definition. Continuity of 
$\overline{f}$ and its inverse with respect to the topology of $X_{\mu}$ follows from the fact that, as pseudo-metrics, we have
\[
d_{\mu}(\bar{x},\bar{y})=d_{\alpha}(\bar{x},\bar{y}) + d_{\beta}(\bar{x},\bar{y}).
\]
    \end{proof}
\end{lemma}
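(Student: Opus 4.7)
The plan is to reduce everything to the single identity
\[
d_\mu(\bar{x}, \bar{y}) = d_\alpha(\bar{x}, \bar{y}) + d_\beta(\bar{x}, \bar{y}),
\]
which is immediate from the definition of $d_\mu$ and the fact that $\mu = \alpha + \beta$ as measures on $\mathcal{G}(\widetilde{X})$. This additivity is the engine of the whole argument.

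First I would use it to establish well-definedness and injectivity simultaneously. Since both $d_\alpha$ and $d_\beta$ are non-negative, the identity gives that $d_\mu(\bar{x},\bar{y})=0$ is equivalent to $d_\alpha(\bar{x},\bar{y})=d_\beta(\bar{x},\bar{y})=0$, which is in turn equivalent to $\pi_\alpha(\bar{x})=\pi_\alpha(\bar{y})$ and $\pi_\beta(\bar{x})=\pi_\beta(\bar{y})$, i.e.\ $f(\bar{x})=f(\bar{y})$. Hence the value $\overline{f}(x)=f(\bar{x})$ does not depend on the choice of lift $\bar{x}\in\pi_\mu^{-1}(x)$, and $\overline{f}(x)=\overline{f}(y)$ forces $x=y$ in $X_\mu$. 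Surjectivity is free from the setup, since $\mathcal{C}=f(\widetilde{X})$ by construction.

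For the homeomorphism conclusion, I would observe that the same identity actually makes $\overline{f}$ an isometry onto $\mathcal{C}$ when $\mathcal{C}\subset\widehat{X_\alpha}\times\widehat{X_\beta}$ is equipped with the $\ell^1$ product metric. Indeed, using that the embeddings $X_\alpha\hookrightarrow\widehat{X_\alpha}$ and $X_\beta\hookrightarrow\widehat{X_\beta}$ are isometric, the distances in the two product factors recover $d_\alpha(\bar{x},\bar{y})$ and $d_\beta(\bar{x},\bar{y})$ respectively, and their sum equals $d_\mu(\bar{x},\bar{y})$ by the master identity. Since the $\ell^1$ metric induces the product topology on $\widehat{X_\alpha}\times\widehat{X_\beta}$, and hence on $\mathcal{C}$, this bijective isometry-for-an-equivalent-metric is automatically a homeomorphism.

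The main obstacle I anticipate is not in this argument but in accepting the setup: identifying Guirardel's core $\mathcal{C}$ with the explicit image $f(\widetilde{X})$ requires verifying the three defining properties of the core (equivariance, being closed and connected, and the two convex-slicing conditions) for $f(\widetilde{X})$. The filling hypothesis on $\alpha+\beta$ enters precisely to guarantee that the diagonal action on the product is nice enough and that $f(\widetilde{X})$ is minimal with these properties. Once the description of $\mathcal{C}$ is granted, as in the excerpt, the lemma falls out entirely from the linearity $d_{\alpha+\beta}=d_\alpha+d_\beta$ of the pseudo-metric in the current.
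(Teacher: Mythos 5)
Your proof is correct, and it streamlines the paper's argument in a way worth noting. The paper handles well-definedness and injectivity by enumerating the geometric configurations in which $d_\mu(\overline{x},\overline{y})=0$ (same point, same complementary region, same leaf not separated by the other lamination), checking that $f$ agrees in each case, and then asserting without detail that these configurations also characterize when $f(\overline{x})=f(\overline{y})$. You bypass the case analysis entirely: since $d_\mu=d_\alpha+d_\beta$ with both summands non-negative, $d_\mu(\overline{x},\overline{y})=0$ holds if and only if $d_\alpha(\overline{x},\overline{y})=d_\beta(\overline{x},\overline{y})=0$, which is exactly the condition $f(\overline{x})=f(\overline{y})$ because $f=(\pi_\alpha,\pi_\beta)$ and each factor is a metric quotient followed by an isometric (hence injective) embedding. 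This gives both directions of the equivalence at once, so the converse that the paper leaves as ``one can check'' comes for free. For the topological conclusion both arguments rest on the same additivity identity, but you make the mechanism explicit: $\overline{f}$ is an isometry onto $\mathcal{C}$ for the $\ell^1$ product metric, which induces the product topology, so bicontinuity is automatic. Your closing caveat is also well placed: the identification $\mathcal{C}=f(\widetilde{X})$ is part of the surrounding setup and is taken as given in the lemma, so nothing in your argument depends on re-verifying Guirardel's axioms.
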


\begin{remark}
In his work, Guirardel goes on to prove that $i(\alpha,\beta)$ is equal to the volume of $\mathcal{C}/\pi_1(X)$, where volume is defined by taking the supremum, over finite trees (convex hull of finitely many points) $K_1,K_2$ of $T_1,T_2$ of the product of Lebesgue measures $\mu_{K_1} \times \mu_{K_2}$ (a finite tree is simplicial, and the Lebesgue measure of a simplicial tree is induced from the Lebesgue measure on the edges, since each edge is isometric to an interval of $\mathbb{R}$). On the other hand, $i(\alpha,\beta)=\frac{1}{2}i(\alpha + \beta, \alpha + \beta)=\frac{1}{2} i(\mu,\mu)$. It would be interesting to see if one can recover the self-intersection number of $\mu$ as some sort of volume of $X_{\mu}/\pi_1(X)$. Compare this with Example~\ref{ex:LiouvilleDual}, where we show that when $\mu$ is the Liouville current of $X$, $X/\pi_1(X)$ is isometric to the hyperbolic surface $X$, and, on the other hand, $i(\mu,\mu)=\pi^2 |\chi(X)|$ (by~\cite[Proposition~15]{Bonahon88:GeodesicCurrent}, so the volume on the dual is not quite the hyperbolic area of $X$, but rather a multiple of it.
We refer to \cite[Definition~3.4]{BIPP21:PSL2xPSL2} as well as~\cite[Section~6.5]{O19:Energy} for a construction of a space on which our space embeds isometrically (provided one makes a choice of $\ell_1$ metric on the product).
\end{remark}

\subsection{Duals for positively ratioed Anosov representations}
\label{ex:positivelyratioed}
In this example we assume $X$ is a closed hyperbolic surface and we go back to the class of geodesic currents introduced in Subsection~\ref{subsubsec:positivelyratioed} namely, positively ratioed Anosov representations.
First, we have the following immediate observation.

\begin{lemma}
Let $\rho \colon \pi_1(X) \to G$ be a positively ratioed representation and $\mu_{\rho}$ its associated geodesic current as in \ref{subsubsec:positivelyratioed}. Then $X_{\mu_{\rho}}/\pi_1(X)$ is homeomorphic to $X$.
\end{lemma}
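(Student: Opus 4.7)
The plan is to show this is essentially an immediate corollary of the continuity results for the projection map combined with the non-atomicity and full-support properties of currents coming from positively ratioed representations.

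First, I would invoke Lemma~\ref{lem:anosovnoatomfull}, which states that for a positively ratioed Anosov representation $\rho \colon \pi_1(X) \to G$, the associated geodesic current $\mu_\rho$ is non-atomic and has full support. These are exactly the two hypotheses needed to apply Proposition~\ref{thm:homeoprojection}(2), which gives that the natural projection $\pi_{\mu_\rho} \colon \wt{X} \to X_{\mu_\rho}$ is a $\pi_1(X)$-equivariant homeomorphism.

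Next, since $\pi_{\mu_\rho}$ is a $\pi_1(X)$-equivariant homeomorphism, it descends to a homeomorphism between the quotient spaces
\[
\overline{\pi_{\mu_\rho}} \colon \wt{X}/\pi_1(X) \longrightarrow X_{\mu_\rho}/\pi_1(X).
\]
The domain is, by definition, the surface $X$ itself, so $X_{\mu_\rho}/\pi_1(X)$ is homeomorphic to $X$.

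There is no real obstacle here beyond verifying the hypotheses of the cited results. The substantive work has already been done in establishing Proposition~\ref{thm:homeoprojection}, which in turn relies on the continuity of the projection map (Proposition~\ref{prop:continuousproj}), the properness of $X_{\mu_\rho}$ (Proposition~\ref{prop:properspace}), and the properness and coboundedness of the $\pi_1(X)$-action on $X_{\mu_\rho}$ from Section~\ref{sec:actions}. The only care needed is to check that ``full support'' in Lemma~\ref{lem:anosovnoatomfull} matches the hypothesis of Proposition~\ref{thm:homeoprojection}(2), which it does by definition.
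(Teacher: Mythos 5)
Your proof is correct and follows exactly the paper's argument: apply Lemma~\ref{lem:anosovnoatomfull} to get that $\mu_\rho$ is non-atomic with full support, then invoke Proposition~\ref{thm:homeoprojection} and pass to quotients via equivariance. You spell out the descent to $X_{\mu_\rho}/\pi_1(X)$ a bit more explicitly than the paper does, but the route is identical.
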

\begin{proof}
By Lemma~\ref{lem:anosovnoatomfull}, geodesic currents associated to positively ratioed Anosov representations have no atoms and full support.
Thus, by Proposition~\ref{thm:homeoprojection}, the result follows.
\end{proof}

\begin{remark}
Note that there are points in the boundary of the Hitchin component corresponding to geodesic currents that might have atoms or might not have full support~(see~\cite{BIPP21:Currents} and~\cite{BIPP21:Crossratios}).
In fact, as we mentioned in Subsection~\ref{subsubsec:positivelyratioed}, in the paper~\cite{BIPP21:Crossratios} the notion of positively ratioed representations is generalized to representations whose cross-ratios need not be continuous (and, thus, whose associated geodesic currents might have atoms). In the sequel to this project with Anne Parreau, we will endow the dual spaces coming from these representations with a geodesic structure.
\end{remark}

In what follows we specialize to the case $G=\SL(3,\mathbb{R})$, and consider only Hitchin representations.
By work of Choi-Goldman~\cite{CG93:ConvexProjective}, to every $\SL(3,\mathbb{R})$ Hitchin representation corresponds, in a one-to-one fashion, a strictly convex real projective structure $\Omega_{\rho}$, that we define now.

A \emph{strictly convex real projective surface} is a quotient $Z = \Omega / \Gamma$ where $\Omega \subseteq \mathbb{RP}^2$ is a strictly convex domain of the real projective plane, and $\Gamma < SL(3, \R)$ is a discrete group of projective transformations acting properly on $\Omega$. Thus, $Z$ is the topological surface $S$ equipped with a Hilbert metric.
Let $\wt{Z}$ be the induced metric in the universal cover of $S$. With this choice, the compactification $\wt{Z} \cup \partial Z$ can be identified with $\Omega \cup \partial \Omega$ and thus a geodesic current on $Z$ is a $\pi_1(Z)$-invariant locally finite Borel measure on the set
$(\partial \Omega \times \partial \Omega - \Delta)/\mathbb{Z}_2$.

We define a proper complete path metric $d_\Omega$ which is invariant under $\mathrm{Aut}(\Omega)$, called the \emph{Hilbert metric} associated to a convex $\mathbb{RP}^2$-structure $Z$.
Let $p,q \in \Omega$, the projective geodesic through $p$ and $q$ in $\Omega$ defines a pair of points $a$ and $b$ on $\partial \Omega$. We define the following complete metric (see, for example~\cite[Theorem~2.1]{Bea99:HilbertDomain}) on $\Omega$
\begin{equation}
\label{hilbert metric}
d_\Omega (p,q) \coloneqq \left \{ \begin{matrix} \log{\mathrm{cr}(a,p,q,b)}& \mbox{ if } p \neq q \\ 0 & p=q \end{matrix} \right.
\end{equation}
where $a,p,q,b$ appear in this order along the projective line and
$\mathrm{cr}(a,p,q,b)$ denotes the projective cross-ratio.
This metric descends to the strictly convex projective structure $Z$.
Furthermore, the projective cross-ratio also induces a geodesic current $\mu_{\Omega}$, by setting 
 \[
 \mu_{\Omega}(B)=\log \mathrm{cr}(a,b,c,d)
 \]
where $B=I_{a,b} \times I_{c,d} \subseteq \mathcal{G}(\wt{Z})$ is a box of geodesics with vertices $a,b,c,d \in \partial Z$, see for example \cite[Subsection 4.1]{BIPP21:Crossratios} for details. 
The current $\mu_\Omega$ satisfies the relation
$\ell_{\rho}^H(c)=i(\mu_{\Omega},c)$ for every closed curve $c \in \Curves(Z)$, where $\ell_{\rho}^H$ is the Hilbert length function associated to $\rho$, which in this setting is given by
\[
\ell_\rho^H \colon \Gamma \to \R : g \mapsto \inf_{z \in \wt{Z}} d_{\Omega}(z,g z)
\]
or, equivalently, by
\[\log{\frac{\lambda_1 (\rho(g))}{\lambda_3 (\rho(g))}} 
\]
where $\lambda_i (\rho(g))$ is the $i$-th eigenvalue of $g \in \SL (3, \R)$.

We show now that $\mu_{\Omega}$ satisfies the \emph{Crofton property} in the sense introduced in Subsection~\ref{ex:Liouville}.
First, we recall some classical results in projective geometry of dimension 2.

Let $\Omega$ be an arbitrary convex domain of the projective plane $\mathbb{R}P^2$. Busemann~\cite{Bus55:Geometry} introduced an additive non-negative measure $\sigma$ on the set of projective lines on $\Omega$ satisfying
\begin{enumerate}
\item For any $C \subset \Omega$ so that $\overline{C} \cap \partial \Omega = \emptyset$, $\sigma(G(C))<\infty$
\item $\sigma(G[x])=0$ for any $x \in \Omega$
\item $\sigma(G(S))>0$, where $S \subset \Omega$ contains a non-degenerate projective line segment.
\end{enumerate}

Using this measure $\sigma$, he defined a \emph{$\sigma$-metric} on $\Omega$ given by
\[
d_{\sigma}(x,y)=\sigma(G[x,y]).
\]
It turns out that, in dimension two, any continuous metric on $\Omega$ for which the projective lines are geodesics can be realized as such a $\sigma$-metric~\cite{Pog73:HilbertFourth}. 
The Hilbert metric $d_{\Omega}$ defined above satisfies these assumptions, and thus, there exists a measure $\sigma$ on the set of projective lines on $\Omega$ satisfying 
\[
d_{\Omega}(x,y)=\sigma(G[x,y]).
\]

\begin{proposition}
For every pair $\overline{x},\overline{y} \in \Omega$, \begin{equation}
    \label{HilbertLength}
\ell_{\Omega} ([\overline{x},\overline{y}]) =\mu_{\Omega} ( G[\overline{x},\overline{y}]).
\end{equation}
\end{proposition}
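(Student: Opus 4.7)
The plan is to compute $\mu_\Omega(G[\overline{x},\overline{y}])$ directly from the cross-ratio definition of $\mu_\Omega$ by approximating $G[\overline{x},\overline{y}]$ as a union of boxes and passing to a limit. The limit will collapse to the Hilbert cross-ratio $\log\mathrm{cr}(a,\overline{x},\overline{y},b) = d_\Omega(\overline{x},\overline{y}) = \ell_\Omega([\overline{x},\overline{y}])$, where $a,b\in\partial\Omega$ are the endpoints of the chord extending $[\overline{x},\overline{y}]$.

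First I would set up the geometry: extend $[\overline{x},\overline{y}]$ to the full projective chord meeting $\partial\Omega$ at $a,b$ with the order $a,\overline{x},\overline{y},b$, and let $\partial^{\pm}$ denote the two open arcs of $\partial\Omega$ bounded by $\{a,b\}$. A projective line lies in $G[\overline{x},\overline{y}]$ iff its two endpoints lie one in each arc and it meets the chord inside $[\overline{x},\overline{y}]$. For $u\in\partial^+\setminus\{a,b\}$, central projection through $u$ is a homeomorphism from $\partial^-$ onto the chord; let $\overline{x}^u,\overline{y}^u\in\partial^-$ be the preimages of $\overline{x},\overline{y}$, and let $I_u$ be the sub-arc of $\partial^-$ bounded by them. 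This yields the fibered description
\[
G[\overline{x},\overline{y}] = \bigsqcup_{u\in\partial^+}\{u\}\times I_u.
\]

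Next, I would partition $\partial^+$ by $a=u_0<u_1<\cdots<u_n=b$ and approximate the fiber over $(u_{i-1},u_i)$ by the box $B_i=(u_{i-1},u_i)\times I_{u_i}$, whose $\mu_\Omega$-measure is $\log\mathrm{cr}(u_{i-1},u_i,\overline{x}^{u_i},\overline{y}^{u_i})$ by definition. Since $\mu_\Omega$ has no atoms (Lemma~\ref{lem:anosovnoatomfull}) and the projective cross-ratio is continuous, the symmetric difference $G[\overline{x},\overline{y}]\bigtriangleup\bigcup_i B_i$ has $\mu_\Omega$-measure tending to zero as the partition is refined, giving
\[
\mu_\Omega(G[\overline{x},\overline{y}]) = \lim_{n\to\infty}\sum_{i=1}^n\log\mathrm{cr}(u_{i-1},u_i,\overline{x}^{u_i},\overline{y}^{u_i}).
\]
A projective cross-ratio identity, combining multiplicativity of cross-ratios on adjacent sub-arcs with the invariance of the cross-ratio of four collinear points under central projection through each $u_i$ (which sends $\overline{x}^{u_i},\overline{y}^{u_i}$ to $\overline{x},\overline{y}$ on the chord), then collapses the Riemann sum in the refinement limit to the single cross-ratio $\log\mathrm{cr}(a,\overline{x},\overline{y},b)=d_\Omega(\overline{x},\overline{y})=\ell_\Omega([\overline{x},\overline{y}])$.

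The main obstacle is the collapse of the Riemann sum, which relies on delicate cross-ratio identities for projective structures on $\partial\Omega$. A perhaps cleaner alternative is to invoke Pogorelov's uniqueness theorem recalled above: $d_\Omega$ is continuous and has projective lines as geodesics, so there is a unique Busemann measure $\sigma$ on projective lines with $\sigma(G[\overline{x},\overline{y}])=d_\Omega(\overline{x},\overline{y})$. One then verifies that $\mu_\Omega$ satisfies Busemann's three axioms (local finiteness is immediate as $\mu_\Omega$ is a Radon current; $\mu_\Omega(G[p])=0$ for $p\in\Omega$ follows by writing $G[p]$ as a decreasing intersection of boxes whose cross-ratio pinches to $1$; strict positivity on non-degenerate segments follows from strict positivity of the cross-ratio on non-degenerate boxes), and identifies $\mu_\Omega$ with $\sigma$ by matching them on a generating family of boxes via a cross-ratio computation, yielding $\mu_\Omega(G[\overline{x},\overline{y}])=\sigma(G[\overline{x},\overline{y}])=d_\Omega(\overline{x},\overline{y})=\ell_\Omega([\overline{x},\overline{y}])$.
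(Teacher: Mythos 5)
Both of your routes stall at the same place, and the step you defer is precisely the one the paper resolves by a different idea. In your first route you reduce everything to the claim that the Riemann sum $\sum_i \log\mathrm{cr}(u_{i-1},u_i,\overline{x}^{u_i},\overline{y}^{u_i})$ collapses to $\log\mathrm{cr}(a,\overline{x},\overline{y},b)$ in the refinement limit; you name this as "the main obstacle" and do not prove it. In your second route you propose to identify $\mu_\Omega$ with the Busemann measure $\sigma$ "by matching them on a generating family of boxes via a cross-ratio computation" --- but computing $\sigma(B)$ for a box $B$ with vertices on $\partial\Omega$ from the defining relation $\sigma(G[\overline{x},\overline{y}])=d_\Omega(\overline{x},\overline{y})$ requires passing interior quadrilaterals to the boundary and showing the resulting combination of Hilbert distances converges to the log cross-ratio; this is the same unproved telescoping identity in disguise. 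Also note that verifying Busemann's three axioms for $\mu_\Omega$ does not by itself help: uniqueness in Pogorelov's theorem pins down the measure representing a \emph{given} metric, and you cannot yet assert that $\mu_\Omega$ represents $d_\Omega$ without assuming the very identity you are trying to prove.

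The missing idea is to compare $\sigma$ and $\mu_\Omega$ not box by box but through their marked length spectra. The paper first checks that $\sigma$ is $\pi_1(Z)$-invariant (because $\pi_1(Z)$ acts by projective isometries, $\sigma(gG[\overline{x},\overline{y}])=d_\Omega(g\overline{x},g\overline{y})=d_\Omega(\overline{x},\overline{y})=\sigma(G[\overline{x},\overline{y}])$, and the transversals $G[\overline{x},\overline{y}]$ generate the Borel $\sigma$-algebra), so $\sigma$ is a geodesic current. Then for a loxodromic $g$ with axis $L_g$ and $\overline{x}\in L_g$ one gets $i(\sigma,g)=\sigma(G[\overline{x},g\overline{x}])=d_\Omega(\overline{x},g\overline{x})=\ell_\Omega(g)$, while $i(\mu_\Omega,g)=\ell_\Omega(g)$ holds by the construction of $\mu_\Omega$. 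Otal's rigidity theorem \cite{Otal90:SpectreMarqueNegative} then forces $\mu_\Omega=\sigma$ as currents, and the proposition follows by evaluating on $G[\overline{x},\overline{y}]$. This replaces the delicate cross-ratio limit with a soft rigidity argument; without it (or a genuine proof of the telescoping identity), your argument is incomplete.
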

\begin{proof}
    By the discussion above, we know that
    \[
    d_{\Omega} ([\overline{x},\overline{y}]) =\sigma ( G[\overline{x},\overline{y}]).
    \]
    Observe that, since $\pi_1(Z)$ acts by projective isometries, $g(G[\overline{x},\overline{y}])=G[g(\overline{x}),g(\overline{y})]$.
    Thus, by the above equation
    \[
    \sigma(g(G[\overline{x},\overline{y}]))=\sigma(G[g(\overline{x}),g(\overline{y})])=\ell_{\Omega} ([g(\overline{x}),g(\overline{y})])=\ell_{\Omega} ([\overline{x},\overline{y}]).
    \]
    This shows that
    \[
    \sigma(g(G[\overline{x},\overline{y}]))=\mu(G[\overline{x},\overline{y}]),
    \]
    for every $g \in \pi_1(Z)$, and every $\overline{x},\overline{y} \in Z$.
    Since the sets $G[\overline{x},\overline{y}]$ generate the Borel sigma algebra of geodesics (see~\cite[Lemma~A.2]{MZ19:PositivelyRatioed}, where the proof is given assuming $X$ is hyperbolic, but the same proof works verbatim in the setting of $X$ Hilbert strictly convex), this shows $\sigma$ is $\pi_1(Z)$-invariant. It follows that $\sigma$ is a geodesic current on $\mathcal{G}(\wt{Z})$.
    Let $g$ a loxodromic element with axis a projective line $L_g$. Let $\overline{x} \in L_g$. Then
    \[
    \ell_{\Omega}(g)=d_{\Omega} ([\overline{x},g \overline{x}]) =\sigma ( G[\overline{x},g \overline{x}])=i(\sigma, g).
    \]
    On the other hand, we also have
    \[
    \ell_{\Omega}(g)=d_{\Omega} ([\overline{x},g \overline{x}]) =\mu_{\Omega} ( G[\overline{x},g \overline{x}])=i(\mu_{\Omega}, g).
    \]    
    Thus, by~\cite[Th\'eor\`eme~2]{Otal90:SpectreMarqueNegative}, $\mu_{\Omega}=\sigma$.
\end{proof}

Let us now fix a Hitchin representation $\rho \colon \Gamma=\pi_1(X) \to \SL(3, \R)$. Martone-Zhang show in \cite{MZ19:PositivelyRatioed} that $\rho$ induces a geodesic current $\mu_\rho$ on $\mathcal{G}(X)$ such that
\[
i(\mu_\rho , c) = \ell_{\Omega_\rho} (c)
\]
for every $c \in \Curves(X)$.
 Moreover, the Hitchin representation $\rho$ induces a \emph{boundary map}
 \[
 \psi \colon \partial \wt{X} \to \partial \Omega.
 \]
 This boundary map is the extension to the boundary at infinity of the developing map of the strictly convex real projective structure. 
 
 \begin{proposition}
 Given $X$ a closed hyperbolic surface and a Hitchin representation $\rho \colon \pi_1(X) \to \SL(3,\mathbb{R})$, let $\dev \colon \wt{X} \to \Omega$ be the developing map of the associated convex projective structure.
 Let $\psi$ denote the extension to the boundary at infinity of this homeomorphism
 \[
 \psi \colon \partial \wt{X} \to \partial \Omega.
 \]
 If $[\gamma] \in \pi_1(X)$, then $\psi(\gamma^{\pm})=(\rho([\gamma]))^{\pm}$. 
 \label{prop:samemaps}
 \end{proposition}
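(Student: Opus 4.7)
The plan is to leverage the $\rho$-equivariance of the developing map, together with the dynamical characterization of $\gamma^{\pm}$ and $(\rho([\gamma]))^{\pm}$ as attracting/repelling fixed points. First, by construction of the convex projective structure $\Omega_{\rho}=\Omega/\rho(\pi_1(X))$, the developing map satisfies
\[
\dev([\gamma]\cdot \overline{x}) = \rho([\gamma])\cdot \dev(\overline{x})
\]
for every $[\gamma] \in \pi_1(X)$ and $\overline{x} \in \wt{X}$. Since $\dev$ is a $\rho$-equivariant homeomorphism onto the (strictly convex) open disk $\Omega$, it extends to a homeomorphism of closed disks $\wt{X}\cup \partial\wt{X} \to \overline{\Omega}$ whose restriction to the boundary is precisely $\psi$. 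By density of $\wt{X}$ in its compactification and continuity, the equivariance passes to the boundary:
\[
\psi([\gamma]\cdot \xi) = \rho([\gamma])\cdot \psi(\xi) \quad \text{for every } \xi \in \partial\wt{X}.
\]

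Next, I would invoke the dynamical description of fixed points on each side. On $\partial\wt{X}$ the isometry $[\gamma]$ acts with north--south dynamics: $\gamma^+$ is the unique attracting fixed point and $\gamma^-$ the unique repelling one, and $[\gamma]^n\cdot \xi \to \gamma^+$ for every $\xi \neq \gamma^-$. Applying $\psi$ and using the boundary equivariance established above,
\[
\rho([\gamma])^n \cdot \psi(\xi) \;\longrightarrow\; \psi(\gamma^+).
\]
On the other hand, since $\rho$ is Hitchin in $\SL(3,\mathbb{R})$, every non-trivial $\rho([\gamma])$ has three distinct positive eigenvalues $\lambda_1 > \lambda_2 > \lambda_3$, so its action on $\mathbb{RP}^2$ has north--south dynamics with attracting fixed point $(\rho([\gamma]))^+$ and repelling fixed point $(\rho([\gamma]))^-$, both of which lie on $\partial\Omega$ by the Anosov property of Hitchin representations. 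Therefore, picking any $\xi \notin \{\gamma^+,\gamma^-\}$, we have $\psi(\xi) \notin \{(\rho([\gamma]))^{\pm}\}$ (as $\psi$ is a bijection and both boundary fixed points of $\rho([\gamma])$ must be preimages of $\gamma^{\pm}$ by equivariance), so $\rho([\gamma])^n\cdot \psi(\xi) \to (\rho([\gamma]))^+$. Comparing the two limits forces $\psi(\gamma^+) = (\rho([\gamma]))^+$, and replacing $n$ by $-n$ yields $\psi(\gamma^-) = (\rho([\gamma]))^-$.

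The main obstacle, modest though real, is the clean promotion of $\dev$ to a homeomorphism of the boundary circles: this requires invoking that $\dev$ is a $\pi_1(X)$-equivariant quasi-isometry between two Gromov hyperbolic spaces (the $\wt{X}$ with its hyperbolic metric and $\Omega$ with the Hilbert metric, compare \eqref{hilbert metric}) whose visual/Gromov boundaries are canonically identified with $\partial\wt{X}$ and $\partial\Omega$. Once this is in place, the rest is a routine north--south dynamics argument as above.
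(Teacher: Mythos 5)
Your argument is correct, and it rests on the same two pillars as the paper's proof: the $\rho$-equivariance of $\dev$ and the fact (via Benoist's theorem that a strictly convex divisible $\Omega$ with its Hilbert metric is Gromov hyperbolic) that $\dev$ extends to a boundary homeomorphism $\psi$. The difference is in how the fixed points are matched up. The paper works with the sequential model of the Gromov boundary and the orbit of an \emph{interior} basepoint: it writes $\gamma^+=[(\gamma^n\cdot o)]$, so that $\psi(\gamma^+)=[(\dev(\gamma^n\cdot o))]=[(\rho(\gamma)^n\cdot\dev(o))]=\rho([\gamma])^+$ in one line, with no dynamics on the target boundary needed beyond the definition of the attracting fixed point. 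You instead push the equivariance to the boundary, $\psi(\gamma\cdot\xi)=\rho(\gamma)\cdot\psi(\xi)$, and compare north--south dynamics on $\partial\wt{X}$ and on $\partial\Omega$; this works, but it obliges you to verify two extra facts the paper sidesteps: that $\rho(\gamma)$ acts on $\partial\Omega$ with north--south dynamics (which does hold --- either from biproximality plus strict convexity, so that the tangent line at the repelling point meets $\partial\Omega$ only there, or simply from Gromov hyperbolicity of $(\Omega,d_\Omega)$), and that $\psi(\xi)\neq\rho([\gamma])^-$ for $\xi\notin\{\gamma^\pm\}$, which you correctly deduce from equivariance since $\psi^{-1}$ carries fixed points to fixed points. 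Both routes are sound; the paper's is shorter and more elementary, while yours has the small advantage of making the boundary equivariance of $\psi$ explicit, which is reused implicitly in the subsequent lemma identifying $\mu_\Omega$ with $\psi_*\mu_\rho$.
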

 \begin{proof}
  The space $\Omega$ is $\delta$-hyperbolic if and only if it is a strictly convex divisible domain~\cite[Theorem~4.5]{Ben04:Convexes}. Thus, the extension of $\dev$ to the boundaries at infinity is well-defined.
  Let us recall that points in the boundary at infinity are equivalence classes of sequences of points, where the equivalence classes identify two sequences if they are at bounded distance. The map $\psi$ hence associates to a point $x = [(x_n)] \in \del \wt{X}$ the point $[(\mathrm{dev}(x_n))] \in \del \Omega$. Let us fix a basepoint $o \in \wt{X}$ and let $[\gamma] \in \pi_1 (X)$, then we can write $\gamma^+ \in \del \wt{X}$ as $\gamma^+ = [ {(\gamma^n \cdot o )}]$. We have that
\[
      \psi (\gamma^+) = \psi [ ( \mathrm{dev} (\gamma^n \cdot o ) )] = [ (\rho (\gamma^n) \cdot \mathrm{dev}(o) ) ] = \rho ([\gamma])^+
\]
where we have used in the second equality the $\Gamma$-equivariance of the developing map.
 \end{proof}

 In fact, we have the following.
 
 \begin{lemma}
 The current $\mu_{\Omega}$ is the push-forward of $\mu_\rho$ via $\psi$, the extension to the boundary of the developing map.
  \end{lemma}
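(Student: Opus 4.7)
The plan is to invoke Otal's rigidity theorem (\cite{Otal90:SpectreMarqueNegative}, already used in the paper) which says that a geodesic current is determined by its intersection numbers with closed curves. So I would reduce to showing that $i(\psi_*\mu_\rho, c) = i(\mu_\Omega, c)$ for every $c \in \Curves(S)$, where the first intersection number is computed in $\mathcal{G}(\wt{Z})$.

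First, I would verify that $\psi_*\mu_\rho$ is a bona fide geodesic current on $Z$. Since $\psi \colon \partial \wt{X} \to \partial \Omega$ is a $\pi_1$-equivariant homeomorphism of circles, it descends to a $\pi_1$-equivariant homeomorphism $\Psi \colon \mathcal{G}(\wt{X}) \to \mathcal{G}(\wt{Z})$ after taking $(\partial)^{(2)}/\mathbb{Z}_2$. So $\psi_*\mu_\rho$ is locally finite and $\pi_1$-invariant.

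Next, I would observe that the key invariant — pairs of transversely intersecting geodesics — is topological in the endpoints: two geodesics in $\wt{X}$ (resp.\ in $\Omega$) meet transversely if and only if their endpoint pairs link on $\partial \wt{X}$ (resp.\ on $\partial \Omega$). Since $\psi$ preserves the cyclic order on the circle, $\Psi \times \Psi$ restricts to a $\pi_1$-equivariant homeomorphism between the transverse loci $\mathfrak{I}_X$ and $\mathfrak{I}_Z$. Descending to $\mathfrak{I}_X/\pi_1$ and $\mathfrak{I}_Z/\pi_1$ and using the change-of-variables formula for pushforwards gives
\[
i(\psi_*\mu_\rho, \psi_*\nu) = i(\mu_\rho, \nu)
\]
for every pair of currents on $X$.

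Third, I would apply this identity to $\nu = c$, where $c$ is the current of a closed curve. By Proposition~\ref{prop:samemaps}, $\psi$ sends the endpoints $\gamma^\pm$ of any lift of $c$ in $\wt{X}$ to the endpoints $\rho([\gamma])^\pm$ of the corresponding lift in $\Omega$, so by equivariance $\psi_* c$ is exactly the current on $Z$ associated to $c$. Combining with the equality above, the Martone--Zhang relation $i(\mu_\rho, c) = \ell_{\Omega_\rho}(c)$, and the identity $i(\mu_\Omega, c) = \ell_\rho^H(c) = \ell_{\Omega_\rho}(c)$ from the preceding discussion, I obtain $i(\psi_*\mu_\rho, c) = i(\mu_\Omega, c)$ for every $c \in \Curves(S)$. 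Otal's theorem then yields $\psi_*\mu_\rho = \mu_\Omega$.

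The only subtle step is the pushforward-intersection invariance, i.e., justifying that transversality of geodesics in the two metric structures is detected purely by the topology of the boundary; this uses the paper's standing assumption that the endpoint map $\partial \colon \mathcal{G}(\wt{X}) \to (\partial \wt{X})^{(2)}/\mathbb{Z}_2$ is a homeomorphism, which holds both for hyperbolic and for strictly convex projective Hilbert metrics. Everything else is formal once this is in place.
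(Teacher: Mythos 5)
Your proposal is correct and follows essentially the same route as the paper: both arguments reduce to matching intersection numbers with closed curves (using Proposition~\ref{prop:samemaps} and $\pi_1$-equivariance of $\psi$ to identify $i(\psi_*\mu_\rho,c)$ with $\ell_{\Omega_\rho}(c)=i(\mu_\Omega,c)$) and then invoke Otal's rigidity theorem. The only cosmetic difference is that you package the equivariance step as a general pushforward-invariance of the intersection form, where the paper computes the pushforward directly on the box $[\gamma'_-,\gamma'_+]\times[z,\gamma z]$.
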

 \begin{proof}
 Let $\gamma_-$ and $\gamma_+$ be the repelling and attractive fixed points in $\partial \wt{X}$ of the deck transformation corresponding to $[\gamma] \ni c$.
 By Proposition~\ref{prop:samemaps}, we have
 that $\gamma'_+ = \psi(\gamma_+)$ corresponds to the attracting point of $\rho(\gamma)$, and $\gamma'_- = \psi (\gamma_-)$ corresponds to the repelling point of $\rho(\gamma)$.
 Then we have, on the one hand, if $\gamma_-'\gamma_+'$ denotes the projective line determined by the points at infinity $\gamma_-'$ and $\gamma_+'$, and $x \in \gamma_-'\gamma_+'$, we have
 \begin{align*}
     \ell_\Omega (c) & = \mu_{\Omega} (G[x, \gamma x]) \\
     &= i(\mu_{\Omega}, \gamma) = \mu_C ([\gamma'_-, \gamma'_+] \times [z, \gamma z] ).
     \end{align*}

     By $\pi_1(X)$-equivariance of $\psi$, we get

 \begin{align*}
i(\psi_* \mu_\rho , \gamma)  &= \psi_* \mu_\rho ([\gamma'_-, \gamma'_+] \times [z, \gamma z] ) = \\
 &= \mu_\rho ([\psi^{-1}\gamma'_-, \psi^{-1}\gamma'_+] \times [\psi^{-1}(z), \gamma \psi^{-1}(z)] ) \\
 &=\mu_\rho ([\gamma_-, \gamma_+] \times [\psi^{-1}(z), \gamma \psi^{-1}(z)] ) =\ell_{\Omega_\rho}(\gamma).
     \end{align*}

Since $i(\mu_{\Omega_\rho} , \gamma) = \ell_{\Omega_\rho}(\gamma)$, by~\cite[Th\'eor\`eme~2]{Otal90:SpectreMarqueNegative} we have $\mu_{\Omega}=\psi_* \mu_{\rho}$.
 \end{proof}

 At this point, we observe that the results obtained in this paper for $X_{\mu}$ equipped with an underlying hyperbolic structure $X$ would also hold if the underlying metric was a Hilbert metric coming from a strictly convex projective structure on the surface.
 In view of this, one could have defined, from the get go, dual spaces $X_{\mu}$ in the setting of Hilbert metrics coming from strictly convex real projective structures $X$ on the surface $S$.
 We choose not to write things in such generality in this paper, since the only place we allow $X$ to be a Hilbert metric (as opposed to a hyperbolic metric) is in this example.

 \begin{proposition}
   The dual space $(X_{\mu_{\Omega}}, d_{\mu_{\Omega}})$ obtained by letting $X$ be the Hilbert metric induced by a convex projective structure $\Omega_{\rho}$ satisfies Theorems~\ref{thm:hyp_intro}, \ref{thm:intro_treegraded}, \ref{thm:action_intro}, \ref{thm:continuity_intro} and \ref{thm:homeointroduction}.

   \label{prop:general}
   \end{proposition}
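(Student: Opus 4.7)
The plan is to verify that every argument appearing in the proofs of Theorems~\ref{thm:hyp_intro}, \ref{thm:intro_treegraded}, \ref{thm:action_intro}, \ref{thm:continuity_intro} and \ref{thm:homeointroduction} only invokes the abstract structural properties of $\wt{X}$ singled out in Section~\ref{sec:background}, and then check these hold for the Hilbert metric $d_{\Omega}$ on $\wt{Z}$. Specifically, I would first record the following four facts in the Hilbert setting: (i) $(\Omega, d_\Omega)$ is a proper geodesic metric space and is $\delta$-hyperbolic because $\Omega$ is strictly convex divisible, by Benoist's theorem~\cite{Ben04:Convexes}; (ii) uniqueness of a geodesic segment between any two points follows from strict convexity, which forces the supporting projective line to be unique; (iii) the endpoint map $\partial \colon \mathcal{G}(\wt{Z}) \to (\partial \wt{Z} \times \partial \wt{Z} - \Delta)/\mathbb{Z}_2$ is a homeomorphism since $\partial \Omega$ is $C^1$ and strictly convex, so each pair of boundary points determines a unique chord; (iv) $\pi_1(Z)$ acts on $(\wt{Z}, d_{\Omega})$ cocompactly, properly discontinuously and by isometries, by definition of a convex projective structure. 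These four items are exactly the hypotheses singled out at the beginning of Subsection~\ref{intro:curr}.

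With these in hand I would revisit each theorem in turn. The proof of Theorem~\ref{thm:hyp_intro} only uses the box/transversal combinatorics on $\mathcal{G}(\wt{X})$ together with the additivity of $d_\mu$ along geodesic segments, which is straightness of $d_\mu$; both depend only on (ii) and (iii). Theorem~\ref{thm:intro_treegraded} relies on the decomposition of currents from~\cite{BIPP21:Currents}, which is already formulated for the class of metrics considered here, together with the purely metric tree-graded machinery of Section~\ref{sec:decomposition}. Theorem~\ref{thm:action_intro} uses only properness of $\wt{X}$, cocompactness of $\pi_1(X)$, and the dynamics of the induced action on $X_\mu$, all of which are insensitive to constant curvature. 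Theorem~\ref{thm:continuity_intro} reduces to Propositions~\ref{prop:contdistance}, \ref{prop:continuousproj} and~\ref{thm:homeoprojection}, whose inputs are the measure-theoretic Lemmas~\ref{lem:discreteatoms}--\ref{lem:shrinkingatom} plus Proposition~\ref{prop:zeromeasure}. Theorem~\ref{thm:homeointroduction} is phrased purely in terms of intersection numbers and translation lengths and carries over once $d_\mu$ is defined.

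The one place that I expect to require real work is Proposition~\ref{prop:zeromeasure}, whose argument passes through the geodesic flow on $T^1 X$ and Poincar\'e recurrence. For a strictly convex divisible $\Omega$, the Hilbert geodesic flow on $T^1(\Omega/\pi_1(Z))$ is a continuous, recurrent flow, and by Benoist's work it is in fact Anosov; moreover a geodesic current on $\mathcal{G}(\wt{Z})$ still corresponds to a flip- and flow-invariant locally finite measure on $T^1(\Omega/\pi_1(Z))$ via the standard identification. Once this is recorded, the counting step in the proof of Proposition~\ref{prop:zeromeasure} — that primitive based loops of length at most $L$ at a fixed point $x$ are finite in number, because there is a unique geodesic in each based homotopy class of bounded length, an instance of (ii) — goes through verbatim. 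Once Proposition~\ref{prop:zeromeasure} is secured in the Hilbert setting, all remaining arguments port mechanically and the proposition follows by a systematic replacement of ``hyperbolic structure $X$'' by ``Hilbert metric $d_\Omega$'' throughout Sections~\ref{sec:dualis0hyperbolic}--\ref{sec:topology}.
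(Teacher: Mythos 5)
Your overall strategy is exactly the paper's: compile a checklist of the structural properties of the ambient metric that are actually used in each proof, and then verify them for the Hilbert metric of a strictly convex divisible $\Omega$ via Benoist's theorem (hyperbolicity, uniqueness of geodesics, cocompact isometric action, continuous geodesic flow with currents realized as flip- and flow-invariant measures). Your identification of Proposition~\ref{prop:zeromeasure} as the point needing genuine dynamical input, and your treatment of it via recurrence of the Hilbert geodesic flow plus uniqueness of based geodesic representatives and finiteness of based homotopy classes of bounded length, matches the paper's items on the geodesic flow and on based loops.

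There are, however, two items on the paper's checklist that your proposal skips, and one of them is a real gap. You dismiss Theorem~\ref{thm:homeointroduction} as ``phrased purely in terms of intersection numbers and translation lengths,'' but the continuity half of that theorem rests on Proposition~\ref{prop:weakflow}, whose proof that the set $A$ of points lying on no lift of a closed geodesic is dense uses ergodicity of the geodesic flow with respect to an invariant measure of full support (in the Hilbert setting one takes the Bowen--Margulis measure, and ergodicity follows from the Anosov property you already invoke). Without this, the construction of the subbasis of neighborhoods used to prove continuity of $\Psi$ is not justified. The second, more minor, omission is the north--south dynamics of loxodromic elements and the density of their fixed points in $\partial\wt{X}$, which the paper uses in Lemma~\ref{lem:atompencil} and in the finiteness of $\delta_\mu$ (Lemma~\ref{prop:deltafinite}); this does follow from the $\delta$-hyperbolicity you established in item (i), but it needs to be said, since Theorem~\ref{thm:hyp_intro} is not ``only box combinatorics'': the finiteness of the hyperbolicity constant is a separate compactness-and-dynamics argument. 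Both gaps are fillable with facts you have essentially already placed on the table, so the proposal is repairable rather than wrong.
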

\begin{proof}
We list the properties used about the metric $X$ used throughout the paper, justify why they are also satisfied when $X$ is a Hilbert metric coming from a strictly convex projective structure, and point precisely to where they have been used. 

\begin{enumerate}
    \item Geodesics are unique and they are the same as length minimizers. This is true for $X$ coming from a strictly convex projective structure, as shown in  (see~\cite{Bea99:HilbertDomain}). This is assumed in the choice of metric in Section~\ref{intro:curr}, in Propositions~\ref{prop:zeromeasure} and~\ref{prop:contdistance}, as well as Section~\ref{sec:dualis0hyperbolic}, Section~\ref{sec:actions} and Section~\ref{sec:completeness}.
    \item There exists a continuous geodesic flow for $X$, and geodesic currents can be seen also as geodesic flow invariant measures under this flow (see~\cite[Section~6]{KP14:Convex} as well as~\cite[Chapter~4]{Cra11:Dynamics}). This is assumed in~Proposition~\ref{prop:zeromeasure}. Continuity properties of the flow are assumed in Lemma~\ref{lem:distancecompare}. \item Ergodicity properties and the existence of an ergodic Borel geodesic flow invariant measure of full support are used in Proposition~\ref{prop:weakflow}. In this setting, one can use the Bowen-Margulis measure, see~\cite[4.2]{Cra11:Dynamics}.
    \item Loxodromic elements have dense set of points on the boundary, and act by north-south dynamics, both of which are true by $\delta$-hyperbolicity~(see~\cite[Chapter~11, Proposition~2.4]{CDR90:NotesGroupes}), since strictly convex real projective Hilbert metrics are $\delta$-hyperbolic~\cite[Theorem~4.5]{Ben04:Convexes}. This is assumed in Lemma~\ref{prop:deltafinite}. 
    \item Uniqueness of geodesic paths in a given base-point homotopy class, and uniqueness of base-point homotopy classes of loops of length at most $L$~(see~\cite[Page~9]{KP14:Convex}).
\end{enumerate}
 \end{proof}

From this, we obtain the following result.

\begin{proposition}
The map $\dev \circ \pi_{\mu}^{-1}$ defines a $\pi_1(X)$-equivariant homeomorphism from $X_{\mu_{\rho}}$ to $\Omega$. 
Furthermore, if we consider the metric on $\wt{X}$ induced by the convex projective structure $\Omega_{\rho}$ on $X$, $\pi_{\mu_{\Omega}}$ induces an isometry from $(\Omega, d_\Omega)$ to $(X_{\mu_{\Omega}}, d_{\mu_{\Omega}})$.
\label{prop:convexproj}
\end{proposition}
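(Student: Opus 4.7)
The plan is to split the statement into two halves and handle them separately, with both halves ultimately reducing to results already established.

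For the first half, I would argue that $\dev \circ \pi_{\mu_\rho}^{-1}$ is a $\pi_1(X)$-equivariant homeomorphism as a composition of two such maps. Indeed, by Lemma~\ref{lem:anosovnoatomfull}, the current $\mu_\rho$ associated to the Hitchin (hence positively ratioed) representation $\rho$ has no atoms and full support. Proposition~\ref{thm:homeoprojection}(2) then yields that $\pi_{\mu_\rho}\colon \wt{X}\to X_{\mu_\rho}$ is a $\pi_1(X)$-equivariant homeomorphism, so $\pi_{\mu_\rho}^{-1}$ is defined and equivariant. Meanwhile, because $\rho$ is Hitchin, the associated real projective structure is strictly convex, so the developing map $\dev\colon \wt{X}\to\Omega$ is a $\pi_1(X)$-equivariant homeomorphism. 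Composing gives the claim.

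For the second half, I would switch to viewing $X$ as carrying the Hilbert metric induced by $\Omega_\rho$, which is legitimate by Proposition~\ref{prop:general}. Under the identification $\wt{X}\cong\Omega$ via $\dev$, the goal becomes to show $d_{\mu_\Omega}(\overline{x},\overline{y})=d_\Omega(\overline{x},\overline{y})$ for all $\overline{x},\overline{y}\in\Omega$. The central input is the Crofton identity from Equation~\eqref{HilbertLength}:
\[
\mu_\Omega\bigl(G[\overline{x},\overline{y}]\bigr)=\ell_\Omega([\overline{x},\overline{y}])=d_\Omega(\overline{x},\overline{y}).
\]
Since $\mu_\Omega=\psi_*\mu_\rho$ and $\psi$ is a homeomorphism, $\mu_\Omega$ inherits from $\mu_\rho$ the property of being non-atomic (Lemma~\ref{lem:anosovnoatomfull}). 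By Lemma~\ref{lem:atompencil}, this rules out pencils of positive measure; in particular, the sets of geodesics through $\overline{x}$ or through $\overline{y}$ (the ``boundary'' of the transversal $G[\overline{x},\overline{y}]$) have zero $\mu_\Omega$-measure. Hence
\[
\mu_\Omega\bigl(G[\overline{x},\overline{y})\bigr)=\mu_\Omega\bigl(G(\overline{x},\overline{y}]\bigr)=\mu_\Omega\bigl(G[\overline{x},\overline{y}]\bigr),
\]
so unwinding the definition of $d_{\mu_\Omega}$ gives
\[
d_{\mu_\Omega}(\overline{x},\overline{y})=\tfrac{1}{2}\bigl\{\mu_\Omega(G[\overline{x},\overline{y}))+\mu_\Omega(G(\overline{x},\overline{y}])\bigr\}=d_\Omega(\overline{x},\overline{y}).
\]
In particular $d_{\mu_\Omega}$ is already a genuine metric on $\wt{X}$, so the quotient map $\pi_{\mu_\Omega}$ is injective and isometric; combined with the first half it descends through $\dev$ to the desired isometry $(\Omega,d_\Omega)\to(X_{\mu_\Omega},d_{\mu_\Omega})$.

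The only substantive step is the passage between the open, half-open, and closed transversals, since the Crofton identity as stated in Equation~\eqref{HilbertLength} uses the closed version while the definition of $d_\mu$ averages the half-open versions. I expect this to be the main (and only) delicate point, and it is dispatched as above using that $\mu_\Omega$ has no atoms of either the $0$-dimensional or the $1$-dimensional (pencil) type. Everything else is either transport through an equivariant homeomorphism or a direct application of Proposition~\ref{thm:homeoprojection} and the push-forward identity $\mu_\Omega=\psi_*\mu_\rho$.
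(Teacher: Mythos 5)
Your proposal is correct and follows essentially the same route as the paper: both halves reduce to Lemma~\ref{lem:anosovnoatomfull} plus Proposition~\ref{thm:homeoprojection} for the equivariant homeomorphism, and to the Crofton identity $\mu_\Omega(G[\overline{x},\overline{y}])=d_\Omega(\overline{x},\overline{y})$ for the isometry; you are in fact more explicit than the paper about reconciling the half-open transversals in the definition of $d_{\mu_\Omega}$ with the closed one in the Crofton formula. One small correction: the vanishing of the measure of the endpoint contributions is not a consequence of Lemma~\ref{lem:atompencil} (which concerns pencils based at points of $\partial\wt{X}$), but of Proposition~\ref{prop:zeromeasure}, whose proof shows that for a non-atomic current the set $G[\overline{x}]$ of geodesics through an interior point $\overline{x}$ is $\mu$-null.
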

\begin{proof}
Since $\mu_\rho$ has no atoms and full support, by Proposition~\ref{thm:homeoprojection} $\pi_{\mu_\rho} \colon \wt{X} \to X_{\mu_\rho}$ is a $\pi_1(X)$-equivariant homeomorphism.
We can thus define the $\pi_1(X)$-equivariant homeomorphism~ $\varphi \coloneqq \dev \circ \pi_{\mu_\rho}^{-1}$.

We show that this map is, in fact, an isometry.
For any points $x,y \in X_{\mu_{\rho}}$, let
$x=\pi_{\mu}(\bar{x})$ and $y=\pi_{\mu}(\bar{y})$, for $\bar{x},\bar{y} \in \wt{X}$.
Note that
\begin{align*}
d_{\mu_{\Omega}}(x,y)&=
d_{\mu_{\Omega}}(\pi_{\mu}(\bar{x}),\pi_{\mu}(\bar{y}))=\mu_{\Omega}(G[\bar{x},\bar{y}])=d_{\Omega}(\bar{x},\bar{y}).
\end{align*}
\end{proof}

\subsection{Duals for hyperbolic/negatively curved Riemannian Liouville current}
\label{ex:LiouvilleDual}
In this subsection, we assume that $X$ is closed.
Recall that the geodesic current $\mathcal{L}_Y^X$  has full support and has no atoms, and it is has been defined as the Liouville current associated to $[(Y,\varphi)] \in \Teich(X)$~(see Subsection~\ref{ex:Liouville} for details).

\begin{lemma}
Given $[(Y,\varphi)] \in \Teich(X)$, the map $\varphi \circ \pi_{\mathcal{L}_Y^X}^{-1}$ induces a $\pi_1(X)$-equivariant homeomorphism between $\wt{Y}$ to
$\wt{X}_{\mathcal{L}_Y^X}$. Furthermore, if $Y=X$, $\pi_{\mathcal{L}_X}$ induces an isometry between $(\wt{X},d_{\wt{X}})$ and $(\wt{X}_{\mathcal{L}_X},d_{\mathcal{L}_X})$.

\label{lem:liouville}
\end{lemma}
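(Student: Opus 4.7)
The plan is to mirror the strategy of Proposition~\ref{prop:convexproj}, with the Liouville current $\mathcal{L}_Y^X$ playing the role of $\mu_{\Omega}$. The two statements are essentially a specialization, first to an arbitrary hyperbolic marking and then to the identity marking.

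For the first statement, I would recall the fact (explicit in Example~\ref{ex:Liouville}) that $\mathcal{L}_Y^X$ has full support and no atoms. Proposition~\ref{thm:homeoprojection} then yields that $\pi_{\mathcal{L}_Y^X}\colon \wt{X}\to X_{\mathcal{L}_Y^X}$ is a $\pi_1(X)$-equivariant homeomorphism. The marking $\varphi\colon X\to Y$ lifts to a $\pi_1(X)$-equivariant homeomorphism $\wt{\varphi}\colon \wt{X}\to\wt{Y}$, and composing with $\pi_{\mathcal{L}_Y^X}^{-1}$ produces the desired equivariant homeomorphism between $\wt{Y}$ and $X_{\mathcal{L}_Y^X}$.

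For the isometry statement with $Y=X$, I would exploit the \emph{Crofton property} of the Liouville current recorded in Subsection~\ref{ex:Liouville}: for every $\bar{x},\bar{y}\in\wt{X}$,
\[
\mathcal{L}_X(G[\bar{x},\bar{y}])=\ell_{\wt{X}}([\bar{x},\bar{y}])=d_{\wt{X}}(\bar{x},\bar{y}).
\]
Because $\mathcal{L}_X$ has no atoms, the measures of the sets $G[\bar{x}]$ and $G[\bar{y}]$ of geodesics through the endpoints of $[\bar{x},\bar{y}]$ vanish (this is exactly the content used in Proposition~\ref{prop:contdistance}; alternatively one pinches boxes containing such pencils and applies continuity of measures from above as in Lemma~\ref{lem:shrinkingatom}). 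Thus the two half-open pieces $G[\bar{x},\bar{y})$ and $G(\bar{x},\bar{y}]$ appearing in the definition of $d_{\mathcal{L}_X}$ both agree with $\mathcal{L}_X(G[\bar{x},\bar{y}])$, and hence
\[
d_{\mathcal{L}_X}\bigl(\pi_{\mathcal{L}_X}(\bar{x}),\pi_{\mathcal{L}_X}(\bar{y})\bigr)=\mathcal{L}_X(G[\bar{x},\bar{y}])=d_{\wt{X}}(\bar{x},\bar{y}),
\]
which is the claimed isometry.

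The one point that demands care rather than being routine is the transition from the intersection-number characterization of the Liouville current (which concerns only closed geodesics) to the pointwise Crofton identity on arbitrary segments; this is precisely why the Crofton property, and not the weaker intersection property, must be invoked. Once the Crofton identity is in hand, the proof reduces to the two short arguments above.
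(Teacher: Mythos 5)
Your proposal is correct and follows essentially the same route as the paper: no atoms plus full support feed into Proposition~\ref{thm:homeoprojection} for the homeomorphism, and the Crofton property of $\mathcal{L}_X$ gives the isometry when $Y=X$. The only difference is that you spell out why the half-open transversals $G[\bar{x},\bar{y})$ and $G(\bar{x},\bar{y}]$ have the same measure as $G[\bar{x},\bar{y}]$, a point the paper's proof passes over silently.
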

\begin{proof}
Note that since $\mathcal{L}_Y^X$ has no atoms and full support, it follows from Proposition~\ref{thm:homeoprojection} that $\varphi \circ \pi_{\mathcal{L}_Y^X}^{-1}$ is a $\pi_1(X)$-equivariant homeomorphism. If $X=Y$, then we write $\mathcal{L}_Y^X=\mathcal{L}_X$, and we have $\pi_{\mathcal{L}_X}(\bar{x})=x$.
Let $x,y \in X_{\mathcal{L}_X}$, and set $\bar{x}=\pi_{\mathcal{L}_X}^{-1}(x)$, $\bar{y}=\pi_{\mathcal{L}_X}^{-1}(y)$.
\begin{align*}
d_{\wt{X}}(\bar{x},\bar{y})&=\mathcal{L}_{X}(G[ \pi_{\mathcal{L}_X}^{-1}(x), \pi_{\mathcal{L}_X}^{-1}
(y)])=d_{\mathcal{L}_X}(x,y).
\end{align*}
\end{proof}

We end this example by remarking that the same argument for a negatively curved Riemannian metric $Z$, and its associated geodesic current $\mathcal{L}_Z$ as defined by Otal (see Subsection~\ref{ex:Liouville}), yields an isometry $\pi_{\mathcal{L}_Z}$ between $(\wt{Z},d_{\wt{Z}})$ and $(\wt{X}_{\mathcal{L}_Z},d_{\mathcal{L}_Z})$.

For example, by~\cite[Proposition~4.2]{OT21:Blaschke}, the Blaschke metric induced by cubic differentials is a negatively curved metric, with the Liouville current associated to a negatively curved Riemannian metric whose dual space is isometric to $X$ equipped with said metric.
One can obtain similar equivalences for other geodesic currents satisfying the Crofton property associated to non-positively curved metrics, as long as the properties in Proposition~\ref{prop:general} are satisfied.

\section{Hyperbolicity}
\label{sec:dualis0hyperbolic}

In this section we prove that the dual spaces $X_{\mu}$ are $\delta$-hyperbolic metric spaces.

Recall (see~Definition~\ref{def:boxes}) that $I_{a,b}$ denotes a generalized ordered interval in $\partial \wt{X}$, and
a box of geodesics was defined as any subset of $\mathcal{G}(\wt{X})$ of the type $B=I_{a,b} \times I_{c,d}$.
Recall also $\mathcal{B}$ denotes the family of all boxes of geodesics.

\begin{definition}[opposite box]
Given a box of geodesics $B=I_{a,b} \times I_{c,d}$, its \emph{opposite box} is defined as $B^{\perp}=I_{d,a} \times I_{b,c}$, so that the intervals $I_{a,b}, I_{c,d}, I_{d,a}, I_{b,c}$ partition $\partial \wt{X}$.
See Figure~\ref{fig:box}.
\label{def:opposite}
\end{definition}

    \begin{figure}[h!]
\centering{
\resizebox{160mm}{!}{\Huge{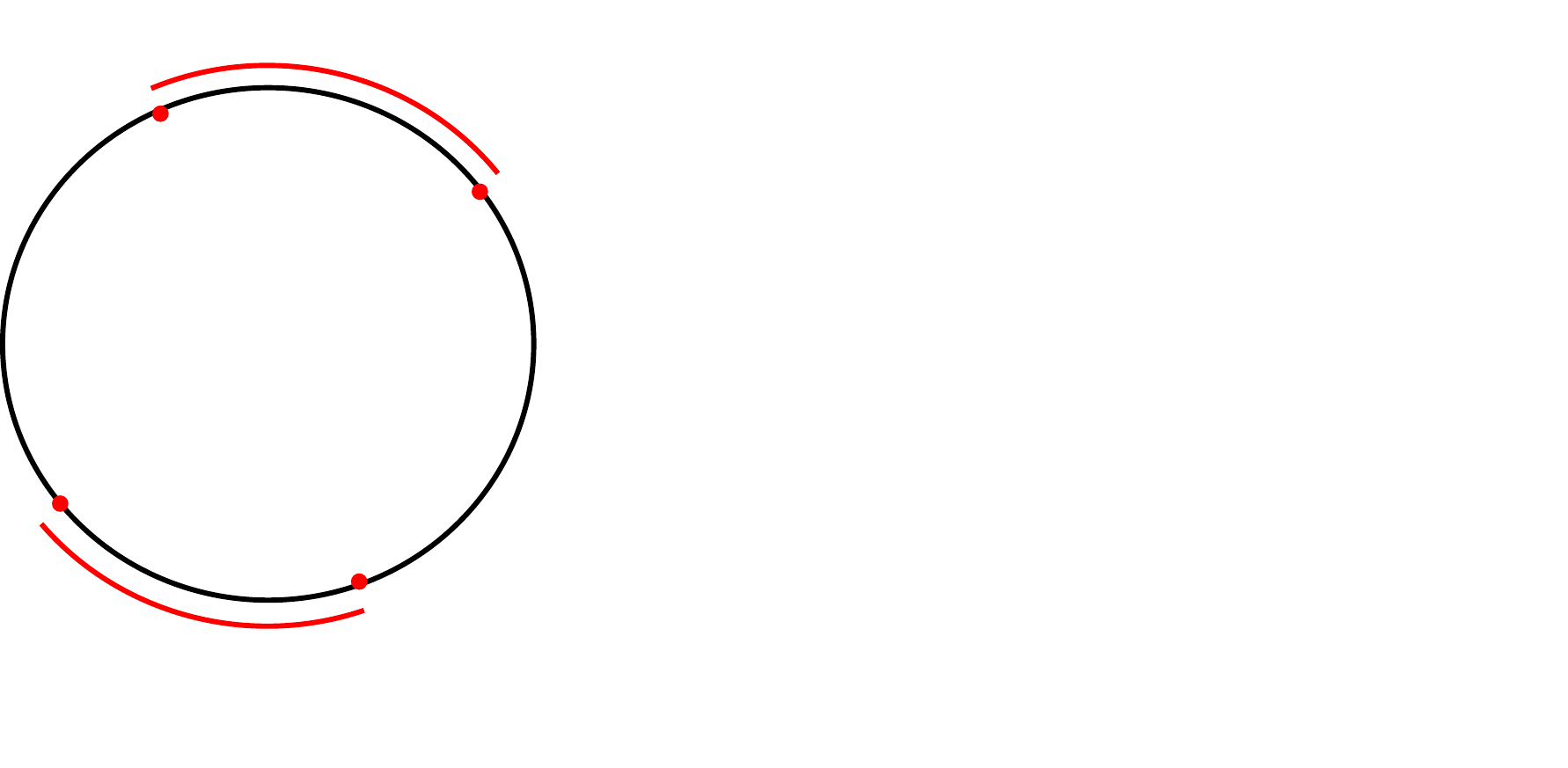}}
    \caption{Sketch of a box of geodesics and its corresponding opposite box. }\label{fig:box}
}
\end{figure}

The following lemma is straightforward and not new. The second equation in Lemma~\ref{lem:otalclaim} appears in work of Otal~\cite[Page~154]{Otal90:SpectreMarqueNegative}, without proof. We provide a proof here, for completeness.

We introduce the following notation. Given two geodesics segments $t$ and $t'$, let $G(t , t')$ be the set of geodesics intersecting both $t$ and $t'$ transversely. We will refer to sets of this type as \emph{double transversals}.
Let $\overline{x},\overline{y},\overline{z},\overline{w} \in \wt{X}$ appear counter-clockwise as vertices of an embedded geodesic quadrilateral on $\wt{X}$ with sides $[\overline{x},\overline{y})$, $[\overline{z},\overline{w})$, $[\overline{y},\overline{z})$, and $[\overline{w},\overline{x})$.

\begin{lemma}
Given the setup described above, we have 
\begin{align*}
\mu(G([\overline{x},\overline{w}), [\overline{y},\overline{z}))) + \mu(G((\overline{x},\overline{w}], (\overline{y},\overline{z}])) &= \\
&\mu(G[\overline{x},\overline{z})) + \mu(G[\overline{w},\overline{y})) \\
&- \mu(G([\overline{w},\overline{z})) - \mu(G([\overline{x},\overline{y}))
\end{align*}
and
\begin{align*}
\mu(G((\overline{x},\overline{w}], (\overline{y},\overline{z}])) + \mu(G([\overline{x},\overline{w}), [\overline{y},\overline{z}))) &= \\
&\mu(G(\overline{x},\overline{z}]) + \mu(G(\overline{w},\overline{y}]) \\
&- \mu(G((\overline{w},\overline{z}]) - \mu(G((\overline{x},\overline{y}])
\end{align*}
\label{lem:otalclaim}
\end{lemma}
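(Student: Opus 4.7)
The plan is to express both identities as equalities of $\mu$-integrals of signed sums of indicator functions on $\mathcal{G}(\wt{X})$ and to verify the corresponding pointwise identity on a subset of full $\mu$-measure. First I would set aside the (at most countable) set of geodesics coinciding with one of the four sides $e_1=[\overline{x},\overline{y}), e_2=[\overline{y},\overline{z}), e_3=[\overline{z},\overline{w}), e_4=[\overline{w},\overline{x})$ or with one of the two diagonals $d_1=[\overline{x},\overline{z}]$, $d_2=[\overline{w},\overline{y}]$; these can at worst produce atoms, which are handled by direct inspection. For every remaining $\gamma\in\mathcal{G}(\wt{X})$, the pointwise identity is obtained by a case analysis on how $\gamma$ meets the quadrilateral $Q$.

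\textbf{Generic case.} If $\gamma$ meets $Q$ but avoids all four vertices, then transversality forces $\gamma$ to cross exactly two of the four open sides $(e_i)$, yielding $\binom{4}{2}=6$ types. A bi-infinite geodesic crosses $d_1$ precisely when it separates $\overline{x}$ from $\overline{z}$ among the four vertices, identifying $G(d_1)$ with the union of the four types $(e_1,e_3), (e_1,e_4), (e_2,e_3), (e_2,e_4)$, and similarly $G(d_2)$ with $(e_1,e_2), (e_1,e_3), (e_2,e_4), (e_3,e_4)$. A direct accounting through the six types then shows that the signed sum
\[
\mathbf{1}_{G[\overline{x},\overline{z})}+\mathbf{1}_{G[\overline{w},\overline{y})}-\mathbf{1}_{G[\overline{w},\overline{z})}-\mathbf{1}_{G[\overline{x},\overline{y})}
\]
vanishes on five of the six types and equals $2$ on the $(e_2,e_4)$-type, matching the LHS because such a geodesic lies in both $G([\overline{x},\overline{w}),[\overline{y},\overline{z}))$ and $G((\overline{x},\overline{w}],(\overline{y},\overline{z}])$ while all other types contribute $0$ to the LHS indicators.

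\textbf{Pencil case.} When $\gamma=\delta$ passes through exactly one vertex, say $\overline{x}$, the half-open conventions become essential. A short computation shows that $\delta$ contributes $\mathbf{1}[\delta\text{ separates }\overline{y}\text{ and }\overline{z}]$ to the LHS (only the first LHS term applies, since $\overline{x}\notin(\overline{x},\overline{w}]$) and $\mathbf{1}[\delta\text{ separates }\overline{w}\text{ and }\overline{y}]-\mathbf{1}[\delta\text{ separates }\overline{w}\text{ and }\overline{z}]$ to the RHS (the $+1$ from the closed endpoint $\overline{x}$ in $G[\overline{x},\overline{z})$ and the $-1$ from $G[\overline{x},\overline{y})$ cancel). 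The required pointwise equality thus reduces to the additivity identity
\[
\mathbf{1}[\overline{y},\overline{z}]=\mathbf{1}[\overline{w},\overline{y}]-\mathbf{1}[\overline{w},\overline{z}]
\]
for pair separations by a line through $\overline{x}$. This holds as long as $\overline{y},\overline{z},\overline{w}$ subtend from $\overline{x}$ an angular sector of total measure less than $\pi$, with $\overline{z}$ in the middle --- conditions ensured by the convexity of the embedded quadrilateral (the interior angle at $\overline{x}$ is less than $\pi$) together with $\overline{z}$ being the diagonal-opposite vertex. The analogous computation works at each of the other three vertices, using that the opposite-diagonal vertex is always in the angular middle. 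The main obstacle is exactly this vertex-pencil bookkeeping, combined with the convexity input that rules out the ``lone-middle'' configuration in which the additivity identity would fail.

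\textbf{Equation~2.} Once Equation~1 is established, Equation~2 follows by comparing the two right-hand sides. Each difference $\mu(G[A,B))-\mu(G(A,B])$ equals $\mu(G_A^{AB})-\mu(G_B^{AB})$, a signed difference of pencil measures at the endpoints $A,B$. Summing the four such differences across the two RHSs, the pencil at each of the four vertices appears with opposite signs from its two adjacent terms and cancels, yielding RHS$_1=$RHS$_2$ and hence Equation~2.
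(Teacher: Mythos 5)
Your proof is correct, but it takes a genuinely different route from the paper's. The paper argues at the level of sets of geodesics: it writes four partitions, splitting each of $G([\overline{x},\overline{z}))$, $G([\overline{w},\overline{y}))$, $G([\overline{w},\overline{z}))$, $G([\overline{x},\overline{y}))$ according to which side of the triangle $\overline{x}\,\overline{y}\,\overline{z}$ (resp.\ $\overline{w}\,\overline{y}\,\overline{z}$) a transverse geodesic exits through, applies $\mu$, and takes the signed combination, with the second identity ``following similarly.'' You instead verify the corresponding pointwise identity of signed indicator sums, stratified by the combinatorial type of a geodesic relative to the quadrilateral, and integrate. The paper's route is a three-line computation once the partitions are granted; yours is longer but makes explicit exactly where the half-open conventions and the convexity of the quadrilateral enter, namely in excluding the ``$\overline{z}$-alone'' configuration for a pencil at $\overline{x}$ via the angular-sector argument. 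In fact your pencil bookkeeping is needed even to justify the paper's argument: for a geodesic through $\overline{x}$ that also separates $\overline{y}$ from $\overline{z}$, the two pieces of the paper's first partition are not disjoint, so the measure arithmetic there silently relies on the same vertex analysis you carry out. Your deduction of the second identity by cancelling pencil differences at the four vertices is also correct --- the leftover atoms on the six lines through pairs of vertices cancel cyclically --- and is cleaner than re-running the partition argument. Two small points to tighten: the deferred ``direct inspection'' of the six geodesics containing a side or a diagonal should be written out (it does check out; e.g.\ the line through $\overline{x},\overline{y}$ contributes $1$ to each side of the first identity), and you should state that convexity, which you use both for the exactly-two-sides crossing count and for the sector argument, is not implied by ``embedded'' alone but is supplied in the application, where the diagonals $[\overline{x},\overline{z}]$ and $[\overline{y},\overline{w}]$ are assumed to intersect.
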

\begin{proof}

We have the following partitions:
\begin{subequations}
\begin{align}
\begin{split}
G([\overline{x},\overline{z})) = G([\overline{x},\overline{z}), [\overline{x},\overline{y})) \cup G([\overline{x},\overline{z}), (\overline{z},\overline{y}])
\label{eq1}
\end{split}\\
\begin{split}
G([\overline{w},\overline{y}))=G([\overline{w},\overline{y}), [\overline{w},\overline{z})) \cup G([\overline{w},\overline{y}), (\overline{y},\overline{z}])
\label{eq2}
\end{split}\\
\begin{split}
G([\overline{w},\overline{z}))=G([\overline{w},\overline{z}), [\overline{w},\overline{y})) \cup G([\overline{w},\overline{z}), (\overline{z},\overline{y}])
\label{eq3}
\end{split}\\
\begin{split}
G([\overline{x},\overline{y}))=G([\overline{x},\overline{y}), [\overline{x},\overline{z})) \cup G([\overline{x},\overline{y}), (\overline{y},\overline{z}])
\label{eq4}
\end{split}
\end{align}
\end{subequations}
We now applying the measure $\mu$ to all the equations, add the equations resulting from Equation~\ref{eq1} and~\ref{eq2}, and subtract this from the equations resulting from Equation~\ref{eq3} and~\ref{eq4}, we get the result. The other equation follows in a similar way.
\end{proof}

\begin{definition}[double transversals and boxes for 4-tuples]
In what follows, compare Figure~\ref{fig:opposite} for illustrations.
Let $\overline{x},\overline{y},\overline{z},\overline{w}$ be four distinct points in $\wt{X}$. Up to relabeling, we assume that they appear as vertices of an embedded geodesic quadrilateral ordered counter-clockwise on $\wt{X}$. Consider the oriented hyperbolic geodesic $\gamma_1$ connecting $\overline{x}$ to $\overline{y}$, and the oriented hyperbolic geodesic $\gamma_2$ connecting $\overline{w}$ to $\overline{z}$. Let $\delta_1$ be the oriented hyperbolic geodesic connecting $\overline{z}$ to $\overline{y}$ and $\delta_2$ be the oriented geodesic connecting $\overline{w}$ to $\overline{x}$. We define three sets of geodesics associated to the tuple $(\overline{x},\overline{y},\overline{z},\overline{w})$. 
\begin{enumerate}
    \item Let $b^+_{\overline{x},\overline{y},\overline{z},\overline{w}}$ be the box of geodesics defined by $(\gamma_2^-, \gamma_1^-] \times [\gamma_1^+,\gamma_2^+)$ and 
    $b^-_{\overline{x},\overline{y},\overline{z},\overline{w}}$ the box defined by $[\gamma_2^-, \gamma_1^-) \times (\gamma_1^+,\gamma_2^+]$
    \item Let ${G^{+}}_{\overline{x},\overline{y},\overline{z},\overline{w}}$ denote a \emph{double transversal}, defined as the set of geodesics intersecting both $[\overline{x},\overline{w})$ and $[\overline{y},\overline{z})$. Let $G^{-}_{\overline{x},\overline{y},\overline{z},\overline{w}}$ denote the set of geodesics intersecting $(\overline{x},\overline{w}]$ and $(\overline{y},\overline{z}]$ transversely.
    Let, also $(G^+)^{\perp}$ denote the set of geodesics intersecting  $[\overline{x},\overline{y})$ and $[\overline{w},\overline{z})$, and $(G^-)^{\perp}$ the set of geodesics intersecting $(\overline{x},\overline{y}]$ and $(\overline{w},\overline{z}]$
    \item Let $B^+_{\overline{x},\overline{y},\overline{z},\overline{w}}$ be the box of geodesics defined by $(\delta_1^-, \delta_2^-] \times [\delta_2^+,\delta_1^+)$, and 
    $B^-_{\overline{x},\overline{y},\overline{z},\overline{w}}$ be the box of geodesics defined by $[\delta_1^-, \delta_2^-) \times (\delta_2^+,\delta_1^+]$.
\end{enumerate}
\label{def:boxes_transverse}
\end{definition}

The following result follows directly from Definition~\ref{def:boxes_transverse}.
\begin{proposition}
Given the setting as described above, dropping the subscripts, we have
\[
b^{\pm} \subseteq G^{\pm} \subseteq B^{\pm}.
\]
Moreover, we have
\[
(B^{\perp})^{\pm} \subseteq (G^{\perp})^{\pm} \subseteq (b^{\perp})^{\pm}.
\]
\label{prop:boxes_transverse}
\end{proposition}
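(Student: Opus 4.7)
The plan is to prove the inclusions by a direct analysis of the endpoints of geodesics on $\partial \wt{X}$, using the following basic principle: a geodesic $\eta$ with distinct endpoints $p, q \in \partial \wt{X}$ intersects a geodesic line $\ell \subseteq \wt{X}$ transversely iff the pairs $\{p, q\}$ and $\{\ell^-, \ell^+\}$ are interleaved in the cyclic order on $\partial \wt{X}$, and the intersection point of $\eta$ with $\ell$ lies in a prescribed subsegment $[r, s] \subseteq \ell$ precisely when $p, q$ satisfy further separating conditions controlled by the transverse lines through $r$ and $s$. The key preliminary is to work out the (universal) counterclockwise cyclic order of the eight boundary endpoints $\gamma_1^\pm, \gamma_2^\pm, \delta_1^\pm, \delta_2^\pm$; this order depends only on the counterclockwise labeling of the quadrilateral $\overline{x}, \overline{y}, \overline{z}, \overline{w}$ and the chosen orientations of the four sides, not on the specific hyperbolic metric.

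For $b^\pm \subseteq G^\pm$, fix $\eta \in b^+$ with endpoints $p \in (\gamma_2^-, \gamma_1^-]$ and $q \in [\gamma_1^+, \gamma_2^+)$. The cyclic order forces $\{p,q\}$ to separate both $\{\delta_1^-, \delta_1^+\}$ and $\{\delta_2^-, \delta_2^+\}$, so $\eta$ crosses both $\delta_1$ and $\delta_2$. Moreover, $p$ and $q$ lie on the same side of $\gamma_1$ and of $\gamma_2$, which confines the crossing of $\eta$ with $\delta_2$ to the middle subsegment $[\overline{x}, \overline{w}]$ (rather than one of the two rays beyond the vertices) and the crossing with $\delta_1$ to $[\overline{y}, \overline{z}]$. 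The half-open conventions in the definitions of $b^+$ and $G^+$ are exactly tuned to handle the borderline cases $p = \gamma_1^-$ or $q = \gamma_1^+$ (where $\eta = \gamma_1$ passes through a vertex).

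For $G^\pm \subseteq B^\pm$ the argument is essentially converse: given $\eta \in G^+$, the crossings of $\eta$ with the segments $[\overline{x}, \overline{w})$ and $[\overline{y}, \overline{z})$ force $\eta$ to cross both $\delta_1$ and $\delta_2$, so $\{p,q\}$ separates both $\{\delta_1^\pm\}$ and $\{\delta_2^\pm\}$ on $\partial \wt{X}$; tracing through the cyclic order, the only consistent placement of $\{p, q\}$ is in the two arcs $(\delta_1^-, \delta_2^-]$ and $[\delta_2^+, \delta_1^+)$, i.e. $\eta \in B^+$. The second chain $(B^\perp)^\pm \subseteq (G^\perp)^\pm \subseteq (b^\perp)^\pm$ is then proven by the identical argument applied to the complementary pair of opposite sides $[\overline{x}, \overline{y})$ and $[\overline{w}, \overline{z})$, swapping the roles of $\{\gamma_1, \gamma_2\}$ and $\{\delta_1, \delta_2\}$. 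The principal technical obstacle throughout is the meticulous bookkeeping of the four open/closed endpoint conventions in the $\pm$ variants, needed to cover the degenerate cases where $\eta$ coincides with one of the lines $\gamma_i, \delta_i$ themselves, or passes through a quadrilateral vertex.
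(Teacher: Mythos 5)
Your strategy (pin down the counterclockwise cyclic order of the eight ideal endpoints and read off each inclusion from separation of endpoint pairs) is the right one, and your argument for $b^{\pm}\subseteq G^{\pm}$ is correct, including the handling of the half-open conventions. The problem is the step $G^{\pm}\subseteq B^{\pm}$. You assert that if $\{p,q\}$ separates both $\{\delta_1^-,\delta_1^+\}$ and $\{\delta_2^-,\delta_2^+\}$ then ``the only consistent placement of $\{p,q\}$ is in the two arcs $(\delta_1^-,\delta_2^-]$ and $[\delta_2^+,\delta_1^+)$.'' That is false. Writing out the cyclic order you invoke but never record --- counterclockwise it is $\gamma_1^-,\ \delta_2^+,\ \delta_1^+,\ \gamma_1^+,\ \gamma_2^+,\ \delta_1^-,\ \delta_2^-,\ \gamma_2^-$ --- one sees that separating both pairs forces one endpoint into the arc $(\delta_2^-,\delta_2^+)$ containing $\gamma_2^-,\gamma_1^-$ and the other into the arc $(\delta_1^+,\delta_1^-)$ containing $\gamma_1^+,\gamma_2^+$; the arcs $(\delta_1^-,\delta_2^-]$ and $[\delta_2^+,\delta_1^+)$ are precisely the two complementary gaps, and a geodesic with endpoints there separates \emph{neither} pair. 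Concretely, the geodesic $\gamma_1$ itself (or the common perpendicular to the sides $[\overline{w},\overline{x}]$ and $[\overline{y},\overline{z}]$) lies in $b^+\cap G^+$ but not in $(\delta_1^-,\delta_2^-]\times[\delta_2^+,\delta_1^+)$, so with the definition as printed the chain $b^+\subseteq G^+\subseteq B^+$ fails at the second inclusion.

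What this reveals is that Definition~\ref{def:boxes_transverse}(3) has $B^{\pm}$ and its opposite box interchanged: the box circumscribing $G^{+}$ is $I_{\delta_2^-,\delta_2^+}\times I_{\delta_1^+,\delta_1^-}$, whose opposite is exactly the box the paper writes for $B^+$. With that correction both chains do hold, by the analysis you outline, and your treatment of $(B^{\perp})^{\pm}\subseteq(G^{\perp})^{\pm}\subseteq(b^{\perp})^{\pm}$ then goes through by the symmetric argument applied to the other pair of opposite sides. Since the paper gives no proof of this proposition beyond ``follows directly from the definition,'' the burden was on you to actually carry out the cyclic-order bookkeeping; had you written the order down, the discrepancy would have surfaced immediately. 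As it stands, the decisive step is asserted rather than verified, and the assertion is wrong.
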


We define the following two quantities.

\begin{definition}[$\delta_{\mu}$ with boxes]
For a given geodesic current $\mu$, define
\[
\delta^B_{\mu} =\frac{1}{2}\sup_{B  \in \B }\min \{ \mu(B^+) + \mu(B^-) , \mu((B^{\perp})^+) +  \mu((B^{\perp})^-)\}
\]
\label{lem:deltaB}
\end{definition}

We observe that $B \times B^{\perp} \subset \mathfrak{I}$ where $\mathfrak{I}$ is the subset of $\mathcal{G}(\wt{X}) \times  \mathcal{G}(\wt{X})$ consisting of transversely intersecting geodesics, used in the definition of intersection number of geodesic currents (see~Definition~\ref{def:intersection}).
Thus, $\delta_{\mu}$ is giving another measure, related to intersection number, of `how far is $\mu$ from being a measured lamination'.

\begin{definition}[$\delta_{\mu}$ with double transversals]
For a given geodesic current $\mu$, define
\[
\delta^G_{\mu} =\frac{1}{2} \sup_{G }\min \{ \mu(G^+) + \mu(G^-), \mu((G^{\perp})^+) + \mu((G^{\perp})^-) \}
\]
where $G$ ranges over all $G_{x,y,z,w}$ with $x,y,z,w$ all distinct. 
\label{lem:deltaG}
\end{definition}

\begin{lemma}
For any geodesic current $\mu$,
\[
\delta_{\mu}^B=\delta_{\mu}^G.
\]
\label{lem:equal_deltas}
\end{lemma}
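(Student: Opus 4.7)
The plan is to establish the identity $\delta_\mu^B = \delta_\mu^G$ by proving the two inequalities $\delta_\mu^G \leq \delta_\mu^B$ and $\delta_\mu^B \leq \delta_\mu^G$ separately, using Proposition~\ref{prop:boxes_transverse} for the inclusion bounds in one direction and a degeneration argument for the other.

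For $\delta_\mu^G \leq \delta_\mu^B$, fix a 4-tuple $(\overline{x},\overline{y},\overline{z},\overline{w})$ with its associated inner box $b$, outer box $B$, and double transversals $G, G^{\perp}$. Proposition~\ref{prop:boxes_transverse} yields $\mu(b^+) + \mu(b^-) \leq \mu(G^+) + \mu(G^-) \leq \mu(B^+) + \mu(B^-)$ together with the reversed inequalities for the corresponding perpendicular sums. Writing $M_b, M_G, M_B$ for the three min quantities, a case analysis on which summand attains each minimum shows that when the minima for $G$ and $B$ both lie on the same side (both main or both perpendicular), the inclusions give $M_B \geq M_G$ directly, while in the cross-aligned cases the inner box $b$ supplies $M_b \geq M_G$ via the opposite behavior of its perpendicular inclusion. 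Since $b, B \in \mathcal{B}$, taking the supremum over all boxes gives $\delta_\mu^B \geq M_G$, and then taking the supremum over 4-tuples yields $\delta_\mu^B \geq \delta_\mu^G$.

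For $\delta_\mu^B \leq \delta_\mu^G$, fix a box $B \in \mathcal{B}$ with corners $p_1, p_2, p_3, p_4 \in \partial \widetilde{X}$ in cyclic order, and approximate it by a sequence of 4-tuples with $\overline{x}_n \to p_1$, $\overline{y}_n \to p_2$, $\overline{z}_n \to p_3$, $\overline{w}_n \to p_4$. As the 4-tuples degenerate, the segment-sides $[\overline{x}_n,\overline{w}_n]$ and $[\overline{y}_n,\overline{z}_n]$ converge to the ideal geodesics $[p_1,p_4]$ and $[p_2, p_3]$. A combinatorial check on the cyclic order of the eight endpoints of the side-lines shows that the double transversals converge, modulo symmetric differences contained in pencils through $p_1, p_2, p_3, p_4$: namely, $G^{\pm}_n$ tends to $(B^{\perp})^{\pm}$, while $(G^{\perp}_n)^{\pm}$ tends to $B^{\pm}$. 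By Proposition~\ref{prop:zeromeasure} these pencils carry no $\mu$-mass at generic corners, and the axis-endpoint case is handled by perturbing the approximating sequence slightly; then Lemma~\ref{lem:cont_measures} yields $\mu(G^{\pm}_n) \to \mu((B^{\perp})^{\pm})$ and $\mu((G^{\perp}_n)^{\pm}) \to \mu(B^{\pm})$. By the symmetry of the min, $M_{G_n}$ converges to the min quantity $M_B$ associated to $B$, giving $\delta_\mu^G \geq M_B$; taking the supremum over $B$ completes the proof.

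The main obstacle is the cross-aligned case in the first inequality, where the minimum for $G$ comes from the main summand while the minimum for $B$ comes from the perpendicular one (or vice versa): in this configuration the two types of inclusions in Proposition~\ref{prop:boxes_transverse} compare quantities in opposite directions, so that neither $b$ nor $B$ individually is guaranteed to bound $M_G$. The resolution exploits the fact that the inner-versus-outer inclusions flip their roles between the main and perpendicular directions, so that together $\{b, B\}$ always contains a witness with the correct alignment; the most delicate sub-cases may additionally require a small perturbation of the 4-tuple to produce an intermediate box in $\mathcal{B}$ whose measures realize $M_G$.
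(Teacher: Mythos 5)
Your second direction ($\delta^B_\mu \le \delta^G_\mu$) is sound, and it is in effect a careful version of what the paper does: the paper only remarks that the inner and outer boxes $b$ and $B$ exhaust all boxes as the $4$-tuple varies, whereas you make this quantitative by degenerating the quadrilateral to the ideal corners of a prescribed box, so that $G_n^{\pm}$ and $(G_n^{\perp})^{\pm}$ converge to $(B^{\perp})^{\pm}$ and $B^{\pm}$ respectively; those limits are correct, and since $\min$ is symmetric the associated minima converge. The boundary issues you flag are real, but the relevant fact about mass concentrated on pencils through boundary points is Lemma~\ref{lem:atompencil} (and the genericity reduction of Lemma~\ref{lem:deltageneric}), not Proposition~\ref{prop:zeromeasure}, which concerns interior points of $\wt{X}$.

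The first direction ($\delta^G_\mu \le \delta^B_\mu$) contains a genuine gap. Write $m_b \le m_G \le m_B$ for the three ``main'' sums, $p_B \le p_G \le p_b$ for the three perpendicular sums, and $M_b, M_G, M_B$ for the corresponding minima. Your case analysis fails in two of the alignment cases. If $M_G$ and $M_B$ are both attained by the perpendicular sums, the inclusions give $M_B = p_B \le p_G = M_G$, which is the wrong inequality. Moreover, whenever $M_G = m_G$ the inner box can never be a witness, since $M_b \le m_b \le m_G = M_G$; so in the cross-aligned case $M_G = m_G$, $M_B = p_B$ neither candidate works. Concretely, nothing in Proposition~\ref{prop:boxes_transverse} forbids $m_b = 1$, $m_G = 5$, $m_B = 10$, $p_B = 3$, $p_G = 6$, $p_b = 10$, for which $M_G = 5$ while $M_b = 1$ and $M_B = 3$. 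Hence the assertion that $\{b, B\}$ ``always contains a witness with the correct alignment'' is false, and the ``small perturbation to an intermediate box'' you defer to is precisely the missing content: one would need, for instance, a monotone family of boxes $C_t$ interpolating from $b$ to $B$ together with an intermediate-value argument for the monotone functions $t \mapsto \mu(C_t^+) + \mu(C_t^-)$ and $t \mapsto \mu((C_t^{\perp})^+) + \mu((C_t^{\perp})^-)$, plus control of their jumps (which, by Lemma~\ref{lem:atompencil}, occur only at atoms). To be fair, the paper's own two-line proof also rests solely on the inclusions of Proposition~\ref{prop:boxes_transverse} and is silent on this case; your attempt to make it explicit exposes the difficulty, but the resolution you propose does not work as stated.
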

\begin{proof}
By the inequalities above we have that, for fixed $x,y,z,w$,
\[
b^{\perp} \supseteq G^{\perp} \supseteq B^{\perp}.
\]
Since ranging over all $x,y,z,w$, $B$ and $b$ exhaust all possible boxes (and same for $B^{\perp}$,$b^{\perp}$), we have, taking measure $\mu$ and supremum over all distinct $x,y,z,w$,
that
\[
\delta_{\mu} ^B\geq \delta_{\mu}^G \geq\delta_{\mu}^B.
\]
as we wanted to show.
\end{proof}

    \begin{figure}[h!]
\centering{
\resizebox{130mm}{!}{\Huge{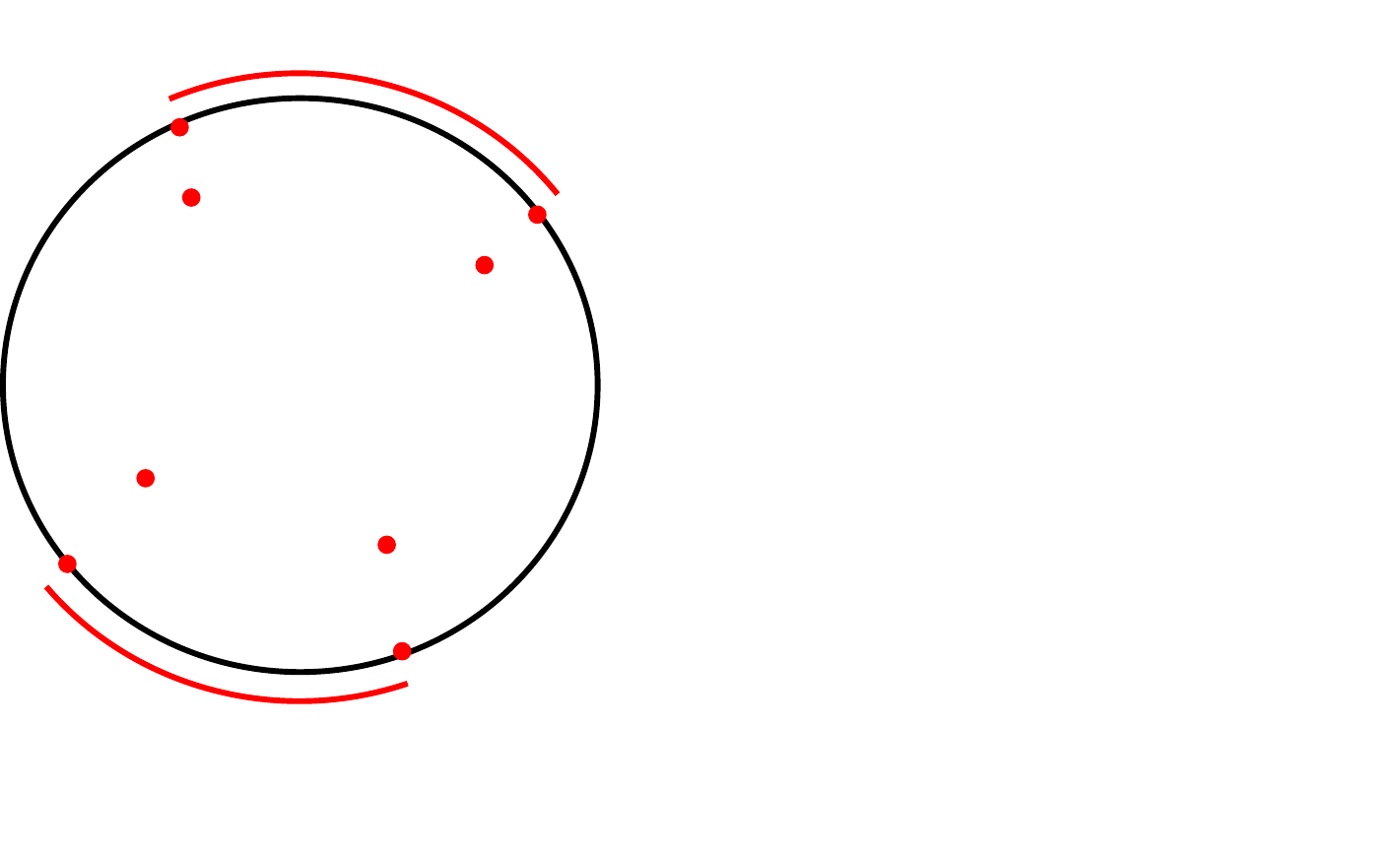}}
    \caption{ }\label{fig:opposite}
}
\end{figure}

Since $\delta^G_{\mu}$ and  $\delta^B_{\mu}$ are the same quantity, we will simply refer to it as $\delta_{\mu}$. For some proofs it will be easier to use one viewpoint or the other.

\begin{proposition}
Let $\mu$ be any geodesic current on $X$. Then $\delta_{\mu}$ is finite.
\label{prop:deltafinite}

\begin{proof}
  Assume $\delta_{\mu}$ is not finite. Hence there exists a sequence of boxes $B'_n$ such that $\mu(B'_n)$ and $\mu({B'_n}^{\perp})$ both diverge. Suppose that
  $B'_n = [a'_n, b'_n) \times [c'_n,d'_n)$ and thus $B'^{\perp}=[b'_n, c'_n) \times [d'_n, a'_n)$, with $a'_n, b'_n, c'_n, d'_n$ are ordered counter clockwise for every $n$. Consider a compact subset $K$ in $\wt{X}$ containing a fundamental region for the $\pi_1 (X)$-action on $\wt{X}$.
For every box $B'_n = [a'_n , b'_n] \times [c'_n , d'_n]$ let $m(B'_n)$ be the \emph{center} of $B'_n$, i.e. the intersection between the two `diagonal' geodesics joining $a'_n$ to $c'_n$ and $b'_n$ to $d'_n$. 
Since the $\pi_1 (X)$ action on $\wt{X}$ is cocompact,  for each box $B'_n$ in the sequence, there exists $g_n \in \pi_1 (X)$ such that $g_n  m(B'_n) \in K$. By $\pi_1 (X)$ invariance of the measure $\mu$ we have that since $\mu (B'_n)$ and $\mu ({B'_n} ^\perp)$ diverge, then also $\mu (g_n B'_n )$ and $\mu ( (g_n {B'_n} )^\perp)$ diverge. Let $B_n := g_n B'_n$.

  In order for $\mu(B_n)$ to diverge, $B_n$ has to leave every compact set of the space of geodesics. Hence either $d_{\partial \wt{X}} (d_n , a_n) \to 0$ or $d_{\partial \wt{X}} (b_n , c_n) \to 0$
  Let's assume without loss of generality that distance between $d_n$ and $a_n$ in $\partial \wt{X}$ tends to $0$ as $n$ goes to infinity.
   By compactness of $S^1$, each of the sequences $(a_n), (b_n), (c_n), (d_n)$ admits a convergent subsequence. Therefore, up to subsequences, we can assume that
   $a_n, d_n \to x$, $b_n \to b$ and $c_n \to c$.

 Now we make some case distinctions
  \begin{enumerate}
      \item $b \neq c$ and $b,c  \neq x$;
      \item  $x \neq c$ and $x \neq b$ and $b = c$;
      \item $x$ is equal to $b$ or $c$ (or both).
  \end{enumerate}
  In cases (1) and (2) we immediately reach a contradiction because we can find a compact set $C \subseteq \mathcal{G} (\wt{X})$ in which $B_n ^\perp$ is included for all $n$ large enough. This is in contradiction with the fact that both $\mu (B_n)$ and $\mu (B_n ^\perp)$ diverge.

Finally, let $B_n$ a sequence of boxes falling into case (3), i.e. three among the vertices $a_n, b_n, c_n, d_n$ converge to the same point on $\partial \wt{X}$.
In this case we also get a contradiction because $m(B_n)$ would escape all compact sets of $\wt{X}$, but the center of the boxes $B _n$  belongs to the compact set $K$ for all $n$. This completes the proof.

\end{proof}
\end{proposition}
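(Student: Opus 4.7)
The plan is to argue by contradiction, exploiting three ingredients: the $\pi_1(X)$-invariance of $\mu$, the cocompactness of the $\pi_1(X)$-action on $\wt X$, and the fact that $\mu$ is a Radon (hence locally finite) measure on $\mathcal G(\wt X)$. Suppose $\delta_\mu=\infty$, so we can pick a sequence of boxes $B_n=[a'_n,b'_n)\times[c'_n,d'_n)$ with
\[
\mu(B_n)\to\infty \quad \text{and} \quad \mu(B_n^\perp)\to\infty.
\]

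First I would normalize the sequence. To each box $B_n$ associate a canonical interior point in $\wt X$ — for instance, the intersection $m(B_n)\in\wt X$ of the two ``diagonal'' geodesics joining $a'_n$ to $c'_n$ and $b'_n$ to $d'_n$. Using cocompactness, choose $g_n\in\pi_1(X)$ so that $g_n m(B_n)$ lies in some fixed compact $K\subset\wt X$ containing a fundamental domain, and replace $B_n$ by $g_nB_n$. By $\Gamma$-invariance of $\mu$, both $\mu(B_n)$ and $\mu(B_n^\perp)$ still diverge, and now the centers stay in $K$.

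Next, by compactness of $\partial\wt X\cong S^1$, pass to a subsequence along which each of the four endpoints $a_n,b_n,c_n,d_n$ converges to a limit in $\partial\wt X$. The key observation is that since $\mu$ is Radon, every relatively compact subset of $\mathcal G(\wt X)$ has finite $\mu$-mass; therefore $\mu(B_n)\to\infty$ forces $B_n$ to escape every compact subset of $\mathcal G(\wt X)$, which happens only when one of the two defining intervals $[a_n,b_n]$ or $[c_n,d_n]$ collapses to a single point in the limit. Applying the same reasoning to $B_n^\perp=[b_n,c_n)\times[d_n,a_n)$, one of its two defining intervals must also collapse. I would then do a case analysis on which pairs of consecutive corners coincide in the limit. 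If exactly one adjacent pair collapses, say $a_n,d_n\to x$ with $b,c,x$ pairwise distinct limits of $b_n,c_n$, then $B_n^\perp$ is eventually contained in a compact set of geodesics (those with endpoints in small neighborhoods of the distinct points $b$ and $c$), contradicting $\mu(B_n^\perp)\to\infty$. Symmetrically, if the other adjacent pair collapses, it is $B_n$ that stays in a compact set. The remaining possibility is that three or all four corners converge to a single point of $\partial\wt X$; this forces $m(B_n)$ to escape every compact subset of $\wt X$, contradicting $m(B_n)\in K$.

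The main obstacle I expect is pinning down the last step cleanly, namely verifying that the center $m(B_n)$ cannot remain in a compact set when three of the corners coalesce at a boundary point, and making sure the case distinction covers all configurations (including the possibility that two opposite corners converge together without either adjacent pair collapsing, which must be ruled out using the structure of the opposite box). Once the geometry of diagonals in $\wt X$ tending to the boundary is handled, the rest of the argument is a mechanical combination of local finiteness of $\mu$ with compactness on $\partial\wt X$.
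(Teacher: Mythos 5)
Your overall strategy is the same as the paper's: argue by contradiction, recenter the boxes using cocompactness so that the centers $m(B_n)$ stay in a fixed compact $K\subset\wt{X}$, pass to convergent subsequences of the four corners, and then contradict either the divergence of $\mu(B_n^\perp)$ (by trapping $B_n^\perp$ in a compact subset of $\mathcal{G}(\wt{X})$) or the containment $m(B_n)\in K$ (when three corners coalesce). The skeleton matches the paper's proof almost step for step.

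There is, however, one step that is wrong as stated: your criterion for when $\mu(B_n)$ can diverge. You claim that $\mu(B_n)\to\infty$ forces one of the two \emph{defining} intervals $[a_n,b_n]$ or $[c_n,d_n]$ to collapse to a point. This is backwards. A box $I\times J$ has closure avoiding the diagonal of $\partial\wt{X}\times\partial\wt{X}$ --- hence compact closure in $\mathcal{G}(\wt{X})$ and finite $\mu$-mass --- exactly when the closed arcs $\overline{I}$ and $\overline{J}$ are disjoint. So divergence of $\mu(B_n)$ requires the two defining arcs to approach \emph{each other}, i.e.\ one of the complementary gap arcs $[b_n,c_n]$ or $[d_n,a_n]$ (which are precisely the defining arcs of $B_n^\perp$) must collapse; this is the criterion the paper uses. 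A defining interval of $B_n$ collapsing while the other stays away actually keeps $B_n$ inside a compact set and bounds $\mu(B_n)$. Your error is partially self-correcting: applying the mis-stated criterion to both $B_n$ and $B_n^\perp$ yields the same combined conclusion as the correct criterion (one pair from $\{(a,b),(c,d)\}$ and one pair from $\{(b,c),(d,a)\}$ coalesce, hence three consecutive corners coalesce and a diagonal, so the center, escapes $K$). But the intermediate claim is false as written, and the same confusion leaks into your first case: when $a_n,d_n\to x$ with $b,c,x$ pairwise distinct, the compact set trapping $B_n^\perp=[b_n,c_n)\times[d_n,a_n)$ is not ``geodesics with endpoints near $b$ and near $c$'' but geodesics with one endpoint near the closed arc $[b,c]$ and the other near $x$. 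Restate the compactness criterion correctly (gap arcs collapse, not defining arcs) and the rest of your argument goes through.
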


Given a geodesic current $\mu$, we say a box of geodesics $B$ is \emph{$\mu$-generic} if $\mu(\partial B)=0$. Let $\mathcal{B}^{\mu} \subset \mathcal{B}$ denote the subset of $\mu$-generic geodesic boxes.

The following is an easy but crucial observation.

\begin{lemma} 
In the definition of $\delta_{\mu}$ we can restrict to $\mu$-generic boxes, i.e.,
\[
\delta_{\mu} =\sup_{B  \in \Bg{\mu} }\min \{ \mu(B) , \mu(B^{\perp}) \}
\]
where $\Bg{\mu} \subset \B$ consists of boxes $B$ so that $B$ and $B^{\perp}$ are $\mu$-generic.
\label{lem:deltageneric}
\end{lemma}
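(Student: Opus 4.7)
The inequality $\sup_{B \in \Bg{\mu}} \min\{\mu(B), \mu(B^{\perp})\} \leq \delta_{\mu}$ is immediate from $\Bg{\mu} \subseteq \B$, so the task is the reverse: approximate an arbitrary $B_0 = I_{a,b} \times I_{c,d} \in \B$ by elements of $\Bg{\mu}$. The key ingredient will be Lemma~\ref{lem:atompencil}: the set $Z = \{z \in \partial \wt{X} : \mu(P(z)) > 0\}$ is countable, since each such $z$ is an endpoint of a closed geodesic of $X$. Writing $W := \partial \wt{X} \setminus Z$, any box whose four vertices lie in $W$ satisfies $\mu(\partial B) \leq \sum_{v} \mu(P(v)) = 0$, and the same holds for $B^{\perp}$, so it automatically belongs to $\Bg{\mu}$.

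Fix $\epsilon > 0$ and a convention for $B_0$, which assigns each vertex to exactly one of the four intervals in the partition. Partition the vertex set as $\{a,b,c,d\} = V_B \sqcup V_{B^\perp}$, where $V_B$ collects the vertices assigned to $I_{a,b} \cup I_{c,d}$ (so that their pencil mass can contribute to $\mu(B_0)$) and $V_{B^\perp}$ those assigned to $I_{b,c} \cup I_{d,a}$. The plan is to construct the desired $B'$ by perturbing each vertex $v$ to a nearby $v' \in W$, with the direction of perturbation dictated by its class: if $v \in V_B$, pick $v'$ just on the side of $v$ opposite to its assigned arc, so that $v$ itself becomes an interior point of the corresponding arc of $B'$; if $v \in V_{B^{\perp}}$, choose $v'$ symmetrically so that $v$ lies in the interior of the corresponding arc of $B'^{\perp}$. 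Because each vertex belongs to a single class, these four directional choices are independent and can be made simultaneously.

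To verify the construction, decompose $\mu = \mu^a + \mu^c$, where $\mu^a$ is supported on $\bigcup_{z \in Z} P(z)$ and $\mu^c$ charges no pencil. Since $\mu^c(P(v)) = 0$ for every vertex $v$, the function $(a', b', c', d') \mapsto \mu^c((a', b') \times (c', d'))$ is continuous at $(a,b,c,d)$, so for perturbations small enough we obtain $|\mu^c(B') - \mu^c(B_0)| < \epsilon/2$ and likewise for the opposites. For the atomic part, $\mu^a(B_0)$ and $\mu^a(B_0^{\perp})$ are each countable sums of point masses, so finitely many atoms realize each up to error $\epsilon/4$; the endpoints of these finitely many atoms form a finite subset of $Z$. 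Our directional rule ensures that any atom with an endpoint at an original vertex lands in the interior of the correct set ($B'$ or $B'^{\perp}$) after perturbation, while atoms whose endpoints lie strictly inside the arcs are preserved for any sufficiently small perturbation that avoids those finitely many endpoints. Combining, $\mu(B') \geq \mu(B_0) - \epsilon/2$ and $\mu(B'^{\perp}) \geq \mu(B_0^{\perp}) - \epsilon/2$, giving $\min\{\mu(B'), \mu(B'^{\perp})\} \geq \min\{\mu(B_0), \mu(B_0^{\perp})\} - \epsilon$.

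The main subtlety the plan must confront is that a uniform outward perturbation of all four vertices would drive $\mu(B')$ up toward $\mu(\overline{B_0})$ but simultaneously drive $\mu(B'^{\perp})$ down to $\mu((B_0^{\perp})^\circ)$, so the \emph{minimum} is in general not preserved (and a uniform inward perturbation fails symmetrically). The vertex-by-vertex, convention-dependent choice of direction above is precisely what keeps both $\mu(B')$ and $\mu(B'^{\perp})$ simultaneously large, and is what makes the approximation by $\Bg{\mu}$ possible.
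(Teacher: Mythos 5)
Your proof is correct and follows essentially the same route as the paper's: both approximate an arbitrary box by nudging its vertices to nearby points $z$ with $\mu(P(z))=0$ (a dense set, since the exceptional $z$ form a countable collection of endpoints of closed geodesics). Your vertex-by-vertex directional rule and the split $\mu=\mu^a+\mu^c$ are a more careful implementation of the same idea --- the paper simply enlarges $B$ on one side and asserts that the atoms of $\mu$ form a discrete set, which need not hold (atom masses can decay and the atoms accumulate), although its conclusion survives via continuity of measure on the shrinking strips; your version makes this, and the simultaneous lower bound on $\mu(B^{\perp})$, explicit.
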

\begin{proof}
Suppose that $B=[a,b) \times [c,d)$, and $\mu(P(a))>0$. By Lemma~\ref{lem:discreteatoms}, there exists a lift of a closed geodesic in the support.
First, note that atoms of $\mu$ are concentrated on $C_{\mu}$, the set of lifts of closed geodesics in the support of $\mu$, and this set $C_{\mu}$ is countable. We claim that for every $\epsilon> 0$, there exists a point $a' \in \partial \wt{X}$, so that the pencil $P(a')$ has no atoms and the box $B'=[a',b) \times [c,d)$ satisfies $\mu(B')-\mu(B)<\epsilon$. By the observation that $C_{\mu}$ is countable, it follows that there are uncountably many $a' \in \partial \wt{X}$ so that $P(a')$ has no atoms. By outer regularity of the measure $\mu$, we can take an open box $B''=(a'',b) \times (c',d)$ containing $B$ so that $\mu(B'')-\mu(B)<\epsilon$. By taking the point $a'$ above in $(a'',b)$, and letting $B'=[a',b) \times [c,d)$ we have $\mu(B'')\geq \mu(B') \geq \mu(B)$, and so $\mu(B') - \mu(B)<\epsilon$, as desired.
Moreover, $\partial B'^{\perp}$ has the same atoms as $\partial B^{\perp}$. By repeating the same argument with $B^{\perp}$, we can guarantee that $B^{\perp}$ is also $\mu$-generic.
\end{proof}

The following is a restatement of the Gromov 4-point condition for $\delta$-hyperbolic spaces~\cite[Page~410]{BH11:NonPosCurvature}.

\begin{definition}[$\delta$-hyperbolicity]
A metric space $(X,d)$ is \emph{$\delta$-hyperbolic} if and only if for any 4-tuple of points $x,y,z,w \in X$, among the following three quantities
\begin{itemize}
    \item $d(x,y) + d(z,w)$
    \item $d(x,z) + d(y,w)$
    \item $d(y,z) + d(x,w)$
\end{itemize}
the two largest of them are within $2\delta$ of each other. If $\delta=0$, then it means that the maximum appears at least twice.
\label{lem:hyperbolic}
\end{definition}

\begin{theorem}
If $\mu$ is a geodesic current then $X_{\mu}$ is a $\delta_{\mu}$-hyperbolic space in the sense of Definition~\ref{lem:hyperbolic}, and $\delta_{\mu}$ is the optimal $\delta$-hyperbolicity constant.
\label{prop:dualishyp}
\end{theorem}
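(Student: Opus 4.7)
The plan is to verify Gromov's four-point condition (as formulated in Definition~\ref{lem:hyperbolic}) for $X_\mu$ with constant $\delta_\mu$, and then to show that any smaller constant fails.

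\textbf{Upper bound ($X_\mu$ is $\delta_\mu$-hyperbolic).} Given four distinct points $\bar x, \bar y, \bar z, \bar w \in \wt X$, I would first handle the degenerate cases (points coinciding, or lying on a common geodesic) directly using straightness of $d_\mu$ along geodesics, which gives the $0$-hyperbolicity condition. In the generic case, relabel so that they are the vertices, in counter-clockwise order, of an embedded convex geodesic quadrilateral. Set
\[
A = d_\mu(\bar x, \bar y) + d_\mu(\bar z, \bar w), \quad B = d_\mu(\bar x, \bar w) + d_\mu(\bar y, \bar z), \quad D = d_\mu(\bar x, \bar z) + d_\mu(\bar y, \bar w).
\]
Averaging the two half-open variants in Lemma~\ref{lem:otalclaim} yields
\[
D - A \;=\; \mu(G^+) + \mu(G^-), \qquad D - B \;=\; \mu((G^\perp)^+) + \mu((G^\perp)^-),
\]
where $G^\pm$ and $(G^\perp)^\pm$ are the double transversals of the two pairs of opposite sides, as in Definition~\ref{def:boxes_transverse}. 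Both right-hand sides are non-negative, so $D = \max\{A,B,D\}$, and the difference between the two largest quantities is
\[
D - \max(A,B) \;=\; \min\{\mu(G^+)+\mu(G^-),\; \mu((G^\perp)^+)+\mu((G^\perp)^-)\} \;\leq\; 2\delta_\mu^G \;=\; 2\delta_\mu,
\]
by Definition~\ref{lem:deltaG} and Lemma~\ref{lem:equal_deltas}. This is exactly the four-point condition with constant $\delta_\mu$.

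\textbf{Lower bound (optimality).} Fix $\epsilon > 0$. By Lemma~\ref{lem:deltageneric}, choose a $\mu$-generic box $B_0 = (a,b)\times(c,d)$ (with $a,b,c,d$ counter-clockwise on $\partial\wt X$) such that $\min\{\mu(B_0),\mu(B_0^\perp)\} > \delta_\mu - \epsilon$. The task is to produce a sequence of 4-tuples $(\bar x_n, \bar y_n, \bar z_n, \bar w_n)$ in $\wt X$ whose diagonal-minus-side discrepancies $D_n - \max(A_n,B_n)$ approach $2\min\{\mu(B_0),\mu(B_0^\perp)\}$. To this end I would take two linked geodesics $\ell_1^n$ with endpoints approaching $a,c$ and $\ell_2^n$ with endpoints approaching $b,d$, then place $\bar x_n, \bar z_n$ on $\ell_1^n$ and $\bar y_n, \bar w_n$ on $\ell_2^n$ so that the boundary extensions of the four sides of the resulting quadrilateral all converge to $\{a,b,c,d\}$. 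The sandwich $b^\pm \subseteq G^\pm \subseteq B^\pm$ from Proposition~\ref{prop:boxes_transverse}, together with the $\mu$-genericity of $B_0$ (so $\mu(\partial B_0)=\mu(\partial B_0^\perp)=0$) and continuity of $\mu$ along nested/increasing sequences, forces $\mu(G_n^+)+\mu(G_n^-) \to 2\mu(B_0)$ and $\mu((G_n^\perp)^+)+\mu((G_n^\perp)^-) \to 2\mu(B_0^\perp)$. Plugging into the identities from the upper bound, $D_n - \max(A_n,B_n) \to 2\min\{\mu(B_0),\mu(B_0^\perp)\} > 2(\delta_\mu - \epsilon)$, so no $\delta < \delta_\mu$ works.

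\textbf{Main obstacle.} The hyperbolicity half is essentially a bookkeeping exercise once Lemma~\ref{lem:otalclaim} is in hand. The real work is in the optimality direction: starting from an abstract box $B_0 \subseteq \G(\wt X)$ and engineering interior 4-tuples in $\wt X$ whose double transversals approximate $B_0$ in measure. The subtlety is that $G^\pm$ depends on crossings of the interior segments $[\bar x,\bar w]$ and $[\bar y,\bar z]$, not just boundary data, so one must push the $\bar x_n,\bar y_n,\bar z_n,\bar w_n$ out to the boundary so that the ``segment-crossing'' condition collapses onto the boundary condition defining $B_0$, while simultaneously preventing the quadrilateral from degenerating. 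The $\mu$-genericity of $B_0$ is the key ingredient that rules out mass escaping along $\partial B_0$ or $\partial B_0^\perp$ and justifies the measure-convergence step.
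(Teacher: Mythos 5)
Your argument is correct, and the hyperbolicity half coincides with the paper's: both rest on Lemma~\ref{lem:otalclaim}, which converts the differences $D-A$ and $D-B$ into $\mu(G^{+})+\mu(G^{-})$ and $\mu((G^{\perp})^{+})+\mu((G^{\perp})^{-})$; your direct observation that $D=\max\{A,B,D\}$ and $D-\max(A,B)=\min\{D-A,\,D-B\}\leq 2\delta_{\mu}$ is just a cleaner packaging of the paper's proof by contradiction. Where you genuinely diverge is optimality. The paper disposes of it in one sentence because, by Definition~\ref{lem:deltaG} and Lemma~\ref{lem:equal_deltas}, the constant $\delta_{\mu}=\delta_{\mu}^{G}$ is \emph{already} a supremum, over $4$-tuples of actual points of $\wt{X}$, of exactly the discrepancy $\tfrac{1}{2}\left(D-\max(A,B)\right)$; hence for every $\epsilon>0$ some $4$-tuple violates the four-point condition for $\delta_{\mu}-\epsilon$, with no limiting construction required. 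Your route --- start from a near-optimal $\mu$-generic box $B_{0}$ via Lemma~\ref{lem:deltageneric} and degenerate quadrilaterals toward its corners so that the double transversals converge in measure --- in effect re-proves a quantitative form of Lemma~\ref{lem:equal_deltas}. It is sound in outline (one harmless slip: with your labeling $\overline{x_n}\to a$, $\overline{y_n}\to b$, $\overline{z_n}\to c$, $\overline{w_n}\to d$, the transversals of $[\overline{x_n},\overline{w_n}]$ and $[\overline{y_n},\overline{z_n}]$ converge to $\mu(B_{0}^{\perp})$ rather than $\mu(B_{0})$, and vice versa, which does not matter since you take a minimum), but it is extra work you could skip by citing the lemma you already invoke in the first half. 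One caveat you share with the paper: both arguments only treat $4$-tuples whose lifts are in convex position; the configuration in which one lift lies inside the geodesic triangle spanned by the other three is covered by neither argument as written, and is not among the degeneracies (coincident or collinear points) you propose to handle separately.
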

\begin{proof}
We prove that $X_{\mu}$ satisfies the $\delta_{\mu}$-hyperbolic 4-point condition.
Let $x,y,z,w \in X_{\mu}$ be four arbitrary points, and, up to relabeling, assume that the geodesic segments $[x,z]$ and $[y,w]$ intersect. 
Assume first that $d(x,z) + d(y,w) > d(x,y) + d(z,w)+2\delta$. We want to show that $d(x,z) + d(y,w) \leq d(y,z) + d(x,w)+2\delta$.
Notice how the first inequality is equivalent to 
\begin{align*} 
\frac{1}{2} & [\mu (G[\overline{x},\overline{z})) + \mu(G[\overline{y},\overline{w}))- \mu(G[\overline{x},\overline{y})) - \mu(G[\overline{z},\overline{w})) ] + \\
& \frac{1}{2} [\mu (G(\overline{x},\overline{z}]) + \mu(G(\overline{y},\overline{w}])- \mu(G(\overline{x},\overline{y}]) - \mu(G(\overline{z},\overline{w}]) ] > 2 \delta
\end{align*}
which is equivalent to
\[
\frac{1}{2} [\mu((G^{\perp})^+) + \mu((G^{\perp})^-) ] > \delta
\]
where $G=G_{x,w,y,z}$, by Lemma ~\ref{lem:otalclaim}.
On the other hand, $d(x,z) + d(y,w) \leq d(y,z) + d(x,w)+2\delta$, is equivalent to

\begin{align*}
\frac{1}{2} [ \mu (G[\overline{x},\overline{z})) + \mu(G[\overline{y},\overline{w}))- \mu(G[\overline{y},\overline{z})) - \mu(G[\overline{x},\overline{w})) ] + \\
\frac{1}{2} [ \mu (G(\overline{x},\overline{z})] + \mu(G(\overline{y},\overline{w}])- \mu(G(\overline{y},\overline{z}]) - \mu(G(\overline{x},\overline{w}]) ] \leq  2 \delta
\end{align*}
 i.e., $\frac{1}{2} [ \mu (G^+), \mu (G^-) ] \leq \delta$, by Lemma ~\ref{lem:otalclaim}.

Suppose, for the sake of contradiction, that  $\frac{1}{2} [ \mu (G^+), \mu (G^-) ] >  \delta$. Then, 
\begin{align*}
\min \{ \mu(G^+) + \mu(G^-), \mu((G^{\perp})^+) + \mu((G^{\perp})^-)\} & > \delta \\
= \sup_G \min \{ \mu(G^+) + \mu(G^-), \mu((G^{\perp})^+) + \mu((G^{\perp})^-)\} 
\end{align*}
which is a contradiction.
If, instead, we had $d(x,z) + d(y,w) > d(y,z) + d(x,w)+2\delta$, then, similarly as above, this would mean $\frac{1}{2} [ \mu (G^+), \mu (G^-) ] > \delta$, and if $\frac{1}{2} [\mu((G^{\perp})^+) + \mu((G^{\perp})^-) ]>\delta$, we would again contradict that $\delta$ is a supremum, so we must have $\frac{1}{2} [\mu((G^{\perp})^+) + \mu((G^{\perp})^-) ] \leq \delta$, and thus $d(x,z) + d(y,w) \leq d(x,y) + d(z,w)+2\delta$, so the 4-point condition is proven.
Since $\delta_{\mu}$ is defined in terms of a supremum, it follows it is the optimal $\delta$-hyperbolicity constant.
\end{proof}

\begin{corollary}
$X_{\mu}$ is $0$-hyperbolic if and only if $\mu$ is a measured lamination.
\label{cor:0hyp}
\end{corollary}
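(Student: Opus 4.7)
The plan is to combine Theorem~\ref{prop:dualishyp}, which identifies the optimal hyperbolicity constant of $X_{\mu}$ as $\delta_\mu = \sup_{B \in \mathcal{B}} \min\{\mu(B), \mu(B^{\perp})\}$, with Proposition~\ref{prop:lamselfzero}, which characterizes measured laminations inside $\Curr(X)$ as precisely those currents with $i(\mu,\mu) = 0$. The corollary then reduces to establishing the equivalence $\delta_\mu = 0 \iff i(\mu,\mu) = 0$.

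For the direction $\delta_\mu > 0 \Rightarrow i(\mu,\mu) > 0$, I will invoke Lemma~\ref{lem:deltageneric} to select a $\mu$-generic box $B$ with $\min\{\mu(B),\mu(B^{\perp})\}>0$. Genericity gives $\mu(B)=\mu(B^\circ)$ and $\mu(B^\perp)=\mu((B^\perp)^\circ)$, so both open boxes have positive $\mu$-measure and hence meet $\supp(\mu)$ in geodesics $\gamma_1 \in B^\circ$ and $\gamma_2 \in (B^\perp)^\circ$. By definition of opposite boxes these intersect transversely, so $B^\circ \times (B^\perp)^\circ \subset \mathfrak{I}$. Next, I shrink both open boxes around $\gamma_1, \gamma_2$ while preserving positivity of $\mu$-measure (possible since $\gamma_i \in \supp \mu$, so every open neighborhood has positive mass) until the $\Gamma$-translates of the product are pairwise disjoint, which is possible because $\Gamma$ acts properly discontinuously on $\mathfrak{I}$. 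Then the projection $\mathfrak{I} \to \mathfrak{I}/\Gamma$ is injective on the shrunken product, giving
\[
i(\mu,\mu) = (\mu \times \mu)(\mathfrak{I}/\Gamma) \geq (\mu \times \mu)(B^\circ \times (B^\perp)^\circ) > 0.
\]

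For the converse, suppose $i(\mu,\mu) > 0$; then there exist transversely intersecting $\gamma_1, \gamma_2 \in \supp(\mu)$ with endpoints $x_1, x_2, y_1, y_2 \in \partial \wt{X}$ in cyclic order, where $\gamma_i = (x_i, y_i)$. Choose small disjoint open arcs $U_j \subset \partial \wt{X}$ about each of these four points (indexed cyclically) and set $B := U_1 \times U_3$; then the opposite box $B^{\perp} = U_4 \times U_2$ contains $\gamma_2$. Since $\gamma_1 \in B \cap \supp(\mu)$ and $\gamma_2 \in B^{\perp} \cap \supp(\mu)$, both $\mu(B)>0$ and $\mu(B^{\perp})>0$, whence $\delta_\mu > 0$.

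The only mildly delicate point is the local-to-global step in the first direction, where one must pass from the product measure of an open set in $\mathfrak{I}$ to its measure in the quotient $\mathfrak{I}/\Gamma$; this is handled cleanly by the proper discontinuity of $\Gamma$ on $\mathfrak{I}$, which lets one shrink the boxes so that the quotient map is injective on the relevant open product.
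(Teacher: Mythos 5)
Your proof is correct, but it takes a genuinely different route from the paper's. The paper's own argument is very short: it quotes a box-characterization of measured laminations from Burger--Iozzi--Parreau--Pozzetti (namely that $\mu$ is a measured lamination if and only if $\mu(B)\mu(B^{\perp})=0$ for every box $B$), which makes the corollary an immediate consequence of Theorem~\ref{prop:dualishyp}. You instead start from Bonahon's intrinsic characterization $i(\mu,\mu)=0$ (Proposition~\ref{prop:lamselfzero}) and prove the equivalence $\delta_\mu=0\iff i(\mu,\mu)=0$ by hand; in effect you reprove the cited box-characterization, using Lemma~\ref{lem:deltageneric} to pass to $\mu$-generic boxes so that positive mass lives on open boxes, the linking of endpoints to get $B^\circ\times(B^\perp)^\circ\subset\mathfrak{I}$, and proper discontinuity of $\Gamma$ on $\mathfrak{I}$ to transfer positive product measure down to $\mathfrak{I}/\Gamma$. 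This is more self-contained (no external citation beyond what the paper already proves or states) at the cost of being longer; the handling of the descent of $\mu\times\mu$ to the quotient via an evenly covered neighborhood is exactly the right point to be careful about, and you treat it correctly. One small inaccuracy: with small arcs $U_1,\dots,U_4$ about the four cyclically ordered endpoints and $B=U_1\times U_3$, the opposite box $B^{\perp}$ is the product of the two \emph{complementary} arcs, so you should write $B^{\perp}\supseteq U_4\times U_2$ rather than $B^{\perp}=U_4\times U_2$; the conclusion $\gamma_2\in B^{\perp}$ and hence $\mu(B^{\perp})>0$ is unaffected.
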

\begin{proof}
By~\cite[Proposition~2.1]{BIPP21:Crossratios}, $\mu$ is a measured lamination if and only if, for every box $B \subset G(X)$, we have $\mu(B)\mu(B^{\perp})=0$. This last equality is true if and only if $\min \{ \mu(B)\mu(B^{\perp})\}=0$.
If $\mu$ is a measured lamination, then $\delta_{\mu}=0$, and thus by Theorem~\ref{prop:dualishyp}, $X_{\mu}$ is $0$-hyperbolic.
If $X_{\mu}$ is $0$-hyperbolic, since $\delta_{\mu}$ is the smallest hyperbolicity constant, we must have $\delta_{\mu}=0$. Since $\delta_{\mu}$ is defined in terms of a supremum, this implies that for all boxes $B \subset \G(\wt{X})$, $\mu(B)\mu(B^{\perp})=0$, and hence $\mu$ must be a lamination.
\end{proof}

As a consequence, we recover the following well-known result.

\begin{corollary}
If $\mu$ is a Liouville current $\mathcal{L}_Y$, for $Y \in \Teich(S)$, then its optimal hyperbolicity constant is~$\delta_{\mu}=\log(2)$.
\label{cor:hypplanedelta}
\end{corollary}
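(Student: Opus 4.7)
By Theorem~\ref{prop:dualishyp} the optimal hyperbolicity constant is
\[
\delta_{\mathcal{L}_Y}=\sup_{B\in\mathcal{B}}\min\bigl\{\mathcal{L}_Y(B),\,\mathcal{L}_Y(B^{\perp})\bigr\},
\]
so the plan is to evaluate this supremum directly using the cross-ratio formula for the Liouville current (Subsection~\ref{ex:Liouville}). For a box $B=I_{a,b}\times I_{c,d}$ whose vertices are cyclically ordered on the boundary circle, I would introduce the shorthand
\[
r=\frac{|a-c|\,|b-d|}{|a-d|\,|b-c|},\qquad s=\frac{|a-c|\,|b-d|}{|a-b|\,|c-d|},
\]
so that $\mathcal{L}_Y(B)=\log r$ and $\mathcal{L}_Y(B^{\perp})=\log s$.

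The key input is Ptolemy's identity for four cyclically ordered points on a circle,
\[
|a-c|\,|b-d|=|a-b|\,|c-d|+|a-d|\,|b-c|.
\]
Dividing this identity by $|a-c|\,|b-d|$ gives the clean relation $\tfrac{1}{r}+\tfrac{1}{s}=1$, while dividing by the denominators of $r$ and $s$ separately shows $r,s>1$. Under the constraint $\tfrac{1}{r}+\tfrac{1}{s}=1$ with $r,s>1$, assuming without loss of generality $r\le s$ yields $\tfrac{2}{r}\ge \tfrac{1}{r}+\tfrac{1}{s}=1$, hence $r\le 2$, with equality iff $r=s=2$. Taking logarithms, $\min\{\mathcal{L}_Y(B),\mathcal{L}_Y(B^{\perp})\}\le\log 2$ for every box $B$, so $\delta_{\mathcal{L}_Y}\le\log 2$.

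To see the bound is sharp, I would pick four boundary points cyclically ordered and equally spaced on the circle (the vertices of an ideal regular quadrilateral in the Poincar\'e disk model): then $|a-b|=|b-c|=|c-d|=|d-a|$, which via Ptolemy forces $r=s=2$, so $\mathcal{L}_Y(B)=\mathcal{L}_Y(B^{\perp})=\log 2$ for this particular box. For currents $\mathcal{L}_Y$ with $Y\ne X$, the boxes are indexed by points of $\partial\wt X$ rather than $\partial\wt Y$; this causes no issue because the marking $\varphi\colon X\to Y$ induces a cyclic-order-preserving boundary homeomorphism $\partial\wt\varphi\colon\partial\wt X\to\partial\wt Y$ carrying boxes to boxes and opposite boxes to opposite boxes, so the two suprema agree. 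No serious obstacle is anticipated: the proof is essentially a one-line consequence of Ptolemy's identity once the right substitution is made, and the extremal configuration is forced by symmetry.
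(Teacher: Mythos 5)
Your proof is correct and follows essentially the same route as the paper: the paper quotes Bonahon's characterization $e^{-\mathcal{L}_Y(B)}+e^{-\mathcal{L}_Y(B^{\perp})}=1$ (Theorem~13 of \cite{Bonahon88:GeodesicCurrent}) and maximizes $\min\{\mathcal{L}_Y(B),\mathcal{L}_Y(B^{\perp})\}$ subject to that constraint, which is exactly your optimization of $\min\{\log r,\log s\}$ under $\tfrac{1}{r}+\tfrac{1}{s}=1$. The only difference is that you rederive Bonahon's identity directly from Ptolemy's relation and exhibit the extremal box explicitly, which makes the argument self-contained but does not change the underlying idea.
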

\begin{proof}
By \cite[Theorem~13]{Bonahon88:GeodesicCurrent}, a geodesic current $\mu$ is a Liouville current $\mathcal{L}_X$ for some hyperbolic structure $Y$, if and only if $e^{-\mu(B)} + e^{-\mu(B^{\perp})}=1$. Maximizing the function $\delta_{\mu}$ subject to this relation, yields $\log(2)$. Thus, by Theorem~\ref{prop:dualishyp}, the result follows.
\end{proof}
\begin{remark}
Recall that when $\mu$ is the hyperbolic Liouville current of $X$, then $X_{\mu}$ is isometric to the hyperbolic plane by Example~\ref{ex:LiouvilleDual}.
Observe that $\log(2)$ is the optimal $\delta$-hyperbolicity constant which was computed for example in~\cite[Corollary~5.4]{BS16:StrongHyp}.
Corollary~\ref{cor:hypplanedelta} recovers this result.
\label{rmk:hypplanedual}
\end{remark}

\begin{proposition}
$\delta_{\mu}$ is a lower semi-continuous function on geodesic currents.
\end{proposition}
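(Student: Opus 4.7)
The plan is to show that for any weak$^*$-convergent sequence $\mu_n \to \mu$ in $\Curr(X)$, we have $\liminf_n \delta_{\mu_n} \geq \delta_\mu$. The key tool is Lemma~\ref{lem:deltageneric}, which allows us to compute $\delta_\mu$ as a supremum only over $\mu$-generic boxes, combined with a Portmanteau-type statement for weak$^*$ convergence of Radon measures.

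First I would fix a box $B \in \mathcal{B}^\mu$, i.e., a box such that both $B$ and $B^\perp$ are $\mu$-generic. Since the four boundary points $a,b,c,d \in \partial \wt{X}$ defining $B = I_{a,b} \times I_{c,d}$ are distinct, the closed intervals $\bar{I}_{a,b}$ and $\bar{I}_{c,d}$ are disjoint in $\partial \wt{X}$, and hence $\bar{B}$ is a compact subset of $\mathcal{G}(\wt{X})$; the same holds for $B^\perp$. Because $\mu_n \to \mu$ in the weak$^*$-topology of Radon measures on the locally compact Hausdorff space $\mathcal{G}(\wt{X})$, and because $B$ is a relatively compact Borel continuity set for $\mu$ (namely $\mu(\partial B) = 0$), the Portmanteau theorem yields $\mu_n(B) \to \mu(B)$, and similarly $\mu_n(B^\perp) \to \mu(B^\perp)$.

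Taking minima and passing to the limit gives
\[
\lim_n \min\{\mu_n(B), \mu_n(B^\perp)\} = \min\{\mu(B), \mu(B^\perp)\}.
\]
Since $\delta_{\mu_n} \geq \min\{\mu_n(B), \mu_n(B^\perp)\}$ for every $n$ by definition, we deduce
\[
\liminf_n \delta_{\mu_n} \;\geq\; \min\{\mu(B), \mu(B^\perp)\}.
\]
Taking the supremum over all $B \in \mathcal{B}^\mu$ and invoking Lemma~\ref{lem:deltageneric} gives $\liminf_n \delta_{\mu_n} \geq \delta_\mu$, which is exactly lower semi-continuity.

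The main obstacle is the measure-theoretic step where weak$^*$ convergence is converted into convergence on boxes. The subtlety is that boxes are not compactly supported test functions, so one must appeal to the Portmanteau theorem; this is why the reduction in Lemma~\ref{lem:deltageneric} to $\mu$-generic boxes (both $B$ and $B^\perp$) is essential, as it guarantees the continuity-set hypothesis. Upper semi-continuity, and therefore continuity, need not hold in general, since atoms in a limit current can cause $\mu_n(B)$ to strictly exceed $\mu(B)$ in the limit for non-generic boxes, so the one-sided statement is the best one can hope for at this level of generality.
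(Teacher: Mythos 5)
Your proof is correct and follows essentially the same route as the paper: reduce to $\mu$-generic boxes via Lemma~\ref{lem:deltageneric}, apply the Portmanteau theorem to get $\mu_n(B)\to\mu(B)$ and $\mu_n(B^\perp)\to\mu(B^\perp)$, and conclude that $\delta_\mu$, being a supremum of functions continuous along the sequence, is lower semi-continuous. The only difference is that you unpack the ``supremum of continuous functions is lower semi-continuous'' step into an explicit $\liminf$ computation, which is a presentational rather than a mathematical difference.
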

\begin{proof}
Let $\mu_i \to \mu$ in the weak$^*$-topology.
Recall that, by Lemma~\ref{lem:deltageneric}, in the definition of $\delta_{\mu}$, we can restrict to $\mu$-generic boxes $B \subset G(X)$ without affecting the supremum.
For any such $B$, let 
\[
f_B(\mu) \coloneqq \min \{ \mu(B), \mu(B^{\perp}) \}.
\]
By the Portmanteau theorem~\cite[Theorem~30.12]{Bau01:Measure}, $f_B(\mu_i) \to f_B(\mu)$, so $f_B$ is a continuous function on geodesic currents. Since $\delta_{\mu}=\sup_{B} f_B(\mu)$
is a supremum of continuous functions, by~\cite[Theorem~10.3]{vR82:RealAnalysis} it must be lower semi-continuous. 
\end{proof}

\begin{example}
We discuss an example.
 Take a sequence $\mu_i$ of scaled hyperbolic Liouville currents $\mu_i \coloneqq a_i \mathcal{L}_{X_i}$ converging to a measured lamination  $\lambda$ in the weak$^*$-topology. Then, we must have $a_i \to 0$ (see argument at the bottom of~\cite[Page~152]{Bonahon88:GeodesicCurrent}).
By Corollary~\ref{cor:hypplanedelta}, $\delta_{\mu_i}=a_i\log(2)$ for all $i \in \mathbb{N}$, and thus $\lim_i a_i \log(2)=0$.
On the other hand $\delta_{\lambda}=0$, also.
\end{example}

To finish this section, we show a few inequalities between the $\delta$-hyperbolicity constants of $X_{\mu}$ and the ones of its subcurrents according to the decomposition theorem for geodesic currents Theorem~\ref{thm:structurecurrents}. 
In Section~\ref{sec:decomposition} we will see that $X_{\mu}$ decomposes as a graph of spaces with vertices the duals of its subcurrents $\mu_i$.
The following inequalities relate the $\delta$-hyperbolicity constants of the components of the space to those of its pieces. The proof is straightforward.

\begin{lemma}
Let $\mu,\mu_1,\mu_2$ be geodesic currents so that $\mu=\mu_1+\mu_2$, and let $B \subseteq \mathcal{G}(\wt{X})$ be a box of geodesics. 
\begin{enumerate}
    \item We have \[\min \{ \mu(B), \mu(B^{\perp}) \} \geq \min \{ \mu_1(B), \mu_1(B^{\perp}) \} +  \min \{ \mu_2(B), \mu_2(B^{\perp}) \}.\]
    \item If, furthermore $\mu_1 \perp \mu_2$, then
    \[\min \{ \mu(B), \mu(B^{\perp}) \} = \min \{ \mu_1(B), \mu_1(B^{\perp}) \} +  \min \{ \mu_2(B), \mu_2(B^{\perp}) \}.
    \]
\end{enumerate}
\label{lem:ortho}
\end{lemma}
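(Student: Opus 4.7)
The plan is to reduce both statements to an elementary inequality for four nonnegative real numbers. Writing
\[
a = \mu_1(B), \quad b = \mu_1(B^\perp), \quad c = \mu_2(B), \quad d = \mu_2(B^\perp),
\]
additivity of measures gives $\mu(B) = a+c$ and $\mu(B^\perp) = b+d$, so part (1) becomes
\[
\min\{a+c,\,b+d\} \;\geq\; \min\{a,b\} + \min\{c,d\}.
\]
This is immediate: $a \geq \min\{a,b\}$ and $c \geq \min\{c,d\}$ yield $a+c \geq \min\{a,b\} + \min\{c,d\}$, and the symmetric argument gives the same bound for $b+d$; taking the minimum of the two left-hand sides preserves the inequality.

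For part (2), I would first unpack the orthogonality $\mu_1 \perp \mu_2$ as the vanishing of the product measure $\mu_1 \times \mu_2$ on transversely intersecting pairs of geodesics, equivalently $i(\mu_1,\mu_2) = 0$. For any box $B$, every geodesic with endpoints in the pair of intervals defining $B$ is transverse to every geodesic with endpoints in the intervals defining $B^\perp$, since the four endpoint intervals alternate around $\partial \wt{X}$. Hence
\[
\mu_1(B)\,\mu_2(B^\perp) \;=\; (\mu_1 \times \mu_2)(B \times B^\perp) \;=\; 0,
\]
i.e., $ad = 0$, and symmetrically $bc = 0$.

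The equality in (2) then follows by a short case split on which coordinate vanishes. If $a = 0$, then $\min\{a,b\} = 0$, and the claim reduces to $\min\{c,b+d\} = \min\{c,d\}$; the constraint $bc = 0$ handles both sub-cases $b = 0$ (trivial) and $c = 0$ (both sides vanish). If instead $a > 0$, then $d = 0$ by $ad = 0$, so $\min\{c,d\} = 0$ and the claim becomes $\min\{a+c,b\} = \min\{a,b\}$; again $bc = 0$ splits into $b = 0$ (both sides vanish) or $c = 0$ (trivial). The only non-routine step in the whole argument is the translation of the orthogonality of $\mu_1$ and $\mu_2$ into the box-level identity $\mu_1(B)\mu_2(B^\perp) = 0$; the remaining work is arithmetic bookkeeping, which is why the statement is advertised as straightforward.
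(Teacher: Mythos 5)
Your proof is correct: part (1) is the elementary superadditivity of $\min$ under coordinatewise sums, and for part (2) the reduction of orthogonality to $\mu_1(B)\mu_2(B^\perp)=\mu_1(B^\perp)\mu_2(B)=0$ (via $B\times B^\perp\subset\mathfrak{I}$, which the paper itself records) together with the four-case check is exactly what is needed. The paper omits the proof, declaring it straightforward, and your argument is evidently the intended one; the only point worth noting is that the paper's notion of $\mu_1\perp\mu_2$ is orthogonality of supports ($i(\supp\mu_1,\supp\mu_2)=0$), which implies the vanishing of $\mu_1\times\mu_2$ on $\mathfrak{I}$ that you actually use.
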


\begin{proposition}
Let $\mu$ be a geodesic current which decomposes according to the structural Theorem~\ref{thm:structurecurrents}, $\mu=\sum_{i}\nu_i + \sum_{j} \lambda_j + \sum_{k} a_k s_k$, where $s_k$ is a geodesic current supported on a simple closed curve, $\lambda_j$ is a non-discrete measured lamination, and $\nu_i$ is a geodesic current which is filling in a subsurface, and all the currents in the decomposition have orthogonal supports (in the sense that $i(\supp(\alpha),\supp(\beta))=0$, according to \cite[Proposition~3.2]{BIPP21:Currents}).
Then, we have
\begin{equation}
\min \{ \mu(B), \mu(B^{\perp})\} = \sum_{i} \min \{ \nu_i(B),\nu_i(B^{\perp})\},
\label{eq:struct_mu}
\end{equation}
and, thus, for every $i$, we have 
\begin{equation}
\delta_{\nu_i} \leq \delta_{\mu} \leq \sum_{i} \delta_{\nu_i}.
\label{eq:struct_delta}
\end{equation}

\label{prop:deltadecomp}
\end{proposition}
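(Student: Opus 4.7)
The plan is to first prove the identity \eqref{eq:struct_mu} by inducting on the number of summands, and then deduce the two inequalities in \eqref{eq:struct_delta} by taking suprema over boxes.

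First I would extend Lemma~\ref{lem:ortho}(2) from two summands to any finite number of pairwise orthogonal summands. The induction step uses that $\supp(\alpha+\beta)=\supp(\alpha)\cup\supp(\beta)$, so that if $\alpha\perp\gamma$ and $\beta\perp\gamma$ then $(\alpha+\beta)\perp\gamma$; writing the sum as $\mu=\mu'+\mu_n$ with $\mu'$ the sum of the first $n-1$ summands, Lemma~\ref{lem:ortho}(2) applies and the induction hypothesis finishes the step. Applied to the decomposition of $\mu$, this gives
\[
\min\{\mu(B),\mu(B^{\perp})\}=\sum_i\min\{\nu_i(B),\nu_i(B^{\perp})\}+\sum_j\min\{\lambda_j(B),\lambda_j(B^{\perp})\}+\sum_k\min\{a_k s_k(B),a_k s_k(B^{\perp})\}
\]
for every box $B$.

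Next I would invoke the characterization of measured laminations by the vanishing of $\min\{\alpha(B),\alpha(B^{\perp})\}$ on all boxes $B$, namely \cite[Proposition~2.1]{BIPP21:Crossratios}, which is exactly the input used in the proof of Corollary~\ref{cor:0hyp}. Each $\lambda_j$ is a non-discrete measured lamination and each $a_k s_k$ is a weighted discrete measured lamination; hence all terms in the second and third sums above are zero, and \eqref{eq:struct_mu} follows.

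Finally, to deduce \eqref{eq:struct_delta} I would take the supremum of \eqref{eq:struct_mu} over all boxes $B$. Since the supremum of a sum of non-negative functions is bounded above by the sum of the individual suprema, we obtain $\delta_\mu\leq\sum_i\delta_{\nu_i}$. Conversely, each summand on the right of \eqref{eq:struct_mu} is non-negative, so for any fixed $i_0$ and every $B$,
\[
\min\{\mu(B),\mu(B^{\perp})\}\;\geq\;\min\{\nu_{i_0}(B),\nu_{i_0}(B^{\perp})\};
\]
taking the supremum yields $\delta_\mu\geq\delta_{\nu_{i_0}}$. The argument is essentially bookkeeping; the only point requiring attention is verifying that the pairwise orthogonality in the structural decomposition of \cite{BIPP21:Currents} is precisely what lets Lemma~\ref{lem:ortho}(2) be applied componentwise, which I expect to be the main (if mild) obstacle.
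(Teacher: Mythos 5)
Your proposal is correct and follows essentially the same route as the paper: additivity of $\min\{\cdot(B),\cdot(B^{\perp})\}$ over orthogonal summands (the paper invokes Lemma~\ref{lem:ortho} directly where you spell out the induction), vanishing of the lamination and multi-curve terms via \cite[Proposition~2.1]{BIPP21:Crossratios}, and the two elementary supremum inequalities for \eqref{eq:struct_delta}. The only difference is that you make explicit the extension of Lemma~\ref{lem:ortho}(2) from two summands to finitely many, which the paper leaves implicit.
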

\begin{proof}
By the structural theorem~\cite[Proposition~3.2]{BIPP21:Currents} all the currents in the decomposition are pairwise orthogonal. 
Thus, equation~\ref{eq:struct_mu} follows by Proposition~\ref{lem:ortho}, and noting that by~\cite[Proposition~2.1]{BIPP21:Currents}, it follows that
\[\min \{ \lambda_j(B),\lambda_j(B^{\perp}) \}=0\]
and \[\min \{ s_k(B),s_k(B^{\perp}) \}=0.\]
The first inequality follows from the fact that, by Lemma~\ref{lem:ortho} (part (i)), $\min \{ \mu(B), \mu(B^{\perp})\} \geq \min \{ \nu_i(B),\nu_i(B^{\perp})\}$ for every $i$, and for any two real valued functions $f,g$ so that $f \leq g$, we have $\sup f \leq \sup g$.
The second inequality in Equation~\ref{eq:struct_delta} follows by Lemma~\ref{lem:ortho} (part (ii)) and the fact that for two real valued functions $f,g$, we have $\sup f + g \leq \sup f + \sup g$. 
\end{proof}

\section{Decomposition theorem}
\label{sec:decomposition}

We begin by defining the set of \emph{special geodesics}, as introduced in~\cite{BIPP21:Currents}. Given a geodesic current $\mu$ on a compact hyperbolic surface $X$, let
\begin{align*}
 \mathcal{E}_\mu := \{ & c \subset X : c \text{ closed geodesic such that } i(\mu , c ) =0 \\
& \text{ and for every } c' \subset X \text{ closed geodesic, }i(\mu, c') >0 \text{ whenever } i (c, c' ) >0  \}   
\end{align*}

The set $\mathcal{E}_\mu = \{ s_1 , \dots , s_n \}$ is a finite set of pairwise disjoint simple closed geodesics, called \emph{special curves}, which decomposes $X$ in subsurfaces with geodesic boundary
\begin{equation}
\label{surfacedecomposition}
X = \bigcup_i X_i.
\end{equation}

Given a current $\mu$ on $X$, recall that the systole of $\mu$ relative to $X_i$ is
\begin{equation*}
\mathrm{sys}_{X_i} (\mu) \coloneqq \inf \{ i (\mu, c) : c \in \Curves(X_i - \partial X_i)\}. 
\end{equation*}

We now state the decomposition theorem for geodesic currents, as proven in \cite[Theorem~1.2]{BIPP21:Currents}.
\begin{theorem}[Decomposition Theorem for Geodesic Currents]
\label{thm:structurecurrents}
Any current $\mu$ on $X$ decomposes as
\begin{equation}
    \label{currentdecomposition}
\mu=\sum_{i=1}^n \mu_i + \sum_{j=1}^m a_j s_j
\end{equation}
where each non-zero $\mu_i$ is a geodesic current supported on $X_i$ and $\sum_{j=1}^m a_j s_j$ is a weighted simple multi-curve, and the weights $a_j$ need not be positive.

Moreover, for each non-zero $\mu_i$ we either have
\begin{itemize}
    \item (type 1) $\mathrm{sys}_{X_i}(\mu_i) > 0$;
    \item (type 2) $\mu_i$ is a measured lamination compactly supported on the interior of ${X}_i$ and intersecting every curve in $X_i$.
\end{itemize}
\end{theorem}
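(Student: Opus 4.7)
The plan is to proceed in three stages: analyze the special set $\mathcal{E}_\mu$, split the current accordingly, and classify each piece.

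\textbf{Stage 1.} Every $c \in \mathcal{E}_\mu$ must be simple: if $c$ had a self-intersection, then $i(c,c)>0$ would force $i(\mu,c)>0$ via the defining property applied to $c'=c$, contradicting $i(\mu,c)=0$. The identical argument with distinct $c_1,c_2 \in \mathcal{E}_\mu$ rules out their crossing. Since a compact surface carries at most $3g-3+n$ pairwise disjoint simple closed geodesics, $\mathcal{E}_\mu = \{s_1,\ldots,s_m\}$ is finite; cutting along it yields $X = \bigcup X_i$.

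\textbf{Stage 2.} The vanishing $i(\mu,s_j)=0$ implies via Definition~\ref{def:intersection} that the $\Gamma$-invariant Borel set of geodesics transverse to a lift of $s_j$ is $\mu$-null. Hence each $\gamma \in \supp(\mu)$ is either a lift of some $s_j$ (a $0$-dimensional atom, in the sense of Lemma~\ref{lem:discreteatoms}) or is entirely contained in a lift of some $X_i$ inside $\wt{X}$. Letting $a_j$ record the atomic weight of $\mu$ along the orbit $\Gamma \tilde s_j$ and defining $\mu_i$ as the restriction of $\mu$ to the geodesics projecting into $X_i$ yields the asserted decomposition $\mu = \sum_i \mu_i + \sum_j a_j s_j$, with weights possibly zero.

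\textbf{Stage 3, the main obstacle.} If $\sys_{X_i}(\mu_i) > 0$ we are in type 1 by definition. Otherwise choose curves $c_n \subset X_i - \partial X_i$ with $i(\mu_i,c_n) \to 0$; after rescaling to unit mass and extracting a weak$^*$ subsequential limit $\lambda \in \Curr(X_i)$, continuity of the intersection form yields $i(\mu_i,\lambda)=0$. To conclude that $\mu_i$ is itself a measured lamination, by Proposition~\ref{prop:lamselfzero} it suffices to prove $i(\mu_i,\mu_i)=0$. A transverse self-intersection of $\mu_i$ would produce a box $B \subset \G(\wt{X})$ with $\mu_i(B)>0$ and $\mu_i(B^\perp)>0$; pushing this through the limit construction against $\lambda$, together with $\Gamma$-equivariance and cocompactness on $\mathfrak{I}/\Gamma$, would produce an internal closed curve in $X_i$ crossing $\mu_i$ by a definite amount, contradicting systolic vanishing. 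The non-discreteness of $\mu_i$ and the ``intersects every internal curve of $X_i$'' property then follow from maximality of $\mathcal{E}_\mu$: any internal simple closed geodesic disjoint from $\supp(\mu_i)$ would itself satisfy the special-curve condition, placing it in $\mathcal{E}_\mu$ and therefore on $\partial X_i$, a contradiction.
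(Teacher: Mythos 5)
First, a point of comparison: the paper does not prove this statement at all --- it is imported verbatim as \cite[Theorem~1.2]{BIPP21:Currents}, so you are effectively reproving an external theorem. Your Stages 1 and 2 are sound and match the standard setup: the argument that special curves are simple and pairwise disjoint (apply the defining property of $\mathcal{E}_\mu$ with $c'=c$, resp.\ $c'=c_2$), the finiteness bound, and the observation that $i(\mu,s_j)=0$ forces the set of geodesics transverse to lifts of $s_j$ to be $\mu$-null, so that every geodesic of $\supp(\mu)$ is either a lift of a special curve or lies in the closure of a complementary region. (Minor care is needed for leaves asymptotic to lifts of the $s_j$, but this only affects whether $\mu_i$ is supported on $X_i$ or its interior.)

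The genuine gap is in Stage 3, and it is exactly where the content of the theorem lives. You need: if $\mu_i\neq 0$, $X_i$ contains no further special curves, and $\sys_{X_i}(\mu_i)=0$, then $i(\mu_i,\mu_i)=0$, so that Proposition~\ref{prop:lamselfzero} applies. Your argument for this is the single sentence ``pushing this through the limit construction against $\lambda$ \dots would produce an internal closed curve in $X_i$ crossing $\mu_i$ by a definite amount,'' which is an assertion of the conclusion, not a proof. The weak$^*$ limit $\lambda$ of the normalized $c_n$ plays no identifiable role (and one must separately rule out that $\lambda$ degenerates to a boundary current of $X_i$ or to zero). The actual mechanism is a recurrence/Anosov-closing-lemma argument: from two transversally intersecting geodesics in $\supp(\mu_i)$ one shows that every sufficiently long recurrent geodesic in $X_i$ accumulates a definite amount of $\mu_i$-mass, which bounds $i(\mu_i,c)$ below for all internal $c$ and contradicts $\sys_{X_i}(\mu_i)=0$. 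This is the content of \cite[Proposition~5.1]{BIPP21:Currents} (elaborated on in the proof of Proposition~\ref{prop:fillingmeasnotprop} of this paper), and none of it is supplied. A secondary gap: your closing claim that ``any internal simple closed geodesic disjoint from $\supp(\mu_i)$ would itself satisfy the special-curve condition'' is false as stated --- membership in $\mathcal{E}_\mu$ also requires that every curve crossing $c$ meet $\mu$ positively, which such a $c$ need not satisfy (e.g.\ when $\supp(\mu_i)$ fills only a proper subsurface, one must pass to the boundary of that supporting subsurface to produce a genuine special curve). So the filling property of type~2 components also needs an actual argument.
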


\begin{remark}
For the remaining of this paper we will refer to currents which fall into the first case as \emph{subcurrents of type 1}, the ones falling into the second case will be referred as \emph{subcurrents of type 2}, and the weighted simple curves in the special simple multi-curve will be referred as \emph{subcurrents of type 3}.
\end{remark}
    \begin{figure}[h!]
\centering{
\resizebox{160mm}{!}{\Huge{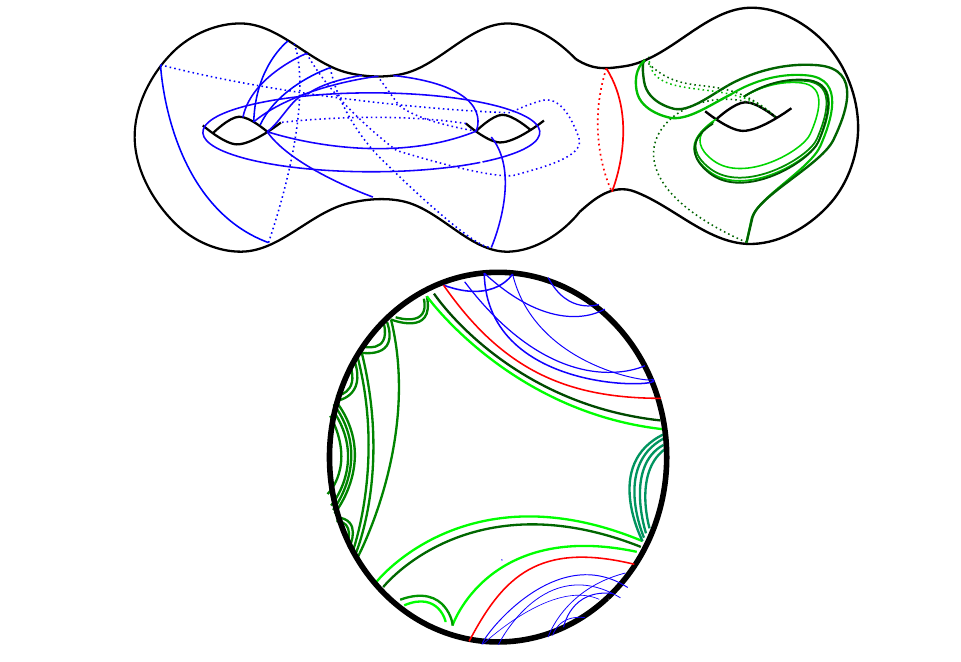}}
    \caption{The support of a current on a surface, and its corresponding lift in the universal cover.}
           \label{fig:decomposition}
}
\end{figure}

Figure \ref{fig:decomposition} shows a sketch of a genus $3$ surface with a geodesic current $\mu$ whose special multi-curve $m$ consists of one single geodesic (red curve, separating the left two handles from the right handle), and yields two subsurfaces. The left one, genus 2, supports a filling geodesic current $\mu_1$ (in blue) within that subsurface. The right one, of genus 1, supports a non-discrete measured lamination $\mu_2$ (in green). The lower figure shows a part of the support of the geodesic current $\mu$ in the universal cover. The lifts of $m$ separate $\wt{X}$ into countably many regions. In the figure, three are depicted, the central one is a region corresponding to the support of $\mu_2$, whereas the upper and lower ones correspond to the support of $\mu_1$.

\begin{definition}[subdual]
Each component $\mu_i$ of $\mu$ is a geodesic current itself, supported on the subsurface $X_i$. On $\wt{X_i} \subseteq \wt{X}$ we can define the pseudo-distance $d_{\mu_i}$ in the same way as for $d_\mu$. The \emph{sub-dual space} $X_{\mu_i}$ is the quotient space $\wt{X_i} / {\{d_{\mu_i} = 0 \}}$ endowed with the $\pi_1 (X_i)$-action.
\label{def:subdual}
\end{definition}

In order to precisely describe the dual $X_\mu$ in terms of the sub-duals $X_{\mu_i}$ we use the notion of \emph{tree-graded space}. 

 For the standard definition of tree-graded space when $(X,d)$ is a geodesic metric space we refer to the work by Drutu-Sapir~\cite{DS05:Treegraded}.

\begin{definition}[Tree-graded space]
A geodesic metric space $(X,d)$ is said to be \emph{tree-graded} with respect to a collection of geodesic subspaces $\mathcal{P}$, called \emph{pieces}, if 
\begin{enumerate}
    \item \emph{axiom pieces}. Given two distinct pieces $P_1 , P_2 \in \mathcal{P}$ the intersection $P_1 \cap P_2$ contains at most one point;
    \item \emph{axiom triangles}. Any simple geodesic triangle in $(X,d)$ is contained in a piece. 
\end{enumerate}
\label{def:treegraded_ds}
\end{definition}

The following can be thought as a local to global principle for geodesics in a tree graded space.
\begin{definition}[Piece-wise geodesic]
Let $(X,\mathcal{P})$ be a tree graded space.
Suppose that the pieces $P_k \in \mathcal{P}$ are geodesics with respect to the restricted metric.
Let $\gamma=\gamma_1 \gamma_2 \cdots \gamma_{2m}$ be a curve in the tree-graded space $(X,\mathcal{P})$ which is a composition of geodesics $\gamma_k$ in $X$. 
Suppose that all geodesics $\gamma_{2k}$ with $k \in \{1, \cdots, m-1 \}$ are non-trivial and for every $k \in \{1, \cdots, m\}$
the geodesic $\gamma_{2k}$ is contained in a piece $P_k$ while for every $k \in \{0, \cdots,m-1\}$ the geodesic $\gamma_{2k+1}$
intersects $P_k$ and $P_{k+1}$ only in its respective endpoints. In addition assume that if $\gamma_{2k+1}$ is empty then
$P_k \subset P_{k+1}$. We call this $\gamma$ a \emph{piece-wise geodesic}.
\end{definition}

The next proposition is Lemma~\cite[Lemma~2.28]{DS05:Treegraded}.

\begin{proposition}
A curve $\gamma$ in a tree graded space $(X,\mathcal{P})$ is a geodesic if and only if it is a piece-wise geodesic.
\label{prop:geodesictreegraded}
\end{proposition}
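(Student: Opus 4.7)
The plan is to follow the argument of Drutu--Sapir \cite[Lemma~2.28]{DS05:Treegraded}, whose key input is a cut-point/projection property of tree-graded spaces that falls directly out of the two defining axioms. Specifically, I would first establish two preliminary facts: (a) for any piece $P \in \mathcal{P}$ and any point $x \notin P$, there is a unique nearest point $\pi_P(x) \in P$, and every geodesic from $x$ to any point of $P$ factors through $\pi_P(x)$; and (b) if $\gamma$ is any geodesic in $X$ and $P \in \mathcal{P}$, then $\gamma \cap P$ is a (possibly empty or degenerate) sub-arc of $\gamma$. Both follow from the triangle axiom: any geodesic triangle built from $x$, $\pi_P(x)$ and a point of $P$, or from three points of $\gamma \cap P$, is simple and hence lies inside a single piece, which by the piece axiom must be $P$.

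For the $(\Leftarrow)$ direction, argue by induction on $m$. Given a piece-wise geodesic $\gamma = \gamma_1 \gamma_2 \cdots \gamma_{2m}$, write $x$ for its initial point and $y$ for its terminal point. Let $a_k, b_k$ denote the endpoints of the connecting segment $\gamma_{2k+1}$ (so $a_k \in P_k$, $b_k \in P_{k+1}$). By (a), every geodesic from $x$ to $y$ must pass through $b_k$ and $a_{k+1}$ in order, because each is the unique projection of the remainder of the curve onto the corresponding piece. Hence the concatenated length of $\gamma$, which is $\sum_k \ell(\gamma_k)$, is the minimum possible, so $\gamma$ is a geodesic.

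For the $(\Rightarrow)$ direction, let $\gamma$ be a geodesic from $x$ to $y$. Using (b), list the pieces $P$ with $|\gamma \cap P| > 1$; each such intersection is a sub-arc of $\gamma$. The piece axiom says distinct pieces share at most one point, so these sub-arcs have pairwise disjoint interiors and admit a well-defined linear order along $\gamma$; label them $P_1, \ldots, P_m$ in that order, and let $\gamma_{2k} \coloneqq \gamma \cap P_k$. Between $P_k$ and $P_{k+1}$ the curve $\gamma$ traces a sub-geodesic $\gamma_{2k+1}$ from the exit point of $P_k$ to the entry point of $P_{k+1}$; by (b) applied to $P_k$ and $P_{k+1}$ this sub-geodesic meets each only at its respective endpoint, with $\gamma_{2k+1}$ degenerate precisely when $P_k \cap P_{k+1} \neq \emptyset$ (which is the convention $P_k \subset P_{k+1}$-type case mentioned in the definition). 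The terminal segments $\gamma_1$ and $\gamma_{2m}$ are analogous sub-geodesics from $x$ to $P_1$ and from $P_m$ to $y$, again by (a).

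The main obstacle is really the preliminary fact (b), namely convexity of $\gamma \cap P$ inside $\gamma$; once this is in place the linear ordering of the visited pieces and the identification of the bridge segments as geodesic arcs are formal. This is a careful but routine application of the two tree-graded axioms in the Drutu--Sapir framework, so the proof in our setting does not require new ideas beyond invoking \cite[Lemma~2.28]{DS05:Treegraded}.
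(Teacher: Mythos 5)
The paper does not prove this statement at all: it is quoted verbatim as \cite[Lemma~2.28]{DS05:Treegraded}, so there is no in-paper argument to compare against. Your sketch correctly reconstructs the standard Drutu--Sapir proof (the projection-to-pieces lemma plus convexity of $\gamma\cap P$ along a geodesic), which is exactly the source the paper is invoking, so your approach and the paper's are in effect the same.
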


Geodesics in a tree graded space can be then thought of as concatenations of geodesics within pieces and geodesics in the \emph{transversal trees} $T_x$. Here $T_x$ denotes the set of points $y \in X$ that can be connected to $x$ by a geodesic intersecting each piece at most once (see~\cite[Lemma~2.14]{DS05:Treegraded} to see why these are trees). 

Our goal is to show that a dual space $X_{\mu}$ is a tree graded space where its pieces can be isometrically identified with the duals $X_{\mu_i}$ of the subcurrents of $\mu$.

However, the dual spaces have not been endowed with a canonical geodesic structure in general (see Subsection~\ref{subsec:geodesics}).
Because of that, we will give a more general definition of tree-graded space that is not assumed to be geodesic, but coincides with the usual definition of tree graded space when the underlying space $X$ is assumed to be geodesic.

\begin{definition}[(metric) tree-graded space]
A metric space $(X,d)$ is said to be a \emph{(metric) tree-graded} space with respect to a collection of subspaces $\mathcal{P}$, called \emph{pieces}, if 
\begin{enumerate}
    \item \emph{axiom pieces}. Given two distinct pieces $P_1, P_2 \in \mathcal{P}$ the intersection $P_1 \cap P_2$ contains at most one point;
    \item \emph{axiom transversals}. For every $x \in X$, there exists a $0$-hyperbolic metric subspace $T_x \subset X$ containing $x$ and intersecting each piece at most in one point with the following properties:  
    \begin{enumerate}
    \item for every $y \in X$, if $y \in T_x$, then $T_x=T_y$. Every intersection of a piece $P$ with a $T_x$ is called \emph{a point of contact} of $P$.
    \item All points of $T_x$ are cut points except (possibly) the points of contact.
    \end{enumerate}
    \item \emph{axiom contact chain triangle}. We say that two contact points are \emph{adjacent} if they are contained in the same $T_x$ or in the same piece.
    Let a \emph{straight contact chain} be an ordered sequence of adjacent contact points $(y_1,\cdots,y_n)$ where no two consecutive points in the sequence are equal. We say that two points $x,y \in X$ are \emph{connected by a straight contact chain} if there is a straight contact chain $(x,\cdots,y)$. An \emph{interior point} of that chain is any point of the chain different from $x$ and $y$. A \emph{contact chain triangle} $\Delta=xyz$ is a set of three points in $X$ and three straight contact chains connecting the points, one per side. The axiom contact chain triangle says that if a straight contact chain triangle is contained in more than one piece, an interior point of one of the side chains must be shared with another chain.
    \end{enumerate}
\label{def:treegraded_drmg}
\end{definition}
To motivate the role of the $0$-hyperbolic subspaces $T_x$, note that these will be the transversal trees when $X$ is geodesic.
In any case, the following Remark gives the intuitive picture the reader should probably keep in mind.

\begin{remark}[Geometric realization of the $T_x$]
Let $\mu=\sum_{i=1}^n \mu_i + \sum_{j=1}^m a_j s_j$ as in theorem ~\ref{thm:structurecurrents}, with set of special geodesics $\{s_1, \dots , s_n \}$. A \emph{region} is the closure of a connected component of the complement of the lifts of special geodesics in $\wt{X}$. A \emph{piece} $P = R / \sim$ is the restriction of the quotient $\wt{X}/{d_\mu = 0}$ to a region $R$. Two regions  $R_i$ and $R_j$ bounded by the lift of a special geodesic $c$ correspond in the dual to two pieces $P_i$ and $P_j$ joined in a single point $x := [c]$. Let $a_{i,j}$ be the weight of $c$. Hence every arc joining the piece $P_i$ to the piece $P_j$ will pass through $x$ and since there is an $a_{i,j}$ measure accumulated at the point $x_{i,j}$, every such arc will gain an $a_{i,j}$ contribution to its length as soon as it passes by through the point $x$. It follows that we may imagine, loosely speaking, that the pieces $P_i$ and $P_j$ are joined by an edge of length $a_{i,j}$, and any arc joining $P_i$ to $P_j$ passes through such arc. Let us now make this idea precise. Compare Figure~\ref{fig:dual_noded} for what follows.

Given two adjacent pieces $P_i$ and $P_j$ as above, consider their disjoint sum ~$P_i \amalg P_j$, and attach an edge $e_{i,j}$ of length $a_{i,j}$ joining the point $x_i \coloneqq [c] \in P_i$ and $x_j \coloneqq [c] \in P_j$.
At the level of the surface, this corresponds to pinching the special geodesics to points, obtaining a noded surface, and then replacing the nodes by edges of length equal to the weight of the component of the especial multi-curve.

More generally, for a metric tree graded space, one can define a projection map $p \colon X \to T$, where $T$ is a $0$-hyperbolic space obtained by collapsing each piece to a point. This space is $0$-hyperbolic because it consists of the union of the subspaces $T_x$ equipped with the restriction of the distance on $X$.  By Section~\ref{subsec:dual_tree}, this $0$-hyperbolic space can be isometrically embedded into an $\mathbb{R}$-tree. This induces a $\mathbb{R}$-tree structure on the $T_x$.

\label{rmk:geodesic}
\end{remark}

    \begin{figure}[h!]
\centering{
\resizebox{120mm}{!}{\Huge{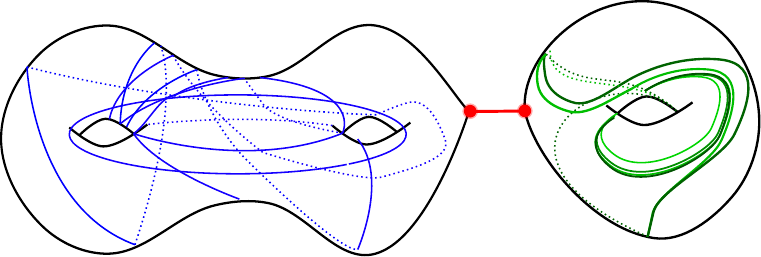}}
    \caption{ The figure shows a sketch of the result of collapsing the special multi-curve $m$ of the geodesic current of Figure~\ref{fig:decomposition}. That is how one can go about putting a geodesic structure on the transversals $T_x$: by pinching $m$ to obtain nodal surfaces, and add edges between them.}
\label{fig:dual_noded}}

\end{figure}

Under the geodesic hypothesis, both definitions above are equivalent. See~Proposition~\ref{prop:eq_defs_treegraded}.

 \begin{figure}[h!]
\centering{
\resizebox{80mm}{!}{\Huge{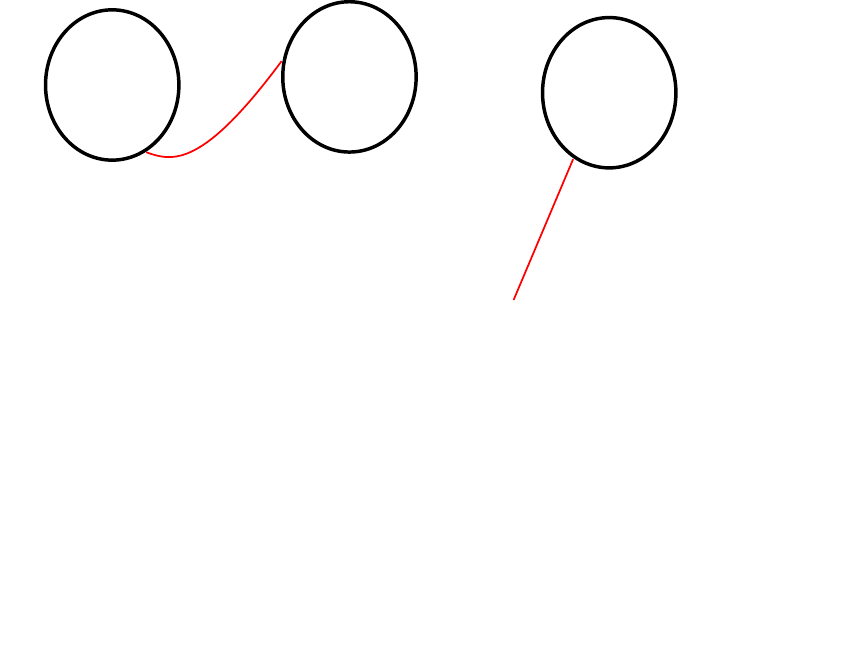}}
    \caption{This figure shows a schematic of a triangle composed of segments of type $\alpha$ (green, dashed) and $\beta$ (red, solid), and the corresponding straight chains for its sides, labelled by $y^i_j$ (red solid dots). It is useful to visualize \emph{axiom straight chain triangle}, as well as the proof of implication from Definition ~\ref{def:treegraded_ds} to Definition ~\ref{def:treegraded_drmg} (in Appendix~\ref{sec:treegraded}). }\label{fig:axiom2bis} 
}
\end{figure}

\begin{theorem}
\label{thm:decomposition}
The current dual $X_\mu$ is a (metric) tree-graded space with respect to $\mathcal{P}$.
\end{theorem}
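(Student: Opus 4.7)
The plan is to verify each of the three axioms of Definition~\ref{def:treegraded_drmg} directly from the structure provided by the BIPP decomposition. Set up the geometry as follows: let $\tilde m \subset \wt X$ denote the full preimage of the special multi-curve $m = \bigcup_{j} s_j$; the components of $\wt X \setminus \tilde m$ have convex closures (since they are bounded by pairwise disjoint simple geodesics), which we call \emph{regions}, and each region $R$ is a lift of some subsurface $X_i$ in the decomposition. The collection $\mathcal{P}$ of pieces then consists of the non-degenerate dual images $P_R := \pi_\mu(R)$, one for each region whose associated sub-current $\mu_i$ is non-zero; each such $P_R$ is isometric to the sub-dual $(X_i)_{\mu_i}$ of Definition~\ref{def:subdual}.

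The verification rests on the following additive distance decomposition: if $\overline{x},\overline{y} \in \wt X$ and the segment $[\overline{x},\overline{y}]$ crosses the lifts $\tilde s_{k_1}, \dots, \tilde s_{k_n}$ of special geodesics (in order, transversely) at points $\overline{p}_1, \dots, \overline{p}_n$, traversing consecutive regions $R_0 \ni \overline{x}, R_1, \dots, R_n \ni \overline{y}$, then
\begin{equation*}
d_\mu(\overline{x},\overline{y}) = d_{\mu_{R_0}}(\overline{x},\overline{p}_1) + \sum_{l=1}^{n-1} d_{\mu_{R_l}}(\overline{p}_l, \overline{p}_{l+1}) + d_{\mu_{R_n}}(\overline{p}_n,\overline{y}) + \sum_{l=1}^{n} a_{k_l}.
\end{equation*}
This follows because $\supp(\mu_i) \subset X_i \setminus \partial X_i$ by Theorem~\ref{thm:structurecurrents}, so lifts of leaves of $\mu_i$ stay inside a single region and can only contribute to $d_\mu$ within that region, while the atomic components $a_{k_l} s_{k_l}$ each contribute their weight exactly when $[\overline{x},\overline{y}]$ crosses the corresponding lift. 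In particular, $d_\mu$ vanishes along any single lift $\tilde s_k$, so each $\tilde s_k$ maps to a single point of $X_\mu$.

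The \emph{pieces axiom} then follows: if two distinct pieces $P_R \neq P_{R'}$ shared the point $\pi_\mu(\overline{x}) = \pi_\mu(\overline{y})$ with $\overline{x} \in R$, $\overline{y} \in R'$, the decomposition formula forces every summand, including every atomic weight $a_{k_l}$ and every intermediate sub-current distance along the chain, to vanish, so $\pi_\mu(\overline{x}) = \pi_\mu(\overline{p}_1) = \dots = \pi_\mu(\overline{p}_n) = \pi_\mu(\overline{y})$; since the region adjacency graph in $\wt X$ is itself a tree there is a unique such chain between $R$ and $R'$, giving at most one shared point. For the \emph{transversals axiom}, let $T$ be the $\mathbb{R}$-tree dual to the weighted multi-curve $\sum_j a_j s_j$ viewed as a measured lamination (Subsection~\ref{subsec:dual_tree}); the decomposition lemma provides a natural $\pi_1(X)$-equivariant map $q \colon X_\mu \to T$ collapsing each piece to its associated vertex, and one sets $T_x$ to be the $q$-fibre over the open edge containing $q(x)$ when $q(x)$ lies in the interior of an edge of $T$, and $T_x := \{x\}$ otherwise; each $T_x$ embeds isometrically into $T$ and is therefore $0$-hyperbolic, while properties (a) and (b) are inherited from the $\mathbb{R}$-tree structure of $T$. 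Finally, the \emph{contact chain triangle axiom} reduces, via $q$, to the tripod structure of any three paths between three points in $T$: the central vertex of the tripod must appear as an interior point of two of the three contact chains.

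The hard part will be the transversals axiom, since the presence of atoms on the special multi-curve makes the projection $\pi_\mu$ discontinuous (as shown in Section~\ref{sec:semicontinuity}), so one must carefully extract the tree structure within $X_\mu$ itself without relying on a geometric realization; this is also the step that forces the introduction of the metric rather than geodesic version of tree-graded space, because the $T_x$ need not be connected in $X_\mu$ when the atomic weights $a_j$ are positive.
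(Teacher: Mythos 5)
Your proposal is correct and follows essentially the same route as the paper: both verify the three axioms of the metric tree-graded definition using the BIPP regions, the additive decomposition of $d_\mu$ along chains of regions (the paper's Lemma~\ref{lem:gradeddistance}), the collapse of each lift of a special geodesic to a single point of $X_\mu$, and the fact that such a lift separates $\wt{X}$. Two small wrinkles to tighten: your $T_x$ always degenerates to $\{x\}$ (since $q$ sends every piece to a vertex, no point of $X_\mu$ lies over an open edge of $T$), which still satisfies the existentially quantified transversal axiom; and in the contact-chain-triangle step the shared interior point is not the tripod's central vertex of $T$ (that is a whole piece, not a point of $X_\mu$) but rather the single point of $X_\mu$ obtained by collapsing the separating special-geodesic lift that both side chains must cross --- which is exactly the separation argument the paper uses, so that step should be phrased in $\wt{X}$ rather than in $T$.
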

\begin{proof}
\begin{enumerate}
\item \emph{axiom pieces}. We firstly recall that $\mathrm{supp}(m)$ is a pairwise disjoint union of geodesics in $\wt{X}$. The fact that two pieces $P_1$ and $P_2$ have at most one point in common will follow from the fact that the intersection of two (closed) complementary regions $R_1 , R_2$ is either empty, or consists of a single geodesic line. Let $P_1 = R_1 / {\{d_\mu = 0\}}$ and $P_2 = R_2 / {\{d_\mu = 0\}}$. If $P_1 \cap P_2$ is empty, we are done. Otherwise, assuming the the two pieces $P_1$ and $P_2$ have non-empty intersection implies that $R_1 \cap R_2 \neq \emptyset$. By construction, it means they are adjacent, i.e. there is a special geodesic $r \in R_1 \cap R_2$ bounding both, and hence $R_1 \cap R_2 = \{r \}$. It suffices to show that, in the quotient, the line $r$ is a single point, which amounts to show that for all $x, y \in r$ we have $d_\mu (x,y) =0$. Assume by contradiction $d_\mu (x, y) > 0$, then there exists a line $l \in \mathrm{supp} (\mu)$ that intersects transversely $r$, contradicting the fact that $r$ is special (i.e. doesn't intersect any other line in $\mathrm{supp}(\mu)$), as wanted.
\item \emph{axiom transversals}. Recall $\mathfrak{R}$ denotes the family of complementary regions determined by the lifts of the special multi-curve $m$. For every $x \in X_{\mu}$, suppose $x \in R_i$, for $R_i \in  \mathfrak{R}$, and let $\wt{c}_1,\cdots, \wt{c}_n$ denote the boundary geodesics of $R_i$, corresponding to lifts of special geodesics $c_1,\cdots,c_n$. Consider the connected component of $R_i \backslash \supp(\mu)$ containing $x$, and suppose it is bounded by geodesics $\wt{c}_{i_1},\cdots, \wt{c}_{i_k}$.  We let $T_x$ be the dual tree $T_{c_{i_j}}$, for any $j=1,\cdots,k$, which is trivially $0$-hyperbolic. Moreover, the transversal condition is trivially satisfied by construction.
\item \emph{axiom straight chain triangle}. A straight chain in this setting corresponds to a sequence $(y_1,\cdots,y_n)$, where $y_i$ are the points obtained by collapsing lifts of simple closed curves in $m$.
If a straight chain triangle is contained in more than one piece, we show that the interior point of one of the chains must intersect one of the points in another side chain. Indeed, let the straight chain triangle be composed of side chains $C_1,C_2,C_3$, and suppose that the endpoints of $C_1$, $x$ and $y$, are in distinct pieces $P_1, P_2$. Then, there is an interior point of $C_1$, say $y_i$, that is the equivalence class of a lift of a special geodesic $\wt{m_i}$, separating regions $R_1,R_2$ corresponding to the pieces $P_1,P_2$.
Since $\wt{m_i}$ separates $\wt{X}$ into two components, the straight chain $C_2 \cup C_3$, which also connects $x$ and $y$, must also contain $y_i$.

\end{enumerate}
\end{proof}

Let $P = R / {\{ d_\mu = 0 \}}$ be a piece corresponding to a region $R$ which is a lift of the subsurface $X_i \subseteq X$. The subsurface $X_i$ is a hyperbolic surface with geodesic boundary, and its universal cover is isometric to $R$. Moreover the pseudo-distance $d_{\mu_i}$ on $R = \wt{X_i}$ coincides with the restriction of $d_\mu$ on $R$. It follows that $P = (X_i)_{\mu_i}$, i.e. the piece $P$ is the dual space of the current $\mu_i \in \Curr(X)$, understood as a geodesic current in $\Curr(X_i)$ restricted to a region $R$. Precisely, there is an inclusion $\iota \colon X_i \xhookrightarrow{} X$ inducing a continuous injective pushforward of geodesic currents $\iota_* \colon \Curr(X_i) \to \Curr(X)$ (see~\cite[Section~4.2]{EM18:ErgodicGeodesicCurrents}), which sends $\mu_i$ as a current on $X_i$ to $\mu_i$ as a current on $X$. 
This allows us to restate the previous Theorem~\ref{thm:decomposition} as follows.

\begin{theorem}
The dual space $X_\mu$ is a metric tree-graded space where the underlying tree is the dual tree of the special multi-curve $m$ and the pieces are the dual spaces of the subcurrents $\mu_i$ of $\mu$ on the subsurfaces $X_i$, restricted to the regions determined by the lifts of the special multicurve. \label{thm:metric_tree_graded}
\end{theorem}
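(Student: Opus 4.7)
This is essentially a repackaging of Theorem~\ref{thm:decomposition} together with two explicit identifications: each piece $P_R$ with the sub-dual $(X_i)_{\mu_i}$, and the underlying transversal space with the dual tree $X_m = \mathcal{T}(m)$ of the special multi-curve.

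The identification of pieces is already sketched in the paragraph preceding the theorem statement. Concretely, if $R \subset \wt{X}$ is a complementary region of the lifts of $m$ and $R$ is a lift of the subsurface $X_i$, then the covering map restricts to a $\pi_1(X_i)$-equivariant isometry $R \cong \wt{X_i}$. The key observation I need to justify is that the pseudo-distance $d_\mu$ restricted to $R$ coincides with $d_{\mu_i}$: for any geodesic segment $[\overline{x},\overline{y}] \subset R$, the only geodesics of $\supp(\mu)$ crossing it transversely lie in $\supp(\mu_i)$, since the other subcurrents $\mu_k$ are supported in other regions (by the orthogonality provided by \cite[Proposition~3.2]{BIPP21:Currents}) and the lifts of special curves lie on $\partial R$. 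Passing to the metric quotient then yields the isometric identification $P_R = R/\{d_\mu = 0\} \cong \wt{X_i}/\{d_{\mu_i} = 0\} = (X_i)_{\mu_i}$.

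For the identification of the underlying tree, the transversal subspace $T_\mu \subset X_\mu$ defined in the proof of axiom transversals of Theorem~\ref{thm:decomposition} is the image of $\wt{X} \setminus \bigcup_i \accentset{\circ}{R_i}$ in $X_\mu$. Because each lift $\tilde{s}_j$ is disjoint from every other geodesic of $\supp(\mu)$, no geodesic of $\supp(\mu)$ is transverse to a subsegment of $\tilde{s}_j$, so $d_\mu$ collapses each $\tilde{s}_j$ to a single point. The adjacency structure of these collapsed points, inherited from which $\tilde{s}_j$ bound which regions, reproduces exactly the combinatorial structure of $\mathcal{T}(m)$ (Subsection~\ref{subsec:dual_tree}). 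The translation length of every $g \in \pi_1(X)$ on $\mathcal{T}(m)$ equals $i(m,g)$, and the same count governs the action of $g$ on $T_\mu$, so Lemma~\ref{lem:isom_trees} produces a $\pi_1(X)$-equivariant isometry $T_\mu \cong \mathcal{T}(m)$.

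The main technical delicacy will be bookkeeping of the $1$-dimensional atoms $a_j s_j$: each contributes $a_j/2$ to the distance from an interior point of a piece $P_R$ to its point of contact with $T_\mu$, which is precisely the ``gluing data'' attaching pieces to the tree in the metric tree-graded structure (compare Remark~\ref{rmk:geodesic} and Figure~\ref{fig:dual_noded}). Once this apportionment is verified to be compatible with additivity of $d_\mu$ along geodesic segments crossing lifts of $m$, the theorem follows by combining the two identifications above with the tree-graded structure established in Theorem~\ref{thm:decomposition}.
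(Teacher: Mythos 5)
Your proposal follows the paper's route exactly: Theorem~\ref{thm:metric_tree_graded} is obtained there as a restatement of Theorem~\ref{thm:decomposition}, with the pieces identified with the sub-duals $(X_i)_{\mu_i}$ via the observation that $d_\mu$ restricted to a region agrees with $d_{\mu_i}$ (by disjointness of the supports of the various subcurrents), and with the transversal $T_\mu$ identified with the dual of the special multi-curve $m$; your bookkeeping of the boundary atoms $a_j s_j$ is in fact more explicit than the paper's. One caveat: your appeal to Lemma~\ref{lem:isom_trees} via matching translation lengths does not literally go through, since the translation length of $g$ on $T_\mu \subset X_\mu$ with the restricted metric is governed by $i(\mu,g)$ rather than $i(m,g)$ whenever the axis of $g$ crosses a region carrying a nontrivial $\mu_i$; the paper's own assertion that $T_\mu$ is isometric to $X_m$ has the same imprecision, and what is actually needed (and true) is only that $T_\mu$ is $0$-hyperbolic with the combinatorial adjacency pattern of $\mathcal{T}(m)$, which follows from additivity of $d_\mu$ along hyperbolic geodesics.
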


We conclude this subsection by showing that the pseudo-distance $d_\mu$ on $X_\mu$ can be computed from the pseudo-distances $d_{\mu_i}$ on the pieces $P_i$.

In this setting, we define a \emph{chain} between two points $x, y \in \wt{X}$ is a sequence of points $C = (\overline{x_0}, \overline{x_1} , \cdots ,\overline{x_{n+1}} )$ with $\overline{x_0} = \overline{x}$ and $\overline{x_{n+1}} = \overline{y}$ such that any two consecutive points $\overline{x_i}$ and $\overline{x_{i+1}}$ are in the same piece $P_i$, with $P_i \neq P_{i+1}$.

It follows from the definition that given $P_j$ and $P_{j+1}$, the point $\overline{x_{j+1}}$ is on the common geodesic boundary $c_j$. We call a chain \emph{straight} if it does not go `back and forth', i.e., if $c_j \neq c_{j+1}$ for $j = 1, \dots {n-1}$. If a chain $C$ is straight, then the corresponding ordered sequence of boundary geodesics $(c_1, \dots , c_n)$ are precisely \emph{the} geodesics separating $\overline{x}$ from $\overline{y}$. 
Each piece $P_i$ is naturally endowed with the induced pseudo-distance $d_{\mu_i}$. We define the \emph{length of a chain} to be
\[
l(C)= \inf_{C}\sum_{i=1}^{n+1} d_{\mu_i} (\overline{x_i} , \overline{x_{i+1}}),
\]
where the $\inf$ is taken over all chains $C$ joining $\overline{x}$ to $\overline{y}$.
It is enough to consider the $\inf$ over straight chains $c$. Moreover, since the curves $c_i$ are special geodesics, there is no line in the support of $\mu$ intersecting them, and hence the distance between any two points on the same special geodesic $c_i$ is zero. It follows that $d(\overline{x_j}, \overline{x_{j+1}})$ does not depend on the choice of $\overline{x_j}$ on $c_j$. This means that all straight chains from $\overline{x}$ to $\overline{y}$ have the same length, and hence we may as well define $d(\overline{x},\overline{y})$ as the length of \emph{any} straight chain.

\begin{lemma}
Let $C = (\overline{x_0}, \cdots, \overline{x_{n+1}})$ be a straight chain from $x$ to $y$ in $\wt{X}$. The pseudo-distance $d_\mu$ on $\wt{X}$ can be expressed as
\[
d_\mu (\overline{x},\overline{y}) = \sum_{i=1}^{n+1} d_{\mu_i} (\overline{x_i} , \overline{x_{i+1}}).
\]
\label{lem:gradeddistance}
\end{lemma}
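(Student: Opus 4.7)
The plan is to combine the straightness of $d_\mu$ along hyperbolic geodesic lines with the decomposition $\mu=\sum_i \mu_i+\sum_j a_j s_j$ of Theorem~\ref{thm:structurecurrents}, after first normalising the chain so that its vertices actually lie on the geodesic segment $[\overline{x},\overline{y}]$ in $\wt{X}$.

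I would begin by establishing the auxiliary vanishing property that $d_\mu(\overline{p},\overline{q})=0$ whenever $\overline{p},\overline{q}$ lie on the same lift $\tilde{c}$ of a special geodesic $c\in\mathcal{E}_\mu$: by the defining property of $\mathcal{E}_\mu$, no geodesic in $\supp(\mu)$ meets $\tilde{c}$ transversely, so both $\mu(G[\overline{p},\overline{q}))$ and $\mu(G(\overline{p},\overline{q}])$ vanish. The same argument carried out inside each $\wt{X_{i_k}}$ shows that the sub-pseudo-distance $d_{\mu_{i_k}}$ likewise vanishes on boundary lifts of $R_{i_k}$. In particular, sliding an interior vertex $\overline{x_k}$ along $\tilde{c_k}$ changes neither side of the claimed identity, so I may replace each $\overline{x_k}$ (for $k=1,\dots,n$) by the unique transverse intersection of $[\overline{x},\overline{y}]$ with $\tilde{c_k}$; after this normalisation the points $\overline{x_0},\dots,\overline{x_{n+1}}$ sit in monotone order on a single hyperbolic geodesic.

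Next, I would invoke the straightness of $d_\mu$~\cite[Proposition~4.1]{BIPP21:Currents} to split $d_\mu(\overline{x},\overline{y})=\sum_{k=0}^{n} d_\mu(\overline{x_k},\overline{x_{k+1}})$, and identify each summand with $d_{\mu_{i_k}}(\overline{x_k},\overline{x_{k+1}})$. Expanding $\mu$ by linearity of measure on $[\overline{x_k},\overline{x_{k+1}}]\subset R_{i_k}$, three kinds of contributions appear: subcurrents $\mu_{i'}$ with $i'\ne i_k$ contribute zero because their lifted supports are disjoint from the interior of $R_{i_k}$; the $\mu_{i_k}$-contribution is precisely $d_{\mu_{i_k}}(\overline{x_k},\overline{x_{k+1}})$ via the canonical $\pi_1(X_{i_k})$-equivariant identification $R_{i_k}\cong\wt{X_{i_k}}$; and the weighted simple curves $a_j s_j$ contribute only through the boundary lifts $\tilde{c_k}$ and $\tilde{c_{k+1}}$ passing through the endpoints.

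The main obstacle is the bookkeeping for these boundary atomic contributions: under the symmetric $\tfrac{1}{2}[\mu(G[\cdot,\cdot))+\mu(G(\cdot,\cdot])]$ convention, each $\tilde{c_k}$ adds $a_{j_k}/2$ to both of the adjacent sub-segments, so the cumulative atomic mass amounts to $\sum_{k=1}^{n} a_{j_k}$. Whether this term appears explicitly or not depends on the interpretation of the \emph{induced} pseudo-distance $d_{\mu_i}$ on the piece $P_i$: under the reading that $d_{\mu_i}$ is the restriction of $d_\mu$ to $R_i$ (as is compatible with the remark preceding the lemma that the chain length is invariant under the choice of $\overline{x_j}\in c_j$), the boundary atoms are absorbed and the identity holds as stated, while under Definition~\ref{def:subdual} they must be accounted for separately. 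Once this interpretive point is fixed, the lemma follows by summing the per-segment identification of the previous step over $k=0,\dots,n$.
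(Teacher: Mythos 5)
Your proof follows the same route as the paper's: you normalize the chain so that its vertices are the transverse intersections of the geodesic $[\overline{x},\overline{y}]$ with the lifts of the special geodesics (justified, as you note, because $d_\mu$ vanishes along each such lift, so the chain length is independent of where the $\overline{x_k}$ sit on them), then apply straightness of $d_\mu$ to split the distance over the subsegments and identify each summand with $d_{\mu_{i}}$ because the subsegment lies in a single region. The only place you go beyond the paper is the bookkeeping of the atomic weights $a_j$ at the crossings; the paper's proof is silent on this, and your observation is accurate: under the literal Definition~\ref{def:subdual} the right-hand side misses the contribution $\sum_k a_{j_k}$ of the weighted special curves traversed by the chain, and the identity as stated requires either reading $d_{\mu_i}$ as the restriction of $d_\mu$ to the closed region (as the paragraph preceding the lemma suggests) or assuming those weights vanish.
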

\begin{proof}
Let $\gamma$ be the hyperbolic geodesic joining $\overline{x}$ to $\overline{y}$.

By the above arguments we can consider, without loss of generality, $C$ to be the chain such that $\overline{x_j} = \gamma \cap c_j$. Since the pseudo-distance $d_\mu$ is straight, it follows that
\[
d_\mu (\overline{x},\overline{y}) = \inf_{C}\sum_{i =0}^{n} d_{\mu_i} (\overline{x_i} , \overline{x_{i+1}}) = \sum_{i =0}^{n} d_{\mu_i} (\overline{x_i} , \overline{x_{i+1}}),
\]
where the last equality follows from the fact that $\overline{x_i}$ and $\overline{x_{i+1}}$ belong to the piece $P_i$.
\end{proof}

\subsection{Properties of the decomposition}

We recall the definition of graph of groups, as in \cite[10.2]{Kap00:Kapovich2000HyperbolicMA}. Let $Y$ be a finite graph where each edge is oriented. We assume that to each vertex $v$ of $Y$ is assigned a \emph{vertex} group $G_v$ and to each edge $e$ is assigned an \emph{edge} group $G_e$. Each inclusion $v \xhookrightarrow{} e$ of a vertex into an edge (as the initial of terminal vertex) corresponds to a monomorphism
$h_{ev} \colon G_e \to G_v$.
The collection
\[
(Y, \{ G_e, G_v, h_{ev} \colon \mbox{ where } e,v  \mbox{ are edges and vertices of } Y \} )
\]
is called a \emph{graph of groups} $(\mathcal{G}, Y)$, where $\mathcal{G}$ is the data of all vertex groups, edge groups and monomorphisms, and $Y$ is the underlying graph. We denote with $\mathcal{G}^0$ and $\mathcal{G}^1$ the set of vertices and edges of $Y$, respectively. When we don't need to specify the underrlying graph $Y$, we will refer to the graph of groups simply as $\mathcal{G}$.

The fundamental group $\pi_1(\mathcal{G})$ of a graph of groups $\mathcal{G}$ is defined as 
\[
\pi_1 (\mathcal{G}) = \langle G_v , t_e : v \in \mathcal{G}^0 , e \in \mathcal{G}^1 | t_e t_{\overline{e}} = 1 , {t_e}^{-1} h_e (g) t_e = h_{\overline{e}} (g) \text{ for all } g \in G_e , e \in \mathcal{G}^1 \rangle
\]

We have seen that any geodesic current $\mu$ decomposes as in \ref{thm:structurecurrents}, where each component $\mu_i$ is supported on a subsurface $X_i$. The decomposition of $X$ in the subsurfaces $X_i$ is given by the family of so-called \emph{special geodesics} $\{c_1 , \dots , c_n \}$.
Note that a special geodesic does not need to be separating.

Given $\mu \in \mathrm{Curr}(X)$ we define a graph of groups $(\mathcal{G}, Y)$ as follows.
For each subsurface $X_i$ we define a vertex $v_i$, and for each special curve $c$ we define an edge $e_{i,j}$ between the vertices $v_i$ and $v_j$ if the curve $c$ is the boundary of $X_i$ and $X_j$. Note that we may have $i = j$ when $c$ is not separating, and $e_{i,j}$ is in this case a loop based at $v_i = v_j$. 

For each edge $e \in \mathcal{G}^1$ we put $G_e \coloneqq \Z$, and for each $v \in \mathcal{G}^0$ we put $G_{v_i} \coloneqq \pi_1 (X_i)$. Let $e \in \mathcal{G}^1$ be an edge joining $v_i$ to $v_j$, and let $c$ be the special geodesic bounding $X_i$ and $X_j$. The monomorphisms $h_{v_i,e} : G_e \to G_{v_i}$ is given by $\iota_* : \Z \to \pi_1(X_i)$, i.e. the induced map from the inclusion $c \hookrightarrow X_i$ at the level of fundamental groups. 

The following result is obtained by simply invoking classical Bass-Serre theory (see~\cite{S03:Trees}).

\begin{proposition}
The fundamental group $\pi_1 (X)$ has a graph of groups decomposition. In particular, $\pi_1 (X)$ is isomorphic to $\pi_1 (\mathcal{G},Y)$.
Furthermore, the fundamental group $\pi_1 (\mathcal{G})$ acts on the simplicial tree $T = \mathcal{T} (m)$ dual tree of the special multi-curve $m = \sum_j a_j s_j$. The factor graph $T/\pi_1 (\mathcal{G})$ is isomorphic to $Y$, and for such action we have
\begin{enumerate}
    \item $\mathrm{stab}_{\pi_1 (\mathcal{G})} (v)  \cong G_v$ for all $v \in T ^0$;
    \item $\mathrm{stab}_{\pi_1 (\mathcal{G})} (e) \cong G_e$ for all $e \in T^1$.
\end{enumerate}
\label{prop:graphgroups}
\end{proposition}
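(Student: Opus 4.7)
The plan is to apply classical Bass--Serre theory to the decomposition of $X$ along the special multi-curve $m$, and then identify the resulting Bass--Serre tree with the dual $\R$-tree $\mathcal{T}(m)$.

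First, I would observe that the special multi-curve $m = \{s_1,\ldots,s_n\}$ consists of pairwise disjoint simple closed geodesics, so cutting $X$ along $m$ produces the subsurfaces $X_1,\ldots,X_k$ with totally geodesic boundary. By a standard application of Seifert--van Kampen (see, e.g., \cite[Chapter~10]{Kap00:Kapovich2000HyperbolicMA}), the decomposition $X = \bigcup_i X_i$ glued along the $s_j$ equips $\pi_1(X)$ with the structure of the fundamental group of a graph of groups whose underlying graph is exactly $Y$: vertices $v_i$ for the $X_i$, edges $e_{i,j}$ for each $s_j$ bounding $X_i$ and $X_j$ (self-loops occur precisely when $s_j$ is non-separating), vertex groups $G_{v_i}=\pi_1(X_i)$, edge groups $G_{e}=\Z$, and the edge monomorphisms given by inclusion of boundary components into the ambient subsurface. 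This yields the first assertion: $\pi_1(X)\cong \pi_1(\mathcal{G},Y)$.

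Next, I would invoke the fundamental theorem of Bass--Serre theory \cite[Theorem~I.5.4]{S03:Trees}: associated to $(\mathcal{G},Y)$ is a Bass--Serre tree $T_{\mathrm{BS}}$ on which $\pi_1(\mathcal{G})$ acts simplicially without inversions, such that the quotient is $Y$ and the vertex (resp. edge) stabilizers are conjugate to the $G_{v_i}$ (resp. $G_e$). The remaining task is to show that this Bass--Serre tree is canonically equivariantly isomorphic to the dual tree $\mathcal{T}(m)$ of the lifted multi-curve $\wt{m}\subset \wt{X}$. For this I would use the explicit description of both trees: vertices of $\mathcal{T}(m)$ correspond to lifts $\wt{X_i}$ of the complementary subsurfaces in $\wt{X}$, and edges correspond to lifts of the $s_j$ (equivalently, adjacent regions sharing a geodesic lift). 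The map sending a vertex of $\mathcal{T}(m)$ to the coset of $\pi_1(X)$ stabilizing the corresponding lift (and analogously for edges) gives a $\pi_1(X)$-equivariant simplicial bijection to $T_{\mathrm{BS}}$; surjectivity follows because the cover $\wt{X}\to X$ realizes every coset, and injectivity follows from the fact that distinct lifts of a subsurface or of a boundary geodesic have distinct stabilizers.

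With the identification $T_{\mathrm{BS}}\cong \mathcal{T}(m)$ in place, parts (1) and (2) are direct translations of the Bass--Serre statement: stabilizers of vertices of $\mathcal{T}(m)$ are the subgroups preserving a given lift $\wt{X_i}$, which under the identification become conjugates of $\pi_1(X_i)=G_{v_i}$, and stabilizers of edges are the subgroups fixing a lift of an $s_j$, which are infinite cyclic, hence conjugate to $G_e=\Z$. The quotient graph $T/\pi_1(\mathcal{G})\cong Y$ is immediate from the Bass--Serre correspondence. The only place one needs to be slightly careful is the identification of the two trees and the verification that edge stabilizers really are generated by the $\pi_1(X)$-element represented by the corresponding boundary loop $s_j$; I expect this to be the main (but routine) bookkeeping step.
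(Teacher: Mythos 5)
Your proposal is correct and follows essentially the same route as the paper, which simply invokes classical Bass--Serre theory (citing Serre's \emph{Trees}) for this statement; your write-up supplies the standard details (van Kampen for the graph-of-groups structure, the Bass--Serre tree, and its equivariant identification with the dual tree of the lifted multi-curve) that the paper leaves implicit. No gaps.
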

\section{Actions}
\label{sec:actions}

The dual spaces $X_{\mu}$ are naturally equipped with a $\pi_1(X)$-action.
We study the properties of this action.

Recall that $d_{\mu}$ denotes both the pseudo-distance in $\wt{X}$ as well as the induced distance on $X_{\mu}$. We will distinguish them by writing points in $\wt{X}$ with the overline notation $\overline{x} \in \wt{X}$, and points in $X_{\mu}$ without it, $x \in X_{\mu}$.

\begin{definition}[Action]
Let $\pi_{\mu} \colon \wt{X} \to X_{\mu}$ be the natural quotient projection. Given $x \in X_{\mu}$, let $\overline{x} \in \pi_{\mu}^{-1}(x)$.
Let $g \in \pi_1(X)$. Define $g \cdot x = \pi_{\mu} ( g(\overline{x}))$.
\label{def:action}
\end{definition}

\begin{lemma}
The fundamental group $\pi_1(X)$ acts by isometries on $X_{\mu}$.
\end{lemma}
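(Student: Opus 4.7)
The plan is to reduce the statement to the observation that $\pi_1(X)$ acts on $\wt{X}$ by isometries preserving every geodesic current. First I would verify that the action in Definition~\ref{def:action} is well-defined. If $\overline{x}, \overline{x}' \in \pi_{\mu}^{-1}(x)$, then $d_\mu(\overline{x},\overline{x}')=0$; once we show that every $g \in \pi_1(X)$ is an isometry of the pseudo-metric $d_\mu$ on $\wt{X}$, it follows that $d_\mu(g\overline{x}, g\overline{x}')=0$ and hence $\pi_\mu(g\overline{x})=\pi_\mu(g\overline{x}')$. The axioms $e \cdot x = x$ and $(gh)\cdot x = g \cdot (h \cdot x)$ then follow immediately from the corresponding identities for the action on $\wt{X}$.

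The main step is therefore to show
\[
d_\mu(g\overline{x}, g\overline{y}) = d_\mu(\overline{x},\overline{y}) \qquad \text{for all } \overline{x},\overline{y} \in \wt{X},\ g \in \pi_1(X).
\]
Since $g$ acts on $\wt{X}$ by a hyperbolic isometry, it sends the geodesic segment $[\overline{x},\overline{y}]$ to $[g\overline{x}, g\overline{y}]$, and the induced homeomorphism on $\partial \wt{X}$ extends to a homeomorphism of $\mathcal{G}(\wt{X})$. Consequently
\[
G[g\overline{x}, g\overline{y}) \;=\; g \cdot G[\overline{x},\overline{y}),\qquad G(g\overline{x}, g\overline{y}] \;=\; g \cdot G(\overline{x},\overline{y}].
\]
Because $\mu \in \Curr(X)$ is by definition $\pi_1(X)$-invariant, applying $\mu$ to both sides and averaging yields $d_\mu(g\overline{x},g\overline{y})=d_\mu(\overline{x},\overline{y})$.

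Putting these pieces together, the map $x \mapsto g\cdot x$ descends to a well-defined map $X_\mu \to X_\mu$ that is distance-preserving for the induced metric, and its inverse is given by $g^{-1}$. Hence the assignment $g \mapsto (x \mapsto g\cdot x)$ defines an action of $\pi_1(X)$ on $X_\mu$ by isometries. I do not expect any real obstacle: the entire argument is a direct consequence of $\Gamma$-invariance of $\mu$ combined with the fact that $\Gamma$ acts on $\wt{X}$ by hyperbolic isometries.
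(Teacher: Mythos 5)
Your proposal is correct and follows essentially the same argument as the paper: well-definedness and the isometry property both come from the identity $G[g\overline{x},g\overline{y}) = g\cdot G[\overline{x},\overline{y})$ (and its half-open counterpart) combined with the $\Gamma$-invariance of $\mu$. No gaps.
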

\begin{proof}
First, we show that the action in Definition~\ref{def:action} is well-defined. Indeed, suppose that $\overline{x},\overline{y} \in \pi_{\mu}^{-1}(x)$, i.e., $d_{\mu}(\overline{x},\overline{y})=0$,
i.e. $\frac{1}{2}\mu(G[\overline{x},\overline{y})) + \frac{1}{2}\mu(G(\overline{x},\overline{y}])=0$.
Since $\mu$ is $\pi_1(X)$-invariant,
we have
\begin{align*}0&=\frac{1}{2}\mu(g G[\overline{x},\overline{y})) + \frac{1}{2}\mu(g G(\overline{x},\overline{y}])=\\
& \frac{1}{2}\mu( G[g \cdot \overline{x},g \cdot\overline{y})) + \frac{1}{2}\mu( G(g \cdot \overline{x},g \cdot \overline{y}])=\\
& d_{\mu}(g \cdot \overline{x},g \cdot \overline{y}),
\end{align*}
which shows that 
$g \cdot \overline{y} \in \pi_{\mu}^{-1}(g \cdot y)$, and thus shows that the action is well-defined.
The same computation for two arbitrary points $x,y \in X_{\mu}$ with lifts $\overline{x},\overline{y}$ shows that the action is by isometries.
\end{proof}

\begin{definition}[Translation length]
For $g \in \pi_1(X)$, we define \[\ell_{\mu}(g) \coloneqq \inf_{x \in X_{\mu}} d_{\mu}(x,g\cdot x).\]
 \end{definition}
\begin{lemma}
For $g \in \pi_1(X)$,
\[
\ell_{X_{\mu}}(g)=i(\mu,g)
\]
and thus 
\[
\ell_{X_{\mu}}(g^n)=n \ell_{X_{\mu}}(g)
\]
\label{lem:lenvsint}
\end{lemma}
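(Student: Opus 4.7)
The plan is to reduce the computation to the case of a basepoint on the axis of $g$, then use the straight property and a standard stable–length argument to propagate the value to arbitrary basepoints. We may assume $g \neq e$, so $g$ acts on $\wt{X}$ as a loxodromic isometry with a unique axis $\gamma_g$.

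First, I will establish the on-axis identity $d_\mu(\overline{x}, g\overline{x}) = i(\mu,g)$ for any $\overline{x} \in \gamma_g$. The key point is that any geodesic $\ell \in \mathcal{G}(\wt{X})$ transverse to $\gamma_g$ meets it at a single point, so the $\langle g\rangle$-orbit of $\ell$ hits $\gamma_g$ in a discrete $\langle g\rangle$-orbit of points on the axis. Since $[\overline{x}, g\overline{x})$ is a fundamental domain for $\langle g\rangle$ acting on $\gamma_g$, exactly one representative of each $\langle g\rangle$-orbit of transverse geodesics lies in $G[\overline{x}, g\overline{x})$. Unwrapping the definition of $i(\mu,\cdot)$ as the pushforward of the $\Gamma$-invariant product measure to $\mathfrak{I}/\Gamma$, and using $\langle g\rangle \subset \Gamma$, this yields $\mu(G[\overline{x}, g\overline{x})) = i(\mu,g)$. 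The same argument applies to $G(\overline{x}, g\overline{x}]$, giving $d_\mu(\overline{x}, g\overline{x}) = i(\mu,g)$.

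Next, I will leverage the straight property of $d_\mu$ (additivity along hyperbolic geodesics, recalled after Definition~\ref{def:currentdual}): since $\overline{x}, g\overline{x}, \dots, g^n\overline{x}$ are collinear along $\gamma_g$, the on-axis identity iterates to $d_\mu(\overline{x}, g^n\overline{x}) = n\,i(\mu,g)$. For an arbitrary lift $\overline{y} \in \wt{X}$, the sequence $a_n \coloneqq d_\mu(\overline{y}, g^n\overline{y})$ is subadditive by the triangle inequality combined with the isometric action, so $\ell^{\mathrm{stab}}_\mu(g) \coloneqq \lim_n a_n/n$ exists and equals $\inf_n a_n/n$; in particular $\ell^{\mathrm{stab}}_\mu(g) \leq d_\mu(\overline{y}, g\overline{y})$. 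Moreover, the triangle inequality $|d_\mu(\overline{y}, g^n\overline{y}) - d_\mu(\overline{x}, g^n\overline{x})| \leq 2\,d_\mu(\overline{x}, \overline{y})$ shows $\ell^{\mathrm{stab}}_\mu(g)$ is independent of basepoint, so taking $\overline{x} \in \gamma_g$ gives $\ell^{\mathrm{stab}}_\mu(g) = i(\mu,g)$.

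Combining the two estimates, for every lift $\overline{y}$ of $y \in X_\mu$, we get $i(\mu,g) = \ell^{\mathrm{stab}}_\mu(g) \leq d_\mu(\overline{y}, g\overline{y})$, while choosing $\overline{x} \in \gamma_g$ yields $\ell_\mu(g) \leq d_\mu(\overline{x}, g\overline{x}) = i(\mu,g)$; taking the infimum over $\overline{y}$ produces $\ell_\mu(g) = i(\mu,g)$. For the power statement, note that $\gamma_{g^n} = \gamma_g$ and $[\overline{x}, g^n\overline{x})$ decomposes into $n$ translates of $[\overline{x}, g\overline{x})$ under $\langle g\rangle$, so the fundamental-domain argument of the first step gives $i(\mu, g^n) = n\,i(\mu,g)$. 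Applying the first equality to $g^n$ then yields $\ell_\mu(g^n) = n\,\ell_\mu(g)$.

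The main technical point is the careful matching of $\mu(G[\overline{x}, g\overline{x}))$ with the formal definition of $i(\mu,g)$ via $(\mu \times \nu_g)(\mathfrak{I}/\Gamma)$; once that identification is made, the remaining arguments are soft consequences of the isometric action, the straight property, and subadditivity.
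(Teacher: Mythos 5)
Your argument is correct, but it is worth knowing that the paper does not actually prove this lemma: it simply cites \cite[Lemma~4.7]{BIPP21:Currents} for the identity $\ell_{X_\mu}(g)=i(\mu,g)$ and invokes ``general properties of Bonahon's intersection number'' for $i(\mu,g^n)=n\,i(\mu,g)$. What you have written is a self-contained proof of the cited result, and it follows the standard route: the on-axis fundamental-domain computation $\mu(G[\overline{x},g\overline{x}))=i(\mu,g)$ for $\overline{x}\in\gamma_g$, the straightness of $d_\mu$ along $\gamma_g$ to get $d_\mu(\overline{x},g^n\overline{x})=n\,i(\mu,g)$, and then the subadditivity/stable-length argument to show that the infimum over all basepoints cannot drop below the axis value. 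This last step is the one genuinely needed to pass from ``the axis realizes $i(\mu,g)$'' to ``the infimum equals $i(\mu,g)$,'' and you handle it correctly. One small point to tighten in the fundamental-domain step: when $g=g_0^k$ is not primitive, the residual stabilizer of $\gamma_g$ in $\Gamma$ is $\langle g_0\rangle$ rather than $\langle g\rangle$, and the geodesic current of the class $[g]$ carries weight $k$ on the orbit of $\gamma_{g_0}$; the two effects cancel, giving $i(\mu,g)=k\,\mu(G[\overline{x},g_0\overline{x}))=\mu(G[\overline{x},g\overline{x}))$ as claimed, but the bookkeeping should be stated. Your disjoint-decomposition argument for $i(\mu,g^n)=n\,i(\mu,g)$ (each transverse geodesic crosses the axis exactly once, hence lies in exactly one translate of $[\overline{x},g\overline{x})$) is also the standard one and is sound.
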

\begin{proof}
The first can be found in~\cite[Lemma~4.7]{BIPP21:Currents}.
 The second result follows from general properties about Bonahon's intersection number.
\end{proof}

\subsection{Coboundedness} 

\begin{definition}[Cobounded/cocompact]
Let $X$ be a metric space.  An action $(G,X)$ is said to be \emph{cobounded} if there exists a bounded set $B \subset X$ so that $X=G B$, i.e., $X=\cup_g g(B)$. An action $(G,X)$ is said to be \emph{cocompact} if there exists a compact set $K \subset X$ so that $X=G K$, i.e., $X=\cup_g g(K)$.
\end{definition}

Let $X,Y$ be metric spaces and $f \colon X \to Y$ be map (not necessarily continuous).
We say that $f$ is \emph{bornologous} if for every $R>0$, there is $S>0$ so that if 
$d_X(x,y) < R$ then $d_Y(f(x),f(y))<S$.
We say that $f$ is \emph{large scale Lipschitz} if there exist constants $c>0$ and $A$ so that
\[
d_Y(f(x),f(y)) \leq c \cdot d_X(x,y) + A.
\]
$f$ is a \emph{quasi-isometric embedding} if 
there exist constants $c>1$ and $A$ so that
\[
1/c\cdot d_X(x,y) - A \leq d_Y(f(x),f(y)) \leq c \cdot d_X(x,y) + A.
\]
$f$ is \emph{coarsely surjective} if there exists a constant $C$ so that for every $y \in Y$ there is $z \in g(X)$ so that $d_Y(z,y)<C$.
We say that $f$ is a \emph{quasi-isometry} if it is a coarsely surjective quasi-isometric embedding.

\begin{lemma}[{\cite[Lemma~1.10]{Joe03:Coarse}}]
Let $X$ be a length space and $Y$ a metric space.
$f$ is bornologous if and only if it is large scale Lipschitz.
\label{lem:borno}
\end{lemma}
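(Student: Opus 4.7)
The plan is to prove both implications, with the forward direction being elementary and the reverse direction exploiting the length space hypothesis.

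First I would handle the trivial direction: if $f$ is large scale Lipschitz with constants $c, A$, then for any $R > 0$ the inequality $d_X(x,y) < R$ immediately yields $d_Y(f(x), f(y)) < cR + A$, so we may take $S = cR + A$ and conclude that $f$ is bornologous.

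For the converse, suppose $f$ is bornologous. Applied to the scale $R = 1$, this gives a constant $S > 0$ such that $d_X(x,y) < 1$ implies $d_Y(f(x), f(y)) < S$. Given arbitrary $x, y \in X$ with $d = d_X(x,y)$, I would use the length space hypothesis to find, for every $\epsilon > 0$, a path $\gamma \colon [0, L] \to X$ from $x$ to $y$ of length $L \leq d + \epsilon$. Choosing an integer $n$ with $n > L$, I subdivide $\gamma$ into $n$ consecutive subpaths each of length $L/n < 1$, producing points $x = x_0, x_1, \dots, x_n = y$ with $d_X(x_i, x_{i+1}) < 1$ for each $i$. Applying the bornologous bound to each consecutive pair and summing via the triangle inequality gives
\[
d_Y(f(x), f(y)) \leq \sum_{i=0}^{n-1} d_Y(f(x_i), f(x_{i+1})) \leq n S.
\]
Choosing $n = \lfloor d + \epsilon \rfloor + 1$ and letting $\epsilon \to 0$ yields $d_Y(f(x), f(y)) \leq S \cdot d_X(x,y) + S$, so $f$ is large scale Lipschitz with constants $c = A = S$.

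The only point requiring a bit of care is the case $d_X(x,y) = 0$, which is immediate since then we may take $x_0 = \dots = x_n = x = y$ trivially (or simply observe the conclusion holds vacuously). The essential idea — that length space structure allows us to promote a local bound into a linear global bound by chaining — is the main content; no genuine obstacle arises, as the length-space hypothesis is doing exactly the work required to prevent counterexamples that could otherwise appear in general metric spaces (such as $X = \mathbb{N}$ with $f(n) = n^2$).
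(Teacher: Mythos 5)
Your proof is correct; the paper does not prove this lemma itself but imports it from Roe's \emph{Lectures on Coarse Geometry}, and your chaining argument (take the scale-$1$ bornologous constant, subdivide an almost-length-minimizing path into unit-length pieces, and sum via the triangle inequality) is exactly the standard proof given there. No gaps: the length-space hypothesis is used precisely where you use it, to produce the subdividable path realizing $d_X(x,y)$ up to $\epsilon$.
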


\begin{proposition}
Let $\mu \in \Curr(X)$.
Then $\pi_{\mu} \colon \wt{X} \to X_{\mu}$ is large scale Lipschitz.
\label{prop:largelip}
\end{proposition}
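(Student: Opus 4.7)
The plan is to invoke Lemma~\ref{lem:borno}: since $\wt{X}$ is a length space (it is a proper geodesic metric space), it suffices to show that $\pi_{\mu}$ is bornologous. So I will fix an arbitrary $R>0$ and produce a constant $S=S(R)>0$ such that $d_{\wt X}(\overline x, \overline y)<R$ implies $d_{\mu}(\pi_\mu(\overline x), \pi_\mu(\overline y))<S$.

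The first step is to bound the pseudo-distance by a single transversal measure. From the definition,
\[
d_{\mu}(\pi_\mu(\overline x), \pi_\mu(\overline y)) \;\le\; \mu(G[\overline x,\overline y]),
\]
and if $d_{\wt X}(\overline x,\overline y)<R$ then $[\overline x,\overline y]\subset \overline{B_R(\overline x)}$, giving $G[\overline x,\overline y]\subseteq G(\overline{B_R(\overline x)})$. So it is enough to bound $\mu(G(\overline{B_R(\overline x)}))$ uniformly in $\overline x$.

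The second step is to remove the dependence on $\overline x$ using cocompactness. Fix a compact fundamental domain $K_0\subset \wt X$ for the $\pi_1(X)$-action and, given $\overline x$, pick $g\in\pi_1(X)$ with $g\overline x\in K_0$. Because $\pi_1(X)$ acts on $\wt X$ by isometries, the induced map on $\mathcal{G}(\wt X)$ sends $G(\overline{B_R(\overline x)})$ to $G(\overline{B_R(g\overline x)})\subseteq G(\overline{N_R(K_0)})$, where $\overline{N_R(K_0)}$ is the closed $R$-neighbourhood of $K_0$. Since $\wt X$ is proper, $\overline{N_R(K_0)}$ is compact. Using $\pi_1(X)$-invariance of $\mu$ we obtain
\[
\mu(G(\overline{B_R(\overline x)})) \;=\; \mu(G(\overline{B_R(g\overline x)})) \;\le\; \mu(G(\overline{N_R(K_0)})).
\]

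The final step is to check that this last quantity is finite. The set $G(\overline{N_R(K_0)})$ is a closed subset of $\mathcal{G}(\wt X)$ (its endpoint pairs form a closed subset of $\partial\wt X\times\partial\wt X$ minus the diagonal); and because any geodesic hitting the compact set $\overline{N_R(K_0)}$ has endpoints at positive distance on the boundary controlled by that compact set, $G(\overline{N_R(K_0)})$ is in fact a compact subset of $\mathcal{G}(\wt X)$. Since $\mu$ is a locally finite Radon measure, setting
\[
S \;=\; \mu(G(\overline{N_R(K_0)})) + 1
\]
gives a finite constant depending only on $R$. This proves bornologous, hence large scale Lipschitz by Lemma~\ref{lem:borno}. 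The only mildly delicate point is the compactness of $G(\overline{N_R(K_0)})$ in $\mathcal{G}(\wt X)$; this follows because $\partial\colon \mathcal{G}(\wt X)\to(\partial\wt X\times\partial\wt X-\Delta)/\mathbb{Z}_2$ is a homeomorphism under our standing assumptions on the metric, and the preimage of a compact set of endpoint pairs is compact.
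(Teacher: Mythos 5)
Your proof is correct and rests on the same ingredients as the paper's: cocompactness of the $\pi_1(X)$-action on $\wt{X}$, $\pi_1(X)$-invariance of $\mu$, and finiteness of $\mu$ on transversals of compact sets. The only difference is packaging: the paper chains adjacent translates of a fundamental domain and inducts to obtain the affine bound directly, whereas you translate the whole $R$-ball into a fixed compact neighbourhood of the fundamental domain (obtaining the bornologous estimate with a constant $S_R$ that is a priori not affine in $R$) and then invoke Lemma~\ref{lem:borno} to upgrade it -- both routes are valid, and your appeal to compactness of $G(\overline{N_R(K_0)})$ is only needed to the extent that this set sits inside a compact set of geodesics and hence has finite $\mu$-measure, exactly as the paper uses for $\mu(G(K))$.
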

\begin{proof}
Given $R>0$, we will find $S_R>0$ so that if $d_{\wt{X}}(x,y) < R$, then $d_{\mu}(\pi_{\mu}(x),\pi_{\mu}(y))<S_R$.
Let $K \subset \wt{X}$ be a compact fundamental domain of the action of $\pi_1(X)$ on $\wt{X}$, and let $D$ be the hyperbolic diameter of $K$.
If $d_{\wt{X}}(x,y) \leq D$, then there exist $g_1,g_2 \in \pi_1(X)$ (depending on $x,y$), so that $x \in g_1 K$ and $y \in g_2 g_1(K)$ (where $g_1, g_2$ could be the identity), and $g_2 g_1 (K)$ is adjacent to $g_1 K$. Indeed, otherwise $d_{\wt{X}}(x,y) > D$, contradicting the choice of $D$.
Thus, $x,y \in g_1(K) \cup g_2 g_1(K)$.
Let $G(K)$ be the set of geodesics intersecting $g_1(K) \cup g_2 g_1(K)$. We have, by subadditivity of $\mu$ and $\pi_1(X)$ invariance, that
\begin{align*}
d_{\mu} ( \pi_\mu (x) , \pi_\mu (y)) & = d_{\mu}(x,y) = \frac{1}{2} \{ \mu (G (x,y]) + \mu ( G [x,y)) \}  \leq \mu (G [x,y]) \\ & \leq \mu (G(g_1 K) \cup G (g_2 g_1 K)) \leq \mu (G(g_1 K)) + \mu (G(g_2 g_1 K)) \\ & = 2 \mu (K)
\end{align*}

is finite (and independent of $x,y$).
Furthermore, $\mu(G(K))$ is finite, since $K$ is compact.
Thus, we have proven that if $d_{\wt{X}}(x,y) \leq D$, then $d_{X_\mu} (\pi_\mu (x) , \pi_\mu (y) ) \leq 2 \mu (G(K))$.
If $D < d_{\wt{X}}(x,y) \leq 2D$, the same proof now using three consecutively adjacent fundamental regions will yield that $d_{X_\mu}(x,y) \leq 3\mu(G(K))$. An induction argument then proves the general case.
\end{proof}

\begin{lemma}
Let $\mu$ be any geodesic current.
The action of $\pi_1(X)$ on $X_{\mu}$ is cobounded.
\label{lem:cocompact}
\begin{proof}
By Proposition~\ref{prop:largelip}, there exist constants $c>0$ and $A \geq 0$ so that for all $\overline{x},\overline{y} \in \wt{X}$,
\[
d_{\mu}(\pi_{\mu}(\overline{x}) ,\pi_{\mu}(\overline{y}))\leq c \cdot d_{\wt{X}}(\overline{x}, \overline{y}) + A.
\]
Thus, for every $x \in X_{\mu}$, and $r>0$, we have
\[
B_{\wt{X}}(\overline{x}, r ) \subseteq B_{\mu}(x ,c \cdot r + A)
\]
where $\overline{x} \in \pi_{\mu}^{-1}(x)$.
Since $\pi_1(X)$ acts cocompactly on $\wt{X}$, there exists a compact subset $K \subseteq \wt{X}$ so that $\wt{X} = \cup_g g K$.
Take a hyperbolic ball $B_{\wt{X}}(\overline{x},  r )$ so that 
\[
 K \subseteq B_{\wt{X}}(\overline{x}, r ).
\]
Then, by the above,
\[
 K \subseteq B_{\mu}(x, c \cdot r + A).
\]

Thus, by equivariance of $\pi_{\mu}$, we have
\[
X_{\mu}= \bigcup_{g \in \pi_1(X)} g B_{\mu}(x,c \cdot r + A),
\]
so the action is cobounded, as we wanted to see.
\end{proof}
\end{lemma}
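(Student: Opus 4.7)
The plan is to combine two facts established earlier in the paper: first, Proposition~\ref{prop:largelip} tells us that the projection $\pi_{\mu} \colon \wt{X} \to X_{\mu}$ is large scale Lipschitz, and second, $\pi_1(X)$ acts cocompactly on $\wt{X}$ since $X$ is compact. These two ingredients alone should suffice to upgrade cocompactness upstairs to coboundedness downstairs.

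First, I would fix a compact fundamental domain $K \subset \wt{X}$ for the deck action of $\pi_1(X)$. Since $K$ is compact, it has finite diameter $D$ with respect to $d_{\wt{X}}$. Applying Proposition~\ref{prop:largelip}, there exist constants $c > 0$ and $A \geq 0$ so that
\[
d_{\mu}(\pi_{\mu}(\overline{x}), \pi_{\mu}(\overline{y})) \leq c \cdot d_{\wt{X}}(\overline{x}, \overline{y}) + A
\]
for all $\overline{x}, \overline{y} \in \wt{X}$. In particular, the image $B \coloneqq \pi_{\mu}(K)$ is a subset of $X_{\mu}$ of diameter at most $cD + A$, hence bounded.

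Next, I would use that $\pi_{\mu}$ is surjective (by definition of the metric quotient) together with its $\pi_1(X)$-equivariance. Given any $x \in X_{\mu}$, pick $\overline{x} \in \pi_{\mu}^{-1}(x)$; since $\wt{X} = \bigcup_{g \in \pi_1(X)} g \cdot K$, there exists $g \in \pi_1(X)$ and $\overline{y} \in K$ with $\overline{x} = g \cdot \overline{y}$. Then $x = \pi_{\mu}(g \cdot \overline{y}) = g \cdot \pi_{\mu}(\overline{y}) \in g \cdot B$. Hence $X_{\mu} = \bigcup_{g \in \pi_1(X)} g \cdot B$, and the action is cobounded, as desired.

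There is no real obstacle here: the argument is essentially the observation that a bornologous equivariant surjection carries cocompact actions to cobounded ones. The only subtle point worth flagging is that we cannot replace ``cobounded'' with ``cocompact'' in general, because $X_\mu$ need not be locally compact (this would require additional assumptions, e.g., that $\mu$ has no atoms, as in Proposition~\ref{thm:homeoprojection}), so $\pi_{\mu}(K)$ is bounded but not automatically compact.
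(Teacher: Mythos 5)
Your proof is correct and follows essentially the same route as the paper: both arguments combine the large scale Lipschitz estimate of Proposition~\ref{prop:largelip} with the cocompactness of the $\pi_1(X)$-action on $\wt{X}$ and the equivariance of $\pi_{\mu}$ (the paper merely packages the bounded set as a $d_{\mu}$-ball containing $K$ rather than as $\pi_{\mu}(K)$ itself). Your closing remark distinguishing coboundedness from cocompactness is consistent with the paper, which reserves cocompactness for the non-atomic case in Lemma~\ref{lem:real_cocompact}.
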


If $\mu$ has no atoms, we can moreover show the action is cocompact.

\begin{lemma}
If $\mu$ is a current with no atoms, then the action of $\pi_1(X)$ on $X_{\mu}$ is cocompact.
\label{lem:real_cocompact}
\end{lemma}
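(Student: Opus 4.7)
The plan is to upgrade the coboundedness proven in Lemma~\ref{lem:cocompact} to cocompactness by exploiting the fact that, under the no-atoms hypothesis, the projection map $\pi_\mu$ is continuous (Proposition~\ref{prop:continuousproj}). The strategy is transparent: push forward a compact fundamental domain of the $\pi_1(X)$-action on $\wt{X}$ through $\pi_\mu$, and verify this yields a compact set whose translates cover $X_\mu$.

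More concretely, I would proceed as follows. First, since $\pi_1(X)$ acts cocompactly by isometries on $\wt{X}$, fix a compact fundamental domain $K \subseteq \wt{X}$, so that $\wt{X} = \bigcup_{g \in \pi_1(X)} g K$. Because $\mu$ has no atoms, Proposition~\ref{prop:continuousproj} gives that $\pi_\mu \colon \wt{X} \to X_\mu$ is continuous, and therefore $K' \coloneqq \pi_\mu(K)$ is a compact subset of $X_\mu$. Using the $\pi_1(X)$-equivariance of $\pi_\mu$ (which is immediate from Definition~\ref{def:action}) and the surjectivity of $\pi_\mu$, we compute
\[
X_\mu = \pi_\mu(\wt{X}) = \pi_\mu\!\left(\bigcup_{g \in \pi_1(X)} g K\right) = \bigcup_{g \in \pi_1(X)} \pi_\mu(gK) = \bigcup_{g \in \pi_1(X)} g\,\pi_\mu(K) = \bigcup_{g \in \pi_1(X)} g K'.
\]
This exhibits the action as cocompact with compact set $K'$.

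There is no real obstacle here beyond invoking continuity of $\pi_\mu$ at the right step; without the no-atoms assumption the image $\pi_\mu(K)$ need not even be precompact, which is precisely why this conclusion fails in general and why the statement requires $\mu$ to have no atoms. One could alternatively argue by noting that $X_\mu$ is proper under the no-atoms hypothesis (cf.\ Proposition~\ref{prop:properspace} used elsewhere in the paper), so that the bounded set from Lemma~\ref{lem:cocompact} has compact closure, but the direct fundamental-domain argument above is more elementary and avoids appealing to properness.
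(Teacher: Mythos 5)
Your argument is correct and is essentially identical to the paper's own proof: both push a compact set $K$ with $\wt{X}=\bigcup_g gK$ forward through the continuous (by the no-atoms hypothesis) equivariant map $\pi_\mu$ and conclude that $X_\mu=\bigcup_g g\,\pi_\mu(K)$. No further comment is needed.
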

\begin{proof}
Since $\pi_1(X)$ acts cocompactly on $\wt{X}$, there exists a compact subset $K \subseteq \wt{X}$ so that $\wt{X} = \cup_g g K$.
Let $K'=\pi_{\mu}(K)$, which is compact by continuity of $\pi_{\mu}$.
Thus, by equivariance of $\pi_{\mu}$, we have
\[
X_{\mu}= \bigcup_{g \in \pi_1(X)} g K'
\]
as we wanted to see.
\end{proof}

\subsection{Boundedness and compactness of balls}

We show that a geodesic current $\mu$ is filling if and only if all balls in  $X_{\mu}$ (equivalently, balls in the $d_{\mu}$-pseudo metric) are bounded. 
Moreover, we show that if $\mu$ has no atoms and it is filling, then $X_{\mu}$ is a proper metric space.

The following proof is adapted from~\cite[Proposition~2.8]{Glo17:CriticalExponents}. We claim no originality, but supply details and point out a gap in that proof.
\begin{proposition}
Let $\overline{x} \in \wt{X}$ and $\mu \in \Curr (X)$.
The following are equivalent:
\begin{enumerate}
    \item The current $\mu$ is filling;
    \item for every $\overline{x} \in \wt{X}$, $B_{\mu} (\overline{x}, 0)$ is bounded;
    \item for every $\overline{x} \in \wt{X}$ and every $r \geq 0$, $B_{\mu} (\overline{x}, r)$ is bounded.
\end{enumerate}
\label{prop:bounded}
\end{proposition}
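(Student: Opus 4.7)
The plan is to prove the equivalence as a cycle $(3) \Rightarrow (2) \Rightarrow (1) \Rightarrow (3)$. The first implication is immediate since $B_\mu(\overline{x},0) \subseteq B_\mu(\overline{x},r)$ for every $r \geq 0$, so the work is in the other two.

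For $(2) \Rightarrow (1)$, I would argue by contrapositive. Suppose $\mu$ is not filling; then there exists a non-trivial internal current $\nu$ with $i(\mu,\nu) = 0$. Recalling that $i(\mu,\nu) = (\mu \times \nu)(\mathfrak{I}/\Gamma)$ where $\mathfrak{I} \subseteq \mathcal{G}(\wt{X}) \times \mathcal{G}(\wt{X})$ is the set of transversely intersecting pairs, and writing $\mathfrak{I}$ as a countable union of $\Gamma$-translates of a fundamental domain together with $\Gamma$-invariance of $\mu \times \nu$, we obtain $(\mu \times \nu)(\mathfrak{I}) = 0$. By Fubini there exists $\gamma_0 \in \supp \nu$ with $\mu(G(\gamma_0)) = 0$. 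For any $\overline{x}, \overline{y} \in \gamma_0$ the inclusion $G[\overline{x},\overline{y}) \subseteq G(\gamma_0)$ gives $d_\mu(\overline{x},\overline{y}) = 0$, so the entire bi-infinite geodesic $\gamma_0$ lies in $B_\mu(\overline{x},0)$, which is therefore unbounded.

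For $(1) \Rightarrow (3)$, the plan is to establish the following \emph{systolic-type estimate}: there exist $L, \delta > 0$ such that $\mu(G(\tau)) \geq \delta$ for every geodesic segment $\tau \subseteq \wt{X}$ of hyperbolic length at least $L$. Given this, the straightness of $d_\mu$ along hyperbolic geodesics (\cite[Proposition~4.1]{BIPP21:Currents}) allows subdividing $[\overline{x},\overline{y}]$ into $\lfloor d_{\wt{X}}(\overline{x},\overline{y})/L \rfloor$ pieces of length $L$, each contributing at least $\delta/2$ to $d_\mu(\overline{x},\overline{y})$; if $\overline{y} \in B_\mu(\overline{x},r)$ this gives $d_{\wt{X}}(\overline{x},\overline{y}) \leq L(2r/\delta + 1)$, which is (3). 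To prove the systolic estimate, I would use Bonahon's compactness of $\{\nu \in \Curr(X) : \ell_X(\nu) = 1\}$ together with continuity of the intersection form: since $\mu$ is filling, $\nu \mapsto i(\mu,\nu)$ attains a positive infimum $\alpha > 0$ on this compact set, hence $i(\mu,\nu) \geq \alpha\, \ell_X(\nu)$ for every $\nu \in \Curr(X)$. Applying the Anosov closing lemma together with cocompactness of $\Gamma \acts \wt{X}$, any sufficiently long segment $\tau$ can be shadowed by the axis of a closed geodesic $c_\tau$ with $\ell_X(c_\tau) = \ell_{\wt{X}}(\tau) + O(1)$; comparing $i(\mu,c_\tau)$ to $\mu(G(\tau))$ up to a bounded error then yields $\mu(G(\tau)) \geq \alpha\, \ell_{\wt{X}}(\tau) - O(1) \geq \delta$ once $\ell_{\wt{X}}(\tau)$ is large.

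The main obstacle is obtaining uniform control of the error $i(\mu,c_\tau) - \mu(G(\tau))$ in the closing lemma step: the implicit $O(1)$ must be independent of $\tau$, requiring a uniform bound on the $\mu$-mass of geodesics crossing bounded neighborhoods of the endpoints where shadowing is imperfect. A cleaner alternative that avoids closing lemmas is to argue by direct weak-$\ast$ compactness: if the systolic estimate fails, pick $\tau_n$ with $\ell_{\wt{X}}(\tau_n) \to \infty$ and $\mu(G(\tau_n))/\ell_{\wt{X}}(\tau_n) \to 0$, translate by $\Gamma$ so midpoints lie in a fundamental domain, build normalized currents $\nu_n$ of unit $\ell_X$-length from the empirical measure of the geodesic flow along $\tau_n$, and extract a non-trivial weak-$\ast$ limit $\nu_\infty$ via Bonahon's compactness; continuity of the intersection form then yields $i(\mu,\nu_\infty) = 0$, contradicting that $\mu$ is filling.
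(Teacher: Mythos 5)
Your cycle $(3)\Rightarrow(2)\Rightarrow(1)\Rightarrow(3)$ matches the paper's logical structure, and your $(2)\Rightarrow(1)$ is essentially the paper's argument with the Fubini step made explicit (the paper simply asserts that a non-filling current admits a geodesic line disjoint from $\supp(\mu)$; your derivation of this from $i(\mu,\nu)=0$ via $(\mu\times\nu)(\mathfrak{I})=0$ is the right justification). The real divergence is in $(1)\Rightarrow(3)$. The paper (Lemmas~\ref{lem:distancecompare} and~\ref{lem:balls}, adapted from Glorieux) proves your ``systolic-type estimate'' by a more elementary route: it considers the first return time $r(v)$ of the geodesic flow to $\supp(\mu)$, notes that $r$ is finite because $\mu$ is filling and upper semicontinuous, hence uniformly bounded by some $C$ on the compact unit tangent bundle, so every segment of length $\ge C+1$ meets $\supp(\mu)$; a compactness argument on pairs of points (translated into a fundamental domain) then upgrades ``positive measure'' to a uniform $\epsilon>0$, and straightness of $d_\mu$ plus induction gives the linear lower bound, exactly as in your reduction. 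Your alternative via Bonahon compactness of unit-length currents buys a conceptually cleaner statement ($i(\mu,\cdot)\ge\alpha\,\ell_X(\cdot)$) but at a real cost.

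That cost is the soft spot in your sketch: the objects $\nu_n$ built from empirical measures along the finite segments $\tau_n$ are neither flow-invariant nor $\Gamma$-invariant, so they are not geodesic currents, and ``continuity of the intersection form'' does not literally apply to the pairs $(\mu,\nu_n)$. You must either (a) pass to closed geodesics by the closing lemma, which reintroduces the uniform $O(1)$ error you already flagged as the main obstacle, or (b) prove directly that any weak-$\ast$ limit $\nu_\infty$ of the $\nu_n$ is a genuine non-trivial current satisfying $i(\mu,\nu_\infty)\le\liminf_n \mu(G(\tau_n))/\ell_{\wt{X}}(\tau_n)$; the latter is standard but requires a portmanteau-type argument on $\mathfrak{I}/\Gamma$ and control of boundary/endpoint effects, none of which is supplied. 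I would not call this a fatal gap --- the strategy is sound and well known --- but as written the key inequality is asserted rather than proved, whereas the paper's return-time argument closes the same step with only upper semicontinuity of $r$ and compactness of $T^1X$. Finally, note that the uniform positivity $\mu(G[\overline{x},\overline{y}])\ge\epsilon$ for $d_{\wt{X}}(\overline{x},\overline{y})\ge C+1$ still needs its own compactness argument even after one knows every long segment crosses $\supp(\mu)$; the paper does this at the level of pairs of points rather than at the level of currents.
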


\begin{proof}
3. $\Rightarrow$ 2. is obvious.

2. $\Rightarrow$ 1. Assume $\mu$ is not filling, then there exists a geodesic line $g \in \mathcal{G}(\wt{X})$ which does not intersect any line in $\mathrm{supp}(\mu)$. It follows that $
d_{\mu} (\overline{x},\overline{y}) = 0$ for any $\overline{x},\overline{y}$ on the geodesic $\gamma$.
In particular $\gamma \subset B_{\mu}(\overline{x},0)$, and hence $B_{\mu}(\overline{x},0)$ is not bounded, contradiction.

1. $\Rightarrow$ 3. Now we assume that $\mu$ is filling.
By Lemma~\ref{lem:distancecompare}, which then implies Lemma~\ref{lem:balls}, there exists $R>0$ so that $B_\mu (\overline{x}, r) \subset B_{\wt{X}}(\overline{x},R)$. Since  $B_{\wt{X}}(\overline{x},R)$ is bounded, it follows that $B_\mu (x,r)$ is bounded, and the Proposition follows. 
\end{proof}

\begin{lemma}
Let $\mu$ be a filling geodesic current on $X$.
There exist a constant $C(\mu)>0$ and a constant $\epsilon(C,\mu)>0$, so that
if $d_{\wt{X}}(\overline{x},\overline{x}) > C+1$ then $d_{\mu}(x,y) > \epsilon$.
\label{lem:distancecompare}
\end{lemma}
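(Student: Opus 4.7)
The plan is to argue by contradiction. Suppose the conclusion fails; then for every $n \in \N$ one can find $\overline{x}_n, \overline{y}_n \in \wt{X}$ with $d_{\wt{X}}(\overline{x}_n, \overline{y}_n) > n$ and $d_\mu(\overline{x}_n, \overline{y}_n) < 1/n$. Using that the action of $\pi_1(X)$ is cocompact on $\wt{X}$ and acts by isometries on both $\wt{X}$ and $X_\mu$, I would first translate each pair by an element of $\pi_1(X)$ so that $\overline{x}_n$ lies in a fixed compact fundamental domain $K$ of hyperbolic diameter $D$. By compactness of $K$ and of $\partial \wt{X}$, extract a subsequence so that $\overline{x}_n \to \overline{x}^* \in K$ and the oriented geodesic extensions of the segments $[\overline{x}_n, \overline{y}_n]$ converge (in the visual topology) to a geodesic ray $[\overline{x}^*, \xi)$ for some $\xi \in \partial \wt{X}$.

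Next, I would produce points of arbitrarily large hyperbolic distance from $\overline{x}^*$ inside a single bounded dual ball around $\overline{x}^*$. For each fixed $L > 0$ and $n$ large (so that $d_{\wt{X}}(\overline{x}_n, \overline{y}_n) > L$), choose $\overline{z}_n^L \in [\overline{x}_n, \overline{y}_n]$ with $d_{\wt{X}}(\overline{x}_n, \overline{z}_n^L) = L$. Straightness of $d_\mu$ then gives $d_\mu(\overline{x}_n, \overline{z}_n^L) \leq d_\mu(\overline{x}_n, \overline{y}_n) < 1/n$, and after a diagonal extraction $\overline{z}_n^L \to \overline{z}^*_L$ along the limit ray, so that $d_{\wt{X}}(\overline{x}^*, \overline{z}^*_L) = L$.

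The main obstacle is that $d_\mu$ is not continuous on $\wt{X} \times \wt{X}$ when $\mu$ has atoms (see Remark~\ref{rem:notcontinuous}), so the vanishing of $d_\mu(\overline{x}_n, \overline{z}_n^L)$ does not directly transfer to the limit. I would circumvent this using the quantitative content already present in the proof of Proposition~\ref{prop:largelip}: there is an explicit constant $M = M(\mu, K)$ (namely $M = 2\mu(G(K))$) such that any two points of $\wt{X}$ within hyperbolic distance $D$ have $d_\mu$-distance at most $M$. Since eventually $d_{\wt{X}}(\overline{x}_n, \overline{x}^*) \leq D$ and $d_{\wt{X}}(\overline{z}_n^L, \overline{z}^*_L) \leq D$, the triangle inequality gives
\[
d_\mu(\overline{x}^*, \overline{z}^*_L) \leq d_\mu(\overline{x}^*, \overline{x}_n) + d_\mu(\overline{x}_n, \overline{z}_n^L) + d_\mu(\overline{z}_n^L, \overline{z}^*_L) \leq 2M + \tfrac{1}{n},
\]
so letting $n \to \infty$ yields $\overline{z}^*_L \in B_\mu(\overline{x}^*, 2M)$.

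Since $L > 0$ is arbitrary, $B_\mu(\overline{x}^*, 2M)$ contains points at unbounded hyperbolic distance from $\overline{x}^*$. This contradicts Proposition~\ref{prop:bounded}, which asserts that every $d_\mu$-ball in $\wt{X}$ is bounded when $\mu$ is filling. Negating the initial assumption yields the desired constants $C(\mu)$ and $\epsilon(C, \mu)$.
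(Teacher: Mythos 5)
Your argument is circular in the context of this paper. The contradiction you derive at the end rests on Proposition~\ref{prop:bounded}, specifically on the implication ``$\mu$ filling $\Rightarrow$ every $d_\mu$-ball $B_\mu(\overline{x},r)$ is bounded.'' But in the paper that implication is \emph{not} proved independently: the proof of $(1)\Rightarrow(3)$ in Proposition~\ref{prop:bounded} invokes Lemma~\ref{lem:balls}, and Lemma~\ref{lem:balls} is proved by an induction whose base case is exactly Lemma~\ref{lem:distancecompare} — the statement you are trying to prove. (Only the converse direction, ``balls bounded $\Rightarrow$ filling,'' has a self-contained proof there.) So the step ``$B_\mu(\overline{x}^*,2M)$ contains points at unbounded hyperbolic distance, contradicting Proposition~\ref{prop:bounded}'' assumes the conclusion. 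Everything before that point is fine and in fact independent of the lemma: the reduction to a fundamental domain, the use of straightness, and the uniform bound $M=2\mu(G(K))$ on the $d_\mu$-distance of hyperbolically close points (from Proposition~\ref{prop:largelip}) are all legitimate, and they do produce a geodesic ray from $\overline{x}^*$ whose total $\mu$-crossing mass is finite. What is missing is a direct argument that a filling current admits no such ray — and that is precisely the content that the lemma is supposed to supply.

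The paper closes this gap dynamically rather than by contradiction-plus-limits: it considers the first return time $r(v)$ of the geodesic flow to $p(\supp\mu)$, observes that fillingness makes $r$ finite everywhere, and uses upper semicontinuity of $r$ on the compact unit tangent bundle $T^1X$ to get a uniform bound $C$. Hence every geodesic segment of hyperbolic length $>C+1$ crosses $\supp\mu$ transversely in its interior, so $\mu(G[\overline{x},\overline{y}])>0$; a compactness argument (a minimizing sequence of such segments would converge to a segment of length $\geq C+1$ with zero crossing mass) then upgrades positivity to a uniform $\epsilon>0$. If you want to keep your limit-ray strategy, you would need to graft in essentially this return-time argument (or some other direct proof that rays with finite crossing mass cannot exist for filling currents, e.g.\ via recurrence and the positivity of $\sys(\mu)$) in place of the appeal to Proposition~\ref{prop:bounded}.
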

\begin{proof}
The surface $X$ has been endowed with a fixed hyperbolic structure, where the boundary components (if any) are totally geodesic. Let $T^1 X_{\mathit{rec}}$ denote the compact subset of $T^1X$ of vectors for which the geodesic flow $\phi_t$ is defined for all times, both in the future and in the past.
In particular, $T^1 X_{\mathit{rec}}$ is invariant under the geodesic flow and under the geodesic flip. If $X$ is a closed surface, then $T^1 X_{\mathit{rec}} = T^1 X$, but in general it is a proper subset.

Let $\phi : T^1 X_{\mathit{rec}} \times \R \to T^1 X_{\mathit{rec}} $ denote the geodesic flow on $T^1 X_{\mathit{rec}} $, $\pi \colon T^1 X_{\mathit{rec}} \to X$ the canonical projection, and $p : \wt{X} \to X$ the universal covering. Define $r \colon T^1 X_{\mathit{rec}}  \to \mathbb{R}$ to be the \emph{first return time} of $\phi_t$ to the support of $\mu$
\[
r(v) \coloneqq \inf \{ t\in \R : \pi(\phi (v,t)) \capt p (\mathrm{supp} \mu) \neq \emptyset \}.
\]

In other words, $r(v)$ is the first time when the geodesic emanating from $v \in T^1 X_{\mathit{rec}}$ intersects the support of $\mu$ transversely. Notice that $r(v)$ is finite since $\mu$ is filling.

Since the function $r$ is upper semi-continuous~(see, for example,~\cite[Lemma~7.3]{MGT21:Smoothings}), it follows that it admits an upper bound $C >0$ on the compact set $T^1 X_{\mathit{rec}}$. By lifting $r$ to $\wt{X}$ we have the upper bound $r(v) \leq C$ for all $v \in T^1 \wt{X_{\mathit{rec}}}$.

Now let us fix $\overline{x}, \overline{y}\in \wt{X}$ such that $d_{\wt{X}} (\overline{x},\overline{y}) \geq C+1$. Then the segment $[\overline{x},\overline{y}]$ must intersect transversely some geodesic line in the support of $\mu$, and therefore
$\mu (G[\overline{x},\overline{y}]) > 0$. If for every $n$, we could find $\overline{x_n}, \overline{y_n} \in \wt{X}$ so that $d_{\wt{X}} (\overline{x_n},\overline{y_n}) \geq C+1$ and $\mu (G[\overline{x_n},\overline{y_n}]) < \frac{1}{n}$, then compactness of $X$ would yield points $\overline{x_{\infty}},\overline{y_{\infty}} \in \wt{X}$ with $d_{\wt{X}} (\overline{x_{\infty}},\overline{y_{\infty}}) \geq C+1$ but $\mu (G[\overline{x_{\infty}},\overline{y_{\infty}}])=0$, which would contradict the choice of $C$. Thus, there must exist a uniform lower bound $\epsilon>0$ so that
for every $\overline{x}, \overline{y}\in \wt{X}$ such that $d_{\wt{X}} (\overline{x},\overline{y}) > C+1$, we have $\mu (G[\overline{x},\overline{y}]) > \epsilon>0$.
This shows that
\[
B_{\mu}(\overline{x},\epsilon) \subset B_{\wt{X}}(\overline{x},C+1),
\]
and thus $B_{\mu}(\overline{x},\epsilon)$ is bounded.
\end{proof}

\begin{remark}
Note that in~\cite[Proposition~2.8]{Glo17:CriticalExponents} it is claimed that $B_{\mu}(\overline{x},\epsilon)$ is compact. We contest this: in fact, it is not true in general. Observe that since, by Lemma~\ref{lem:distancecompare}, $B_{\mu}(\overline{x},\epsilon)$ is bounded, its compactness is equivalent to $B_{\mu}(\overline{x},\epsilon)$ being closed in $\wt{X}$. At the same time, 
\[
B_{\mu}(\overline{x},\epsilon) = \{ \overline{y} \in \wt{X} \colon d_{\mu}(\overline{y},\overline{x}) \leq \epsilon \}
\]
which is closed if and only if $d_{\mu}(\overline{y},\cdot)$ is lower semicontinuous (in the topology of $\wt{X}$ induced by the hyperbolic metric). However, we know from Propositions~\ref{prop:contdistance} and~\ref{prop:discontdistance} this happens if and only if $\mu$ has no atoms.
We collect this in the next proposition.
\end{remark}

A metric space $(X, d)$ is \emph{proper} if closed balls are compact.

\begin{proposition}
Given a geodesic current $\mu$ on $X$.
If $\mu$ is filling and has no atoms, then $X_{\mu}$ is proper.
\label{prop:properspace}
\end{proposition}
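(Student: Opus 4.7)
The plan is to show that every closed ball in $X_\mu$ is compact by lifting it to $\wt{X}$, verifying the lift is closed and bounded (hence compact, since $\wt{X}$ is proper), and then pushing back down using continuity of $\pi_\mu$.

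Fix $x \in X_\mu$ and $r \geq 0$, pick $\overline{x} \in \pi_\mu^{-1}(x)$, and consider the closed ball $\overline{B}_\mu(x,r) \subseteq X_\mu$. Its preimage in $\wt{X}$ is
\[
\pi_\mu^{-1}(\overline{B}_\mu(x,r)) = \{\overline{y} \in \wt{X} : d_\mu(\overline{x},\overline{y}) \leq r\}.
\]
First I would observe that this set is \emph{bounded} in $\wt{X}$: since $\mu$ is filling, Proposition~\ref{prop:bounded} guarantees that every pseudo-metric ball of finite $d_\mu$-radius is bounded in the hyperbolic metric. Next I would argue it is \emph{closed} in $\wt{X}$: because $\mu$ has no atoms, Proposition~\ref{prop:contdistance} says $\overline{y} \mapsto d_\mu(\pi_\mu(\overline{x}), \pi_\mu(\overline{y}))$ is continuous on $\wt{X}$, so the sublevel set above is closed.

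Since $\wt{X}$, being the universal cover of the compact hyperbolic surface $X$, is a proper geodesic metric space, any closed bounded subset is compact. Hence $\pi_\mu^{-1}(\overline{B}_\mu(x,r))$ is compact in $\wt{X}$. The no-atoms hypothesis also gives continuity of $\pi_\mu$ by Proposition~\ref{prop:continuousproj}, and $\pi_\mu$ is surjective, so
\[
\overline{B}_\mu(x,r) = \pi_\mu\bigl(\pi_\mu^{-1}(\overline{B}_\mu(x,r))\bigr)
\]
is the continuous image of a compact set, hence compact. This is exactly the statement that $X_\mu$ is proper.

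There is no real obstacle here once the three inputs are in place; the proof is essentially a three-line assembly. The subtle point, which is why the hypotheses cannot be weakened in this proof, is that atoms would destroy closedness of the lifted ball in $\wt{X}$ (as pointed out in the remark preceding the proposition), and failure of filling would destroy boundedness. Both hypotheses are therefore used exactly once, each in its own step.
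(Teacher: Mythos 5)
Your proof is correct and follows essentially the same route as the paper's: lift the closed ball to $\wt{X}$, get boundedness from the filling hypothesis, closedness from continuity (no atoms), deduce compactness of the preimage by properness of $\wt{X}$, and push forward under the continuous surjection $\pi_\mu$. The only cosmetic difference is which intermediate result you cite for boundedness (Proposition~\ref{prop:bounded} versus the paper's direct appeal to Lemma~\ref{lem:distancecompare}), and that you phrase closedness via continuity of the distance function rather than of $\pi_\mu$ itself; both are equivalent.
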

\begin{proof}
Assume $\mu$ is filling and has no atoms.
Let $x \in X_{\mu}$ and $B=B_{\mu}(x,r)$ be a closed $d_{\mu}$-ball. The preimage $\pi_{\mu}^{-1}(B)$ is the closed $d_{\mu}$-ball in $\wt{X}$ (in the pseudo-metric $d_{\mu}$), which by Lemma~\ref{lem:balls} below is bounded.
If $\mu$ has no atoms, then $\pi_{\mu}$ is continuous, thus $\pi_{\mu}^{-1}(B)$ is closed. Altogether, this shows $\pi_{\mu}^{-1}(B)$ is compact, and continuity of $\pi_{\mu}$ again implies $B$ is compact, so $X_{\mu}$ is proper.
\end{proof}

\begin{lemma}
For all $r$, there exists a constant $R(r)>0$, 
so that for all $\overline{x} \in \wt{X}$,
\[
B_{\mu}(\overline{x},r) \subset B_{\wt{X}}(\overline{x},R).
\]
\label{lem:balls}
\end{lemma}
\begin{proof}
Let $C$ and $\epsilon$ be the constants given by Lemma~\ref{lem:distancecompare}. Let $\overline{x},\overline{y} \in \wt{X}$ with $d_{\wt{X}} (\overline{x},\overline{y}) \geq C +1$ and and $d_\mu (\overline{x},\overline{y}) \geq \epsilon$. In other words, we have
\[
B_\mu (\overline{x}, \epsilon) = \{ \overline{y} \in \wt{X} \colon d_\mu (\overline{x}, \overline{y}) \leq \epsilon \} \subseteq B_{\wt{X}} (\overline{x}, C+1)
\]
Now we proceed by induction on the distance $d_{\wt{X}} (\overline{x},\overline{y}) = N \cdot (C +1)$, for any $N \in \N$. The induction basis has been proven already. Assume that the following implication holds
\[
d_{\wt{X}} (\overline{x},\overline{y}) \geq (N-1) \cdot (C +1) \Rightarrow d_\mu (\overline{x},\overline{y}) \geq (N-1) \cdot \epsilon
\]
assume that $d_{\wt{X}} (\overline{x},\overline{y}) \geq N \cdot (C +1)$. We want to show that $d_\mu (\overline{x},\overline{y}) \geq N \cdot \epsilon$.

Let $\gamma \in \mathcal{G}({{\wt{X}}})$ be the geodesic line joining $\overline{x}$ to $\overline{y}$ in ${\wt{X}}$. Since $d_{\wt{X}} (\overline{x},\overline{y}) \geq (N-1) (C+1)$, then there must exist $\overline{z} \in \gamma$ such that $d_{\wt{X}} (\overline{x},\overline{z}) \geq (N-1) (C+1)$ and $d_{\wt{X}} (\overline{z}, \overline{y}) \geq C+1$, and hence, by induction hypothesis $d_\mu (\overline{x},\overline{z}) \geq (N-1)\epsilon$ and $d_\mu(\overline{z},\overline{y}) \geq \epsilon$.
Since the pseudo distance $d_\mu$ is straight, we have
\[
    d_\mu (\overline{x}, \overline{y}) = d_\mu (\overline{x}, \overline{z}) + d_\mu (\overline{z},\overline{y}) \geq (N-1)\epsilon +\epsilon = N \epsilon
\]
as wanted.
Finally, if $r < \epsilon$, then $B_{\mu}(x,r) \subset B_{\mu}(x,\epsilon) \subset B(x,R(\epsilon))$, by the previous argument.
\end{proof}

\begin{proposition}
Let $\mu \in \Curr(X)$.
If $\sys(\mu)>0$, then $\pi_{\mu} \colon \wt{X} \to X_{\mu}$ is a quasi-isometry.
\label{prop:quasi-isom}
\end{proposition}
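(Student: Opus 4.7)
Establishing that $\pi_\mu$ is a quasi-isometry amounts to showing three properties: coarse surjectivity, and matching linear upper and lower bounds comparing $d_\mu\circ\pi_\mu$ with $d_{\wt X}$. Coarse surjectivity is immediate because $\pi_\mu$ is surjective by construction, and the upper bound
\[
d_\mu(\pi_\mu(\overline x),\pi_\mu(\overline y))\;\leq\; c\, d_{\wt X}(\overline x,\overline y)+A
\]
holds for every geodesic current by Proposition~\ref{prop:largelip}, so it applies in particular when $\sys(\mu)>0$.

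The remaining content of the proposition is the matching linear lower bound. The plan is to extract it from Lemma~\ref{lem:balls}, whose proof does more than the statement asserts: its inductive step, together with straightness of $d_\mu$ on hyperbolic geodesics, actually establishes the uniform inequality
\[
d_\mu(\overline x,\overline y)\;\geq\;\frac{\epsilon}{C+1}\,d_{\wt X}(\overline x,\overline y)-\epsilon
\]
for all $\overline x,\overline y\in\wt X$, where $C$ and $\epsilon$ are the constants furnished by Lemma~\ref{lem:distancecompare}. Since both Lemma~\ref{lem:distancecompare} and Lemma~\ref{lem:balls} require $\mu$ to be filling, the whole proposition reduces to showing that $\sys(\mu)>0$ implies $\mu$ is filling as a geodesic current; once this is granted, the lower bound, the upper bound, and surjectivity combine immediately to give the quasi-isometry.

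For the filling implication I would argue by contradiction. Assume $\sys(\mu)>0$ and that some non-trivial internal current $\nu$ satisfies $i(\mu,\nu)=0$. Approximating $\nu$ in the weak$^{*}$ topology by weighted multi-curves $\nu_n=\sum_k\lambda_{n,k}\,c_{n,k}$, continuity of the intersection form yields $i(\mu,\nu_n)\to 0$, while
\[
i(\mu,\nu_n)=\sum_k\lambda_{n,k}\,i(\mu,c_{n,k})\;\geq\;\sys(\mu)\sum_k\lambda_{n,k}
\]
forces $\sum_k\lambda_{n,k}\to 0$. This, combined with a uniform control on the number of $\pi_1(X)$-translates of each lift $\widetilde{c_{n,k}}$ meeting a fixed compact transversal $G(\tau)\subset\G(\wt X)$, would force $\nu_n(G(\tau))\to 0$ and contradict the weak$^{*}$ convergence $\nu_n\to\nu\neq 0$. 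The delicate point---and the main obstacle of the proof---is precisely to obtain such a uniform count of relevant $\pi_1(X)$-translates as $n,k$ vary, which must be done by exploiting cocompactness of $\pi_1(X)\acts\wt X$ and the local finiteness of the Radon measures involved; once this bookkeeping is in place, the rest of the argument is direct.
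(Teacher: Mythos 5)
Your quasi-isometry architecture is essentially the paper's: the upper bound is Proposition~\ref{prop:largelip}, the lower bound is the content of Lemma~\ref{lem:balls} (the paper packages it as a bornologous quasi-inverse and invokes Lemma~\ref{lem:borno}, but the inequality you extract directly from the induction is the same and, if anything, cleaner since it sidesteps the question of whether the domain of the quasi-inverse is a length space), and coarse surjectivity is immediate. That part of your plan is sound.

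The gap is in the step you yourself flag as the main obstacle, and it is not merely a bookkeeping issue: the uniform count you need does not exist, and the contradiction you aim for does not materialize. The number of $\pi_1(X)$-translates of a lift of $c_{n,k}$ meeting a fixed compact transversal $G(\tau)$ --- equivalently, the number of transverse crossings of $c_{n,k}$ with $\tau$ --- grows linearly in $\ell_X(c_{n,k})$ and is unbounded over $\Curves(X)$. What weak$^*$-convergence actually pins down is $\sum_k\lambda_{n,k}\,\ell_X(c_{n,k})=i(\mathcal{L}_X,\nu_n)\to i(\mathcal{L}_X,\nu)>0$, not the total weight; so $\sum_k\lambda_{n,k}\to0$ is perfectly compatible with $\nu_n(G(\tau))\to\nu(G(\tau))>0$. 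Indeed, weights tending to $0$ while lengths tend to infinity is exactly how non-atomic currents such as measured laminations arise as limits of weighted multi-curves (compare the example following Corollary~\ref{cor:hypplanedelta}). So your proposed route to ``$\sys(\mu)>0$ implies filling'' cannot close as designed.

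What Lemmas~\ref{lem:distancecompare} and~\ref{lem:balls} actually require is the finiteness of the first return time, i.e.\ that every geodesic of $\wt{X}$ crosses $\supp\mu$ transversely. The way to get this from $\sys(\mu)>0$ is by contraposition via the closing lemma, in the style of Proposition~\ref{prop:fillingmeasnotprop} and \cite[Proposition~5.1]{BIPP21:Currents}: since the non-crossing condition is closed in $\G(\wt{X})$, a geodesic that never crosses $\supp\mu$ yields a recurrent non-crossing geodesic, and closing up long recurrent segments produces closed curves $c_k$ with $i(\mu,c_k)\to 0$, contradicting $\sys(\mu)>0$. (To be fair, the paper's own proof silently assumes this implication when it invokes those two lemmas under the hypothesis $\sys(\mu)>0$; but since you made the implication an explicit reduction in your argument, the flawed justification you sketch for it is a genuine gap in your proof.)
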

\begin{proof}
First, note that for every $x \in X_{\mu}$, $\pi_{\mu}^{-1}(x)$ is a bounded subset of $\wt{X}$.  Indeed, its hyperbolic diameter is upper bounded by the maximum of the return time, as in the proof of Lemma~\ref{lem:distancecompare}. Making an arbitrary choice of $z_x \in \pi_{\mu}^{-1}(x)$ for each $x \in X_{\mu}$, we define a map $g \colon X_{\mu} \to \wt{X}$, given by $x \mapsto z_x$. This map is bornologous by Lemma~\ref{lem:balls}. It follows then by Lemma~\ref{lem:borno}, that there exist constants $c>0$ and $A$ so that
\[
d_{\wt{X}}(\overline{x},\overline{y})  \leq c \cdot d_{\mu}(\pi_{\mu}(\overline{x}),\pi_{\mu}(\overline{y}))   + A.
\]
for every $\overline{x}, \overline{y} \in g(X_{\mu})$. This, together with Lemma~\ref{prop:largelip} shows that $\pi_{\mu}$ is a quasi-isometric embedding.
We note that $g$ is coarsely surjective: the upper bound $C$ of the return time satisfies the property that, for every $\overline{x} \in \wt{X}$, there is $\overline{y} \in g(X_{\mu})$ so that $d_{\wt{X}}(\overline{x},\overline{y})<C$.
From this it follows $\pi_{\mu}$ is a quasi-isometry.
\end{proof}

The following definition can be found in~\cite[Definition~I.8.2]{BH11:NonPosCurvature}.
\begin{definition}[Proper action]
Let $G$ a group acting by isometries on a metric space $X$. The action is said to be proper if for each $x \in X$, there exists $r>0$, so that the set $\{ g\in G \colon g B(x,r) \cap B(x,r) \neq \emptyset \}$ is finite, where $B(x,r)$ denotes an open ball of radius $r$ centered at $x$.
\label{def:propaction}
\end{definition}

The action $\pi_1(X)$ on $X_{\mu}$ is not proper in general. In this section we characterize when it is.

Let $d^*$ denote the distance on $T^1 \wt{X}$ and $d$ the distance on $X$.
We recall the following classical result in the dynamical properties of the geodesic flow of a hyperbolic surface, known as the Closing lemma.

\begin{lemma}[{\cite[4.5.15]{Eberlein1979SurfacesON}}]
Given a compact set $C \subset T^1 \wt{X}$ and $\xi >0$, there exists $T \geq 0$, and $\delta > 0$ such that if there is $t \geq T$, $v \in C$, and $g \in \pi_1(X)$ and $d^{*}(g(v),g_t(v))<\delta$, then there is $t' \in \mathbb{R}$ with $|t' - t| < \xi$ and $v' \in T^1\wt{X}$ with $d^*(v',v) < \xi$ and $g(v')=g_{t'}(v')$. 

\label{lem:closing}
\end{lemma}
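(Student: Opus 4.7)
The strategy is to exploit the negatively curved geometry of $\wt{X}$: an element of $\pi_1(X)$ that nearly translates a vector along the geodesic flow by a large amount must be loxodromic, and its axis and translation length are then pinned down to within a controlled error. So the proposed proof is essentially a rigidity argument for hyperbolic isometries.

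\medskip

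\textbf{Step 1 (setup and reduction).} Pass to $\wt{X}$ with its hyperbolic structure; identify $T^1 \wt{X}$ with the unit tangent bundle and denote by $\pi : T^1\wt{X}\to\wt{X}$ the footpoint projection. Since $C$ is compact and $\pi_1(X)$ acts cocompactly, the basepoints $\pi(C)$ lie in a compact set $K\subset\wt{X}$. Fix the $\delta$-hyperbolicity constant of $\wt{X}$ and, given $\xi>0$, invoke the Morse lemma / fellow-traveller property to choose $T_0$ so large that any two geodesic segments of length at least $T_0$ whose endpoints are pairwise within a small amount $\delta_1=\delta_1(\xi)>0$ remain within $\xi/4$ of one another synchronously. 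This is where the negatively curved geometry does most of the work.

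\medskip

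\textbf{Step 2 ($g$ is loxodromic with translation length close to $t$).} Let $v\in C$ with $p=\pi(v)$ and let $q=\pi(g_t(v))$, so $d_{\wt{X}}(p,q)=t$. The hypothesis $d^{*}(g(v),g_t(v))<\delta$ gives $d_{\wt X}(gp,q)<\delta$ and that the tangent directions of $g(v)$ and $g_t(v)$ at these nearby basepoints agree up to $\delta$. Hence $g$ sends the oriented geodesic segment $[p,q]$ to a segment starting within $\delta$ of $p$ with nearly the correct direction, so $d_{\wt X}(p,gp)\leq t+\delta$ and $d_{\wt X}(q,gq)\leq t+\delta$. Because $t\geq T\geq T_0$ is large, $g$ cannot be elliptic or parabolic: it must be loxodromic with translation length $t':=\ell(g)$. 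Using the explicit displacement formula $d(x,gx)=2\arccosh\!\bigl(\cosh(\ell(g)/2)\cosh r(x)\bigr)$, where $r(x)$ is the distance from $x$ to the axis $A_g$, one gets $|t'-t|$ small and $r(p),r(q)$ small as soon as $\delta$ is small relative to $\xi$. This is the quantitative core of the argument.

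\medskip

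\textbf{Step 3 (locate the nearby periodic vector).} Let $p'$ be the foot of the perpendicular from $p$ onto $A_g$ and let $v'$ be the unit tangent vector to $A_g$ at $p'$ oriented in the direction of translation of $g$. By Step 2, $d_{\wt X}(p,p')=r(p)<\xi$; by the fellow-traveller choice in Step 1 and the fact that the tangent to $[p,q]$ at $p$ is near the tangent to $A_g$ at $p'$, we have $d^{*}(v,v')<\xi$. By the definition of the axis, $g(v')=g_{t'}(v')$ exactly, and from Step 2, $|t'-t|<\xi$.

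\medskip

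\textbf{Main obstacle.} The whole argument is quantitative, and the main technical point is the uniform control in Step 2: one must choose $T$ and $\delta$ so that \emph{every} $(v,g,t)$ satisfying the hypothesis yields a loxodromic $g$ whose axis is within $\xi/2$ of $p$ and whose translation length is within $\xi$ of $t$. This uniformity is delivered by the cocompactness of the $\pi_1(X)$-action (so everything reduces to the compact quotient) combined with the convexity/exponential divergence of the displacement function of a hyperbolic isometry. Once this rigidity estimate is in hand, the construction of $v'$ and $t'$ in Step~3 is immediate.
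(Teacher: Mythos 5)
First, a point of comparison: the paper does not prove this statement at all --- it is quoted from Eberlein~[4.5.15] and used as a black box --- so there is no in-paper argument to measure yours against. Your overall strategy (show $g$ is a hyperbolic isometry whose axis fellow-travels $[p,q]$ and whose translation length is close to $t$, then take $v'$ tangent to the axis at the foot of the perpendicular from $p$) is indeed the standard proof of the closing lemma in negative curvature.

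However, Step 2 as written has a genuine gap. After extracting $d(gp,q)<\delta$ you only use the footpoint part of the hypothesis to obtain $d(p,gp),\,d(q,gq)\in[t-\delta,t+\delta]$, and then claim that the displacement formula alone forces $r(p),r(q)$ small and $\ell(g)$ close to $t$. It does not. Take $g$ hyperbolic with small translation length $\ell$, take $p$ at distance roughly $t/2$ from $A_g$, let $v$ point from $p$ toward $gp$ and set $t=d(p,gp)$, so that $q=gp$: then $d(gp,q)=0$ and $d(p,gp)=d(q,gq)=t$, yet $r(p)\approx t/2$ is enormous and $\ell(g)$ is nowhere near $t$. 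What excludes this configuration is precisely the \emph{direction} component of $d^*(g(v),g_t(v))<\delta$ (in the example the two vectors point in nearly opposite directions), and that is where the real work lies: one must show that the broken path $\bigcup_{n}g^n[p,q]$, made of segments of length $t\geq T$ meeting at angles close to $\pi$, is a global quasigeodesic by the local-to-global principle, hence lies at uniformly bounded (in fact, small, after choosing $T$ large and $\delta$ small) distance from the axis of $g$; only then do $r(p)<\xi$ and $|\ell(g)-t|<\xi$ follow. You mention the direction agreement and set up the fellow-traveller property in Step 1, but your quantitative step never actually invokes them, so the argument as written would fail. A minor further point: the displacement identity is $\sinh\tfrac12 d(x,gx)=\sinh\bigl(\tfrac12\ell(g)\bigr)\cosh r(x)$, not the $\cosh$--$\cosh$ version you quote (as $\ell\to 0$ your formula gives displacement $2r(x)$ for a map converging to the identity). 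Step 3 is correct once Step 2 is repaired.
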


Using this result, we can prove the following.

\begin{proposition}
If $\mu$ is a geodesic current with a subcurrent of type 2 in its decomposition, then the action of $\pi_1(X)$ on $X_{\mu}$ is not proper.
\label{prop:fillingmeasnotprop}
\end{proposition}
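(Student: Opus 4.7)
The plan is to exhibit, for a type 2 subcurrent $\lambda$ of $\mu$, a point $x \in X_\mu$ and a sequence of distinct deck transformations $g_n \in \pi_1(X)$ with $d_\mu(x, g_n x) \to 0$, which contradicts properness at $x$ in the sense of Definition~\ref{def:propaction}.

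Write the Structure Decomposition Theorem~\ref{thm:structurecurrents} as $\mu = \lambda + \mu'$, where $\lambda$ is a non-discrete measured lamination compactly supported in the interior of a subsurface $X_i$, and $\mu'$ collects the remaining pieces (the other subsurface subcurrents and the weighted special multicurve). Fix a lift $\wt{X}_i^{(0)} \subset \wt{X}$ of $X_i$, which is stabilised by the subgroup $\pi_1(X_i) \leq \pi_1(X)$. Since $\lambda$ has no closed leaves and $\mathrm{supp}(\lambda)$ contains a minimal sub-lamination on which every leaf is dense, we may choose a leaf $\ell$ of $\lambda$ which is non-closed and recurrent, and lift it to $\tilde\ell \subset \mathrm{int}(\wt{X}_i^{(0)})$; note that $\mathrm{Stab}_{\pi_1(X)}(\tilde\ell)$ is trivial because $\tilde\ell$ is the axis of no element of $\pi_1(X)$. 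Put $\bar{x} \in \tilde\ell$ and $x := \pi_\mu(\bar{x}) \in X_\mu$.

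The key dynamical input is that density of $\ell$ in its minimal component furnishes distinct elements $g_n \in \pi_1(X_i)$ with $g_n \tilde\ell \to \tilde\ell$ in $\mathcal{G}(\wt{X})$; that is, the endpoints of $g_n \tilde\ell$ in $\partial\wt{X}$ converge to the corresponding endpoints of $\tilde\ell$. The $g_n$ must eventually lie in $\pi_1(X_i)$, since any $g \in \pi_1(X) \setminus \pi_1(X_i)$ sends $\wt{X}_i^{(0)}$ to a disjoint lift of $X_i$. Letting $\bar{y}_n \in g_n \tilde\ell$ be the closest point to $\bar{x}$, endpoint convergence forces $\bar{y}_n \to \bar{x}$ in $\wt{X}$. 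To estimate $d_\mu(x, g_n x)$, two observations are crucial. First, two distinct leaves of $\wt\lambda$ never cross transversely, so $d_\lambda$ vanishes along $\tilde\ell$ and along $g_n\tilde\ell$. Second, both leaves sit in $\mathrm{int}(\wt{X}_i^{(0)})$, while $\mathrm{supp}(\mu')$ consists of geodesics lying in other subsurface lifts or along the lifts of the special multicurve, none of which enter $\mathrm{int}(\wt{X}_i^{(0)})$; hence $d_{\mu'}$ also vanishes along each of these leaves. From $d_\mu = d_\lambda + d_{\mu'}$ we conclude that each leaf collapses to a single point in $X_\mu$, namely $x$ and $g_n x$ respectively. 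The same reasoning applied to the short segment $[\bar x, \bar y_n] \subset \mathrm{int}(\wt{X}_i^{(0)})$ (valid once $n$ is large, by convexity of $\wt{X}_i^{(0)}$) gives $d_{\mu'}(\bar x, \bar y_n) = 0$, so
\[
d_\mu(x, g_n x) = d_\mu(\bar x, \bar y_n) = d_\lambda(\bar x, \bar y_n).
\]
Since $\lambda$ has no atoms, Proposition~\ref{prop:contdistance} applied to $\lambda$ makes the right-hand side a continuous function of $(\bar x, \bar y_n)$, so $d_\lambda(\bar x, \bar y_n) \to 0$. Consequently, for every $r > 0$ the set $\{g \in \pi_1(X) : g B(x,r) \cap B(x,r) \neq \emptyset\}$ contains all but finitely many $g_n$, contradicting properness.

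The main delicate step will be producing the recurrence sequence $g_n \tilde\ell \to \tilde\ell$ inside $\mathcal{G}(\wt{X})$ (with genuine convergence of both endpoints, not just one), which hinges on existence of a minimal sub-lamination of the non-discrete lamination $\lambda$ and the fact that every leaf there is dense. Everything else is a bookkeeping exercise with the Decomposition Theorem~\ref{thm:structurecurrents} and an application of the continuity of $d_\lambda$ furnished by Proposition~\ref{prop:contdistance}.
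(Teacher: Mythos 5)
Your argument is correct, but it takes a genuinely different route from the paper's. The paper also starts from a recurrent non-closed leaf of the type 2 component, but then invokes the Anosov Closing Lemma (together with the measure estimate from \cite[Proposition~5.1]{BIPP21:Currents}) to manufacture honest closed geodesics $c_k$ and distinct elements $g_k$ with $i(\mu,[g_k])<\tfrac1k$, and finally converts this into non-properness via the Martone--Zhang inequality $d_\mu(\overline{x},g_k\overline{x}) \le i(\mu,g_k)+2d_\mu(\overline{x},A_{c_k})$. You bypass all of that: you use that leaves of $\wt\lambda$ are pairwise disjoint (so each leaf collapses to a single point of $X_\mu$), that $d_\mu=d_\lambda+d_{\mu'}$ with $d_{\mu'}$ vanishing on segments inside the convex region $\mathrm{int}(\wt{X}_i^{(0)})$, and the continuity of $d_\lambda$ from Proposition~\ref{prop:contdistance}, to get $d_\mu(x,g_nx)\to 0$ outright. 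Your approach is more self-contained (no Closing Lemma, no external translation-length estimate) and actually yields the stronger conclusion that translates of a point accumulate on it; the paper's approach buys, as a by-product, explicit closed curves with $i(\mu,[g_k])\to 0$, which is information about the marked length spectrum rather than just the action. Two small points to tighten: (i) you do not need full convergence of both endpoints $g_n\tilde\ell\to\tilde\ell$ in $\mathcal{G}(\wt{X})$ (the step you flag as delicate) --- all the argument uses is $d_{\wt{X}}(\overline{x},g_n\tilde\ell)\to 0$, which is exactly what recurrence of $\ell$ in its minimal component gives after translating the returning strands back near $\overline{x}$; and (ii) you should note why the $g_n$ may be taken pairwise distinct, e.g.\ because for each fixed $g\ne e$ with $g\tilde\ell\ne\tilde\ell$ the quantity $d_{\wt{X}}(\overline{x},g\tilde\ell)$ is a fixed positive number, so no single $g$ can account for infinitely many returns.
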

\begin{proof}
This follows from elaborating on the proof of~\cite[Proposition~5.1]{BIPP21:Currents}.
Let $\mu_i$ be a type 2 current in the decomposition of $\mu$, and $Y$ be the subsurface $X_i$ on which $\mu_i$ is supported in the decomposition of $\mu$.
Recall that a geodesic $c$ is recurrent in $\mathring{Y}$ if there exists a sequence
$(t_n)$ in $\mathbb{R}$ with $\lim_n |t_n| = \infty$ and the sequence $( c(t_n) )$ staying in a fixed compact subset $K$ of $\mathring{Y}$. Up to reparameterization
we may assume that the sequence $(t_n)$ is monotone increasing with
$\lim t_n = +\infty$. Let $v \in T^1 \mathring{Y}_p$ be an accumulation point of the sequence
$(c'(t_n))$. Up to enlarging $K$ we may assume $p \in \mathring{K}$. Let $s \colon \mathbb{R} \to X$ be the unit speed geodesic with $s'(0) = v$, and $\gamma \subset \wt{X}$ be a lift of $s$.
Let $\overline{B}(\wt{p},\eta)$ be the closed ball centered at $\wt{p}$ of radius $\eta$ for the hyperbolic metric in $\wt{X}$.
In ~\cite[Proposition~5.1]{BIPP21:Currents} it is shown that for every $\epsilon>0$, and for all $x \in \gamma$ except at most countably many, there exists $\eta>0$ so that $\mu(G(\overline{B}(\wt{p},\eta) \setminus \{ \gamma \})< \epsilon$.
Decreasing $\eta$ if necessary, we can in addition assume $\eta$ is smaller than $d_{X}(p, \partial X)$ and the injectivity radius at $p$, so that the projection from $\wt{X}$ to $X$ sends $\overline{B}(\wt{p},\eta)$ to $\overline{B}(p,\eta) \subset \mathring{X}$
isometrically. Since $c$ is not closed, $c'(t_n)$ is never tangent to $s$. We can assume $c(t_n)$ is not in $s$ and, by passing to a subsequence, the points $c(t_n)$ are on the same side of $s$ in $\overline{B}(p, \eta)$.
Now let $0 < \xi < \eta$ and $C$ be the compact set consisting of unit tangent vectors based at a point of $\bar{B}(p,\eta)$. Let $T$ and $\delta$ be the corresponding
constants given by Lemma~\ref{lem:closing}. We may assume $\delta < \eta$, and choose $n_0 \in \mathbb{N}$ so that $d^*(c_n'(t), v)< \delta$ for all $n \geq n_0$. Then, pick $n>m>n_0$ so that $t_n - t_m > T_k$ and thus, by the hypotheses, $d^*(c_n'(t), c_m'(t))< \delta$.
Now, we construct a sequence of curves by taking a sequence of shrinking $\epsilon$'s in the above construction, yielding corresponding $\eta_k$, as follows. Letting $\epsilon=1/k$, we get a number $\eta_k>0$ satisfying the properties above, and we can construct a loop $\alpha_k$ by concatenating a geodesic segment $c([0, t_n - t_m])$ and the geodesic segment connecting $c(t_n)$ and $c(t_m)$ within $\overline{B}(p,\eta_k)$.
As in the proof of~\cite[Proposition~5.1]{BIPP21:Currents}, the closed lemma gives us a closed geodesic $c_k$ in the homotopy class of $\alpha_k$ represented by an element $g_k \in \pi_1(X)$ with the key property that $i(\mu, [g_k])< \frac{1}{k}$. We now analyze the hyperbolic geometry of any of its lifts.
A lift of $\alpha_k$ in $\wt{X}$ is a piecewise geodesic path consisting of long geodesic segments of hyperbolic length larger than $T_k$, and short geodesic segments of length $1$. 
Each endpoint $\wt{\alpha_k}^{\pm}$ of such lift is contained in intervals $I_k \subset \partial \wt{X}$ of diameter shrinking to $0$ as $k$ goes to infinity. Thus, $\alpha_k^+$ (resp. $\alpha_k^-$) converges to some $\alpha^+$ (resp. $\alpha_-$) in $\partial \wt{X}$.
Observe that the endpoints $\alpha_k^{\pm}$ are the same as the endpoints of the lift of the closed geodesic $c_k$ stabilized by $g_k$, since the periodic orbits $c_k$ is in the homotopy class of $\alpha_k$.
Thus, if we also denote $\wt{c_k}$ the geodesic lifts of the closed geodesics $c_k$, we have that $\wt{c_k}$ converges to some geodesic $\wt{c}$ in the Gromov-Hausdorff topology of $\mathcal{G}(\wt{X})$.
Observe, too, that $\ell_X(g_k) \asymp T_k$ (where $\asymp$ means asymptotically equal up to multiplicative and additive error independent of $k$), since the hyperbolic length of $\alpha_k$ is $T_k + 1$ and $\wt{\alpha_k}$ and $\wt{c_k}$ are quasi-isometric. Therefore (up to taking a subsequence), the $g_k$ are hyperbolic isometries of different translation lengths, and thus distinct elements of $\pi_1(X)$.
We also note that, by construction, $i(\mu, g_k) < \frac{1}{k}$.
By picking $\overline{x} \in \wt{c}$,  letting $x=\pi_{\mu}(\overline{x})$, and taking $B_{\mu}(x, r)$, for an arbitrary $r>0$, by the proof of~\cite[Proposition~3.14]{MZ19:PositivelyRatioed}, we have
\[
d_{\mu}(\bar{x}, g_k \bar{x})  < i(\mu, g_k) + 2d_{\mu}(\bar{x},A_{c_k}) < \frac{1}{k} + 2r < 3r
\]
for all $k$ larger than $1/r$.
Thus, we have that, for every $r$,
\[
B_{\mu}(\bar{x},3r) \cap g_k B_{\mu}(\bar{x},3r) \neq \emptyset
\]
for infinitely many distinct $g_k \in \pi_1(X)$. This shows the action of $\pi_1(X)$ is not proper on $X_{\mu}$.
\end{proof}

\begin{proposition}
If $\sys(\mu)>0$, then $\pi_1(X)$ acts properly on $X_{\mu}$.
\label{prop:proper}
\end{proposition}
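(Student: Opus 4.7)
The plan is to deduce properness of the $\pi_1(X)$-action on $X_\mu$ from the already established proper discontinuity of the $\pi_1(X)$-action on $\wt{X}$, by transferring the estimate across the map $\pi_\mu \colon \wt{X} \to X_\mu$. The crucial input is Proposition~\ref{prop:quasi-isom}, which under the hypothesis $\sys(\mu) > 0$ asserts that $\pi_\mu$ is a $\pi_1(X)$-equivariant quasi-isometry. In particular there exist constants $c \geq 1$ and $A \geq 0$ such that
\[
\tfrac{1}{c}\, d_{\wt{X}}(\overline{u}, \overline{v}) - A \ \leq\ d_\mu\bigl(\pi_\mu(\overline{u}), \pi_\mu(\overline{v})\bigr)
\]
for every $\overline{u}, \overline{v} \in \wt{X}$.

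Now fix $x \in X_\mu$ and choose any lift $\overline{x} \in \pi_\mu^{-1}(x)$. Pick $r > 0$ arbitrarily (e.g.\ $r=1$). Suppose $g \in \pi_1(X)$ satisfies $g B(x,r) \cap B(x,r) \neq \emptyset$; then there exists $y \in B(x,r)$ with $gy \in B(x,r)$, so by the triangle inequality and the fact that $\pi_1(X)$ acts isometrically on $X_\mu$,
\[
d_\mu(x, gx) \ \leq\ d_\mu(x, gy) + d_\mu(gy, gx) \ =\ d_\mu(x, gy) + d_\mu(y, x) \ <\ 2r.
\]
By $\pi_1(X)$-equivariance of $\pi_\mu$ we have $\pi_\mu(g\overline{x}) = gx$, so applying the lower bound in the quasi-isometry inequality above to the pair $(\overline{x}, g\overline{x})$ yields
\[
\tfrac{1}{c}\, d_{\wt{X}}(\overline{x}, g\overline{x}) - A \ <\ 2r, \qquad \text{i.e.}\qquad d_{\wt{X}}(\overline{x}, g\overline{x}) \ <\ c(2r + A).
\]

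Since $\pi_1(X)$ acts freely and properly discontinuously on $\wt{X}$, the set $\{ g \in \pi_1(X) : d_{\wt{X}}(\overline{x}, g\overline{x}) < c(2r+A)\}$ is finite. Hence the set of $g \in \pi_1(X)$ with $g B(x,r) \cap B(x,r) \neq \emptyset$ is finite as well, which is exactly Definition~\ref{def:propaction}. This establishes properness. The only delicate point is that $\pi_\mu$ need not be continuous when $\mu$ has atoms, but since the argument only uses the one-sided metric estimate between chosen lifts and does not require preimages of open sets, continuity plays no role; the quasi-isometric bound from Proposition~\ref{prop:quasi-isom} is all that is needed.
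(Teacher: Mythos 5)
Your proof is correct and rests on the same mechanism as the paper's: under $\sys(\mu)>0$ one has a coarse lower bound of $d_{\wt{X}}$ by $d_\mu$ (the paper cites Lemma~\ref{lem:balls} directly to get $B_\mu(\overline{x},r)\subset B_{\wt{X}}(\overline{x},R)$, while you cite Proposition~\ref{prop:quasi-isom}, which is itself derived from that lemma), and then properness is pulled back from the deck action on $\wt{X}$. Your packaging via the orbit displacement $d_\mu(x,gx)<2r$ is a clean equivalent of the paper's ball-intersection argument, and your closing remark that continuity of $\pi_\mu$ is not needed is a fair and worthwhile observation.
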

\begin{proof}
Suppose not, i.e., suppose there exists $x$ so that for all $r>0$, the set $\{ g\in \pi_1 (X) \colon g B_{\mu}(x,r) \cap B_{\mu}(x,r) \neq \emptyset \}$ is infinite. Consider a sequence $(g_n)$ of elements in $\pi_1(X) \cap \{ g\in \pi_1 (X) \colon g B_{\mu}(x,r) \cap B_{\mu}(x,r) \neq \emptyset \}$.
Choose $r>\epsilon$, where $\epsilon$ is the constant in Lemma~\ref{lem:balls}, so that $B_{\mu}(\overline{x},r) \cap g_i(B_{\mu}(\overline{x},r)) \neq \emptyset$. By Lemma~\ref{lem:balls}, we have,
\[
B_{\mu}(\overline{x},r) \cap g_i(B_{\mu}(\overline{x},r)) \subset B_{\wt{X}}(\overline{x},R) \cap g_i(B_{\wt{X}}(\overline{x},R)).
\]
This contradicts that the action of $\pi_1(X)$ on $\wt{X}$ is proper.
\end{proof}

The following theorem follows immediately from the previous propositions.

\begin{theorem}
\label{thm:proper_for_filling}
The action of $\pi_1(X)$ on $X_{\mu}$ is proper if and only if $\mu$ is filling.
\end{theorem}
\begin{proof}
If $\sys(\mu)>0$, then Proposition~\ref{prop:proper} implies the action of $\pi_1(X)$ on $X_{\mu}$ is proper.
If $\sys(\mu)=0$ then either $\mu$ has a subcurrent of type 2 in its decomposition, in which case Proposition~\ref{prop:fillingmeasnotprop} implies the action is not proper, or $\mu$ fills a proper subsurface $Y$.
Then there is some geodesic $\gamma$ in $\wt{X}$ that is a lift of a closed curve $c$, so that $\overline{x} \coloneqq \pi_{\mu}(\gamma)$ is a single point in $X_{\mu}$. But then $\gamma \overline{x} = \overline{x}$, and thus $\gamma^n \overline{x} = \overline{x}$ for every $n \in \mathbb{Z}$. Since $\pi_1(X)$ is torsion-free, this implies that the set $\{g \in \pi_1(X) : g B_{\mu}(\overline{x},r) \cap B(\overline{x},r) = \emptyset \}$ is infinite for any $r>0$, and hence the action of $\pi_1(X)$ is not proper.
\end{proof}

\subsection{Freeness}

The action of $\pi_1(X)$ on $X_{\mu}$ is not always free. In this section we characterize when it is.

The following lemma relates stabilizers of points of $X_{\mu}$ and the topology of the support of a measured lamination $\mu$, and it's just an observation.

\begin{lemma}
Let $\mu$ be a non-trivial geodesic current.
Let $\mathcal{C}$ be the set of connected components of $\wt{X} \setminus \supp \mu$.
\begin{enumerate}
\item If $x \in C \in \mathcal{C}$, then the stabilizer of $\pi_\mu (x)$ is equal to the (set-wise) stabilizer of $C$.
\item If $x \in \gamma \in \supp \mu$, so that $\gamma$ doesn't intersect any geodesic in $\supp \mu$, then the stabilizer of $\pi_\mu (x)$ in $X_{\mu}$ is the stabilizer of $\gamma$ in $\wt{X}$.
\item If $\overline{x} \in \gamma \in \supp \mu$, so that $\gamma$ intersects some geodesic in $\supp \mu$, then the stabilizer of $\pi_{\mu} (\overline{x})$ in $X_{\mu}$ is trivial.
\end{enumerate}
\label{lem:dualstab}
\end{lemma}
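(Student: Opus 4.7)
The strategy for all three parts is to identify the fiber $F_{\overline{x}} := \pi_\mu^{-1}(\pi_\mu(\overline{x})) \subseteq \wt{X}$, and then observe that since $\pi_1(X)$ acts on $X_\mu$ by isometries coming from its action on $\wt{X}$, the stabilizer of $\pi_\mu(\overline{x})$ in $\pi_1(X)$ is precisely the setwise stabilizer of $F_{\overline{x}}$. The recurring technical tool is: if some geodesic $\eta \in \supp\mu$ transversely crosses the interior of a segment $[\overline{x},\overline{y}]$, then a small enough open neighborhood $U$ of $\eta$ in $\mathcal{G}(\wt{X})$ consists of geodesics still transversely crossing that interior, and $\mu(U)>0$ by definition of support; hence $\mu(G(\overline{x},\overline{y})) > 0$ and in particular $d_\mu(\overline{x},\overline{y})>0$.

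For part (1), I would first note that each connected component $C$ of $\wt{X}\setminus \bigcup_{\eta\in\supp\mu}\eta$ is geodesically convex, being an intersection of the open hyperbolic half-planes cut out by the complete geodesics in $\supp\mu$. Thus the hyperbolic segment between any two points of $C$ stays inside $C$, is disjoint from $\supp\mu$, and gives $d_\mu=0$. Conversely, if $\overline{y}\notin C$ the segment $[\overline{x},\overline{y}]$ must exit $C$ by transversely crossing some $\eta\in\supp\mu$ at an interior point, so the technical tool forces $d_\mu(\overline{x},\overline{y})>0$. This identifies $F_{\overline{x}}=C$ and yields the desired equality of stabilizers.

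For part (2), the same principle gives $\gamma\subseteq F_{\overline{x}}$, since by hypothesis no geodesic in $\supp\mu$ transversely crosses $\gamma$, so segments along $\gamma$ pick up no $\mu$-mass. This immediately yields $\mathrm{Stab}(\gamma)\subseteq \mathrm{Stab}(\pi_\mu(\overline{x}))$. Conversely, if $g$ fixes $\pi_\mu(\overline{x})$ then $g(\overline{x})\in g(\gamma)\in\supp\mu$ by $\pi_1(X)$-invariance of $\mu$; the hypothesis on $\gamma$ forces either $g(\gamma)=\gamma$ or $g(\gamma)\cap\gamma=\emptyset$, and in the disjoint case one aims at a contradiction by exhibiting a geodesic in $\supp\mu$ --- typically a $\pi_1(X)$-translate of $\gamma$ lying in the strip between $\gamma$ and $g(\gamma)$, or a suitable perturbation of $\gamma$ or $g(\gamma)$ drawn from a neighborhood of positive $\mu$-mass --- that crosses the interior of $[\overline{x}, g(\overline{x})]$ transversely. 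Part (3) follows the same template, now exploiting the transverse intersection of $\gamma$ with $\gamma'\in\supp\mu$ at some $\overline{p}$ and its $\pi_1(X)$-translates: for any nontrivial $g$, the configuration of crossings of $\gamma$ and $g(\gamma)$ with the corresponding translates of $\gamma'$ produces a geodesic of $\supp\mu$ meeting the interior of $[\overline{x}, g(\overline{x})]$ transversely, forcing $d_\mu>0$ and hence the triviality of the stabilizer.

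The most delicate point is the disjoint subcase of part (2) when $\gamma$ and $g(\gamma)$ bound a common complementary region of $\supp\mu$ with no translate of $\gamma$ strictly between them, as can occur for non-isolated boundary leaves of a measured lamination. In such configurations the direct ``in-between translate'' argument fails, and one must instead invoke the structural decomposition (Theorem~\ref{thm:structurecurrents}) and the tree-graded description of $X_\mu$ (Theorem~\ref{thm:metric_tree_graded}) to reduce the computation of the stabilizer of $\pi_\mu(\overline{x})$ to the corresponding stabilizer inside one of the sub-duals $X_{\mu_i}$, where either $\gamma$ becomes isolated within the relevant piece or the argument collapses to the already-treated case of an atomic closed leaf.
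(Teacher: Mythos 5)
Your overall strategy --- identify the fiber $\pi_\mu^{-1}(\pi_\mu(\overline{x}))$ and equate the stabilizer of $\pi_\mu(\overline{x})$ with the setwise stabilizer of that fiber --- is exactly the paper's (whose proof of (1) and (2) is simply ``Clear'', and whose proof of (3) identifies the fiber as a proper subsegment of $\gamma$). Part (1) of your argument is essentially correct, with one small imprecision: the fiber need not equal $C$ (it can pick up non-atomic boundary leaves of $C$, since the half-open transversals through an endpoint have $\mu$-measure zero by Proposition~\ref{prop:zeromeasure}); but since any $g\in\pi_1(X)$ permutes the complementary components and $d_\mu(\overline{x},g\overline{x})>0$ whenever $g\overline{x}$ lies in a different component, the stabilizer identity still follows.

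The genuine gap is in part (2), and to your credit you have put your finger on exactly the right configuration --- but your proposed repair does not work. If $\gamma$ is a \emph{non-closed} boundary leaf and some $g\in\pi_1(X)$ carries $\gamma$ to a different side $g\gamma$ of the \emph{same} complementary region $P$ (which happens whenever two sides of a complementary ideal polygon project to the same leaf downstairs), then the open segment $(\overline{x},g\overline{x})$ lies in the convex region $P$ and meets no support geodesic, while the only support geodesics through the endpoints are $\gamma$ and $g\gamma$ themselves, which are not atoms; hence $d_\mu(\overline{x},g\overline{x})=0$ and $g$ stabilizes $\pi_\mu(\overline{x})$ even though $\mathrm{Stab}(\gamma)$ is trivial. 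Passing to the structural decomposition does not dissolve this: the configuration lives entirely inside a single type~2 piece, where $\gamma$ remains a non-isolated, non-atomic boundary leaf, so nothing ``collapses to the atomic closed case.'' You should either restrict part (2) to the case where $\gamma$ is an atom (a lift of a closed geodesic, e.g.\ a weighted special curve) --- where your argument closes immediately because the segment $[\overline{x},g\overline{x})$ picks up the atom at $\overline{x}$ --- or prove only the inclusion $\mathrm{Stab}(\gamma)\subseteq\mathrm{Stab}(\pi_\mu(\overline{x}))$, which is all that is used later (in Lemma~\ref{lem:free}). For part (3) your sketch is vaguer than needed: rather than hunting for a separating support geodesic for every nontrivial $g$, it is cleaner to argue as the paper does that the fiber is a proper subsegment of $\gamma$ (or, more robustly, a bounded set, using that $\gamma$ and $\gamma'$ lie in a common type~1 component), and that a proper subsegment or bounded set has trivial setwise stabilizer in a torsion-free discrete group.
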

\begin{proof}
\begin{enumerate}
\item Clear.
\item Clear.
\item Let $\gamma' \in \supp(\mu)$ so that $\gamma \cap \gamma' \neq \emptyset$.
 Then $\pi_{\mu}^{-1}(x)$ is a proper geodesic subsegment $\eta$ of $\gamma$, and the (set-wise) stabilizer of $\eta$ is trivial. 
\end{enumerate}
\end{proof}

\begin{lemma}
 Let $\mu$ be a non-trivial geodesic current.
 Then the action of $\pi_1(X)$ on $X_{\mu}$ is free if and only if $\mu$ has a unique component of type 1 (i.e., $\mu$ is filling) or a unique component of type 2.
\label{lem:free}
\end{lemma}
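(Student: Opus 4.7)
The plan is to argue both implications via Lemma~\ref{lem:dualstab} (the description of stabilizers of $\pi_\mu(\overline{x})$ in $\pi_1(X)$ according to the position of $\overline{x}$ relative to $\supp\mu$). ``One component in its decomposition'' will be read as: the decomposition of Theorem~\ref{thm:structurecurrents} contains no special curves (i.e.\ $\mathcal{E}_\mu = \emptyset$) and no type~3 subcurrent, so that $\mu$ reduces to a single filling subcurrent (type~1 or type~2) on $X$. Each form of nontrivial decomposition will be shown to force a non-trivial stabilizer, and conversely a single filling component will be shown to produce only trivial stabilizers.

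For $(\Rightarrow)$, I prove the contrapositive. If $\mu$ has a type~3 subcurrent $a_j s_j$, any lift $\tilde{s}_j$ lies in $\supp\mu$ and no other geodesic of $\supp\mu$ crosses it transversally, since $s_j$ is a special curve in $\mathcal{E}_\mu$. Lemma~\ref{lem:dualstab}(2) then gives that the stabilizer of $\pi_\mu(\tilde{s}_j)$ in $\pi_1(X)$ contains $\langle s_j \rangle$, so the action is not free. Similarly, if $\mathcal{E}_\mu \neq \emptyset$, pick a special curve $c \in \mathcal{E}_\mu$ and a lift $\tilde{c}$; since no geodesic of $\supp\mu$ crosses $\tilde{c}$, the pseudo-distance $d_\mu$ vanishes on $\tilde{c}$, so $\pi_\mu(\tilde{c})$ is a single point fixed by $\langle c \rangle$. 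Thus freeness forces the absence of both obstructions, and Theorem~\ref{thm:structurecurrents} makes $\mu$ a single filling subcurrent on $X$.

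For $(\Leftarrow)$, assume $\mu$ is a single filling subcurrent of type~1 or type~2 on $X$, and fix $x = \pi_\mu(\overline{x})$ and a nontrivial $g \in \pi_1(X)$; I will show $g \cdot x \neq x$ case by case using Lemma~\ref{lem:dualstab}. Case~(3) produces only trivial stabilizers by hypothesis. For case~(1), if $\mu$ is type~2 the complementary regions of $\supp\mu$ in $\wt{X}$ are ideal polygons with at least three ideal vertices, and any nontrivial $g$ fixes only two points in $\partial\wt{X}$, so it cannot permute three or more ideal vertices; if $\mu$ is type~1, filling together with self-intersection force complementary regions to be bounded in $\wt{X}$, and a nontrivial $g \in \pi_1(X)$ is a hyperbolic isometry whose orbits are unbounded, hence cannot preserve a bounded set. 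For case~(2), if $\mu$ is type~2 the leaves are non-closed (by non-discreteness) so they are not axes of any $g$; if $\mu$ is type~1, no isolated geodesic in $\supp\mu$ can exist, because any isolated lift of a simple closed curve $c$ would force $i(\mu,c)=0$ by $\pi_1(X)$-equivariance of $\supp\mu$, contradicting $\sys_X(\mu) > 0$.

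The main obstacle is the type~1 subcase of $(\Leftarrow)$: both boundedness of complementary regions and the absence of isolated geodesics in $\supp\mu$ rely on the structural fact that a type~1 filling current is ``sufficiently crossing'' in $\wt{X}$. The cleanest argument should combine $\sys_X(\mu) > 0$ with $i(\mu,\mu) > 0$, transferring crossings through the $\pi_1(X)$-equivariance of the support so that every geodesic of $\supp\mu$ is met by another and every complementary region is compact.
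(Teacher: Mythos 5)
Your ``only if'' direction is essentially the paper's argument: a second component forces a special curve $c\in\mathcal{E}_\mu$, and the stabilizer of the image of a lift of $c$ is infinite cyclic by Lemma~\ref{lem:dualstab}. For the ``if'' direction, however, you have taken a much heavier route than necessary and it contains a genuine gap. The paper's proof is essentially two lines: if $g\neq e$ fixed a point of $X_\mu$, then $\ell_{X_\mu}(g)=0$, hence $i(\mu,[g])=0$ by Lemma~\ref{lem:lenvsint}; this contradicts $\sys(\mu)>0$ when the unique component is of type 1 (Lemma~\ref{lem:possystole}), and contradicts the defining property of a type 2 component (it intersects every closed curve in its supporting subsurface, which here is all of $X$). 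No analysis of complementary regions or of the geometry of $\supp\mu$ is needed.

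The gap in your case analysis sits exactly where you flag it. In the type 1 subcase of case (1) you need every complementary region of $\supp\mu$ in $\wt{X}$ to be bounded; this is the content of Proposition~\ref{prop:bounded} together with Lemma~\ref{lem:distancecompare}, which require $\mu$ to be filling, and the passage from ``single type 1 component'' to ``filling'' is precisely the structural input you defer to a hoped-for argument rather than supply. In addition, your type 2 subcase of case (1) is wrong as written: a nontrivial $g$ fixing only two points of $\partial\wt{X}$ does \emph{not} prevent it from permuting three or more ideal vertices --- a cyclic permutation fixes nothing. To repair it you must argue that a hyperbolic $g$ preserving a finite subset of $\partial\wt{X}$ must, by north--south dynamics, fix every point of that subset (its attracting and repelling points lie in the set, and no other point has a finite orbit), and then invoke that no nontrivial element of $\pi_1(X)$ fixes three boundary points. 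Finally, your justification in case (2) that type 2 laminations have no closed leaves should cite the ``intersects every curve in $X_i$'' property rather than non-discreteness alone. All of these can be patched, but the translation-length identity renders the entire case analysis unnecessary.
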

\begin{proof}
 Suppose that $\mu$ has more than one component in its decomposition or just one component but of type 3. Then, there is at least one simple closed curve $c$ in $X$ not intersected by any other geodesic of the support of $\mu$. By Lemma~\ref{lem:dualstab}, if $\gamma$ is a lift of $c$, the stabilizer of $\pi_{\mu}(\gamma) \in X_{\mu}$ is infinite cyclic, so the action of $\pi_1(X)$ on $X_{\mu}$ is not free.
Now, suppose $\mu$ has only one component of type 1 or type 2 in its decomposition. We will show that the action is free.
Assume first is of type 1 (i.e. $\mu$ is filling, i.e. $\sys(\mu)>0$). Then the action is of $\pi_1(X)$ is proper, hence free, by the same argument as in the proof of Theorem~\ref{thm:proper_for_filling}.
If the component is of type 2, then $\ell_{\mu}(c)=i(\mu, c)>0$ for every (non-boundary parallel) \emph{closed} curve $c$ on $X$, and hence $\ell_{\mu}(g)>0$ for every $g \in \Gamma$. In this case $X_{\mu}$ is equivariantly isometric to an $\mathbb{R}$-tree by Corollary~\ref{cor:0hyp}. Now, we recall that by~\cite[1.3]{CM87:Groups}, if $\Gamma$ is a finitely generated group acting isometrically on an $\mathbb{R}$-tree $T$, then $\ell_{T}(g)=0$ if and only if $g$ has a fixed point. Hence, applying this to $T=X_{\mu}$, no element $g \in \pi_1(X)$ acts with fixed points on $X_{\mu}$, and thus $\pi_1(X)$ acts freely.
\end{proof}

\begin{remark}
We give here a brief comment on isometry types and axes, which will not be used in the sequel.
Recall that for Gromov hyperbolic spaces, there is an analogous classification of isometries in three types: hyperbolic, parabolic and elliptic.
The classification can be described also in terms of the number of fixed points at infinity (see~\cite[Chapter~10]{CDR90:NotesGroupes}).
Observe that from the previous results, it follows that $g \in \pi_1(X)$ acts on $X_\mu$ as an elliptic or hyperbolic isometry.
Let $g \in \pi_1(X)$. If $i(\mu,[g])>0$ then $\ell_{X_{\mu}}(g)>0$, and thus $g$ is hyperbolic (by~\cite[2.2]{F15:Fujiwara}). Suppose $i(\mu,[g])=0$. If $[g]$ is a special geodesic in the decomposition of $\mu$, then it follows it has a fixed point, as in the proof of Lemma~\ref{lem:free}.
Otherwise, it's in one of the subsurfaces $X_i$ of the decomposition of $X$.
By the same argument as in Lemma~\ref{lem:free}, it must be in a subsurface that does not contain any projections of leaves in the support of $\mu$, and so $g$ has a fixed point.
Similarly, one can define a notion of axis of a hyperbolic element $g$ as
\[
T_g = \{ x \in X_{\mu} \colon d_{\mu}(x,g(x))=\ell_{X_{\mu}}(g)\}.
\]
By~\cite[Proposition~4.4]{MZ19:PositivelyRatioed}, it follows that the hyperbolic axis $A_g$ of $g$ is contained in $\pi_{\mu}^{-1}(T_g)$. In general, $T_g$ is geodesic (whenever $X_{\mu}$ is equipped with a geodesic structure) and corresponds to the sequence of complementary regions $R_i$ in $\wt{X}$ traversed by $A_g$, as in the setup of the decomposition theorem of Section~\ref{sec:decomposition}.
Finally, if two hyperbolic elements $g, h$ in $X_{\mu}$ have intersecting hyperbolic axes $A_g,A_h$ in $X$, then their axes $T_g$ and $T_h$ in the dual also intersect.
\end{remark}

\begin{theorem}
If $\sys(\mu)>0$, then $\pi_1(X)$ acts properly, coboundedly and freely on $X_{\mu}$.
\label{thm:geoaction}
\end{theorem}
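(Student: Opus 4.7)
The plan is to assemble this theorem directly from results already proven earlier in this section, since the three properties have each been addressed separately.

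First, coboundedness is the easiest part: it holds for \emph{any} geodesic current $\mu$ (with no systole hypothesis needed), and was established in Lemma~\ref{lem:cocompact} via the large-scale Lipschitz property of $\pi_{\mu}$ from Proposition~\ref{prop:largelip}. So I would simply invoke Lemma~\ref{lem:cocompact}.

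Second, for properness, the hypothesis $\sys(\mu) > 0$ is exactly what Proposition~\ref{prop:proper} requires. That proposition uses Lemma~\ref{lem:balls} (which gives the containment $B_\mu(\overline{x},r) \subset B_{\wt{X}}(\overline{x},R)$ for filling currents) together with properness of the $\pi_1(X)$-action on $\wt{X}$. So properness follows by citation.

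Third, freeness requires slightly more care: it is stated in Lemma~\ref{lem:free} in terms of the decomposition having exactly one component, not in terms of the systole. So I need the bridge provided by Lemma~\ref{lem:possystole}, which says that $\sys(\mu) > 0$ is equivalent to $\mu$ having exactly one component, and that component being of type 1 (in particular, there are no type 2 or type 3 summands, and no special multi-curve). Combining Lemma~\ref{lem:possystole} with Lemma~\ref{lem:free} then yields freeness. The only step requiring any thought is noticing that the reformulation in Lemma~\ref{lem:free} matches the systolic hypothesis via Lemma~\ref{lem:possystole}; there is no real obstacle, since by definition $\sys(\mu) > 0$ rules out both special multi-curve components (which would be annihilated by their own curve) and type 2 measured laminations (which have zero systole in their support subsurface). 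Putting these three citations together completes the proof in one short paragraph.
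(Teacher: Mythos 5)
Your proposal is correct and matches the paper's proof, which likewise just cites Lemma~\ref{lem:cocompact} for coboundedness, Proposition~\ref{prop:proper} for properness, and Lemma~\ref{lem:free} for freeness. Your extra step of routing through Lemma~\ref{lem:possystole} to translate $\sys(\mu)>0$ into the one-component hypothesis of Lemma~\ref{lem:free} is a slightly more careful bookkeeping of the same argument (the paper's proof of Lemma~\ref{lem:free} in fact handles the $\sys(\mu)>0$ case directly).
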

\begin{proof}
It follows by Lemma~\ref{lem:cocompact}, Lemma~\ref{prop:proper} and Lemma~\ref{lem:free}.
\end{proof}

\section{Completeness}
\label{sec:completeness}

Recall that a metric space $(X,d)$ is \emph{complete} if every Cauchy sequence converges.
In this section we characterize when a dual $X_{\mu}$ is complete in terms of the structural decomposition of $\mu$ (see Theorem~\ref{thm:structurecurrents}). The main theorem of this section, Theorem~\ref{thm:new_completeness}, shows that, in the absence of atoms for $\mu$, $X_{\mu}$ is complete if and only it has no type 2 components in its structural decomposition, i.e., components which are non-discrete measured laminations.
In the presence of atoms, $X_{\mu}$ is complete if and only it is a multicurve. When $\mu$ is not a multi-curve, we show one can always find non-convergent Cauchy sequences, due to, essentially, the discontinuities of the projection map $\pi_{\mu}$.
We split the proof into two subsections. In the first one, we analyze the case of multi-curves and currents with type 2 components, and in the second, we prove the general result.

\subsection{Multi-curves and currents with type 2}
\label{subsec:measuredlam}

In Definition~\ref{def:currentdual} we introduced the dual of geodesic currents. The first observation is that, by the definition of $X_{\mu}$, if $\mu$ is a multi-curve, any the distance between two distinct points in $X_{\mu}$ is uniformly bounded from below. 
Therefore, it follows vacuously that

\begin{lemma}
If $\mu$ is a multi-curve, then the dual $X_{\mu}$ is complete.
\end{lemma}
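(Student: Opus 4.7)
The plan is to show that the metric on $X_\mu$ is uniformly discrete when $\mu$ is a multi-curve, from which completeness is immediate.

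First I would write $\mu = \sum_{i=1}^n a_i c_i$ with $a_i > 0$ and distinct curves $c_i$, and set $\epsilon \coloneqq \tfrac{1}{2}\min_i a_i > 0$. The support $\supp(\mu) \subseteq \G(\wt X)$ is the union of finitely many $\Gamma$-orbits of lifts of the $c_i$, in particular a discrete subset of $\G(\wt X)$. For any distinct $x, y \in X_\mu$, pick lifts $\bar x, \bar y \in \wt X$. By Definition of the dual,
\[
d_\mu(x,y) = \tfrac{1}{2}\bigl(\mu(G[\bar x, \bar y)) + \mu(G(\bar x, \bar y])\bigr),
\]
and each of the two transversal sets is a (finite, by discreteness and local finiteness) union of atoms of $\mu$ with weights among the $a_i$. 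Since $x \neq y$, we have $d_\mu(x,y) > 0$, so at least one of the two transversals contains at least one geodesic in $\supp(\mu)$, contributing a term of the form $a_i$. That geodesic contributes at least $\tfrac{1}{2} a_i \geq \epsilon$ to $d_\mu(x,y)$.

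Therefore $d_\mu(x,y) \geq \epsilon$ for all $x \neq y$ in $X_\mu$, i.e.\ $X_\mu$ is uniformly discrete. Any Cauchy sequence $(x_n)$ in such a space must eventually have diameter smaller than $\epsilon$, hence must be eventually constant, and in particular converges. This shows $X_\mu$ is complete.

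The only point requiring care is checking that the contribution of a single geodesic in $\supp(\mu)$ to $d_\mu(x,y)$ is at least $\tfrac{1}{2} a_i$ and not smaller due to the half-open conventions in $G[\bar x, \bar y)$ and $G(\bar x, \bar y]$; but even in the worst case, when one endpoint lies on the geodesic and the other does not, exactly one of the two transversals contains that geodesic, giving precisely $\tfrac{1}{2}a_i$, which is still $\geq \epsilon$. No further obstacle arises.
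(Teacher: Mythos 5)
Your proof is correct and takes essentially the same route as the paper, which simply observes that any two distinct points of $X_{\mu}$ are a uniform $d_{\mu}$-distance apart (so completeness holds ``vacuously''); you have just made the uniform lower bound $\tfrac{1}{2}\min_i a_i$ explicit. The only cosmetic remark is that the transversals $G[\overline{x},\overline{y})$ themselves are not finite sets of geodesics --- it is their intersection with $\supp(\mu)$ that is finite --- but this does not affect the argument, since the measure only charges the atoms.
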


We will now show that if $\mu$ is a non-discrete measured lamination, i.e. a type 2 current, then $X_{\mu}$ is not complete. 

We will make use of the notion of \emph{exotic ray} of a lamination as defined by T. Torkaman and Y. Zhang in \cite{TuZh21:halo}. A \emph{geodesic ray} $r$ on a complete hyperbolic surface of finite area $X = \wt{X} / \Gamma$ is a geodesic isometric immersion $r : [0, \infty ) \to X$.

Given a measured lamination $\Lambda$ on $X$ which is not a multi-curve, the intersection $i (r, \Lambda)$ between the ray and the lamination is generically infinite. There are two obvious cases when a ray has finite intersection with $\Lambda$:
\begin{enumerate}
    \item The ray $r$ is asymptotic to a leaf of $\Lambda$;
    \item The ray $r$ is eventually disjoint from $\Lambda$.
\end{enumerate}

Nevertheless, these are not the only two possibilities. Namely, there exist geodesic rays such that $i (r, \Lambda) < \infty$ but are neither asympotic to a leaf of $\Lambda$, nor eventually disjoint. Such rays are called \emph{exotic rays}.

\begin{theorem}[{\cite[Theorem~1.1]{TuZh21:halo}}]
Let $\Lambda$ be a non-multi-curve lamination. Then there exist exotic rays for $\Lambda$.
\end{theorem}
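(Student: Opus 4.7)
To prove existence of exotic rays for a non-multi-curve lamination $\Lambda$, I would first reduce to the case of a minimal non-closed sublamination $\Lambda_0 \subseteq \Lambda$. Any geodesic lamination decomposes as a finite union of minimal components together with isolated leaves, and the non-multi-curve hypothesis guarantees that at least one minimal component is not a single closed geodesic. An exotic ray for $\Lambda_0$ extends to an exotic ray for $\Lambda$ at the cost of at most finitely many additional transverse intersections with the isolated closed leaves of $\Lambda \setminus \Lambda_0$.

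For minimal non-closed $\Lambda_0$, I would work in the universal cover $\widetilde{X}$. The endpoints of leaves of $\widetilde{\Lambda}_0$ form a Cantor set $E \subset \partial \widetilde{X}$, and in the cusped case the parabolic fixed points form a countable dense subset $P$. Rays asymptotic to a leaf have endpoint in $E$, while rays with endpoint in an open arc bounding the ideal closure of some lifted complementary region are eventually disjoint from $\Lambda_0$. Exotic rays should correspond to endpoints $\xi \in \partial \widetilde{X} \setminus (E \cup P)$ that do not lie in any such bounding arc; such rays cross infinitely many leaves of $\widetilde{\Lambda}_0$, yet can still have finite total transverse intersection with $\Lambda_0$ because the transverse measure of $\Lambda_0$ is continuous (no atoms), so each individual crossing contributes zero mass and the total is determined by how the crossings cluster.

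I would produce such $\xi$ by an explicit ``leaf-shadowing'' construction, taking $\xi$ as a limit of endpoints of leaves $\widetilde{\ell}_n$ of $\widetilde{\Lambda}_0$ whose consecutive separating transversals have transverse measures $\epsilon_n$ with $\sum_n \epsilon_n < \infty$. The resulting ray would cross infinitely many leaves but accumulate only finite total transverse intersection. The main obstacle is to simultaneously verify that (a) the limit $\xi$ is not itself a leaf endpoint, so that the ray is not asymptotic to any single leaf, (b) the ray does not eventually enter a single complementary region, and (c) the transverse intersection indeed integrates to a finite value. These three constraints stand in tension with one another, and balancing them requires exploiting both the recurrence of the geodesic flow transverse to $\Lambda_0$ and the self-similar Cantor structure of $E$, likely via a symbolic/train-track encoding of the splitting sequence of the exotic ray.
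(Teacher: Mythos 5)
This statement is imported from Torkaman--Zhang (\cite[Theorem~1.1]{TuZh21:halo}); the paper gives no proof of it, so the only question is whether your sketch would stand alone, and it would not. Two of the justifications you do give are invalid as stated. First, non-atomicity of the transverse measure does not make an infinite family of crossings summable: a generic ray crosses a set of leaves of \emph{infinite} total transverse measure even though every single leaf has measure zero, so finiteness of $i(r,\Lambda_0)$ can only come from the construction (summable gaps between the nested leaves separating the basepoint from $\xi$), never from "each crossing contributes zero mass." Second, the reduction to a minimal component is not as cheap as claimed: $\Lambda\setminus\Lambda_0$ need not consist of isolated closed leaves, and even when it does, a geodesic ray can cross lifts of a weighted closed geodesic infinitely often, contributing infinite intersection; you must arrange for the exotic ray to eventually avoid the other components, which is an argument, not a remark.

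The main gap is that the step you defer --- simultaneously verifying your (a), (b), (c) --- \emph{is} the theorem. A structural point you are missing would actually dissolve (a) and (b): if the construction produces a nested sequence of leaves $(a_n,b_n)$ crossed by the ray with both endpoints converging to $\xi$ from opposite sides ("shrinking caps") at unbounded distance from the basepoint, then the ray meets $\Lambda_0$ at unbounded times by fiat, and $\xi$ cannot be an endpoint of any leaf, since such a leaf would have to cross the caps. (Conversely, a ray meeting $\wt{\Lambda_0}$ at unbounded times whose endpoint is \emph{not} enclosed by shrinking caps is forced to be asymptotic to a leaf, so this is the only possible shape of an exotic ray.) Everything therefore reduces to producing shrinking caps whose consecutive gaps have summable transverse measure, and that is precisely where the work of Torkaman--Zhang lies: one concatenates long leaf segments with short transversal jumps of rapidly decaying measure, shows the concatenation is a quasi-geodesic whose limiting geodesic ray still crosses the intended caps, and --- a point your sketch omits entirely --- controls the \emph{additional} leaves crossed while shadowing each leaf (leaves of the minimal set accumulate onto the shadowed leaf from the ray's side, so the total intersection is not just $\sum_n\epsilon_n$). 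As written, your proposal is a plausible plan with the right overall shape, but not a proof.
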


We will make use of the above result to prove the non-completeness for geodesic currents $\mu$ with type 2 components in their decomposition.

\begin{lemma}
Let $\nu$ be a geodesic current with a type 2 component $\mu$ in its decomposition. Then the current dual space $X_{\nu}$ is incomplete.
\begin{proof}
Let $Y$ be the subsurface induced by the decomposition of $\nu$ where $\mu$ is supported on.
Let $r \colon [0, \infty) \to X$ be an exotic ray for $\mu$ in $Y$, and let $T = \{ t \colon r(t) \in \supp(\mu) \}$, i.e., the times of intersection of $r$ with the support of $\mu$. Since $r$ is exotic, $T$ is infinite. We extract a countable collection of these times $(t_n)$, increasing and going to infinity, and consider the sequence $\bar{x_n} \coloneqq r(t_n) \in \wt{X}$. Also, let $r_n$ denote the subray of $r$ from $t_n$ onwards. Since $r$ is contained in $Y$, considering intersection with $\nu$ is the same as considering intersection with $\mu$. Note that $i(\mu, r_n) \to 0$ as $n$ goes to infinity, since $i(\mu, r)$ is finite.
For every $\epsilon>0$, take $N>0$ large enough so that $i(\mu,r_N) < \epsilon$. 
Then, for every $n> m \geq N$, we have $i(\mu,r_m)-i(\mu,r_n) = \mu(G[\bar{x_n},\bar{x_m}]) < \epsilon$, so the sequence $(\bar{x_n})$ is Cauchy. 
We are left to show that $(\overline{x_n})$ does not converge. Assume by contradiction $x_n \to x \in X_\mu$. Then pick any $\overline{x} \in \pi_{\mu}^{-1} (x) \subseteq \wt{X}$ and a small ball $B_\epsilon (\overline{x})$ around it. It follows that there exists $N > 0$, so that $\overline{x_n} \in B_\epsilon (\overline{x}) $ for all $n > N$, which is absurd as $(\overline{x_n})$ exits all compact sets in $\wt{X}$.
\end{proof}
\label{prop:incomplete}
\end{lemma}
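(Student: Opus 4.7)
My plan is to exhibit a concrete Cauchy sequence in $X_\nu$ that does not converge, exploiting the non-discrete nature of the type 2 component $\mu$. Let $Y$ be the subsurface of $X$ on which $\mu$ is supported. Since $\mu$ is of type 2, its support is a non-discrete measured lamination that fills $Y$, and in particular is not a multi-curve. The natural source of Cauchy sequences is therefore a geodesic ray in $Y$ that has only \emph{finite} total transverse intersection with $\mu$ but does not dissipate in a trivial way — this is exactly what Tourkman--Zhang's exotic rays provide.

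Concretely, I would invoke the existence of an exotic ray $r\colon [0,\infty)\to X$ for $\mu$ in $Y$, lift it to $\tilde r$ in $\wt{X}$, and pick an increasing sequence of crossing times $t_n\to\infty$ at which $\tilde r$ meets $\supp\mu$ transversely. Set $\overline{x_n}=\tilde r(t_n)$. Because $r$ stays in (the interior of) $Y$, the only component of $\nu$ whose support intersects subsegments $\tilde r|_{[t_m,t_n]}$ is $\mu$ itself — the special multi-curve and the remaining components of $\nu$ do not contribute. Writing $r_n=r|_{[t_n,\infty)}$, I would then bound
\[
d_\nu(\overline{x_m},\overline{x_n}) \;\le\; \mu\bigl(G[\overline{x_m},\overline{x_n}]\bigr) \;=\; i(\mu,r_m)-i(\mu,r_n).
\]
Since $i(\mu,r)<\infty$, the tails $i(\mu,r_n)$ form a decreasing nonnegative sequence and hence converge; this makes $(\pi_\nu(\overline{x_n}))$ Cauchy in $X_\nu$.

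It remains to show the sequence has no limit in $X_\nu$. Suppose $\pi_\nu(\overline{x_n})\to x$ and choose any $\overline{x}\in\pi_\nu^{-1}(x)\subset\wt{X}$. I would argue that for some small $\epsilon>0$, the preimage $\pi_\nu^{-1}(B_\nu(x,\epsilon))$ is contained in a bounded subset of $\wt{X}$. Once this is established, the fact that the $\overline{x_n}$ escape every compact set of $\wt{X}$ (since $t_n\to\infty$ along the ray) yields a contradiction. The boundedness of $\pi_\nu^{-1}(B_\nu(x,\epsilon))$ near $\overline{x}$ should follow from the filling property of $\mu$ in $Y$ combined with the tree-graded decomposition from Theorem~\ref{thm:metric_tree_graded}: in the piece of $X_\nu$ corresponding to $Y$, any sequence that $d_\mu$-approaches $x$ must stay within a bounded set of lifts of $Y$, by an application of Proposition~\ref{prop:bounded} to $\mu$ restricted to $\wt Y$.

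The main obstacle I anticipate is not the Cauchy estimate, which is essentially a telescoping argument using Otal-type additivity, but the non-convergence step, where one must rule out the possibility that $\overline{x_n}$ accumulates to a \emph{different} representative $\overline{x}'\in\pi_\nu^{-1}(x)$ far from the ray. The cleanest way to handle this is to first reduce to the restricted current $\mu$ on the single piece $(X_i)_{\mu_i}$ via the tree-graded decomposition (so that we may replace $\nu$ by $\mu$ and $X$ by $Y$), and then apply the filling-in-$Y$ property to conclude that $d_\mu$-balls pull back to bounded subsets of $\wt Y$. At that point, non-convergence is immediate from the ray escaping to infinity.
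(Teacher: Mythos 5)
Your construction of the Cauchy sequence is exactly the paper's: take an exotic ray for the type~2 component $\mu$ in its supporting subsurface $Y$, sample the crossing times, and telescope the tails $i(\mu,r_n)$ using straightness of $d_\nu$ along the ray and the fact that the other components of $\nu$ do not meet the interior of $Y$. That half is correct.

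The non-convergence step, however, has a genuine gap. You propose to show that $\pi_\nu^{-1}(B_\nu(x,\epsilon))$ is bounded in $\wt{X}$ by applying Proposition~\ref{prop:bounded} to ``$\mu$ restricted to $\wt{Y}$'' via its filling property. But Proposition~\ref{prop:bounded} requires the current to be filling \emph{in the sense of geodesic currents} (every geodesic crossed transversally by the support), and a type~2 component is a measured lamination, which is never filling in that sense since $i(\mu,\mu)=0$ --- the paper states this explicitly in Subsection~\ref{subsec:examples_currents}. The lamination is only ``filling'' in the weaker sense that it intersects every \emph{closed} curve in $Y$, which is not what Proposition~\ref{prop:bounded} needs. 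Concretely, the claim is false: every leaf of $\wt{\mu}$, and every complementary region of $\wt{\mu}$ in $\wt{Y}$, is an unbounded subset of $\wt{X}$ lying entirely within a single $d_\nu$-distance-zero class, so \emph{every} $d_\nu$-ball centered at a point of $\pi_\nu(\supp\wt{\mu})$ has unbounded preimage. Your reduction therefore cannot get off the ground, and the worry you yourself raise --- that the $\overline{x_n}$ might accumulate onto a far-away representative $\overline{x}'\in\pi_\nu^{-1}(x)$ --- is not resolved by boundedness, since the fibers themselves are unbounded. The paper's proof does not pass through boundedness of pulled-back balls at all; it argues the contradiction directly from the fact that the $\overline{x_n}$ leave every compact set along the ray. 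To repair your version you would need a different mechanism, e.g.\ arguing that any candidate fiber $\pi_\nu^{-1}(x)$ (a leaf or complementary region) is eventually separated from the tail of the ray by a set of leaves of definite positive transverse measure, using that the exotic ray is neither asymptotic to a leaf nor eventually disjoint from $\supp\mu$.
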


\subsection{General Case}
\label{subsec:general}

The following result is elementary, see~\cite[Chapter I.3, Corollary~3.8]{BH11:NonPosCurvature}.
\begin{lemma}
If $(X,d)$ is a proper metric space, then it is complete and locally compact.
\label{thm:hopfrinow}
\end{lemma}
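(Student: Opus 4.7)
The plan is to unpack the two conclusions separately, using only the definition of properness (closed balls are compact) together with standard facts about sequential compactness in metric spaces.

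For local compactness, I would simply note that for any $x \in X$ and any $r > 0$, the closed ball $\overline{B}(x,r)$ is, by properness, compact. Since it contains the open ball $B(x,r)$, which is an open neighborhood of $x$, every point admits a compact neighborhood, which is exactly local compactness.

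For completeness, let $(x_n)$ be a Cauchy sequence in $X$. First I would observe that any Cauchy sequence is bounded: picking $N$ so large that $d(x_n, x_N) < 1$ for all $n \geq N$, the sequence lies in the closed ball $\overline{B}(x_N, R)$ for $R = \max\{1, d(x_1,x_N), \ldots, d(x_{N-1}, x_N)\}$. By properness this closed ball is compact, hence sequentially compact since $(X,d)$ is a metric space. Therefore $(x_n)$ admits a convergent subsequence $x_{n_k} \to x \in X$. The standard trick then finishes: given $\epsilon > 0$, choose $N$ with $d(x_n, x_m) < \epsilon/2$ for $n, m \geq N$, and $k$ large enough that $n_k \geq N$ and $d(x_{n_k}, x) < \epsilon/2$; then for $n \geq N$, $d(x_n, x) \leq d(x_n, x_{n_k}) + d(x_{n_k}, x) < \epsilon$, so $x_n \to x$.

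There is really no obstacle here; both statements follow immediately from properness together with the equivalence of compactness and sequential compactness in metric spaces, plus the elementary fact that a Cauchy sequence with a convergent subsequence converges. The proof is short enough that the reference \cite[Chapter I.3, Corollary~3.8]{BH11:NonPosCurvature} likely suffices without further elaboration in the paper.
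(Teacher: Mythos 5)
Your proof is correct and is exactly the standard argument; the paper itself gives no proof, simply citing \cite[Chapter I.3, Corollary~3.8]{BH11:NonPosCurvature}, so there is nothing to compare beyond noting that your write-up supplies the routine details the authors chose to omit.
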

We can now prove the main result of this section:
\begin{theorem} Let $\mu$ be a geodesic current.
\label{thm:new_completeness}
\begin{enumerate}
    \item  If $\mu$ has no atoms, then
    $X_\mu$ is complete if and only if $\mu$ has no components of type 2 in its structural decomposition
    \item If $\mu$ has atoms, then $X_{\mu}$ is complete if and only if $\mu$ is a multi-curve.
\end{enumerate}
\end{theorem}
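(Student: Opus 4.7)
The theorem comprises four implications. Lemma~\ref{prop:incomplete} immediately handles Part~(1) $\Rightarrow$ and the type~2 sub-case of Part~(2) $\Rightarrow$, so the remaining work is to establish completeness for the two forward directions and to treat the non-multi-curve atomic case in Part~(2) $\Rightarrow$.

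For Part~(1) $\Leftarrow$, assume $\mu$ has no atoms and no type~2 components, so $\mu=\sum_i \mu_i$ with each $\mu_i$ of type~1 and non-atomic. Since $\mu_i$ is then filling in $X_i$ and non-atomic, Proposition~\ref{prop:properspace} shows each piece $X_{\mu_i}$ is proper, hence complete. By Theorem~\ref{thm:metric_tree_graded}, $X_\mu$ is a metric tree-graded space built from these proper pieces meeting at contact points in a locally finite tree (all edge weights $a_j$ vanish since $\mu$ has no atoms); using the uniform discreteness of contact points inside each piece (a consequence of properness and the cocompact $\pi_1(X_i)$-action), any closed ball in $X_\mu$ is contained in a finite union of closed balls drawn from finitely many pieces and is therefore compact. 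Completeness then follows from Lemma~\ref{thm:hopfrinow}. Equivalently, one can combine the closedness of $\pi_\mu$ from Proposition~\ref{thm:homeoprojection} with the proper $\pi_1(X)$-action from Section~\ref{sec:actions} to deduce properness of $X_\mu$ directly.

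For Part~(2) $\Leftarrow$, assume $\mu=\sum_{j=1}^k a_j s_j$ is a multi-curve with each $a_j>0$. Every complementary region of $\supp(\mu)$ and every lift $\wt{s}_j$ collapses to a single point in $X_\mu$ under $d_\mu$-equivalence, and the half-sum formula forces distinct points to be at distance at least $\tfrac{1}{2}\min_j a_j>0$. Thus $X_\mu$ is uniformly discrete and automatically complete. For Part~(2) $\Rightarrow$, assume $\mu$ has atoms but is not a multi-curve. Write $\mu=\nu+\delta$ with $\delta$ the atomic part and $\nu=\sum_i\mu_i$ the non-atomic residue; since $\mu$ is not a multi-curve, $\nu\neq 0$. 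If $\nu$ has a type~2 component, apply Lemma~\ref{prop:incomplete}. Otherwise every $\mu_i$ is of type~1, and because any special curve $s_j$ with $a_j>0$ must border a subsurface supporting a non-zero $\mu_i$ (by the definition of special curve preceding Theorem~\ref{thm:structurecurrents}), one can choose a lift $\wt{s}_j$ adjacent to a lift $R$ of a subsurface $X_i$ with $\mu_i\neq 0$. Pick $\bar x\in\wt{s}_j\cap\partial R$ and a sequence $\bar y_n\in\mathring{R}$ with $\bar y_n\to \bar x$ in $\wt X$. Since $\mu_i$ has no atoms, Proposition~\ref{prop:zeromeasure} and continuity of measure imply $d_\mu(\bar y_n,\bar y_m)\to 0$, so $(y_n):=(\pi_\mu(\bar y_n))$ is Cauchy; yet for any candidate limit $y$ and any lift $\bar y^*\in\pi_\mu^{-1}(y)$, a case split on the location of $\bar y^*$ (on $\wt{s}_j$, in $\mathring{R}$ away from $\bar x$, or outside $\bar R$) yields $\liminf_n d_\mu(\bar y_n,\bar y^*)\ge a_j/2>0$, so $(y_n)$ does not converge.

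The main obstacle is the case analysis in Part~(2) $\Rightarrow$. The underlying phenomenon is the asymmetry of the half-sum formula $d_\mu(\overline x,\overline y)=\tfrac{1}{2}[\mu(G[\overline x,\overline y))+\mu(G(\overline x,\overline y])]$ at the atom $\wt{s}_j$: segments crossing $\wt{s}_j$ transversely in their interior pick up $a_j$ in both summands, whereas segments terminating at a point of $\wt{s}_j$ pick up $a_j$ in only one summand, giving the persistent $a_j/2$ jump that manifests the failure of semicontinuity of $d_\mu$ recorded in Proposition~\ref{prop:discontdistance}. Keeping precise track of which endpoint of the approximating segment sits on $\wt{s}_j$, and of whether $\bar y^*$ is separated from $\bar y_n$ by $\wt{s}_j$, is what rules out every potential limit.
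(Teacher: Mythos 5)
Your treatment of part (1) and of the backward direction of part (2) is essentially the paper's: the incomplete direction of (1) is Lemma~\ref{prop:incomplete}, the complete direction combines properness of the type-1 pieces (Proposition~\ref{prop:properspace}, Lemma~\ref{thm:hopfrinow}) with the tree-graded structure of Theorem~\ref{thm:metric_tree_graded} (the paper extracts a Cauchy subsequence lying in a single piece rather than arguing that closed balls are compact, but the content is the same), and the multi-curve case is handled by uniform discreteness exactly as in the paper.

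The forward direction of part (2) has a genuine gap. You decompose $\mu=\nu+\delta$ into non-atomic and atomic parts and then identify $\delta$ with the special multi-curve $\sum_j a_j s_j$ and $\nu$ with $\sum_i\mu_i$; this identification is false, because a type-1 component $\mu_i$ can itself carry atoms, namely closed geodesics of positive weight that are crossed by the rest of the support and hence are not special curves. Concretely, take $\mu=\mathcal{L}_X+c$ for a single weighted closed geodesic $c$: then $i(\mu,c')\geq \ell_X(c')>0$ for every closed geodesic $c'$, so $\mathcal{E}_\mu=\emptyset$ and the structural decomposition consists of a single type-1 component with no special curves, yet $\mu$ has an atom and is not a multi-curve. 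Neither of your two cases (a type-2 component, or a special curve $s_j$ with $a_j>0$ bordering a region) applies, so your argument establishes nothing for this current. A second missed case is a purely atomic current with infinitely many (non-discretely arranged) closed-geodesic atoms, for which your assertion $\nu\neq 0$ fails outright; the paper treats this case separately. The paper's proof of (2) avoids the structural decomposition altogether: it anchors the Cauchy sequence at an arbitrary atom $\gamma$, approaches a point of $\gamma$ through nearby mass of the support (either non-closed support geodesics or infinitely many other atoms), and uses exactly the $\tfrac{1}{2}\mu(\{\gamma\})$ jump you describe. Your local mechanism is the right one; to close the gap you should run it at an arbitrary atom rather than only at a weighted special curve, and add the purely atomic non-discrete case.
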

\begin{proof}
\begin{enumerate}
\item Suppose first $\mu$ has no atoms. If $\sys(\mu)>0$, we have proven that $X_{\mu}$ is proper in Proposition~\ref{prop:properspace}, and therefore its complete by Lemma~\ref{thm:hopfrinow}. If $\sys(\mu)=0$ and $\mu$ has a component of type 2 in its decomposition, then, by Lemma~\ref{prop:incomplete}, $X_{\mu}$ is not complete. If it has no components of type 2, then given a Cauchy sequence, we can extract a subsequence that is contained in a piece of the tree graded space structure (by Theorem~\ref{thm:metric_tree_graded}), using that the distance between non-adjacent pieces is uniformly bounded from below, that, in this case, transverse trees are singletons, and pieces intersect at a point. On the other hand, the subdual space corresponding to the piece is proper, and thus the subsequence is convergent. Hence, the original sequence is convergent.
    \item Suppose now $\mu$ has atoms. If it is a multi-curve, then it is obviously complete.
    If it is not, then it has non-atomic components or it has an infinite set of atoms (non-discrete) in $\G(\wt{X})$ (both cases can happen simultaneously, too).
    In the first case, we have a compact set of geodesics $B$ containing an infinite uncountable collection of lifts of non-closed geodesics $\mathcal{A}$ and an atom $\{ \gamma \}$. 
    Consider  $G=\mathcal{A} \cup\{ \gamma \}$ as a subset of $\wt{X}$, and consider an infinite sequence of points $(\overline{x_n})$ in $B-G \subset \wt{X}$, and $\overline{x} \in \gamma$, so that $\mu(G(\overline{x_n},\overline{x})) \to 0$. This is possible since $\mu(B)$ is finite, and we can pick a sequence of points approaching $\gamma$ along a geodesic ray that intersects $B$ and $\gamma$. Since we have $d_{\mu}(x_n, x)>\frac{1}{2}\mu(\{ \gamma \})$, the sequence is Cauchy but has no limit.
    If $\mu$ it is purely atomic but non-discrete, a similar argument as above can be performed, where now $\mathcal{A}$ is now replaced by a (countably) infinite set of atoms inside of a compact $B$.
\end{enumerate}
\end{proof}

\section{Topology}
\label{sec:topology}

In this section we prove that the map sending a geodesic current to its dual space is a continuous injection  when the space of geodesic currents is equipped with the natural weak$^*$-topology and the space of duals is equipped with the also natural equivariant Gromov-Hausdorff topology introduced by~\cite{Pau88:Thesis}.

\subsection{Topologies in the space of duals}

We start by describing a topology on the space of duals.
Let $\mathcal{Z}$ denote the space of Gromov hyperbolic spaces $Z$ with a cobounded action of $\pi_1(X)$ by isometries.
A topology we can equip it with is the \emph{equivariant Gromov-Hausdorff topology}, defined by the following family of neighbhorhoods.

\begin{definition}[$\epsilon$-relation]
\label{def:equivGH}
Given $Z \in \mathcal{Z}$, $K \subset Z$ a compact subset and  $P \subset \pi_1(X)$ a finite subset, we say that $(Z,K)$ is $\epsilon$-related to $(Z',K')$ if
\begin{enumerate} \item There exists a compact $K' \subset Z'$, and a relation $\mathcal{R}$ between $K$ and $K'$ so that for all $x,y \in K$ and $x',y' \in K'$, if $x \mathcal{R} x'$ and $y \mathcal{R} y'$, then $|d(x,y)-d(x',y')|<\epsilon$; and
\item For every $x \in K$, $x' \in K'$, and $\gamma \in P$, if $\gamma(x) \in K$, and $x \mathcal{R} x'$, then $\gamma(x') \in K'$ and $\gamma(x) \mathcal{R} \gamma(x')$.
\end{enumerate}
We will think of the relation $\mathcal{R}$ as a bijection $\varphi \colon K \to K'$, and write $(Z',K') \sim_{\varphi, \epsilon} (Z,K)$ to denote that  $(Z',K')$ is $\epsilon$-related to $(Z,K)$ via the relation $\varphi$. We will also write $(Z',K') \sim_{\epsilon} (Z,K)$ when we don't need to be explicit about the relation.
Now, given $Z \in \mathcal{Z}$, $K \subset Z$ a compact subset and $\epsilon>0$, we define the subset $V(Z, K, P, \epsilon)$ to be
\[
\{ Z' \in \mathcal{Z} : (Z',K') \sim_{\epsilon} (Z,K)
\}
\]
Using coboundedness of the action of $\pi_1(X)$, the same proof as in \cite[Proposition~4.1]{Pa89:Paulin1989TheGT} shows that if one takes the sets $K$ to be finite in the above family of neighborhoods $V(Z,K,P,\epsilon)$, the induced topology is equivalent.
\end{definition}

We recall that the space of geodesic currents is equipped with the weak$^*$ topology~(see Section~\ref{subsec:weakstar}).

We will use the following family of basis of neighborhoods which induces a topology coarser than the weak$^*$-topology of geodesic currents.

\begin{proposition}
Let \[
A = \{ \overline{x} \in \wt{X} \colon \mbox{ for all } \nu \in \Curr(X), \nu(G[\overline{x}])=0 \}.
\]
For $\mu \in \Curr(X)$, let $\mathcal{W}_{\wt{X}}^{\mu}$ denote the family of sets 
\[
W(\mu, C, \epsilon) = \{ \nu \in \Curr(X) \colon |\nu(G[\overline{x},\overline{y}]) - \mu(G[\overline{x},\overline{y}])| < \epsilon \colon \overline{x},\overline{y} \in C \}
\] where $\epsilon>0$,
and $C \subset \wt{X}$ ranges over all finite subsets of the subset $A \subset \wt{X}$. 
Then subset $A \subset \wt{X}$ is dense and
$\mathcal{W}_{\wt{X}}^{\mu}$ is a subbasis of neighborhoods at $\mu$ generating a topology coarser than the weak$^*$-topology in $\mathcal{G}(\wt{X})$.
\label{prop:weakflow}
\end{proposition}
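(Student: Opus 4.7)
My plan is to treat the two assertions (density of $A$ and comparison of topologies) separately, with the Portmanteau theorem doing the main work for the latter.

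For density of $A$, I will argue that the complement $\wt{X} \setminus A$ is contained in a countable union of closed subsets with empty interior. Indeed, by Proposition~\ref{prop:zeromeasure}, if $\overline{x}$ lies outside the axis of every non-trivial element $g \in \Gamma$, then $\nu(G[\overline{x}])=0$ for every $\nu \in \Curr(X)$, so such $\overline{x}$ belongs to $A$. Since $\Gamma$ is countable and each axis is a closed nowhere-dense subset of $\wt{X}$, the Baire category theorem (applied to the complete metric space $\wt{X}$) gives density of the complement, and a fortiori density of $A$.

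For the topology comparison, I will show that each set $W(\mu, C, \epsilon)$ is open in the weak$^*$-topology on $\Curr(X)$, from which the subbasis property and coarseness follow formally. It suffices to prove that, for any fixed $\overline{x}, \overline{y} \in A$, the functional $\nu \mapsto \nu(G[\overline{x},\overline{y}])$ is continuous on $\Curr(X)$. The key observations are: (i) $G[\overline{x},\overline{y}]$ is relatively compact in $\mathcal{G}(\wt{X})$, since geodesics intersecting the compact segment $[\overline{x},\overline{y}]$ have their pairs of endpoints lying in a compact subset of $(\partial \wt{X})^{(2)}/\sim$; (ii) the topological boundary of $G[\overline{x},\overline{y}]$ in $\mathcal{G}(\wt{X})$ is contained in $G[\overline{x}] \cup G[\overline{y}]$, because a limit of geodesics transverse to the open segment which is not itself transverse must pass through an endpoint; and (iii) by the defining property of $A$, $\nu(G[\overline{x}])=\nu(G[\overline{y}])=0$ for every $\nu$, so $\nu$ assigns zero mass to the boundary of $G[\overline{x},\overline{y}]$. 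The Portmanteau theorem for Radon measures in the vague topology (applied locally, using that $G[\overline{x},\overline{y}]$ is relatively compact and thus has finite mass) then gives $\nu_n(G[\overline{x},\overline{y}]) \to \nu(G[\overline{x},\overline{y}])$ whenever $\nu_n \to \nu$ weak$^*$, which is the required continuity.

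The subbasis claim is then routine: the finite intersections of sets of the form $W(\mu, C, \epsilon)$ (with $\mu$ varying) generate a topology in which each such set is open by construction, and the previous step ensures this topology is coarser than the weak$^*$-topology. I anticipate the main technical subtlety to be the boundary analysis in step (ii), since one has to work in $\mathcal{G}(\wt{X})=(\partial\wt{X})^{(2)}/\sim$ rather than in $\wt{X}$ itself, and be careful about geodesics tangent to $[\overline{x},\overline{y}]$ at its endpoints (which are exactly the geodesics in $G[\overline{x}]$ or $G[\overline{y}]$), so that our hypothesis $\overline{x},\overline{y}\in A$ genuinely kills all boundary mass uniformly in $\nu$.
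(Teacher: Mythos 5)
Your proof is correct, and both halves take a genuinely different route from the paper's. For density of $A$, the paper also reduces via Proposition~\ref{prop:zeromeasure} to showing that the union of all lifts of closed geodesics has dense complement, but it does so downstairs on the surface by an ergodicity argument: the periodic orbits form a geodesic-flow-invariant set of zero Liouville measure, so their complement has full measure and is therefore dense because the Liouville measure has full support. Your Baire-category argument upstairs (countably many axes, each a closed nowhere dense line in the complete metric space $\wt{X}$) reaches the same conclusion with less machinery. For the topology comparison, the paper routes through Bonahon's flow-box topology on the projective tangent bundle: it realizes the sets in question as transversals coming from $H$-shapes and invokes Bonahon's Lemma~4.2, whose content is essentially the Portmanteau-type statement you prove directly. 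Your argument --- relative compactness of $G[\overline{x},\overline{y}]$, the inclusion $\partial G[\overline{x},\overline{y}] \subseteq G[\overline{x}] \cup G[\overline{y}]$ (which holds because, two distinct geodesics meeting in at most one point, the only non-transverse configuration is the geodesic carrying $[\overline{x},\overline{y}]$ itself, which lies in $G[\overline{x}]$), and the vanishing of boundary mass for \emph{every} $\nu$ guaranteed by $\overline{x},\overline{y}\in A$ --- is self-contained and avoids the detour through $PT(X)$; the paper itself records the boundary identification you need as a consequence of Proposition~\ref{prop:zeromeasure}. The one point worth making explicit is that you are establishing sequential continuity of $\nu\mapsto\nu(G[\overline{x},\overline{y}])$, which suffices because $\Curr(X)$ with the weak$^*$-topology is metrizable, as the paper notes.
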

\begin{proof}
We relate this family of neighborhoods to the topology generated by flow boxes for geodesic currents as in measures on the projective tangent bundle $PT(X)$ of $X$, as described, for example, in \cite[Lemma~3.4.4]{AL17:HyperbolicStructures} or \cite{Bonahon86:EndsHyperbolicManifolds}.
We note that, in the flow box topology of $PT(X)$, an $H$-shape $(\tau_L,\gamma,\tau_R)$ is determined by a pair of arcs $\tau_L$, $\tau_R$ on $X$, that we will furthermore assume to be geodesic, as well as another geodesic segment $\gamma$ on $X$, transverse to both $\tau_L$ and $\tau_R$, with one endpoint on the first and the other in the second. An $H$-shape then consists of all geodesic arcs on $X$ homotopic to $\gamma$ and transverse to $\tau_L$ and $\tau_R$.
An $H$-shape $H$ defines a subset of $PT(X)$ by considering $B_H$, the set of lifts to $PT(X)$ of  geodesic segments on $H$. A lift of $B_H$ to $PT(\wt{X})$ is then given by the set of lifts to $PT(\wt{X})$ of $B_H$, i.e., a set of geodesic segments on $\wt{X}$ with endpoints on lifts $\wt{\tau_L}$ and $\wt{\tau_R}$. These lifts can be uniquely extended to bi-infinite geodesics, obtaining $G(\tau_L,\tau_R)$, the set of geodesics intersecting both $\tau_L$ and $\tau_R$.
This is the set of bi-infinite geodesics
$G_H=G(\tau_L) \cap G(\tau_R) \subset \mathcal{G}(\wt{X})$.
By \cite[Lemma~4.2]{Bonahon86:EndsHyperbolicManifolds}, it follows that the family of sets
\[
W(\mu,G_H,\epsilon)=\{ \nu \in \Curr(X) : |\nu(G_H)-\mu(G_H)| < \epsilon \colon G_H \subseteq \mathcal{G}(\wt{X}) \text{ and } \nu(\partial G_H) = 0 \}
\]
as $\epsilon>0$ ranges over all positive values, and the sets $G_H$ range over all the sets of geodesics determined by flow boxes $B_H$, is a subbasis at $\mu$ inducing a topology which, a priori, is coarser than that of the weak$^*$-topology. Indeed, the condition $\nu(\partial B_H)=0$ in Bonahon's result is equivalent to the condition $\nu(\partial G_H) = 0$.
We finally note that $W(\mu, C, \epsilon)$ is a finite intersection of sets of type $W(\mu,G_H,\epsilon)$, and thus also a neighborhood. We now address the density of $A \subset \wt{X}$. We note that, by Proposition~\ref{prop:zeromeasure}, $\nu(\partial G[\overline{x},\overline{y}]) \neq 0$ if and only if there is a lift of a closed geodesic in the support of $\nu$ passing through $\overline{x}$ or $\overline{y}$. Let $B$ denote the union of the lifts of all closed geodesics in $X$. We will show that the complement of $B$ is dense. Firstly, we notice that $B$ is the projection to the surface of a countable union of disjoint Liouville measure zero sets in the unit tangent bundle. Indeed, these correspond to the periodic orbits of the geodesic flow, $\tilde{B}$. Since $\tilde{B}$ is geodesic flow invariant, by ergodicity of the geodesic flow its complement must have full Liouville measure, and thus must be dense (since the Liouville measure has full support). This argument also works with any geodesic current of full support in place of the Liouville current.
Finally, we observe that $A$ contains the complement of $B$, so $A$ is a dense subset of points of $\wt{X}$.
\end{proof}

Let $\mathcal{D}(X) \subset \mathcal{Z}$ be subset of dual spaces of geodesic currents of $X$ (it is a subset of $\mathcal{Z}$ by Proposition~\ref{prop:dualishyp}), equipped with the subspace topology inherited from the equivariant Gromov-Hausdorff topology.
Let $\Curr(X)$ denote the space of geodesic currents, equipped with the subspace weak$^*$-topology.

\begin{theorem}
\label{thm:homeo}
    The map $\Psi \colon \Curr(X) \to \mathcal{D}(X)$ given by $\mu \mapsto X_{\mu}$ is a continuous injection.
\begin{proof}
\begin{enumerate}
\item Continuity of $\Psi$. Let $X=X_{\mu}$, and $K = \{ x_1,\cdots,x_n \} \subset X$, $P=\{ g \}$. Given a standard neighborhood of the equivariant Gromov-Hausdorff topology  $U(X,K,P, \epsilon)$, we find a neighborhood $W$ of the subbasis introduced in Proposition~\ref{prop:weakflow}. We recall that, a priori, the topology it generates is coarser than the weak$^*$-topology, but this will be enough to prove continuity. We shall now construct such a $W$ so that $\Psi(W) \subset U$.
First of all, by density of the set $A$ in Proposition~\ref{prop:weakflow}, we can, if necessary, replace the points $x_i$ in $K$ by points $x_i^*$ in $K^* \subset A$ so that $|\mu(G(x_i,x_j))-\mu(G(x_i^*,x_j^*))| < \epsilon/2$.
Let $\alpha_1,\cdots,\alpha_n$ denote all the geodesic arcs of the type $[\overline{x},\overline{y}] \subset \wt{X}$, where $x, y \in K^*$, $x \neq y$, and $\pi_{\mu}(\overline{x})=x$ and $\pi_{\mu}(\overline{y})=y$.
Note that, if needed, we can also add more points to $K$ (and thus in $K^*$) so that we get at least two distinct $\alpha_i$'s (we need at least 3 distinct points in $K^*$).
Let $W = \cap_{i=1}^n W_{\mu}(\alpha_i,\epsilon/2)$. 
We can then write
\[
\cap_i G(\alpha_i)=\cap_{i \neq j} G(\alpha_i,\alpha_j)
\]
where $G(\alpha_i,\alpha_j)$ denotes the set of geodesics intersecting $\alpha_i$ and $\alpha_j$ transversely, as in~Proposition~\ref{prop:weakflow}. 
Thus, we can write
$W = \cap_{i \neq j} W_{\mu}(\alpha_i,\alpha_j,\epsilon/2)$, where
\[
W_{\mu}(\alpha_i, \alpha_j,\epsilon) = \{ \mu' \in \Curr(X) : |\mu(G(\alpha_i,\alpha_j))-\mu'(G(\alpha_i,\alpha_j))|<\epsilon/2 \}.
\]
Let $\mu' \in W$.
For every $x \in K \subset X_{\mu}$, pick one $\overline{x} \in \pi_{\mu}^{-1}(x) \subset \wt{X}$, and let $\overline{K}$ be the finite set consisting of one $\overline{x} \in \pi^{-1}(x)$ for each $x \in  K$.
Finally, let $K'=\pi_{\mu'}(\overline{K})$, where we denote $\pi_{\mu'}(\overline{x})=x'$.
We claim this defines an $\epsilon$-relation between $x \in K$ and $x' \in K'$.
 We start by proving the first condition of Definition~\ref{def:equivGH}.

Note that by
\[
|\mu'(G(\overline{x}^*,\overline{y}^*))-\mu(G(\overline{x}^*,\overline{y}^*))| < \epsilon/2
\]
and the choice of $x_i^* \in K^*$ in relation to $x_i \in K$, it follows that
\[
|d_{\mu'}(x',y')-d_{\mu}(x,y)| < \epsilon.
\]
Thus, the first condition of the $\epsilon$-relation is satisfied.
As for the second condition, $g(x) \in K$ means, by equivariance of $\pi_{\mu}$, that $g(\pi_{\mu}(\overline{x}))=\pi_{\mu}(g \overline{x}) \in K$.
Thus,
\[
g \overline{x} \in \pi_{\mu}^{-1}(K),
\]
and hence, by equivariance of $\pi_{\mu'}$, we have
\[
g(x')=\pi_{\mu'}(g \overline{x}) \in K'
\]
which shows that $g(x) \mathcal{R} g(x')$ and $g(x') \in K'$, as we wanted.
Thus, $X_{\mu'} \in U(X,K,P,\epsilon)$.
This shows continuity of $\Psi$ with respect to the topology in $\Curr(X)$ induced by the subbasis of Proposition~\ref{prop:weakflow}. Since that topology is coarser than the weak$^*$-topology, this proves continuity of $\Psi$.

\item Injectivity of $\Psi$. If $X_{\mu}=X_{\mu'}$, then in particular  $\ell_{\mu'}([g])=\ell_{\mu'}([g])$ for all $g \in \pi_1(X)$, i.e., by Lemma~\ref{lem:lenvsint} $i(\mu, g) = i(\mu',g)$ for all $g \in \pi_1(X)$, and thus $\mu=\mu'$.
\end{enumerate}
\end{proof}
\end{theorem}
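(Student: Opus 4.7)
The plan is to handle injectivity quickly via translation lengths, and then to reduce the equivariant Gromov--Hausdorff condition to a finite number of ``flow-box'' style weak$^*$ conditions on the current, following the idea of Paulin's argument in the $\mathbb{R}$-tree setting but using the subbasis of Proposition~\ref{prop:weakflow}.

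For injectivity, suppose $X_\mu$ and $X_{\mu'}$ are isometric as $\pi_1(X)$-metric spaces. Then the translation length functions agree: $\ell_\mu(g) = \ell_{\mu'}(g)$ for every $g \in \pi_1(X)$. By Lemma~\ref{lem:lenvsint} this gives $i(\mu,[g]) = i(\mu',[g])$ for every conjugacy class in $\pi_1(X)$, and by Otal's marked length spectrum rigidity (\cite[Th\'eor\`eme~2]{Otal90:SpectreMarqueNegative}, invoked via Theorem~\ref{thm:weakintersection}) this forces $\mu = \mu'$.

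For continuity, fix a basic equivariant Gromov--Hausdorff neighborhood $V(X_\mu, K, P, \epsilon)$ with $K = \{x_1, \dots, x_n\} \subset X_\mu$ finite and $P \subset \pi_1(X)$ finite. First I would enlarge $K$ so that it is closed under the action of $P$ (including $P^{-1}$), which is legitimate because the resulting neighborhood is contained in the original one. Next, using the density statement of Proposition~\ref{prop:weakflow}, I pick lifts $\overline{x}_i \in \wt{X}$ of each $x_i$ which can be perturbed to nearby points $\overline{x}_i^{*} \in A$ (where $A \subset \wt{X}$ is the dense set of points $\overline{z}$ with $\nu(G[\overline{z}]) = 0$ for every $\nu \in \Curr(X)$), so that $|d_\mu(x_i, x_j) - \tfrac{1}{2}(\mu(G[\overline{x}_i^{*}, \overline{x}_j^{*}]) + \mu(G[\overline{x}_j^{*}, \overline{x}_i^{*}]))| < \epsilon/4$. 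At this point the distance in $X_\mu$ between the $x_i$ is approximated by the single quantity $\mu(G[\overline{x}_i^{*}, \overline{x}_j^{*}])$.

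Then I take the weak$^*$-subbasic neighborhood $W(\mu, C, \epsilon/4)$ from Proposition~\ref{prop:weakflow} where $C$ consists of all $\overline{x}_i^{*}$, and similarly of $g \overline{x}_i^{*}$ for $g$ in the (finite) set $P$ needed to witness equivariance. Given any $\mu' \in W$, define $x_i' := \pi_{\mu'}(\overline{x}_i) \in X_{\mu'}$, and let $K' = \{x_1',\dots,x_n'\}$; the relation $x_i \sim x_i'$ will be the candidate $\epsilon$-relation. Because $\overline{x}_i^{*} \in A$ is non-atomic for every current, the distances $d_{\mu'}(x_i', x_j')$ coincide with $\mu'(G[\overline{x}_i^{*}, \overline{x}_j^{*}])$ up to $\epsilon/4$, and the weak$^*$ bound gives $|d_{\mu'}(x_i', x_j') - d_\mu(x_i, x_j)| < \epsilon$. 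For the equivariance condition, if $g \in P$ and $g \cdot x_i = x_j \in K$, then $g \overline{x}_i$ and $\overline{x}_j$ project to the same point in $X_\mu$, so by $\pi_1(X)$-equivariance of $\pi_{\mu'}$ one has $g \cdot x_i' = \pi_{\mu'}(g \overline{x}_i)$, which again matches $x_j'$ up to a small error once the good-points perturbation is taken into account. This shows $\Psi(W) \subset V$, proving continuity.

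The main obstacle is the atomic case: if $\bar{x}_i$ happens to lie on a lift of a closed geodesic in $\mathrm{supp}(\mu)$, then the two half-open expressions $\mu(G[\bar{x}_i,\bar{x}_j))$ and $\mu(G(\bar{x}_i,\bar{x}_j])$ can differ from $\mu(G[\bar{x}_i,\bar{x}_j])$ by a fixed positive amount, and the flow-box (Portmanteau-type) comparison across nearby currents breaks down on the boundary. The device that overcomes this is exactly the density of the atom-free set $A$ from Proposition~\ref{prop:weakflow}, which uses the observation that lifts of closed geodesics project to a Liouville-measure-zero set in the unit tangent bundle; this is what lets one harmlessly replace $K$ by a perturbed $K^*$ before quantifying the weak$^*$ condition.
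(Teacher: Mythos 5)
Your proposal is correct and follows essentially the same route as the paper's proof: injectivity via translation lengths, Lemma~\ref{lem:lenvsint} and marked-length-spectrum rigidity, and continuity by pulling back an equivariant Gromov--Hausdorff neighborhood to a finite intersection of the subbasic weak$^*$ neighborhoods of Proposition~\ref{prop:weakflow}, using the density of the atom-free set $A$ to perturb the test points and defining the $\epsilon$-relation through the projections $\pi_{\mu'}$ of fixed lifts. Your additional step of closing $K$ under the finite set $P$ and including the translates $g\overline{x}_i^{*}$ in the test set $C$ is a small organizational refinement that makes the equivariance check slightly more transparent, but it does not change the substance of the argument.
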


It seems plausible that the weak$^*$-topology is, in general, strictly finer than the equivariant Gromov-Hausdorff topology.
On the other hand, the subspace topology in the of duals of measured laminations yields an embedding (see the next subsection~\ref{subsec:paulin}). 

\subsection{Relation to Paulin's work on $\mathbb{R}$-trees}
\label{subsec:paulin}
Theorem~\ref{thm:homeo} can be seen as a generalization of Paulin's result~\cite[Main~Theorem]{Pa89:Paulin1989TheGT}.

Let $G$ be a finitely generated group. The action of $G$ on an $\mathbb{R}$-tree $T$ is said to be \emph{minimal} if the only
invariant subtrees are $\emptyset$ and $T$.
An \emph{end} of an $\mathbb{R}$-tree $T$ is an equivalence class of rays in $T$, with two rays identified
if their intersection is a ray. The action of $G$ on $T$ is said to be \emph{irreducible}
if there is no end of $T$ fixed by every element of $G$.
The action is said to be \emph{reducible} when it is not irreducible. 
Let $\mathcal{T}(G)$
 the set of equivalence classes of $\mathbb{R}$-trees, containing more than one point, endowed with a minimal irreducible action of $G$, where two $\mathbb{R}$-trees are identified whenever there is an isometry from one onto the other
commuting with the actions.

Paulin defines the \emph{axial topology} (or \emph{translation length topology}) on $\mathcal{T}(G)$ by the family of neighborhoods
\[
V(P,T,\epsilon)=\{ T' \in \mathcal{T} \colon | \ell_T(g)-\ell_{T'}(g)| \mbox{ for all } g \in P \},
\]
as $P$ ranges over all finite subsets of $G$, and $\epsilon>0$. One can define a similar topology for any collection of metric spaces $\mathcal{D}$ acted on by a fixed group of isometries $G$.
One can also endow $\mathcal{T}(G)$ with the equivariant Gromov-Hausdorff topology.
When $G=\pi_1(X)$ is a surface group, and one restricts furthermore to small actions of $G$, Skora showed~\cite[Theorem~3.3]{S90:GeometricAction} (see also \cite[13.6]{Hub22:Vol3}) that
 $\mathcal{T}(G)$ corresponds to $\mathcal{D}_{\ML(X)}$, the subset of dual spaces $X_{\lambda}$ where $\lambda \in \ML(X)$. 
Therefore, by Proposition~\ref{lem:lenvsint} and \cite[Main~Theorem]{Pa89:Paulin1989TheGT}, it follows that the restriction of our map $\Psi$ to $\ML(X)$,
\[
\Psi \colon \ML(X) \to \mathcal{D}_{\ML}(X),
\]
is a homeomorphism onto its image, where $\ML(X)$ is equipped with the subspace weak$^*$-topology, and $\mathcal{D}_{\ML}(X)$ is equipped with the equivariant Gromov-Hausdorff topology.
Our result is thus a partial generalization of Paulin's work.
On the other hand, Paulin's result is more general than ours, since it applies to all irreducible and minimal $G$-actions on $\mathbb{R}$-trees, for any finitely generated group $G$.

\subsection{Relation to Cantrell--Oreg\'on-Reyes and Sapir's work}

Our map $\Psi$ descends to a continuous injection
\[
\mathbb{P}\Psi \colon \mathbb{P}\Curr(X) \to \mathbb{P}\mathcal{D}(X),
\]
where $\mathcal{D}(X)$ is equipped with the equivariant Gromov Hausdorff topology. Here $\mathbb{P}\mathcal{D}(X)$ denotes the projectivization of $\mathcal{D}(X)$, where $X_{\mu}$ and $X_{\nu}$ are equivalent if and only if there exists a constant $C>0$, so that 
\[d_{\mu}(\pi_{\mu}(x), \pi_{\mu}(y))=C \cdot d_{\mu'}(\pi_{\mu'}(x), \pi_{\mu'}(y))\] for all $x,y \in \wt{X}$.
We note that even though $\mathbb{P}\Curr(X)$ is a compact metrizable space, $\mathbb{P}\mathcal{D}(X)$ with the equivariant Gromov Hausdorff is not Hausdorff in general (see~\cite[Chapter~4]{Pau88:Thesis}), so one cannot conclude the induced continuous injection is an embedding.

The subspace $\mathbb{P}\Curr_{\mathit{fill}}(X) \subseteq \mathbb{P}\Curr(X)$ consisting of filling geodesic currents can be equipped with a distance, by recent work of Sapir~\cite{Sap22:ExtensionThurston}, defined as follows
\[
d_{\mathit{fill}}([\mu],[\nu]) \coloneqq \sup_{c \in \Curves(X)} \log \frac{i(\mu,c)}{i(\nu,c)} + \sup_{c \in \Curves(X)} \log \frac{i(\nu,c)}{i(\mu,c)}.
\]
This distance coincides with the symmetrization of the Thurston distance on Teichmuller space, when restricted to $\Curr_{\Teich(X)}(X)$, i.e., the embedded image of Teichm\"uller space in $\mathbb{P}\Curr(X)$ by~\cite[Theorem~8.5]{Th98:ThurstonMetric}.
On the other hand, there is also a natural distance 
$d_{\mathcal{D}}$ on $\mathbb{P}\mathcal{D}_{\mathit{fill}}(X)$, defined in recent work of Oreg\'on-Reyes~\cite[Definition~1.2]{OR22:SpaceMetric}, which can also be expressed, by~\cite[Lemma~3.5]{OR22:SpaceMetric}, as
\[
d_{\mathcal{D}}([\mu],[\nu]) \coloneqq \sup_{g \in \pi_1(X)} \log \frac{\ell_{X_\mu}(g)}{\ell_{X_{\mu'}}(g)} + \sup_{g \in \pi_1(X)} \log \frac{\ell_{X_{\mu'}}(g)}{\ell_{X_{\mu}}(g)}.
\]
Indeed, by Proposition~\ref{prop:quasi-isom}, $\mathbb{P}\mathcal{D}_{\mathit{fill}}(X)$ corresponds to a subspace of the space $\mathcal{D}(\pi_1(X))$ (this also follows from~\cite[Theorem~1.11]{COR22:Manhattan}), and one can endow it with the restriction of that metric.
By~\cite[Lemma~3.5]{OR22:SpaceMetric}, and Proposition~\ref{lem:lenvsint}, it follows that the restriction of $\Psi$ to $\mathbb{P}\Curr_{\mathit{fill}}$,
\[
\mathbb{P}\Psi \colon \mathbb{P}\Curr_{\mathit{fill}} \to \mathbb{P}\mathcal{D}_{\mathit{fill}}(X)
\]
is an isometry with respect to the metrics $d_{\mathit{fill}}$ and $d_{\mathcal{D}}$.
Furthermore, the topology induced by $d_{\mathit{fill}}$ coincides with the subspace weak$^*$-topology, by~Theorem~\ref{thm:weakintersection}. The topology induced by $d_{\mathcal{D}}$ coincides with the topology of translation lengths or axial topology on $\mathbb{P}\mathcal{D}_{\mathit{fill}}(X)$ 
by~\cite[Lemma~3.5]{OR22:SpaceMetric}.
Since by~\cite[Lemma~3.5]{EM18:ErgodicGeodesicCurrents} and~\cite[Corollary~3.8]{EM18:ErgodicGeodesicCurrents} $\mathbb{P}\Curr_{\mathit{fill}}(X)$ is dense and open in $\mathbb{P}\Curr(X)$, then the extension of $\mathbb{P}\Psi|_{\mathbb{P}\Curr_{\mathit{fill}}(X)}$ to the closure is the same as $\Psi$.

\appendix
\section{Equivalence of notions of tree graded spaces}
\label{sec:treegraded}
In this section we show that, under the assumption that $X$ is geodesic, the Drutu-Sapir definition of tree graded space as in Definition~\ref{def:treegraded_ds} and our Definition~\ref{def:treegraded_drmg} are equivalent.

\begin{proposition}
If $(X,d)$ is a geodesic metric space, then Definition~\ref{def:treegraded_drmg} is equivalent to Definition~\ref{def:treegraded_ds}.
\label{prop:eq_defs_treegraded}
\end{proposition}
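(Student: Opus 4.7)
The plan is to prove both implications by exploiting the fact that, in a geodesic space, each geodesic segment admits a canonical decomposition into maximal subsegments lying in pieces and subsegments crossing between pieces; this decomposition is what connects the ``triangles'' axiom to the ``contact chain triangle'' axiom.

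For the direction Definition~\ref{def:treegraded_ds} $\Rightarrow$ Definition~\ref{def:treegraded_drmg}, I would define $T_x$ to be the Drutu--Sapir transversal tree at $x$, i.e.\ the set of points that can be joined to $x$ by a geodesic crossing each piece at most once. By~\cite[Lemma~2.14]{DS05:Treegraded} this is an $\mathbb{R}$-tree, hence $0$-hyperbolic, and by construction it meets every piece in at most one point. Condition (a) follows since ``being joinable without re-entering a piece'' is an equivalence relation on $X$; condition (b) is~\cite[Lemma~2.31]{DS05:Treegraded}, where cut-point behavior of non-contact points is exactly the content of the characterisation of transversal trees. For the axiom contact chain triangle, given three straight chains forming a triangle contained in at least two pieces, I would pick, for each chain, a geodesic realising it (concatenating in-piece and transversal geodesic segments as in Proposition~\ref{prop:geodesictreegraded}). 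The union of the three realisations forms a (possibly non-simple) geodesic triangle; if this triangle were simple, axiom triangles would force it into a single piece, contradicting the hypothesis; so two of the chains must share an interior point, which gives the required overlap.

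For the direction Definition~\ref{def:treegraded_drmg} $\Rightarrow$ Definition~\ref{def:treegraded_ds}, the main task is to verify the triangles axiom from the contact chain triangle axiom. Let $\Delta = xyz$ be a simple geodesic triangle. The key preliminary step is to establish a decomposition lemma: any geodesic segment in $(X,d)$ is a piece-wise geodesic in the sense of Proposition~\ref{prop:geodesictreegraded}, where pieces are replaced by maximal subsegments inside a single $P \in \mathcal{P}$ (the $\alpha$-segments, dashed green in Figure~\ref{fig:axiom2bis}) and transversal segments inside a single $T_w$ (the $\beta$-segments, solid red). This follows from the fact that a geodesic which exits a piece cannot re-enter it (axiom pieces) and a geodesic which is forced through transversal trees moves along cut-point sequences (axiom transversals, condition (b)). Marking the breakpoints $y^i_j$ along each side gives three straight contact chains between $x$, $y$, $z$, and thus a straight contact chain triangle in the sense of Definition~\ref{def:treegraded_drmg}.

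Now apply axiom contact chain triangle: either the triangle is contained in a single piece (which is the desired conclusion) or an interior breakpoint on one side coincides with an interior breakpoint on another side. In the latter case, that shared contact point sits on two distinct sides of $\Delta$, contradicting simplicity of the triangle. Hence $\Delta$ lies in a single piece, as required.

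The hard part will be the decomposition lemma used in the second direction: one must show that any geodesic segment in a metric tree-graded space indeed decomposes cleanly into $\alpha$- and $\beta$-type pieces. The delicacy is that, although we are assuming $X$ is geodesic, the transversal subspaces $T_x$ themselves come only with a $0$-hyperbolic metric, not a pre-specified geodesic structure; one must use the cut-point property from axiom transversals (b) to force a geodesic of $X$ that travels through a transversal to do so along the unique $\mathbb{R}$-tree arc inside it, and to force a geodesic that enters a piece to stay until it exits through a contact point. Once this structural lemma is in hand, both directions reduce to bookkeeping arguments matching up the combinatorics of chains and of geodesic sides.
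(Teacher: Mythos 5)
Your overall strategy is the same as the paper's: realize straight contact chains by piece-wise geodesics and play the non-simplicity of the resulting geodesic triangle against the two triangle axioms, with the decomposition of a geodesic into $\alpha$-segments (in pieces) and $\beta$-segments (in transversals) as the connecting device. Two points, however, deserve attention.

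First, in the direction Definition~\ref{def:treegraded_ds} $\Rightarrow$ Definition~\ref{def:treegraded_drmg}, your step ``the realized geodesic triangle is non-simple, so two of the chains must share an interior point'' has a genuine gap. Non-simplicity only hands you a point $z$ where two sides meet, and $z$ may lie in the interior of an $\alpha$-segment or of a $\beta$-segment of each side; it need not be a contact point of either chain, which is what the axiom requires. To promote $z$ to a coincidence of contact points one has to use the projection/entry-point lemma for pieces in tree-graded spaces (\cite[Lemma~2.4]{DS05:Treegraded}): the geodesics from $z$ back to the pieces $P_1,P_2,P_3$ containing the vertices must enter those pieces at the same points, forcing $y^1_1=y^3_1$ and the analogous identifications at the other two vertices, after which one argues separately according to whether $z$ sits in an $\alpha$- or a $\beta$-segment (in the latter case using $0$-hyperbolicity of $T_x$ to produce the shared $\beta$-endpoint). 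This case analysis is the bulk of the paper's proof of this implication and cannot be omitted.

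Second, you are right that the decomposition of an arbitrary geodesic segment into $\alpha$- and $\beta$-segments is the delicate ingredient in the direction Definition~\ref{def:treegraded_drmg} $\Rightarrow$ Definition~\ref{def:treegraded_ds}: Proposition~\ref{prop:geodesictreegraded} is a statement about Drutu--Sapir tree-graded spaces, so it is not available until that structure has been established, and one must instead derive the decomposition directly from the axiom pieces and the cut-point clause of axiom transversals, exactly as you indicate. You flag this as ``the hard part'' without carrying it out; note that the paper is equally terse here, so this is a gap shared with the source rather than a defect of your route. Once both of these steps are supplied, your argument coincides with the paper's.
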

\begin{proof}
Assume throughout that $(X,d)$ is geodesic metric space and also that it has a collection of pieces that intersect in at most one point, i.e., \emph{axiom pieces}.
Let $\Delta=xyz$ be a geodesic triangle with geodesic sides $\gamma^1=\gamma_{xy}$, $\gamma^2=\gamma_{yz}$ and $\gamma^3=\gamma_{xz}$.
Suppose $\Delta$ is contained in more than one piece.
We want to show $\Delta$ is non-simple.
We can decompose each $\gamma^i$, according to Proposition~\ref{prop:geodesictreegraded}, into a piece-wise geodesic consisting of geodesic segments $\alpha^i_j$ contained in pieces (we will call them \emph{$\alpha$-segments}) and geodesic segments $\beta^i_k$ each contained in a transversal $T_x$ (we will call them \emph{$\beta$-segments}), for $i=1,2,3$.
The endpoints of the $\beta$ for each $i$, give corresponding straight contact chains $(y^i_1,\cdots,y^i_{\ell_i})$.
Since $\Delta$ is contained in more than one piece, at least one of the the $\beta$ segments are non-degenerate.
  By \emph{axiom straight chain triangle}, one of the straight chains shares an interior point with another side chain say $y^1_i=y^3_j$. This means that their corresponding subsegments $\beta^1_i$ and $\beta^3_j$ share an endpoint, and thus $\Delta$ is non-simple.

\begin{figure}[htp]
     \centering
          \begin{subfigure}[b]{0.4\textwidth}
\centering{
\resizebox{80mm}{!}{\Huge{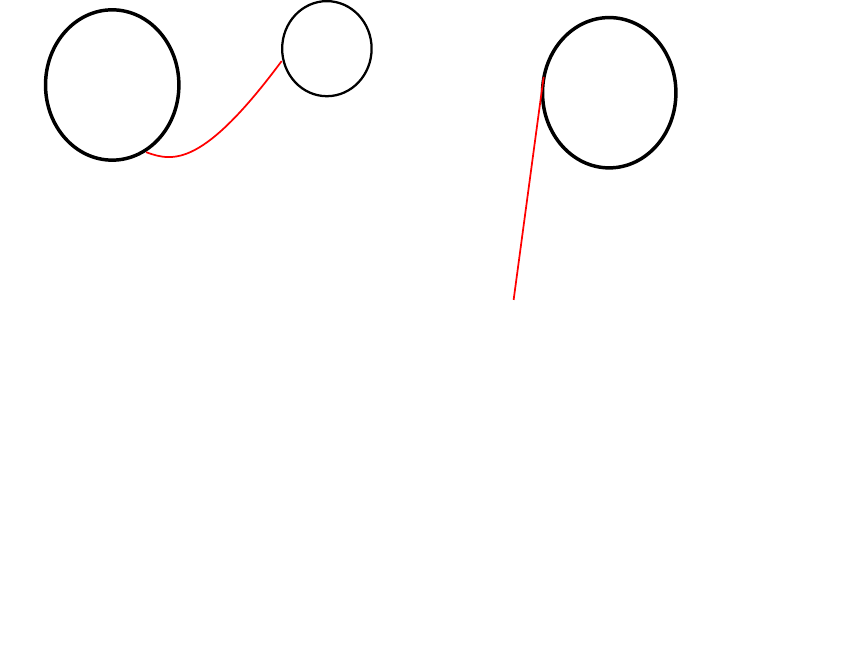}}
\label{fig:triangle_case3}
}
     \end{subfigure}
     \begin{subfigure}[b]{0.4\textwidth}

\centering{
\resizebox{80mm}{!}{\Huge{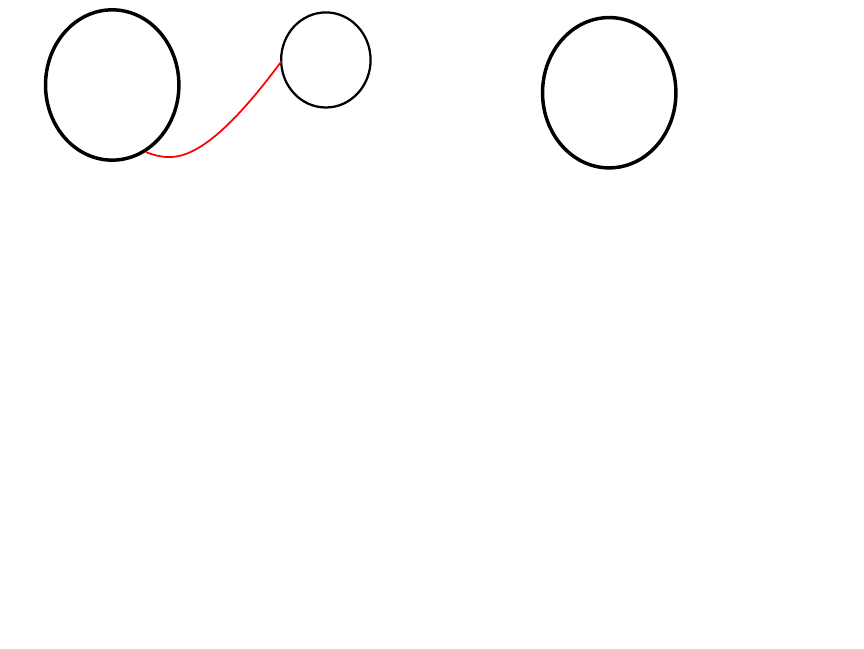}}
\label{fig:triangle_case3bis}
}
     \end{subfigure}

     \caption{From triangles to straight chain triangles: Proof of implication from Definition~\ref{def:treegraded_ds} to Definition~\ref{def:treegraded_drmg}. From a straight chain we generate a triangle with simple subsegments. The axiom triangles forces self-intersection of the triangles, giving two cases depending on which types of subsegments intersect. The left figure exhibits the cases of two $\beta$-segments intersecting, and the right one the case of two $\alpha$-segments intersecting }
     \label{fig:equivalence}
\end{figure}

Now we show that Definition~\ref{def:treegraded_ds} implies Definition~\ref{def:treegraded_drmg}.
Given \emph{axiom triangles}, the (axiom transversals) is~\cite[Lemma~2.12]{DS05:Treegraded} and~\cite[Lemma~2.13]{DS05:Treegraded}.

We will see now that the axiom contact chain triangle also follows from \emph{axiom triangles}.

Let $\Delta=xyz$ be a contact chain triangle, where $x,y,z$ are three points in $X$ contained in at least two distinct pieces. Suppose, say, that $x \in P_1$, and let $y \in P_2$ and $z \in P_3$, where $P_2$ and $P_3$ could potentially be the same, but both distinct from $P_1$. Denote by $C_i \coloneqq (y^i_1,\cdots,y^i_{\ell_i})$ the straight chains connecting the points $x,y,z$ pairwise, for $i=1,2,3$.
We can connect the contact points in these chains by simple geodesic segments that we will call, as before, $\alpha^i_j$ and $\beta^i_k$, depending on whether their interiors are in pieces or in $T_x$, respectively. Let $\hat{C_i}$ denote the corresponding piece-wise geodesic realizing the straight chain $C_i$. They together form a corresponding geodesic triangle $\hat{\Delta}$ which is contained in more than one piece, so it must be non-simple. Thus, there must be a point of intersection $z$ between, say, the interior of $\hat{C_1}$ and a point in $\hat{C_3}$.
The geodesics $[z, y^1_1]$ and $[z, y^3_1]$ both intersect the piece $P_1$ only at their endpoints $y^1_1$ and $y^3_1$, and thus, by~\cite[Lemma~2.4]{DS05:Treegraded}, we have $y^1_1=y^3_1$.
Similarly, the geodesics $[z, y^1_{\ell_1}]$ and $[z, y^2_{\ell_3}]$ both intersect, say $P_2$, at $y^1_{\ell_1}$ and $y^2_{\ell_3}$, so $y^1_{\ell_1}=y^2_{\ell_3}$. Then, a similar argument shows that $y^3_{\ell_1}= y^2_{1}$. Thus, since $\alpha^i_j$ are all simple geodesics, it follows that $\alpha^1_1$, $\alpha^2_1$ and $\alpha^1_{\ell_1}$ must be singletons and equal to the points of contact $y_1, y_2, y_3$ at $P_1, P_2$ and $P_3$, respectively.
Then, the $\beta$ segments adjacent to each $y_i$, since they share $y_i$, must be contained in the same $T_x$.
If $z$ is in a $\beta$ segment, 
a similar argument using geodesics emanating from $z$ and Lemma~\cite[Lemma~2.4]{DS05:Treegraded}, as before, shows that all other $\beta$ segments are contained in the same $T_x$, and thus $\Delta=\Delta_1 \cup A$, where $\Delta_1$ is a triangle made of $\beta$ segments and contained in $T_x$ and $A$ is a disjoint union of $\alpha$-segments.
If $z$ is in an $\alpha$ segment, a similar argument shows that $\Delta=\Delta_1 \cup A \cup B$, where $\Delta_1$ is a triangle made of $\beta$ segments contained in a single $T_x$, $A$ is a union of $\alpha$ segments, and $B$ is a union of $\beta$ segments. 
In any of the two cases, since $T_x$ is $0$-hyperbolic and $\Delta_1$ is contained in $T_x$, the triangle $\Delta_1$ must be non-simple, and thus there must be two $\beta$ subsegments corresponding to different sides of $\Delta$ sharing an endpoint, which shows that one of the corresponding straight contact chains shares an interior point with another, thus contradicting the axiom contact chain triangle.
\end{proof}

\bibliographystyle{hamsalpha}
\bibliography{main}

\newcommand{\noop}[1]{}
\providecommand{\bysame}{\leavevmode\hbox to3em{\hrulefill}\thinspace}
\providecommand{\href}[2]{#2}
\providecommand{\arXiv}[1]{\eprint{arXiv:#1}}
\providecommand{\eprint}{\begingroup \urlstyle{rm}\Url}
\begin{thebibliography}{BIPP25}

\bibitem[AG17]{AG17:Dualcubecomplex}
Tarik Aougab and Jonah Gaster, \emph{Curves intersecting exactly once and their
  dual cube complexes}, Groups Geom. Dyn. \textbf{11} (2017), no.~3,
  1061--1101.

\bibitem[AL17]{AL17:HyperbolicStructures}
Javier Aramayona and Christopher~J. Leininger, \emph{Hyperbolic structures on
  surfaces and geodesic currents}, Algorithmic and Geometric Topics around Free
  Groups and Automorphisms, Adv. Courses Math. CRM Barcelona,
  Birkh\"{a}user/Springer, Cham, 2017, pp.~111--149.

\bibitem[Bau01]{Bau01:Measure}
Heinz Bauer, \emph{Measure and integration theory}, De Gruyter Studies in
  Mathematics, vol.~26, Walter de Gruyter \& Co., Berlin, 2001, Translated from
  the German by Robert B. Burckel.

\bibitem[Bea99]{Bea99:HilbertDomain}
A.F. Beardon, \emph{{The Klein, Hilbert and Poincaré metrics of a domain}},
  Journal of Computational and Applied Mathematics \textbf{105} (1999), no.~1,
  155--162.

\bibitem[Ben04]{Ben04:Convexes}
Yves Benoist, \emph{Convexes divisibles. {I}}, Algebraic groups and arithmetic,
  Tata Inst. Fund. Res., Mumbai, 2004, pp.~339--374.

\bibitem[BH11]{BH11:NonPosCurvature}
Martin~R. Bridson and Andre Haefliger, \emph{Metric spaces of nonpositive
  curvature}, Springer, 2011.

\bibitem[BIPP21]{BIPP21:Currents}
M.~Burger, A.~Iozzi, A.~Parreau, and M.~B. Pozzetti, \emph{Currents, systoles,
  and compactifications of character varieties}, Proc. Lond. Math. Soc. (3)
  \textbf{123} (2021), no.~6, 565--596.

\bibitem[BIPP24]{BIPP21:Crossratios}
Marc Burger, Alessandra Iozzi, Anne Parreau, and Maria~Beatrice Pozzetti,
  \emph{Positive crossratios, barycenters, trees and applications to maximal
  representations}, Groups Geom. Dyn. \textbf{18} (2024), no.~3, 799--847.

\bibitem[BIPP25]{BIPP21:PSL2xPSL2}
\bysame, \emph{Weyl chamber length compactification of the {${\rm
  PSL}(2,\Bbb{R})\times {\rm PSL}(2,\Bbb{R})$} maximal character variety},
  Glasg. Math. J. \textbf{67} (2025), no.~1, 11--33.

\bibitem[BL18]{BL18:Marked}
Anja Bankovic and Christopher~J. Leininger, \emph{Marked-length-spectral
  rigidity for flat metrics}, Trans. Amer. Math. Soc. \textbf{370} (2018),
  no.~3, 1867--1884.

\bibitem[Bon86]{Bonahon86:EndsHyperbolicManifolds}
Francis Bonahon, \emph{Bouts des vari\'et\'es hyperboliques de dimension 3},
  Ann. of Math. (2) \textbf{124} (1986), no.~1, 71--158.

\bibitem[Bon88]{Bonahon88:GeodesicCurrent}
\bysame, \emph{The geometry of {T}eichm\"{u}ller space via geodesic currents},
  Invent. Math. \textbf{92} (1988), no.~1, 139--162.

\bibitem[BP21a]{BP21:Hk}
Jonas Beyrer and Beatrice Pozzetti, \emph{A collar lemma for partially
  hyperconvex surface group representations}, Trans. Amer. Math. Soc.
  \textbf{374} (2021), no.~10, 6927--6961.

\bibitem[BP21b]{BP21:Theta}
\bysame, \emph{{Positive surface group representations in $\mathrm{PO}(p,q)$}},
  2021.

\bibitem[Bus55]{Bus55:Geometry}
Herbert Busemann, \emph{The geometry of geodesics}, Academic Press, Inc., New
  York, 1955.

\bibitem[CB88]{BC88:AutomorphismsSurfaces}
Andrew Casson and Steven Bleiler, \emph{{Automorphisms of Surfaces after
  Nielsen and Thurston}}, Student texts, vol.~9, London Mathematical Society,
  London, UK, 1988.

\bibitem[CD17]{CD17:MedianMeasuredWall}
Indira Chatterji and Cornelia Druţu, \emph{Median geometry for spaces with
  measured walls and for groups}, 2017.

\bibitem[CDP90]{CDR90:NotesGroupes}
M.~Coornaert, T.~Delzant, and A.~Papadopoulos, \emph{G\'{e}om\'{e}trie et
  th\'{e}orie des groupes}, Lecture Notes in Mathematics, vol. 1441,
  Springer-Verlag, Berlin, 1990, Les groupes hyperboliques de Gromov. [Gromov
  hyperbolic groups], With an English summary.

\bibitem[CG93]{CG93:ConvexProjective}
Suhyoung Choi and William~M. Goldman, \emph{Convex real projective structures
  on closed surfaces are closed}, Proc. Amer. Math. Soc. \textbf{118} (1993),
  no.~2, 657--661.

\bibitem[CM87]{CM87:Groups}
Marc Culler and John~W. Morgan, \emph{Group actions on {${\bf R}$}-trees},
  Proc. London Math. Soc. (3) \textbf{55} (1987), no.~3, 571--604.

\bibitem[CMV04]{CFV04:MeasuredWall}
Pierre-Alain Cherix, Florian Martin, and Alain Valette, \emph{Spaces with
  measured walls, the {H}aagerup property and property ({T})}, Ergodic Theory
  Dynam. Systems \textbf{24} (2004), no.~6, 1895--1908.

\bibitem[CN05]{CN05:Walls}
Indira Chatterji and Graham Niblo, \emph{From wall spaces to {$\rm CAT(0)$}
  cube complexes}, Internat. J. Algebra Comput. \textbf{15} (2005), no.~5-6,
  875--885.

\bibitem[CR25]{COR22:Manhattan}
Stephen Cantrell and Eduardo Reyes, \emph{Manhattan geodesics and the boundary
  of the space of metric structures on hyperbolic groups}, Comment. Math. Helv.
  \textbf{100} (2025), no.~1, 11--59.

\bibitem[Cra11]{Cra11:Dynamics}
Micka\"{e}l Crampon, \emph{Dynamics and entropies of {H}ilbert metrics},
  Institut de Recherche Math\'{e}matique Avanc\'{e}e, Universit\'{e} de
  Strasbourg, Strasbourg, 2011, Th\`ese, Universit\'{e} de Strasbourg,
  Strasbourg, 2011.

\bibitem[dlH93]{dlH93:Hilbert}
Pierre de~la Harpe, \emph{On {H}ilbert's metric for simplices}, Geometric group
  theory, {V}ol. 1 ({S}ussex, 1991), London Math. Soc. Lecture Note Ser., vol.
  181, Cambridge Univ. Press, Cambridge, 1993, pp.~97--119.

\bibitem[DLR10]{DLR10:DegenerationFlatMetrics}
Moon Duchin, Christopher~J. Leininger, and Kasra Rafi, \emph{Length spectra and
  degeneration of flat metrics}, Invent. Math. \textbf{182} (2010), no.~2,
  231--277.

\bibitem[DR23]{DeRosa23Thesis}
Luca De~Rosa, \emph{{Currents in Teichm\"{u}ller Theory and their Dual
  Spaces}}, Ph.D. thesis, Swiss Federal Institute of Technology, 2023.

\bibitem[DS05]{DS05:Treegraded}
Cornelia Dru\c{t}u and Mark Sapir, \emph{Tree-graded spaces and asymptotic
  cones of groups}, Topology \textbf{44} (2005), no.~5, 959--1058, With an
  appendix by Denis Osin and Mark Sapir.

\bibitem[Ebe79]{Eberlein1979SurfacesON}
Patrick Eberlein, \emph{Surfaces of nonpositive curvature}, Memoirs of the
  American Mathematical Society \textbf{20} (1979), 0--0.

\bibitem[EM22]{EM18:ErgodicGeodesicCurrents}
Viveka Erlandsson and Gabriele Mondello, \emph{Ergodic invariant measures on
  the space of geodesic currents}, Ann. Inst. Fourier (Grenoble) \textbf{72}
  (2022), no.~6, 2449--2513.

\bibitem[ES22]{ES22:GeodesicCount}
Viveka Erlandsson and Juan Souto, \emph{Geodesic currents and {M}irzakhani's
  curve counting}, Progress in Mathematics, vol. xii-151, Birkhäuser Cham,
  2022.

\bibitem[Eva08]{E08:Lectures}
Steven~N. Evans, \emph{Probability and real trees}, Lecture Notes in
  Mathematics, vol. 1920, Springer, Berlin, 2008, Lectures from the 35th Summer
  School on Probability Theory held in Saint-Flour, July 6--23, 2005.

\bibitem[FM12]{FM12:Primer}
Benson Farb and Dan Margalit, \emph{A primer on mapping class groups},
  Princeton Mathematical Series, vol.~49, Princeton University Press,
  Princeton, NJ, 2012.

\bibitem[Fuj15]{F15:Fujiwara}
Koji Fujiwara, \emph{Subgroups generated by two pseudo-{A}nosov elements in a
  mapping class group. {II}. {U}niform bound on exponents}, Trans. Amer. Math.
  Soc. \textbf{367} (2015), no.~6, 4377--4405.

\bibitem[Glo17]{Glo17:CriticalExponents}
Olivier Glorieux, \emph{Critical exponent for geodesic currents}, 2017,
  \eprint{1704.06541}.

\bibitem[Gui05]{Gui05:Coeur}
Vincent Guirardel, \emph{C\oe ur et nombre d'intersection pour les actions de
  groupes sur les arbres}, Ann. Sci. \'{E}cole Norm. Sup. (4) \textbf{38}
  (2005), no.~6, 847--888.

\bibitem[Hal50]{Ham50:Measure}
Paul~R. Halmos, \emph{Measure {T}heory}, D. Van Nostrand Co., Inc., New York,
  N. Y., 1950.

\bibitem[Ham97]{Ham97:Cocycles}
Ursula Hamenst\"{a}dt, \emph{Cocycles, {H}ausdorff measures and cross ratios},
  Ergodic Theory Dynam. Systems \textbf{17} (1997), no.~5, 1061--1081.

\bibitem[Ham99]{Ham99:Cocycles}
U.~Hamenst\"adt, \emph{Cocycles, symplectic structures and intersection},
  Geometric And Functional Analysis \textbf{9} (1999), no.~1, 90–140.

\bibitem[Hub22]{Hub22:Vol3}
John~Hamal Hubbard, \emph{Teichm\"{u}ller theory and applications to geometry,
  topology, and dynamics. {V}ol. 3}, Matrix Editions, Ithaca, NY, 2022,
  Manifolds that Fiber over the circle.

\bibitem[Kap09]{Kap00:Kapovich2000HyperbolicMA}
Michael Kapovich, \emph{Hyperbolic manifolds and discrete groups}, Modern
  Birkh\"{a}user Classics, Birkh\"{a}user Boston, Ltd., Boston, MA, 2009,
  Reprint of the 2001 edition.

\bibitem[Kas18]{K18:GeometricStructures}
Fanny Kassel, \emph{Geometric structures and representations of discrete
  groups}, Proceedings of the {I}nternational {C}ongress of
  {M}athematicians---{R}io de {J}aneiro 2018. {V}ol. {II}. {I}nvited lectures,
  World Sci. Publ., Hackensack, NJ, 2018, pp.~1115--1151.

\bibitem[KMG]{KMG:BBP}
Michael Kapovich and D\'idac Mart\'inez-Granado, \emph{Bounded backtracking
  property and geodesic currents}, \noop{3001} in preparation.

\bibitem[KP14]{KP14:Convex}
Inkang Kim and Athanase Papadopoulos, \emph{Convex real projective structures
  and {H}ilbert metrics}, Handbook of {H}ilbert geometry, IRMA Lect. Math.
  Theor. Phys., vol.~22, Eur. Math. Soc., Z\"{u}rich, 2014, pp.~307--338.

\bibitem[Mar16]{Martelli16:IntroGeoTop}
Bruno Martelli, \emph{An introduction to geometric topology}, CreateSpace
  Independent Publishing Platform, 2016.

\bibitem[MGT21]{MGT21:Smoothings}
Dídac Martínez-Granado and Dylan~P. Thurston, \emph{From curves to currents},
  Forum of Mathematics, Sigma \textbf{9} (2021), e77.

\bibitem[MS91]{MORGAN1991143}
John~W. Morgan and Peter~B. Shalen, \emph{Free actions of surface groups on
  $\mathbb{R}$-trees}, Topology \textbf{30} (1991), no.~2, 143--154.

\bibitem[MZ19]{MZ19:PositivelyRatioed}
Giuseppe Martone and Tengren Zhang, \emph{Positively ratioed representations},
  Comment. Math. Helv. \textbf{94} (2019), no.~2, 273--345.

\bibitem[Nv16]{BS16:StrongHyp}
Bogdan Nica and J\'{a}n \v{S}pakula, \emph{Strong hyperbolicity}, Groups Geom.
  Dyn. \textbf{10} (2016), no.~3, 951--964.

\bibitem[OR23]{OR22:SpaceMetric}
Eduardo Oregón-Reyes, \emph{The space of metric structures on hyperbolic
  groups}, Journal of the London Mathematical Society \textbf{107} (2023),
  no.~3, 914--942,
  \eprint{https://londmathsoc.onlinelibrary.wiley.com/doi/pdf/10.1112/jlms.12703}.

\bibitem[OT21]{OT21:Blaschke}
Charles Ouyang and Andrea Tamburelli, \emph{Limits of {B}laschke metrics}, Duke
  Math. J. \textbf{170} (2021), no.~8, 1683--1722.

\bibitem[Ota90]{Otal90:SpectreMarqueNegative}
Jean-Pierre Otal, \emph{Le spectre marqué des longueurs des surfaces à
  courbure négative}, Ann. of Math. (2) \textbf{131} (1990), no.~1, 151.

\bibitem[Ouy23]{O19:Energy}
Charles Ouyang, \emph{High-energy harmonic maps and degeneration of minimal
  surfaces}, Geom. Topol. \textbf{27} (2023), no.~5, 1691--1746.

\bibitem[Pap14]{Papadopoulos14:MetricConvexity}
Athanase Papadopoulos, \emph{Metric spaces, convexity and nonpositive
  curvature}, European Mathematical Society Publishing House, 2014.

\bibitem[Pau88]{Pau88:Thesis}
Fr\'{e}d\'{e}ric Paulin, \emph{Topologie de {G}romov \'{e}quivariante,
  structures hyperboliques et arbres r\'{e}els}, Invent. Math. \textbf{94}
  (1988), no.~1, 53--80.

\bibitem[Pau89]{Pa89:Paulin1989TheGT}
Fr\'{e}d\'{e}ric Paulin, \emph{{The Gromov topology on $\mathbb{R}$-trees}},
  Topology and its Applications \textbf{32} (1989), 197--221.

\bibitem[PH92]{HP92:TrainTracks}
R.~C. Penner and J.~L. Harer, \emph{Combinatorics of train tracks}, Annals of
  Mathematics Studies, vol. 125, Princeton University Press, Princeton, NJ,
  1992.

\bibitem[Pog73]{Pog73:HilbertFourth}
A.~V. Pogorelov, \emph{Complete solution of {H}ilbert's fourth problem}, Dokl.
  Akad. Nauk SSSR \textbf{208} (1973), 48--51.

\bibitem[Roe03]{Joe03:Coarse}
John Roe, \emph{Lectures on coarse geometry}, University Lecture Series,
  vol.~31, American Mathematical Society, Providence, RI, 2003.

\bibitem[Sag95]{S01:Cube1}
Michah Sageev, \emph{Ends of group pairs and non-positively curved cube
  complexes}, Proc. London Math. Soc. (3) \textbf{71} (1995), no.~3, 585--617.

\bibitem[Sag14]{S02:Cube2}
\bysame, \emph{{$\rm CAT(0)$} cube complexes and groups}, Geometric group
  theory, IAS/Park City Math. Ser., vol.~21, Amer. Math. Soc., Providence, RI,
  2014, pp.~7--54.

\bibitem[San04]{Sa04:Integral}
Luis~A. Santal\'{o}, \emph{Integral geometry and geometric probability}, second
  ed., Cambridge Mathematical Library, Cambridge University Press, Cambridge,
  2004, With a foreword by Mark Kac.

\bibitem[Sap24]{Sap22:ExtensionThurston}
Jenya Sapir, \emph{An extension of the {T}hurston metric to projective filling
  currents}, Geom. Dedicata \textbf{218} (2024), no.~3, Paper No. 78, 18.

\bibitem[Ser03]{S03:Trees}
Jean-Pierre Serre, \emph{Trees}, Springer Monographs in Mathematics,
  Springer-Verlag, Berlin, 2003, Translated from the French original by John
  Stillwell, Corrected 2nd printing of the 1980 English translation.

\bibitem[Sko90]{S90:GeometricAction}
Richard~K. Skora, \emph{Geometric actions of surface groups on
  {$\Lambda$}-trees}, Comment. Math. Helv. \textbf{65} (1990), no.~4, 519--533.

\bibitem[Thu98]{Th98:ThurstonMetric}
William~P. Thurston, \emph{Minimal stretch maps between hyperbolic surfaces},
  1998.

\bibitem[TZ21]{TuZh21:halo}
Tina Torkaman and Yongquan Zhang, \emph{Train tracks, entropy, and the halo of
  a measured lamination}, 2021.

\bibitem[VO16]{VO16:FoundationsErgodic}
Marcelo Viana and Krerley Oliveira, \emph{Foundations of ergodic theory},
  Cambridge Studies in Advanced Mathematics, vol. 151, Cambridge University
  Press, Cambridge, 2016.

\bibitem[vRS82]{vR82:RealAnalysis}
A.~C.~M. van Rooij and W.~H. Schikhof, \emph{A second course on real
  functions}, Cambridge University Press, Cambridge-New York, 1982.

\bibitem[Wol98]{W98:Duals}
Michael Wolf, \emph{Measured foliations and harmonic maps of surfaces}, J.
  Differential Geom. \textbf{49} (1998), no.~3, 437--467.

\end{thebibliography}

\end{document}